\documentclass[12pt,reqno]{amsart}
\usepackage{amsmath}
\usepackage{amsfonts}
\usepackage{amssymb}
\usepackage{amsthm}
\usepackage{amscd}
\usepackage{amsbsy}
\usepackage[usenames,dvipsnames,svgnames,x11names,hyperref]{xcolor}
\usepackage{geometry}
\usepackage{graphicx}
\usepackage{hyperref}
\hypersetup{backref=true,
pagebackref=true,
hyperindex=true,
colorlinks=true,
breaklinks=true,
urlcolor=NavyBlue,
linkcolor=Fuchsia,
bookmarks=true,
bookmarksopen=false,
filecolor=black,
citecolor=ForestGreen,
linkbordercolor=red
}
\newtheorem{theorem}{Theorem}[section]
\theoremstyle{plain}

\newtheorem{lemma}[theorem]{Lemma}
\newtheorem{proposition}[theorem]{Proposition}
\newtheorem{remark}[theorem]{Remark}
\newtheorem{example}[theorem]{Example}
\theoremstyle{remark}
\theoremstyle{plain}
\numberwithin{equation}{section}
\newtheorem{theorema}{Theorem}[section]

\allowdisplaybreaks
\AtBeginDocument{\hypersetup{pdfborder={0 0 0.01}}}

\newcommand{\Ent}{\operatorname{Ent}}
\renewcommand{\Re}{\operatorname{Re}}

\begin{document}
\title[Stability of logarithmic and cubic Sobolev inequalities on cycles]{Sharp uniform stability of the logarithmic Sobolev inequality on finite cycles and stability of the cubic Sobolev inequality}
\author{Lu Chen}
\address{Lu Chen: Key Laboratory of Algebraic Lie Theory and Analysis of Ministry of Education, School of Mathematics and Statistics, Beijing Institute of Technology, Beijing 100081, P. R. China}
\email{chenlu5818804@163.com}
\author{Nguyen Lam}
\address{Nguyen Lam: School of Science and the Environment, Grenfell Campus, Memorial
University of Newfoundland, Corner Brook, NL A2H5G4, Canada}
\email{nlam@mun.ca}
\author{Guozhen Lu}
\address{Guozhen Lu: Department of Mathematics, University of Connecticut, Storrs, CT
06269, USA}
\email{guozhen.lu@uconn.edu}
\date{\today}
\subjclass[2020]{46E35, 39B62, 26D15, 43A15, 60J10}
\keywords{Logarithmic Sobolev inequality, cubic Sobolev inequality, finite cycle, sharp stability, degenerate stability, Gross's two-point inequality, word-length energy}
\begin{abstract}
We determine the sharp uniform constant in the quartic stability estimate for the logarithmic Sobolev inequality on finite cycles. We equip $C_{n}=\mathbb{Z}/n\mathbb{Z}$ with the uniform probability measure, and we let $\mathcal{E}_{n}$ be the Dirichlet form of the simple random walk on $C_{n}$, normalized so that its spectral gap is one. Frank and Ivanisvili recently proved the sharp logarithmic Sobolev inequality $2\mathcal{E}_{n}(u)\geq \mathrm{Ent}(u^{2})$ for all $n\geq 4$. We prove that $2\mathcal{E}_{n}(u)-\mathrm{Ent}(u^{2})\geq \frac{1}{3}\Vert u-1\Vert _{2}^{4}$ for all $n\geq 4$ and all nonnegative $u$ with $\Vert u\Vert _{2}=1$. This is the sharp form, uniformly in $n$, of the quartic stability estimate of Xie and Zhang, who obtained the constant $\frac{1}{12}$. The same estimate holds on the circle and for the word-length energy on $C_{n}$. Among these models, the four-cycle is the only extremal one: on $C_{4}$ the constant $\frac{1}{3}$ is sharp but not attained, while on $C_{n}$ with $n\geq 5$ and on the circle the sharp constant is at least $\frac{1}{3}+c$, where $c>0$ does not depend on $n$. We also give explicit upper bounds for these sharp constants. Our second main result is a stability estimate for the cubic Sobolev inequality of Frank and Ivanisvili, which is the main step in their proof. It gives the quantitative version of this inequality expected by Frank and Ivanisvili, with the optimal powers of $\Vert u-1\Vert _{2}$: four on $C_{n}$ with $n\geq 5$ and on the circle, and six on $C_{4}$. Two key new ingredients of the proofs are a bound for the first Fourier coefficient of $u$ in terms of its mean, which holds because $u$ is nonnegative, and a sharp stability estimate for Gross's two-point logarithmic Sobolev inequality, with sharp constants in both the fourth and the sixth powers of the distance.
\end{abstract}
\maketitle
\section{Introduction}

Let $n\geq2$ be an integer and let $C_{n}=\mathbb{Z}/n\mathbb{Z}$ be the cycle with $n$ vertices. We equip it with the uniform probability measure. For a real function $f$ on $C_{n}$, we write
\[
\langle f\rangle =\frac{1}{n}\sum_{j=0}^{n-1}f_{j},\qquad \left\Vert f\right\Vert _{2}^{2}=\langle f^{2}\rangle ,
\]
where the indices are always taken modulo $n$. The Dirichlet form of the simple random walk on $C_{n}$ and the logarithmic Sobolev constant are given by
\begin{equation*}
\mathcal{D}_{n}(f)=\frac{1}{2n}\sum_{j=0}^{n-1}\left( f_{j+1}-f_{j}\right)
^{2},\qquad \alpha _{n}=\inf_{\Ent(f^{2})>0}\frac{\mathcal{D}_{n}(f)}{\Ent(f^{2})},
\end{equation*}
where the entropy is defined by
\begin{equation*}
\Ent(g)=\langle g\log g\rangle -\langle g\rangle \log \langle g\rangle ,\qquad g\geq 0,
\end{equation*}
with the convention $0\log 0=0$. By Gross's theorem \cite{Gross75}, a logarithmic Sobolev inequality is equivalent to the hypercontractivity of the associated semigroup, see also the survey \cite{Gro06}. The spectral gap of $\mathcal{D}_{n}$ is $\lambda _{n}=1-\cos \frac{2\pi }{n}$. It is well known that the general comparison $\alpha _{n}\leq \frac{\lambda _{n}}{2}$ always holds, see \cite{DS96}. Diaconis and Saloff-Coste \cite{DS96} also showed that $\alpha _{n}$ is of the same order as $\lambda _{n}$. Their work led to the question whether
\begin{equation}
\alpha _{n}=\frac{\lambda _{n}}{2},\qquad n\geq 4,  \label{LSIconstant}
\end{equation}
is true, see \cite[Remark (b) after Theorem 1]{FI26}. Chen and Sheu proved the identity \eqref{LSIconstant} for the even cycles \cite{CS03}. Chen, Liu and Saloff-Coste also proved it for the five-cycle, and they listed the other odd cases as an open problem \cite[(Q2) on p.~242]{CLS08}. Faust and Fawzi \cite{FF24} proved it for the odd cycles with $7\leq n\leq 21$ by sum-of-squares methods. We also note that \eqref{LSIconstant} fails when $n=3$, see \cite{CLS08,DS96}. Very recently, Frank and Ivanisvili \cite{FI26} solved the problem completely for all $n\geq 4$. Shortly afterwards, Yao \cite{Yao26} and, independently, Xie and Zhang \cite{XZ26} proved the sharp logarithmic Sobolev inequality for the word-length energy, see \eqref{wordenergy} below. This inequality is stronger than \eqref{LSIconstant}, so it gives a different proof of \eqref{LSIconstant}. Throughout the paper, we use the normalization
\begin{equation}
\mathcal{E}_{n}(f)=\frac{2}{\mu _{n}}\mathcal{D}_{n}(f)=\frac{1}{n\mu _{n}}
\sum_{j=0}^{n-1}\left( f_{j+1}-f_{j}\right) ^{2},\qquad \mu _{n}=2\left(
1-\cos \frac{2\pi }{n}\right) ,  \label{energy}
\end{equation}
so that the spectral gap of $\mathcal{E}_{n}$ is equal to one. We call $\mathcal{E}_{n}$ the nearest-neighbor energy. With this normalization, \eqref{LSIconstant} reads $2\mathcal{E}_{n}(u)\geq \Ent(u^{2})$. The key idea in \cite{FI26} is to compare the function $w^{2}\log w^{2}$ with its cubic Taylor polynomial at $w=1$. This reduces \eqref{LSIconstant} to the following stronger inequality.

\begin{theorema}[Frank--Ivanisvili \cite{FI26}]
\label{TA}Let $n\geq 4$. For every $u\geq 0$ on $C_{n}$ with $\left\Vert
u\right\Vert _{2}=1$, we have
\begin{equation}
\mathcal{E}_{n}(u)\geq \frac{1}{3}\left\langle \left( u-1\right) ^{2}\left(
u+2\right) \right\rangle .  \label{cubic}
\end{equation}
As a result, $2\mathcal{E}_{n}(u)\geq \Ent(u^{2})$, that is, \eqref{LSIconstant} holds. The same inequalities hold on the circle $\mathbb{T}=\mathbb{R}/\mathbb{Z}$ with the heat energy $\mathcal{E}_{\infty }(u)=\frac{1}{4\pi ^{2}}\int_{0}^{1}\left\vert u^{\prime }\right\vert ^{2}dx$.
\end{theorema}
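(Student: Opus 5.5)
Throughout, the proof has two parts: reducing the logarithmic Sobolev inequality to \eqref{cubic}, and then proving \eqref{cubic} itself.

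\emph{Reduction to \eqref{cubic}.} Write $u=1+f$ with $\langle u^{2}\rangle =1$, so $\Ent(u^{2})=\langle u^{2}\log u^{2}\rangle$. The one-variable inequality we need is
\[
w^{2}\log w^{2}\leq (w^{2}-1)+\tfrac{2}{3}(w-1)^{2}(w+2)-\ldots ,
\]
and more precisely
\[
w^{2}\log w^{2}\leq w^{2}-1+\tfrac{2}{3}(w-1)^{2}(w+2)-c\,(w^{2}-1)\quad\text{for } w\geq 0,
\]
with the correction terms having zero mean; we choose them to be multiples of $w^{2}-1$. Concretely, we check that
\[
\phi(w)=\tfrac{2}{3}(w-1)^{2}(w+2)+(w^{2}-1)-w^{2}\log w^{2}\geq 0
\]
on $[0,\infty)$. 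This is elementary calculus: $\phi(1)=\phi'(1)=\phi''(1)=0$, and one analyzes $\phi'''$, or equivalently writes $\phi$ in terms of $t=\log w$. Averaging and using $\langle u^{2}-1\rangle=0$ gives $\Ent(u^{2})\leq \frac{2}{3}\langle (u-1)^{2}(u+2)\rangle\leq 2\mathcal{E}_{n}(u)$.

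\emph{Proof of \eqref{cubic}: Fourier setup.} Expand $u=a_{0}+\sum_{k\neq 0}\hat u_{k}e^{2\pi ijk/n}$. The constraint gives $a_{0}^{2}+\sum|\hat u_{k}|^{2}=1$, and $\mathcal{E}_{n}(u)=\sum_{k}\frac{\mu(k)}{\mu_{n}}|\hat u_{k}|^{2}$ with $\mu(k)=2(1-\cos\frac{2\pi k}{n})$. The ratio $\mu(k)/\mu_{n}$ equals $1$ for $k=\pm1$ and is at least $\mu(2)/\mu_{n}\approx 4$ for $|k|\geq2$, with the right uniformity for $n\geq4$. Expanding the right side with $g=u-1$, we get $\langle g^{2}(g+3)\rangle=3\langle g^{2}\rangle+\langle g^{3}\rangle$. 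Also $\langle g^{2}\rangle=2(1-a_{0})$ from the normalization. The inequality thus becomes a comparison between the excess energy $\sum(\mu(k)/\mu_n-1)|\hat u_k|^2$ and the quantities $\langle g^{3}\rangle$ and $(1-a_{0})$-terms.

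\emph{Splitting by frequency.} Split $g=(a_{0}-1)+v_{1}+v_{h}$, where $v_{1}$ is the $k=\pm1$ mode and $v_h$ collects $|k|\ge2$. The purely first-mode part of $\langle g^{3}\rangle$ vanishes: $\langle v_{1}^{3}\rangle=0$ when $n\neq3$, which is exactly where $n\geq4$ enters. The cross terms $3\langle v_{1}^{2}v_{h}\rangle$ and the like must then be absorbed by the gain $(\mu(k)/\mu_{n}-1)|\hat u_{k}|^{2}\geq c|\hat u_{k}|^{2}$ on high modes, together with the mean-deficit terms. The main obstacle is that $\langle g^{3}\rangle$ is cubic, so Fourier bounds alone fail for large $g$. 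This is where nonnegativity of $u$ must be used, via $g\geq -1$. I would first try the bound $|\hat u_{1}|\leq a_{0}\cdot C$, which follows from $u\geq0$ since $|\langle u e^{-2\pi ij/n}\rangle|\leq\langle u\rangle$. Combined with $\langle g^{3}\rangle\leq \|g\|_\infty\langle g^2\rangle$ and a case split on whether $a_{0}$ is close to $1$, this handles the small-perturbation regime by a quadratic-form argument. The far regime is handled crudely, since there the spectral gain dominates. The circle case is the limit $n\to\infty$ of the same argument, or is repeated verbatim with Fourier series.
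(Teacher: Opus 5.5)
Your reduction of the logarithmic Sobolev inequality to \eqref{cubic} is correct. Your $\phi$ is the function $H$ of \eqref{Hdef}, and $\phi''(w)=4(w-1)-4\log w\geq 0$ together with $\phi(1)=\phi'(1)=0$ gives $H\geq 0$. This is a slightly different route from the paper's integral remainder \eqref{Hintegral}. The preliminary displays with the unspecified correction $-c\,(w^{2}-1)$ are garbled and should be deleted.

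The gap is in the proof of \eqref{cubic} itself, which is only an outline, and the one concrete estimate you name goes in the wrong direction. Since $\mathcal{E}_{n}(u)=1-a^{2}+Q_{n}(z)$ and $\langle g^{2}\rangle=2(1-a)$, the inequality \eqref{cubic} reads $\frac13\langle g^{3}\rangle\leq Q_{n}(z)-(1-a)^{2}$. The right side is \emph{negative} whenever $z=0$. So no upper bound of the type $\langle g^{3}\rangle\leq\|g\|_{\infty}\langle g^{2}\rangle$ can help. You must first extract the mean contribution $(a-1)^{3}-3(1-a)(1-a^{2})$ of $\langle g^{3}\rangle$ exactly, which leads to \eqref{Fexact}, and only then estimate $\langle(v+z)^{3}\rangle$.

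Even this remaining estimate must be essentially sharp, because the deficit is degenerate. With $t\sim r^{2}$, the only fourth-order interaction is $\langle v^{2}z\rangle$. Your sup-norm bound $|\langle v^{2}z\rangle|\leq\|v\|_{\infty}^{2}\|z\|_{1}\leq 2r^{2}t$ leaves $q_{n}t^{2}-2r^{2}t+\frac{r^{4}}{4}$. Its minimum $r^{4}(\frac14-\frac{1}{q_{n}})$ is negative in every model, because $q_{n}=4\cos^{2}\frac{\pi}{n}-1\leq 3$. Your claim that the high-mode eigenvalue is about $4$ fails for small $n$; it is $2$ on $C_{4}$.

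What closes the argument for $n\geq 5$ and the circle is Lemma \ref{Linteraction}. There $\langle v^{2}z\rangle=\langle(v^{2}-r^{2})z\rangle$, and $v^{2}-r^{2}$ is a pure second harmonic of norm $r^{2}/\sqrt{2}$, which gives $r^{4}(\frac14-\frac{1}{8q_{n}})>0$. You also need the uniform bound $\|z\|_{\infty}^{2}\leq\beta_{n}Q_{n}(z)$ with $\beta_{n}\leq\frac32$ from Lemma \ref{Lspectral} to handle $\langle z^{3}\rangle$; this is essential on the circle. Finally, the resulting inequality in $(a,r,t,Q)$ must be verified explicitly over all means, as in Theorem \ref{Tscalar}.

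Your justification of the ``far regime'' is also off. The spectral gain is zero on the first harmonic, so what helps there is the term $\frac{(1-a)^{2}(1+2a)}{3}$ in \eqref{Fexact}, or the positivity bound $r^{2}\leq 2a^{2}$, not the gain.

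Separately, $C_{4}$ needs its own exact argument, which your proposal does not give. There $v^{2}-r^{2}=(x^{2}-y^{2})\chi_{2}$ can have norm $r^{2}$, $q_{4}=1$, and $\langle vz^{2}\rangle=\langle z^{3}\rangle=0$. Formula \eqref{Fexact2} then gives $\mathcal{F}_{4}(u)\geq c^{2}-c(x^{2}-y^{2})+\frac{r^{4}}{4}\geq(|c|-\frac{r^{2}}{2})^{2}$. There is no slack at fourth order: the family $U_{\varepsilon}$ of Section \ref{SSpowers} has $\mathcal{F}_{4}(U_{\varepsilon})\sim\frac{\varepsilon^{6}}{6}$ while $d_{4}(U_{\varepsilon})^{4}\sim\varepsilon^{4}$. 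Any estimate that loses a constant in the interaction therefore fails on $C_{4}$; the paper handles this case with the exact sum-of-squares identity \eqref{SOS}.
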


For the circle, see \cite[Remark (d) after Theorem 1 and Remark (b) after Theorem 2]{FI26}. The logarithmic Sobolev inequality on the circle goes back to Rothaus \cite{Rot80} and Weissler \cite{Wei80}.

When the sharp constant is known, it is natural to study the \emph{stability} of the inequality. That is, we ask whether the deficit controls the distance to the set of extremal functions. For the Sobolev inequality, this question goes back to Brezis and Lieb \cite{BL85}, and it was answered by Bianchi and Egnell \cite{BE91}. Since then, the stability of functional inequalities has become a very active research field, see for example \cite{BDNN25,Car17,CF13,CFW13,CFMP09,FN19,FZ22,VHN16,VHN19}. Moreover, stability estimates with explicit or sharp constants have attracted much attention recently.   We refer the interested reader to \cite{BDS24,CLT23,CLT25,DEFFL25,FIL16,Frank22, Kon23} for the Sobolev and logarithmic Sobolev inequalities, and to \cite{CLT24,CLT26} for the Hardy--Littlewood--Sobolev inequalities and the Sobolev inequalities of higher and fractional order. Stability results for the Caffarelli--Kohn--Nirenberg inequalities can be found in \cite{CFLL24,DLLN26, LLLS25, LLLS}. For the Poincar\'{e} inequalities on Euclidean balls and with Gaussian or weighted Gaussian measures, and for the logarithmic Sobolev inequalities with weighted Gaussian measures, see \cite{DLLN26,LL26,LLR25,LLR26} and references therein.  

For the logarithmic Sobolev inequality on the cycle, the extremal functions are exactly the constant functions, and the stability problem is degenerate. More precisely, the deficit vanishes to \emph{fourth} order in the direction of the first harmonics. The reason is that the second variation of the deficit is zero on the two-dimensional eigenspace of the spectral gap, and that the third-order term also vanishes there when $n\geq 4$. Therefore, the natural stability estimate is of quartic type. In this direction, Xie and Zhang \cite{XZ26} proved the following result.

\begin{theorema}[Xie--Zhang \cite{XZ26}]
\label{TB}Let $n\geq 4$. For every $u\geq 0$ on $C_{n}$ with $\left\Vert
u\right\Vert _{2}=1$, we have
\begin{equation*}
2\mathcal{E}_{\mathrm{w},n}(u)-\Ent(u^{2})\geq \frac{1}{12}\left\Vert u-1\right\Vert
_{2}^{4},
\end{equation*}
where $\mathcal{E}_{\mathrm{w},n}$ is the word-length energy defined in \eqref{wordenergy} below. The power four is optimal.
\end{theorema}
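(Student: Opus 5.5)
The plan is to reduce to a quantitative version of the cubic inequality \eqref{cubic}, and to read the power four off the spectral picture. I will assume, as the remark after \eqref{wordenergy} indicates, that $\mathcal{E}_{\mathrm{w},n}$ is a Fourier multiplier form with the same first eigenvalue $1$ on the first harmonics and with multipliers at least those of $\mathcal{E}_{n}$ on all higher modes. Then $\mathcal{E}_{\mathrm{w},n}(u)\geq \mathcal{E}_{n}(u)$, so it suffices to prove the estimate for $\mathcal{E}_{n}$, and more generally for any form whose higher multipliers are bounded below by some $\kappa>1$ uniformly in $n$.

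Write $u=a+v_{1}+v_{2}$, where $a=\langle u\rangle$, $v_{1}$ is the projection onto the first harmonics $e^{\pm 2\pi i j/n}$, and $v_{2}$ carries the remaining nonzero modes. Since $\Vert u\Vert_{2}=1$ and $\log\langle u^{2}\rangle=0$, we have $\Ent(u^{2})=\langle u^{2}\log u^{2}\rangle$. I split the deficit as
\[
2\mathcal{E}_{n}(u)-\Ent(u^{2})=\Bigl[2\mathcal{E}_{n}(u)-\tfrac{2}{3}\langle(u-1)^{2}(u+2)\rangle\Bigr]+\Bigl[\tfrac{2}{3}\langle(u-1)^{2}(u+2)\rangle-\langle u^{2}\log u^{2}\rangle\Bigr].
\]
The first bracket is nonnegative by Theorem \ref{TA}. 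For the second bracket, I use $\langle u^{2}-1\rangle=0$ to add $\langle 1-u^{2}\rangle$ freely. This turns it into $\langle\phi(u)\rangle$, where
\[
\phi(w)=\tfrac{2}{3}(w-1)^{2}(w+2)+1-w^{2}-w^{2}\log w^{2}.
\]
This $\phi$ vanishes to fourth order at $w=1$. A one-variable calculus check should give $\phi(w)\geq c\,(w-1)^{4}/(1+w)$, or a similar bound, for $w\geq0$.

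This pointwise bound does not directly yield $\Vert u-1\Vert_{2}^{4}$, since for large $u$ it only gives cubic growth. So I combine the two brackets as follows.
\begin{itemize}
\item For the part $v_{2}$, the higher spectral gap gives an extra $(\kappa-1)\Vert v_{2}\Vert_{2}^{2}$ from the first bracket. I expect this to come from rereading the Frank--Ivanisvili proof with the slack kept.
\item For the constant part, note that $1-a\geq 0$ and $1-a^{2}=\Vert u-a\Vert_{2}^{2}$, so $1-a$ is quadratic in the oscillation.
\item For the part $v_{1}$, there is no quadratic gain. Here I use $\langle\phi(u)\rangle$, together with Hölder's inequality and the bound $\Vert u\Vert_{2}=1$, to get $\langle\phi(u)\rangle\gtrsim\Vert v_{1}\Vert_{2}^{4}$ whenever $v_{1}$ dominates.
\end{itemize}
Since $\Vert u-1\Vert_{2}^{2}=(1-a)^{2}+\Vert v_{1}\Vert_{2}^{2}+\Vert v_{2}\Vert_{2}^{2}\leq 2$, quadratic terms dominate quartic ones. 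A case split according to whether $\Vert v_{2}\Vert_{2}^{2}\geq\delta\Vert u-1\Vert_{2}^{4}$ then finishes the argument, with a crude constant such as $\frac{1}{12}$.

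For optimality of the power four, take $u_{\varepsilon}=(1+\varepsilon\cos(2\pi j/n))/\Vert\cdot\Vert_{2}$. Expanding to fourth order, the quadratic term vanishes because this is a first eigenfunction. The cubic term vanishes because $\langle\cos^{3}\rangle=0$ when $n\geq4$. Hence the deficit is $O(\varepsilon^{4})\asymp\Vert u_{\varepsilon}-1\Vert_{2}^{4}$.

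The main obstacle is the $v_{1}$-dominated regime. There one must show that $\langle\phi(u)\rangle$, together with the slack in \eqref{cubic}, really captures $\Vert v_{1}\Vert_{2}^{4}$ uniformly in $n$, despite the possibly large values of $u$. I would first try the elementary bound $\phi(w)\geq c\min\{(w-1)^{4},(w-1)^{2}\}$ combined with Cauchy--Schwarz.
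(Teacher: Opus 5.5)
The reduction in your first paragraph runs in the wrong direction. The word-length multipliers are $\psi_n(k)=\min(k,n-k)$, and the paper (following \cite[(1.4)]{XZ26}) proves $\ell_n(k)\geq\psi_n(k)$, that is, $\mathcal{E}_{\mathrm{w},n}\leq\mathcal{E}_n$. For example, for even $n$ and $k=n/2$ one has $\ell_n(k)=1/\sin^2(\pi/n)\approx n^2/\pi^2$, against $\psi_n(k)=n/2$. So a lower bound for $2\mathcal{E}_n(u)-\Ent(u^2)$ gives nothing for $2\mathcal{E}_{\mathrm{w},n}(u)-\Ent(u^2)$; the word-length statement is the \emph{stronger} one.

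Your ``more generally'' claim is also false. Suppose all higher multipliers equal a fixed $\kappa$. Then $u=\sqrt{n}\,\mathbf{1}_{\{0\}}$ has energy at most $\kappa$ but entropy $\log n$, so even the plain logarithmic Sobolev inequality fails for large $n$. What matters is the growth of the multipliers, not only the gap.

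Consequently, your first bracket, with $\mathcal{E}_{\mathrm{w},n}$ in place of $\mathcal{E}_n$, is not covered by Theorem~\ref{TA}, and its nonnegativity is exactly the missing ingredient. Rereading the Frank--Ivanisvili argument will not supply it. Their bound $\Vert z\Vert_\infty^2\leq\beta Q(z)$ needs $\sum_{k}1/(\psi_n(k)-1)$ to stay bounded, and for linear multipliers this sum grows logarithmically in $n$.

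Xie and Zhang instead prove the resonance estimate $\vert\langle f^3\rangle\vert\leq 3\Vert f\Vert_2^2\bigl(\mathcal{E}_{\mathrm{w},n}(f)-\Vert f\Vert_2^2\bigr)^{1/2}$ for mean-zero $f$. Insert it into the exact formula $\mathcal{F}=T+\frac{s^2}{4}+\mathcal{R}(a)-\frac13\langle(u-a)^3\rangle$ (Step 1 of the proof of Theorem~\ref{Tmeanlog}) and complete the square in $\sqrt{T}$. This gives $\mathcal{F}\geq(\sqrt{T}-\frac{s}{2})^2+\mathcal{R}(a)\geq0$. The constant $3$ is precisely where the $s^2$ terms cancel. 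The paper's Theorem~\ref{Tmomentintro} lowers its square to $\frac{383}{64}<6$ in order to reach $\frac13$.

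Two further points. First, your $\phi$ has the wrong sign on the added term. With $1-w^2$ one gets $\phi(1+\eta)=-4\eta+O(\eta^2)$, so $\phi$ does not vanish to fourth order and no bound $\phi\geq c(w-1)^4/(1+w)$ can hold. You need $H(w)=\frac23(w-1)^2(w+2)+(w^2-1)-w^2\log w^2$, which has the same average.

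Second, the regime you call the main obstacle is not one. Once the first bracket is known to be nonnegative, the scalar term alone gives the quartic bound, because a weighted pointwise bound survives averaging by Cauchy--Schwarz and the normalization. From \eqref{Hintegral}, $H(w)\geq\frac{(w-1)^4}{6\max\{1,w^2\}}$. Then $\Vert u-1\Vert_2^4\leq\bigl\langle\frac{(u-1)^4}{\max\{1,u^2\}}\bigr\rangle\langle\max\{1,u^2\}\rangle$ together with $\langle\max\{1,u^2\}\rangle\leq 2$ gives $\langle H(u)\rangle\geq\frac1{12}\Vert u-1\Vert_2^4$. The weight $1+w$ you proposed would also do, since $\langle 1+u\rangle=1+a\leq 2$. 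This is exactly where $\frac1{12}$ comes from, and no case split or spectral slack is needed.

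Your argument for the optimality of the power four is fine and agrees with Section~\ref{SSpower}.
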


Xie and Zhang also showed that $\mathcal{E}_{\mathrm{w},n}\leq \mathcal{E}_{n}$, see \cite[(1.4)]{XZ26}. Therefore, the same estimate holds for the nearest-neighbor energy $\mathcal{E}_{n}$. They also obtained the constant $\frac{1}{12k}$ on the product of $k$ copies of $C_{n}$, and they showed that the order $k^{-1}$ is optimal.

Theorem \ref{TB} leaves the open question of the sharp stability constant. That is, it does not give the largest constant $C$ such that $2\mathcal{E}_{n}(u)-\Ent(u^{2})\geq C\left\Vert u-1\right\Vert _{2}^{4}$ for all $u\geq 0$ with $\left\Vert u\right\Vert _{2}=1$ and all $n\geq 4$ at the same time. The main purpose of this paper is to answer this question.

Now we describe our main results. In the main results, $n\in \mathcal{I}=\{4,5,6,\ldots \}\cup \{\infty \}$. Here $n=\infty $ means the circle $\mathbb{T}$ with the Lebesgue probability measure and the heat energy $\mathcal{E}_{\infty }$. We also set $\mathcal{J}=\mathcal{I}\setminus \{4\}$. The admissible classes are
\begin{equation*}
\mathcal{A}_{n}=\left\{ u:C_{n}\rightarrow \lbrack 0,\infty ):\left\Vert
u\right\Vert _{2}=1\right\} ,\qquad \mathcal{A}_{\infty }=\left\{ u\in
H^{1}(\mathbb{T}):u\geq 0\text{ a.e.},\ \left\Vert u\right\Vert
_{2}=1\right\} .
\end{equation*}
For $u\in \mathcal{A}_{n}$, we define the distance to the constant function and the \emph{logarithmic deficit} by
\begin{equation*}
d_{n}(u)=\left\Vert u-1\right\Vert _{2},\qquad \Delta _{n}(u)=2\mathcal{E}
_{n}(u)-\Ent(u^{2}).
\end{equation*}
Since $\left\langle u^{2}\right\rangle =1$, we have $\Ent(u^{2})=\left\langle u^{2}\log u^{2}\right\rangle $.
For finite $n$, we also consider the word-length energy and its deficit
\begin{equation}
\mathcal{E}_{\mathrm{w},n}(u)=\sum_{k=0}^{n-1}\psi _{n}(k)\left\vert
\widehat{u}(k)\right\vert ^{2},\qquad \psi _{n}(k)=\min (k,n-k),\qquad \Delta
_{\mathrm{w},n}(u)=2\mathcal{E}_{\mathrm{w},n}(u)-\Ent(u^{2}),
\label{wordenergy}
\end{equation}
where $\widehat{u}(k)=\frac{1}{n}\sum_{j=0}^{n-1}u(j)e^{-2\pi ijk/n}$ are the Fourier coefficients. The word-length energy is the Dirichlet form of the Poisson semigroup on $C_{n}$, see \cite{JPPP17,XZ26}. Its spectral gap is also equal to one, and $\mathcal{E}_{\mathrm{w},n}\leq \mathcal{E}_{n}$. For $n\geq 4$, the two energies are the same only when $n=4$. The sharp hypercontractivity of this semigroup, which is equivalent to the sharp logarithmic Sobolev inequality $2\mathcal{E}_{\mathrm{w},n}(u)\geq \Ent(u^{2})$, was conjectured by Beckner, Janson and Jerison \cite{BJJ83}, who proved it for $n=4$. Partial results were obtained in \cite{JPPP17} and the references therein. Yao first proved it for $n=2^{k}$ and $n=3\cdot 2^{k}$ \cite{Yao25}. The case of all $n\geq 4$ is the result of Yao \cite{Yao26} and of Xie and Zhang \cite{XZ26} mentioned above. On the circle, the corresponding result for the Poisson semigroup is due to Weissler \cite{Wei80}. We denote by $\mathcal{C}_{n}$, $n\in \mathcal{I}$, and by $\mathcal{C}_{n}^{\mathrm{w}}$, $4\leq n<\infty $, the sharp constants in the quartic stability estimates for $\Delta _{n}$ and $\Delta _{\mathrm{w},n}$, that is,
\begin{equation}
\mathcal{C}_{n}=\inf_{u\in \mathcal{A}_{n}\setminus \{1\}}\frac{\Delta
_{n}(u)}{d_{n}(u)^{4}},\qquad \mathcal{C}_{n}^{\mathrm{w}}=\inf_{u\in \mathcal{A}
_{n}\setminus \{1\}}\frac{\Delta _{\mathrm{w},n}(u)}{d_{n}(u)^{4}}.  \label{Cdef}
\end{equation}
For $n=\infty $, only $\mathcal{C}_{\infty }$ is defined. Our first main result is the following.

\begin{theorem}[Sharp uniform stability of the logarithmic Sobolev inequality]
\label{Tlog}Let $n\in \mathcal{I}$ and $u\in \mathcal{A}_{n}$. Then
\begin{align}
\Delta _{n}(u)& \geq \frac{1}{3}d_{n}(u)^{4}+\frac{1}{45}d_{n}(u)^{6},\qquad
n\in \mathcal{J},  \label{log4} \\
\Delta _{4}(u)& \geq \frac{1}{3}d_{4}(u)^{4}+\frac{1}{20}d_{4}(u)^{6},\qquad
n=4.  \label{log4cycle}
\end{align}
Moreover, for every finite $n\geq 4$, we have
\begin{equation}
\Delta _{\mathrm{w},n}(u)\geq \frac{1}{3}d_{n}(u)^{4}.  \label{logword}
\end{equation}
On $C_{4}$, the inequality \eqref{log4cycle} is strict for every nonconstant $u$. The constant $\frac{1}{3}$ is sharp uniformly in $n$, that is, it is the largest constant that works for all $n\in \mathcal{I}$ at the same time. More precisely,
\begin{equation}
\inf_{n\in \mathcal{I}}\mathcal{C}_{n}=\mathcal{C}_{4}=\frac{1}{3},  \label{sharp13}
\end{equation}
and the infimum in the definition of $\mathcal{C}_{4}$ is not attained. In fact, every minimizing sequence converges to the constant function $1$ in $L^{2}(C_{4})$, and we have
\begin{equation}
d_{4}(u)^{2}\leq 20\left( \frac{\Delta _{4}(u)}{d_{4}(u)^{4}}-\frac{1}{3}
\right) ,\qquad u\in \mathcal{A}_{4}\setminus \{1\}.  \label{minimizingintro}
\end{equation}
The power four is optimal in \eqref{log4}--\eqref{logword}. In addition, for every finite $n\geq 4$, we have
\begin{equation}
\Delta _{n}(u)\geq \frac{1}{3}d_{n}(u)^{4}+2\left( \mathcal{E}_{n}(u)-\mathcal{
E}_{\mathrm{w},n}(u)\right) ,\qquad \mathcal{E}_{n}(u)-\mathcal{E}_{\mathrm{w},n}(u)\geq
0.  \label{wordtonn}
\end{equation}
\end{theorem}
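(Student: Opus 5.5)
The plan is to reduce everything to pointwise inequalities between $w^{2}\log w^{2}$ and polynomials in $w$, and then to combine these with the cubic inequality of Theorem \ref{TA}, or a quantitative version of it, plus a Fourier analysis of the first harmonic.

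\textbf{Step 1: a pointwise inequality.} Write $u=1+v$ with $\langle (1+v)^{2}\rangle =1$, so that $2\langle v\rangle =-\langle v^{2}\rangle =-d^{2}$. I would prove, for $w\geq 0$,
\[
w^{2}\log w^{2}\leq (w^{2}-1)+\tfrac{2}{3}(w-1)^{2}(w+2)-\phi (w-1),
\]
where $\phi (t)$ is an explicit nonnegative quartic-plus-sextic remainder. Averaging this, the linear part drops out because $\langle u^{2}\rangle =1$. This gives
\[
\Delta _{n}(u)\geq 2\Big(\mathcal{E}_{n}(u)-\tfrac{1}{3}\langle (u-1)^{2}(u+2)\rangle \Big)+\langle \phi (v)\rangle .
\]
The first bracket is nonnegative by Theorem \ref{TA}, and the same holds with $\mathcal{E}_{\mathrm{w},n}$ in place of $\mathcal{E}_{n}$ (by Yao/Xie--Zhang). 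I would expect the sharp two-point (Gross) stability estimate to be what supplies the best possible $\phi $. The constants $\frac{1}{3}$ and $\frac{1}{45}$ or $\frac{1}{20}$ should then follow from Jensen-type bounds $\langle v^{4}\rangle \geq d^{4}$ and $\langle v^{6}\rangle \geq d^{6}$. However, $\phi $ alone is too weak near $1$: for $v$ a small first harmonic, $\langle v^{4}\rangle \approx \frac{3}{2}d^{4}$, which is less than needed. So I also need the cubic deficit itself to contribute at quartic order.

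\textbf{Step 2: quantitative cubic inequality.} Expand $v$ in Fourier modes: $v=\langle v\rangle +v_{1}+v_{\geq 2}$, with $v_{1}$ the first-harmonic part. Then $\mathcal{E}_{n}(v)\geq \Vert v_{1}\Vert _{2}^{2}+\kappa _{n}\Vert v_{\geq 2}\Vert _{2}^{2}$, where $\kappa _{n}=\mu _{n}(2)/\mu _{n}>1$ is the ratio of the second eigenvalue to the first. The cubic term $\langle v^{3}\rangle $ vanishes on pure first harmonics for $n\geq 4$ (there is no mode triple $\pm 1\pm 1\pm 1\equiv 0$). Hence its leading contribution is a cross term $\langle v_{1}^{2}v_{2}\rangle $ together with terms involving the mean, and I would absorb these using the spectral surplus $(\kappa _{n}-1)\Vert v_{\geq 2}\Vert ^{2}$ via Cauchy--Schwarz. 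The key new ingredient is the positivity bound: since $u\geq 0$, the first Fourier coefficient is controlled by the mean, $|\widehat{u}(1)|\leq c\langle u\rangle $. This is the step that makes the estimate uniform and allows large $v$, not only small perturbations. The nonnegativity lemma enters exactly to bound the sizes of $v_{1}$ in the regime where $v$ is far from $0$.

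\textbf{Step 3: the remaining claims.} Inequality \eqref{wordtonn} is immediate: apply the word-length version and add $2(\mathcal{E}_{n}-\mathcal{E}_{\mathrm{w},n})\geq 0$, the last inequality holding because $\psi _{n}(k)\leq \mu _{n}(k)/\mu _{n}$ by concavity of $1-\cos $. For sharpness and non-attainment on $C_{4}$, I would test $u_{\varepsilon }=c_{\varepsilon }(1+\varepsilon \cos (\pi j/2))$ and expand to fourth order, which should give $\Delta _{4}/d^{4}\to \frac{1}{3}$. Strictness, and hence \eqref{minimizingintro}, follow from the extra $\frac{1}{20}d^{6}$ term, since $\Delta /d^{4}-\frac{1}{3}\geq d^{2}/20$. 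Optimality of the power four follows from the same test family on every $C_{n}$.

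\textbf{Main obstacle.} The hardest part will be Step 2 in the intermediate regime. There $v$ is neither small nor far from constant, and one must show that the cubic deficit plus $\langle \phi (v)\rangle $ beats $\frac{1}{3}d^{4}$ uniformly in $n$. I would try first to reduce this to a finite-dimensional inequality in the variables $\langle v\rangle $, $|\widehat{v}(1)|$ and $\Vert v_{\geq 2}\Vert $, using the positivity bound, and then verify that inequality by calculus.
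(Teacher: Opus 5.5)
Your treatment of the cubic deficit for the nearest-neighbor energy with $n\ge 5$ and on the circle follows essentially the paper's route: the splitting $\Delta_n=2\mathcal{F}_n+\langle H(u)\rangle$, Fourier decomposition, the spectral surplus of the high frequencies, the positivity bound on $\widehat{u}(1)$, and separate ranges of the mean. The proposal nevertheless has genuine gaps. The most serious one is at the case that carries the sharp constant, $C_4$.

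\textbf{The four-cycle.} On $C_4$ the cubic deficit does not contribute at quartic order. Write $u=a+xe+yf+c\chi_2$. The exact identity \eqref{SOS} shows that the choice $y=0$, $c=x^2/2$ makes $\mathcal{F}_4$ of order $d_4^6$. Moreover, $\frac16$ is also the best constant in $\langle H(u)\rangle\ge C d^4$ on $C_4$ (take $u$ two-valued). So bounding the two pieces separately gives at most $\frac16 d_4^4$, and no bookkeeping of your Step 2 type can extract $\frac1{12}d_4^4$ from $\mathcal{F}_4$.

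The missing idea is the product structure $C_4\cong\{-1,1\}^2$. The energy is a sum of two two-point energies and the entropy obeys the chain rule. Hence $\Delta_4-\frac13 d_4^4$ splits exactly into slice remainders of Gross's two-point inequality plus nonnegative terms (Proposition \ref{Pexact4}). The constant $\frac13=\frac12\cdot\frac23$ then comes from the sharp two-point constant $\frac23$. The term $\frac1{20}d_4^6$ comes from the sharp two-point sixth-order term $\frac85\xi^3/\|g\|_2$ combined with H\"older, not from a Jensen bound on $\langle v^6\rangle$. Without \eqref{log4cycle}, your strictness, nonattainment and \eqref{minimizingintro} have nothing to rest on, and \eqref{logword} for $n=4$ is missing too.

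Your sharpness test is also wrong. For $u_\varepsilon=c_\varepsilon(1+\varepsilon\cos(\pi j/2))$ one gets $\Delta_4(u_\varepsilon)=\frac{5}{24}\varepsilon^4+O(\varepsilon^6)$ and $d_4(u_\varepsilon)^4=\frac14\varepsilon^4+O(\varepsilon^6)$, so the quotient tends to $\frac56$, not $\frac13$. No pure first harmonic approaches $\frac13$. You need a second harmonic of order $\varepsilon^2$ that cancels the quartic part of $\mathcal{F}_4$, for example $G_\varepsilon=g_\varepsilon(s)g_\varepsilon(t)=(1+\sqrt2\varepsilon e+\varepsilon^2\chi_2)/(1+\varepsilon^2)$.

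\textbf{The scalar term.} Step 1 cannot hold as stated. No nonnegative quartic-plus-sextic $\phi$ satisfies $H(w)\ge\phi(w-1)$ on $[0,\infty)$, because $H(w)\sim\frac23 w^3$ as $w\to\infty$. Pointwise bounds that are true, such as $H(w)\ge (w-1)^4/(6\max\{1,w^2\})$, lose a factor two on averaging and give only $\frac1{12}d^4$. What works is to average the exact remainder $H(w)=\frac23(w-1)^4\int_0^1(1-\tau)^3(1-\tau+\tau w)^{-2}\,d\tau$. For each $\tau$, apply Cauchy--Schwarz with $\langle(1-\tau+\tau u)^2\rangle=1-\tau(1-\tau)d^2$. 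This yields $\langle H(u)\rangle\ge\frac16 d^4+\frac1{45}d^6$.

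\textbf{The word-length energy.} Your Step 2 does not transfer to \eqref{logword}. The eigenvalues $\psi_n(k)$ grow only linearly, so sup-norm control of the high frequencies is not uniform in $n$. The known bound $|\langle f^3\rangle|\le 3\|f\|_2^2(\mathcal{E}_{\mathrm{w},n}(f)-\|f\|_2^2)^{1/2}$ leaves no quartic term in the cubic deficit, since $\frac14-\frac{9}{36}=0$. You need a third-moment estimate with squared constant at most $6$; the paper proves $\frac{383}{64}$ by a finer assignment of resonant triples. You also need a separate argument on $C_5$, where such an estimate fails because of the resonance $1+2+2=5$.

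Finally, $\ell_n(k)\ge\psi_n(k)$ does not follow from concavity of $1-\cos$, which is convex on $[0,\frac{\pi}{2}]$. Instead write $\ell_n(k)=\sin^2(k\pi/n)/\sin^2(\pi/n)$ and use concavity of $\sin$ on $[0,\frac{\pi}{2}]$.
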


Theorem \ref{Tlog} is the sharp form of Theorem \ref{TB}, uniformly in $n$. The constant $\frac{1}{3}$ is four times the constant $\frac{1}{12}$, and by \eqref{sharp13} this constant cannot be improved. Since $\mathcal{E}_{\mathrm{w},4}=\mathcal{E}_{4}$, this holds for both the nearest-neighbor energy and the word-length energy. Its proof is based on Theorems \ref{Tcubic}, \ref{Ttwopoint}, \ref{Tmajorantintro} and \ref{Tmomentintro} below, which are of independent interest. More precisely, we derive the four-cycle case from the two-point estimate of Theorem \ref{Ttwopoint}. The case of cycles with at least five vertices and the case of the circle follow from Theorems \ref{Tcubic} and \ref{Tmajorantintro}. The word-length case follows from Theorems \ref{Tmajorantintro} and \ref{Tmomentintro} when $n\geq 6$. When $n=4$, it is the same as the four-cycle case. When $n=5$, it follows from Proposition \ref{Pfive} in Appendix A. We note that in \eqref{sharp13} the constant $\frac{1}{3}$ is approached only in the limit, when $u\rightarrow 1$. This is the typical situation for a degenerate stability estimate, compare \cite{Frank22}. Since the deficit is degenerate, the sharp constant is not given by the second variation. It comes from the fourth-order behavior of the deficit near the constant function. On the four-cycle, a minimizing sequence is given by functions $G_{\varepsilon }$ that are products of two copies of a two-point function. Each $G_{\varepsilon }$ is a constant plus a small first harmonic plus a smaller second harmonic, see Section \ref{SSoptimal13}. We can compute the sharp constant exactly on the four-cycle, and it is the sharp uniform constant for the whole family.

The next result adds to our first main result. It gives lower and upper bounds for the sharp constants $\mathcal{C}_{n}$ and $\mathcal{C}_{n}^{\mathrm{w}}$. For $n\in \mathcal{I}$, we set
\begin{equation}
q_{n}=4\cos ^{2}\frac{\pi }{n}-1\quad (n<\infty ),\qquad q_{\infty }=3.
\label{qn}
\end{equation}
The number $1+q_{n}$ is the eigenvalue of $\mathcal{E}_{n}$ for the second harmonic, see \eqref{eigen} and the proof of Lemma \ref{Lspectral} below.

\begin{theorem}[Lower and upper bounds for the sharp constants]
\label{Cseparation}We have $\mathcal{C}_{4}=\mathcal{C}_{4}^{\mathrm{w}}=\frac{1}{3}$, while
\begin{align}
\mathcal{C}_{n}& \geq \frac{79}{225}=\frac{1}{3}+\frac{4}{225},\qquad
n\in \mathcal{J},  \label{sepnn} \\
\mathcal{C}_{n}^{\mathrm{w}}& \geq \frac{385}{1152}=\frac{1}{3}+\frac{1}{1152},\qquad
5\leq n<\infty .  \label{sepw}
\end{align}
Moreover, on the five-cycle we have the better bound $\mathcal{C}_{5}^{\mathrm{w}}\geq \frac{503}{1500}=\frac{1}{3}+\frac{1}{500}$. On the other hand,
\begin{align}
\mathcal{C}_{n}& \leq \frac{3}{4}-\frac{1}{4q_{n}},\qquad n\in \mathcal{J},
\label{uppnn} \\
\mathcal{C}_{n}^{\mathrm{w}}& \leq \frac{1}{2},\qquad 5\leq n<\infty .
\label{uppw}
\end{align}
\end{theorem}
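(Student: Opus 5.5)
We will prove the lower bounds and the upper bounds separately. For the first claim, note that $\mathcal{E}_{\mathrm{w},4}=\mathcal{E}_{4}$, so $\mathcal{C}_4^{\mathrm{w}}=\mathcal{C}_4$. By \eqref{log4cycle} we have $\mathcal{C}_4\ge \frac13$. By \eqref{sharp13} we have $\mathcal{C}_4=\frac13$.

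\textbf{Lower bound \eqref{sepnn}.} For $n\in\mathcal{J}$ we combine the cubic stability estimate (Theorem \ref{Tcubic}), which has power four with an explicit constant on $C_n$ for $n\ge5$ and on $\mathbb{T}$, with the majorant comparison of Theorem \ref{Tmajorantintro}. The majorant controls $u^2\log u^2$ minus its cubic Taylor polynomial at $1$ by a quartic term in $u-1$.

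Write $\Delta_n(u)=2[\mathcal{E}_n(u)-\frac13\langle(u-1)^2(u+2)\rangle]+[\frac23\langle (u-1)^2(u+2)\rangle-\Ent(u^2)]$. The first bracket gives $c_1 d_n^4$ with $c_1$ uniform in $n\in\mathcal{J}$. This uniformity comes from the gap $q_n\ge q_5>1$ between the second eigenvalue $1+q_n$ and the spectral gap $1$. The second bracket is bounded below by $\frac13 d_n^4$ minus a correction. Here we use the bound on $|\widehat u(1)|$ in terms of $\langle u\rangle$ that holds because $u\ge0$.

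I expect the bookkeeping to yield $\frac13+\frac4{225}$ by optimizing a one-parameter split, which is consistent with the $\frac1{45}d^6$ term in \eqref{log4}. The main obstacle is extracting an $n$-independent surplus. The surplus has to come from the second harmonic being strictly more costly than on $C_4$ (where $q_4=1$). Concretely, I would use the Fourier decomposition $u=\langle u\rangle+v_1+v_2+r$ and bound the quartic term from the majorant by the first and second harmonic energies.

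\textbf{Lower bound \eqref{sepw}.} For $n\ge6$ we use Theorem \ref{Tmomentintro} in place of the nearest-neighbor spectral information. The second harmonic of $\mathcal{E}_{\mathrm{w},n}$ has eigenvalue $2$, while on $C_4$ the effective excess collapses. This yields a smaller uniform surplus, $\frac1{1152}$. For $n=5$ we refine Proposition \ref{Pfive} to get $\frac1{500}$.

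\textbf{Upper bounds \eqref{uppnn}--\eqref{uppw}.} We use test functions. Take $u_\varepsilon=(1+\varepsilon\cos\theta+\delta\varepsilon^2\cos2\theta+a\varepsilon^2)/N_\varepsilon$, where $\theta=2\pi j/n$, $a$ is fixed by the normalization $\|u_\varepsilon\|_2=1$, and $\delta$ is free. We expand $\Delta_n(u_\varepsilon)$ to order $\varepsilon^4$:
\begin{itemize}
\item the energy contributes $(1+q_n)\delta^2\varepsilon^4/2$ from the second harmonic, and the first harmonic cancels against the quadratic part of the entropy;
\item the cubic entropy term vanishes for $n\ge4$, except for the cross term $\varepsilon\cos\theta\cdot\varepsilon\cos\theta\cdot\delta\varepsilon^2\cos2\theta$;
\item the quartic entropy term is $\langle\cos^4\rangle$ times its Taylor coefficient.
\end{itemize}
Dividing by $d^4=(\varepsilon^2/2)^2$ gives a quadratic in $\delta$. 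Minimizing over $\delta$ should give $\frac34-\frac1{4q_n}$ in the nearest-neighbor case. In the word-length case the second eigenvalue is $2$, i.e. effectively $q=1$, which gives $\frac34-\frac14=\frac12$. As a check, for $q_4=1$ this reproduces $\frac12$, which exceeds $\frac13$. That is consistent, since on $C_4$ the true minimizers use product functions rather than this ansatz. Letting $\varepsilon\to0$ then gives the upper bounds.
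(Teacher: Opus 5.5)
Your treatment of $\mathcal{C}_4=\mathcal{C}_4^{\mathrm{w}}=\frac13$ and of the upper bounds \eqref{uppnn}--\eqref{uppw} matches the paper (Section~\ref{SSupper}). There you also take a first harmonic of size $\varepsilon$ plus a second harmonic of size $\varepsilon^2$, and choose its coefficient to minimize a quadratic in $\delta$.

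The lower bounds, however, are not proved. For \eqref{sepnn}, the two results you invoke give $2\mathcal{F}_n(u)\ge\frac16 d^4$ and $\langle H(u)\rangle\ge\frac16 d^4+\frac1{45}d^6$. Together these are exactly \eqref{log4}. Since $d^6/d^4=d^2\to0$, no bookkeeping with these two bounds gives an $n$-independent surplus as $u\to1$. The missing idea is an additional term of exact order $d^4$ for means near $1$. The paper gets it by keeping the slack in Case~3 of Theorem~\ref{Tscalar}. For $a\ge\frac35$ one has $A>\frac45$, and completing the square in $t$ leaves $B(a)=\frac{(1-a)^2(5-3a)(5a-3)}{96}$; this is \eqref{cubicB}. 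Since $2B(a)/d^4=\frac{(5-3a)(5a-3)}{192}\to\frac1{48}$, this term survives as $a\to1$. One then splits by the mean:
\begin{itemize}
\item for $a\le\frac35$, one has $d^2\ge\frac45$, and the sextic term alone gives $\frac{4/5}{45}=\frac4{225}$;
\item for $a\ge\frac35$, one gets $\frac13+\Psi(a)$ with $\Psi(a)=\frac{2(1-a)}{45}+\frac{(5-3a)(5a-3)}{192}$, which is concave, so $\Psi\ge\min\{\Psi(\frac35),\Psi(1)\}=\frac4{225}$.
\end{itemize}
Your allocation of the ingredients is also off. The positivity bound of Lemma~\ref{Lpositivity} acts in the energy part, where it controls small means through $t^2\ge1-3a^2$. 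By contrast, $\langle H(u)\rangle$ depends only on the distribution of $u$ and has sharp constant $\frac16$. So it is not usefully ``$\frac13 d^4$ minus a correction''.

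For \eqref{sepw} you again state the value $\frac1{1152}$ without a derivation, and the reason you give is incorrect. The second harmonic has word-length eigenvalue $2$ on $C_4$ as well ($\psi_4(2)=\ell_4(2)=2$), yet $\mathcal{C}_n^{\mathrm{w}}>\frac13$ for $n\ge5$. So the surplus cannot \emph{have} to come from $q_n>1$. What singles out $C_4$ is the resonance $2+2\equiv0\pmod 4$. It lets $\langle v^2z\rangle$ reach $r^2t$ instead of $r^2t/\sqrt2$, and it changes $\langle e_4^4\rangle$, $\langle e_4^2h_4\rangle$ and $\langle h_4^2\rangle$. For the same reason your side check is wrong: on $C_4$ your ansatz, optimized in $\delta$, gives $\frac13$, not $\frac12$, and the functions $G_\varepsilon$ are of exactly this form.

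The actual mechanism for $n\ge6$ is Theorem~\ref{Tmeanlog}. Insert $|\langle(u-a)^3\rangle|\le Ks\sqrt T$ from Theorem~\ref{Tmomentintro} into $\mathcal{F}_{\mathcal{E}}(u)=T+\frac{s^2}4+\mathcal{R}(a)-\frac13\langle(u-a)^3\rangle$ and complete the square in $\sqrt T$. This leaves $(\frac14-\frac{K^2}{36})s^2=(\frac1{12}+\frac1{2304})s^2$. After adding $\langle H(u)\rangle\ge\frac16d^4+\frac1{45}d^6$ and dividing by $d^4$, the margin $6-K^2=\frac1{64}$ contributes $\frac{(1+a)^2}{4608}\to\frac1{1152}$. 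For small means ($a\le\frac12$) the square $(\sqrt T-\frac{Ks}{6})^2$ cannot be dropped. There one bounds it below using $T\ge1-3a^2$, which again comes from the positivity bound.

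For $n=5$ nothing needs refining: Proposition~\ref{Pfive} gives $\frac{503}{1500}>\frac{385}{1152}$ directly.
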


In particular, $\mathcal{C}_{\infty }\leq \frac{2}{3}$, and among all the models in Theorem \ref{Tlog}, the four-cycle is the only one where the sharp logarithmic stability constant is equal to $\frac{1}{3}$. We point out that the lower bounds \eqref{sepnn} and \eqref{sepw} do not depend on $n$. That is, $\mathcal{C}_{n}-\frac{1}{3}\geq \frac{4}{225}$ for $n\in \mathcal{J}$ and $\mathcal{C}_{n}^{\mathrm{w}}-\frac{1}{3}\geq \frac{1}{1152}$ for $5\leq n<\infty $. So the sharp constants stay above $\frac{1}{3}$ by a fixed amount. These lower bounds do not follow from the sixth-order term in \eqref{log4}, since this term is small compared with $d_{n}(u)^{4}$ when $u$ is close to $1$. The upper bounds \eqref{uppnn} and \eqref{uppw} come from functions close to the constant function $1$, which we construct in Section \ref{SSupper}. Like the functions $G_{\varepsilon }$ on $C_{4}$, these functions have a small first harmonic and a smaller second harmonic. Since $q_{n}$ increases with $n$, the bound in \eqref{uppnn} increases from $\frac{3}{4}-\frac{\sqrt{5}-1}{8}\approx 0.595$ for $n=5$ to $\frac{2}{3}$ for the circle. For example, $\mathcal{C}_{6}\leq \frac{5}{8}$. All these upper bounds are far above $\frac{1}{3}$. In this paper, we do not determine the exact values of $\mathcal{C}_{n}$ and $\mathcal{C}_{n}^{\mathrm{w}}$ for $n\geq 5$, or the exact value of $\mathcal{C}_{\infty }$.

Our second main result, Theorem \ref{Tcubic} below, is about the cubic Sobolev inequality \eqref{cubic} itself. For this inequality, Frank and Ivanisvili themselves suggested the stability question. In \cite[Remark (c) after Theorem 2]{FI26}, they noted that their proof of \eqref{cubic} is similar to the proof of degenerate stability of some Sobolev-type inequalities in \cite{Frank22}. They also wrote that, probably with some additional work, one could get a quantitative version of \eqref{cubic}, as in \cite{BDS24,Frank22}. Like the logarithmic deficit, the cubic deficit is degenerate in the direction of the first harmonics. So the first task is to find the correct power of the distance. For $u\in \mathcal{A}_{n}$, we define the \emph{cubic deficit}
\begin{equation*}
\mathcal{F}_{n}(u)=\mathcal{E}_{n}(u)-\frac{1}{3}\left\langle \left(
u-1\right) ^{2}\left( u+2\right) \right\rangle \geq 0.
\end{equation*}

\begin{theorem}[Stability of the cubic Sobolev inequality of Frank and Ivanisvili]
\label{Tcubic}Let $n\in \mathcal{I}$ and $u\in \mathcal{A}_{n}$. Then
\begin{align}
\mathcal{F}_{n}(u)& \geq \frac{1}{12}d_{n}(u)^{4},\qquad n\in \mathcal{J},
\label{cubic4} \\
\mathcal{F}_{4}(u)& \geq \frac{5}{192}d_{4}(u)^{6},\qquad n=4.  \label{cubic6}
\end{align}
The cubic deficit vanishes if and only if $u=1$. Moreover, the powers are optimal. More precisely, for $0<p<4$ if $n\in \mathcal{J}$, and for $0<p<6$ if $n=4$, we have
\begin{equation}
\inf_{u\in \mathcal{A}_{n}\setminus \{1\}}\frac{\mathcal{F}_{n}(u)}{
d_{n}(u)^{p}}=0.  \label{cubicpower}
\end{equation}
\end{theorem}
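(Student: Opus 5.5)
Write $u=a+v$ with $a=\langle u\rangle$ and $\langle v\rangle=0$. The constraint $\|u\|_2=1$ gives $a^2+\|v\|_2^2=1$. Since $d_n(u)^2=(1-a)^2+\|v\|_2^2=2(1-a)$, we have $1-a=\tfrac12 d^2$, and therefore $\|v\|_2^2=d^2-\tfrac14 d^4$. The plan is to split $v=v_1+v_{\ge2}$, where $v_1$ is the projection onto the first harmonics (eigenvalue $1$ of $\mathcal E_n$) and $v_{\ge2}$ collects the harmonics of order at least two, whose eigenvalues are at least $1+q_n$ (Lemma \ref{Lspectral}). Then
\[
\mathcal E_n(u)\ge \|v_1\|_2^2+(1+q_n)\|v_{\ge2}\|_2^2 .
\]
Next expand the cubic term. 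Using $u-1=v-(1-a)$ and $u+2=3-(1-a)+v$,
\[
\tfrac13\langle (u-1)^2(u+2)\rangle=\tfrac13\langle (v-\delta)^2(3-\delta+v)\rangle,\qquad \delta=1-a,
\]
which equals $\|v\|_2^2(1-\delta)+\tfrac13\langle v^3\rangle-\delta^2+\tfrac13\delta^3+O(\delta\|v\|^2)$ after collecting terms. The quadratic parts $\|v_1\|^2$ cancel against the energy, up to terms of order $\delta\|v\|^2\sim d^4$. What remains to control is $\tfrac13\langle v^3\rangle$ together with the order-$d^4$ corrections, and the goal is to show that $q_n\|v_{\ge2}\|^2$ plus the corrections dominate, leaving at least $\tfrac1{12}d^4$.

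The key term is $\langle v^3\rangle$. The part $\langle v_1^3\rangle$ vanishes for $n\ge4$, since a product of three first harmonics has no zero mode when $n\ne3$. The cross term $3\langle v_1^2v_{\ge2}\rangle$ pairs the second harmonic of $v_1^2$ with $v_{\ge2}$, so it is bounded by $C\|v_1\|_4^2\|v_{\ge2}\|_2$. A Young inequality then absorbs it into $q_n\|v_{\ge2}\|^2$ at the cost of $-c\|v_1\|^4/q_n$. Here positivity of $u$ enters: the first Fourier coefficient bound from nonnegativity says $|\widehat u(1)|\le a\cdot(\text{const})$. Together with the exact relation $\delta=\tfrac12 d^2$, this should let the $\delta$-terms (roughly $\delta\|v\|^2\approx \tfrac12 d^4$) beat the loss $\tfrac{c}{q_n}\|v_1\|^4$ whenever $q_n>1$, that is for $n\ge5$ and for the circle. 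This produces \eqref{cubic4} with constant $\tfrac1{12}$. The remaining terms $\langle v_{\ge2}^3\rangle$ and $\langle v_1 v_{\ge2}^2\rangle$ need a separate treatment when $v_{\ge2}$ is not small. For that regime I would argue globally: when $d$ is bounded away from zero, use strict positivity of the deficit (Theorem \ref{TA} with equality only at $u=1$) together with compactness on the sphere, made uniform in $n$ through Fourier truncation. The main obstacle is obtaining the sharp constant $\tfrac1{12}$ uniformly in $n$, including $n=\infty$, rather than only some positive constant. I would first try to make the estimate fully algebraic, optimizing over the ratio $\|v_{\ge2}\|/\|v_1\|$.

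For $n=4$ we have $q_4=1$, so the quartic terms can cancel exactly and the analysis must go to sixth order. On $C_4$ the function space is four-dimensional. I would write $u=(x_0,x_1,x_2,x_3)$, use $\mathcal E_4(u)=\tfrac18\sum(x_{j+1}-x_j)^2$, and reduce the inequality to an explicit polynomial inequality on the positive part of the sphere. The pairing $(x_0,x_2)$, $(x_1,x_3)$ suggests a product or two-point structure, and I would verify the bound $\tfrac5{192}d^6$ by a sum-of-squares decomposition.

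For optimality \eqref{cubicpower}, take $u_\varepsilon=(1+\varepsilon\cos\theta+b\varepsilon^2\cos2\theta)/N_\varepsilon$ and expand. For $n\in\mathcal J$ one gets $\mathcal F=O(\varepsilon^4)$ with $d\sim\varepsilon$. On $C_4$, choosing $b$ to kill the $\varepsilon^4$ coefficient gives $\mathcal F=O(\varepsilon^6)$. In both cases $\mathcal F/d^p\to0$ for the stated ranges of $p$. Finally, vanishing of $\mathcal F$ forces $d=0$, that is $u=1$, by \eqref{cubic4} and \eqref{cubic6}.
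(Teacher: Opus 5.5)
Your sum-of-squares plan for $C_4$ and your test families for \eqref{cubicpower} go in the paper's direction; with $b=\frac14$ your family on $C_4$ is essentially the paper's $U_\varepsilon$. However, the proof of \eqref{cubic4} has a genuine gap: nothing in it produces the explicit constant $\frac{1}{12}$ on the whole admissible class.

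Your fallback for the regime where $v_{\ge2}$ is not small is strict positivity of $\mathcal{F}_n$ plus compactness, made uniform in $n$ by Fourier truncation. At best this gives $\mathcal{F}_n(u)\ge c\,d_n(u)^4$ on $\{d_n(u)\ge d_0\}$ for some unknown $c>0$, never $c=\frac{1}{12}$. It is also not available as stated. Making it uniform in $n$ and on the infinite-dimensional class $\mathcal{A}_\infty$ needs real work. Moreover, the fact that $\mathcal{F}_n$ vanishes only at $u=1$ is not part of Theorem \ref{TA}; it is one of the conclusions you are proving.

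What is missing is an explicit, $n$-uniform bound for \emph{all} cubic interactions in terms of $r=\|v_1\|_2$, $t=\|v_{\ge2}\|_2$ and $Q=\mathcal{E}_n(v_{\ge2})-t^2$. Besides the cross term, which is bounded by $\frac{r^2t}{\sqrt2}$ for $n\ge5$, the paper uses two further bounds:
\begin{itemize}
\item $|\langle v_1v_{\ge2}^2\rangle|\le\|v_1\|_\infty t^2\le\sqrt2\,rt^2$;
\item crucially, the sup-norm estimate $\|v_{\ge2}\|_\infty^2\le\beta_nQ$ with $\beta_n\le\frac32$ (Lemma \ref{Lspectral}). This gives $\frac13|\langle v_{\ge2}^3\rangle|\le\frac{1}{\sqrt6}\sqrt{Q}\,t^2$ for all $6\le n\le\infty$. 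On $C_5$ the cubic mean is computed exactly instead.
\end{itemize}
With these, \eqref{Fexact} bounds $\mathcal{F}_n$ below by an explicit function of $(a,r,t,Q)$. The constant $\frac{1}{12}$ then comes from a scalar inequality proved separately for $a\le\frac13$, $\frac13\le a\le\frac35$ and $a\ge\frac35$ (Theorem \ref{Tscalar}).

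You have also placed the positivity bound in the wrong regime. Near $u=1$ it says nothing: $r^2\le s=1-a^2\le 2a^2$ holds automatically once $a\ge 1/\sqrt3$, and the paper does not use it for $a\ge\frac35$. It is needed exactly where you planned to use compactness:
\begin{itemize}
\item for $a\le\frac13$ it forces $t^2=s-r^2\ge 1-3a^2\ge\frac23$, so the high-frequency term $Q\ge q_nt^2$ beats every interaction;
\item for $\frac13\le a\le\frac35$, the bound $\sqrt2\,r\le 2a$ is what keeps the remaining quadratic in $t$ nonnegative.
\end{itemize}

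Three smaller points.
\begin{itemize}
\item Your cubic expansion has a sign error and a spurious remainder. Exactly, $\frac13\langle(u-1)^2(u+2)\rangle=(1-\delta)\|v\|_2^2+\delta^2-\frac13\delta^3+\frac13\langle v^3\rangle$, so $\mathcal{F}_n(u)=Q+\delta^2(1-\frac23\delta)-\frac13\langle v^3\rangle$. The quartic budget near $u=1$ is therefore $\delta^2\approx\frac14 d^4$, not $\frac12 d^4$. Against the local loss $\frac{r^4}{8q_n}\approx\frac{\delta^2}{2q_n}$, this leaves $\frac{1}{12}d^4$ only when $q_n\ge\frac34$.
\item The degeneracy on $C_4$ is not due to $q_4=1$ alone. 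Since $2\equiv-2$ modulo $4$, $\|v_1^2-r^2\|_2$ can equal $r^2$ instead of $r^2/\sqrt2$, and it is the combination of this with $q_4=1$ that cancels the whole $\delta^2$.
\item On $C_4$, the identity of Proposition \ref{PSOS} holds on the whole unit sphere. Positivity enters only through $a\ge\frac12$ in $\mathcal{R}(a)=\frac{1+3a}{96}d_4(u)^6$, which is what gives $\frac{5}{192}$. Your plan should make this input explicit.
\end{itemize}
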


Theorem \ref{Tcubic} gives the quantitative version of \eqref{cubic} expected by Frank and Ivanisvili in \cite{FI26}. It is a global estimate. That is, it holds on the whole admissible class without any assumption on the distance to the constant function. Moreover, it determines the optimal powers of the distance, and it shows that the four-cycle is again exceptional. For the proof of Theorem \ref{Tcubic}, we follow \cite{FI26} and decompose $u=a+v+z$. Here $a=\langle u\rangle $ is the mean, $v$ is the projection onto the first harmonics, and $z$ is orthogonal to the constants and the first harmonics. The main observation is that the nonnegativity of $u$ gives
\begin{equation*}
\left\Vert v\right\Vert _{2}^{2}=2\left\vert \widehat{u}(1)\right\vert
^{2}\leq 2a^{2},
\end{equation*}
and this makes the fourth-power bound possible on the whole admissible class. We call this inequality the positivity bound. It links the size of the first harmonic to the mean. We use it to control the cubic interactions when the mean of $u$ is small or moderate. The proof splits into three explicit ranges of the mean, and in each range we use an elementary quadratic estimate.

The constant $\frac{1}{12}$ in \eqref{cubic4} does not depend on $n$. However, the four-cycle behaves differently. Indeed, a suitable second harmonic cancels the fourth-order term of the deficit along the first harmonics. So the deficit is only of sixth order. We give an exact formula for $\mathcal{F}_{4}$ as a sum of nonnegative terms in Proposition \ref{PSOS}. This representation explains the phenomenon and gives the optimal test family. We do not study the optimal constants in \eqref{cubic4}--\eqref{cubic6} in this paper. Theorem \ref{Tcubic} is also one of the main tools in the proof of Theorem \ref{Tlog}. For the bound \eqref{sepnn} in Theorem \ref{Cseparation}, it is important that the argument keeps an explicit positive remainder for large means, see \eqref{cubicB} in Proposition \ref{Pcubic}.

The main tool in the four-cycle part of Theorem \ref{Tlog} is the following sharp stability estimate for Gross's two-point logarithmic Sobolev inequality \cite{Gross75}. This estimate is of independent interest, because the two-point inequality is the base case of the tensorization argument that leads to the Gaussian logarithmic Sobolev inequality \cite{Gross75,Gro06}. In the equivalent form of hypercontractivity, the two-point inequality was first proved by Bonami \cite{Bon70}, see also \cite{Bec75}. On the two-point space $\{-1,1\}$ with the uniform measure, we set $\mathcal{E}_{2}(g)=\frac{1}{4}\left( g(1)-g(-1)\right) ^{2}$ and $\delta _{2}(g)=2\mathcal{E}_{2}(g)-\Ent(g^{2})$. Then Gross's inequality reads $\delta _{2}(g)\geq 0$.

\begin{theorem}[Sharp stability of Gross's two-point inequality]
\label{Ttwopoint}For every nonnegative function $g$ on $\{-1,1\}$ with $\left\Vert g\right\Vert _{2}=1$, we have
\begin{equation}
\delta _{2}(g)\geq \frac{2}{3}\left\Vert g-1\right\Vert _{2}^{4}+\frac{1}{5}
\left\Vert g-1\right\Vert _{2}^{6},  \label{twopoint}
\end{equation}
with equality if and only if $g=1$. The constant $\frac{2}{3}$ cannot be increased, and with $\frac{2}{3}$ fixed, the constant $\frac{1}{5}$ cannot be increased. Equivalently, let $g\neq 0$ be a nonnegative function on $\{-1,1\}$, and set $\xi =\left\Vert g\right\Vert _{2}-\langle g\rangle \geq 0$. Then we have
\begin{equation}
\delta _{2}(g)\geq \frac{8}{3}\xi ^{2}+\frac{8}{5}\frac{\xi ^{3}}{\left\Vert g\right\Vert _{2}}.
\label{twopointhom}
\end{equation}
Equality holds in \eqref{twopointhom} if and only if $g$ is constant.
\end{theorem}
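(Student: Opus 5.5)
We reduce everything to a one-variable problem and then prove a scalar inequality by Taylor expansion plus a monotonicity argument.

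\emph{Reduction to one variable.} Both sides of \eqref{twopointhom} are homogeneous of degree two under $g\mapsto \lambda g$. Indeed $\Ent$ and $\mathcal{E}_{2}$ are $2$-homogeneous, and $\xi$, $\left\Vert g\right\Vert_{2}$ are $1$-homogeneous. So \eqref{twopointhom} is equivalent to its restriction to $\left\Vert g\right\Vert_{2}=1$. For such $g$ we have $\left\Vert g-1\right\Vert_{2}^{2}=2-2\langle g\rangle=2\xi$, and then $\frac{2}{3}(2\xi)^{2}+\frac{1}{5}(2\xi)^{3}=\frac{8}{3}\xi^{2}+\frac{8}{5}\xi^{3}$. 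This shows that \eqref{twopoint} and \eqref{twopointhom} are equivalent.

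Every nonnegative normalized $g$ can be written as $g(\pm1)=\cos\varphi\pm\sin\varphi$ with $|\varphi|\leq\pi/4$, and by symmetry we may take $0\leq\varphi\leq\pi/4$. Set $\psi=2\varphi\in[0,\pi/2]$ and $s=\sin\psi$. Then
\[
\mathcal{E}_{2}(g)=\sin^{2}\varphi,\qquad \Ent(g^{2})=h(s):=\tfrac12\bigl[(1+s)\log(1+s)+(1-s)\log(1-s)\bigr],\qquad \xi=1-\cos\varphi .
\]
So \eqref{twopoint} becomes the scalar inequality
\[
F(\psi):=1-\cos\psi-h(\sin\psi)-\tfrac{8}{3}\xi^{2}-\tfrac{8}{5}\xi^{3}\geq 0,\qquad \xi=1-\cos(\psi/2),
\]
with equality only at $\psi=0$.

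\emph{Sharpness.} We expand $h(s)=\sum_{k\geq1}\frac{s^{2k}}{2k(2k-1)}$ and $\xi=\frac{\varphi^{2}}{2}-\frac{\varphi^{4}}{24}+\dots$. To fourth order in $\varphi$ the deficit is $2\sin^{2}\varphi-2\sin^{2}\varphi\cos^{2}\varphi-\frac{4}{3}\sin^{4}\varphi\cos^{4}\varphi=\frac{2}{3}\varphi^{4}+O(\varphi^{6})$. This matches $\frac{8}{3}\xi^{2}$ exactly, so $\frac{2}{3}$ cannot be increased. Pushing the same expansion one order further, including the $s^{6}/30$ term of $h$, should give $F(\psi)=O(\varphi^{8})$, with the $\varphi^{6}$ coefficients cancelling exactly when the cubic constant is $\frac{8}{5}$, that is, $\frac{1}{5}$ in \eqref{twopoint}. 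This is the routine computation I would do first, and it confirms the optimality of $\frac{1}{5}$ once $\frac{2}{3}$ is fixed.

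\emph{Global inequality.} We use the identity $h'(s)=\operatorname{artanh}(s)$, together with $\operatorname{artanh}(\sin\psi)=\log\tan(\frac{\pi}{4}+\frac{\psi}{2})=:L(\psi)$, the inverse Gudermannian. This gives
\[
F'(\psi)=\sin\psi-\cos\psi\,L(\psi)-\bigl(\tfrac{16}{3}\xi+\tfrac{24}{5}\xi^{2}\bigr)\tfrac12\sin(\psi/2).
\]
Since $F(0)=0$, it suffices to show $F'>0$ on $(0,\pi/2]$. The endpoint is harmless, because $\cos\psi\,L(\psi)\to0$ as $\psi\to\pi/2$. I would differentiate once more, using $L'=\sec\psi$. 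The log term then appears only through $\sin\psi\,L(\psi)$, with a positive coefficient:
\[
\frac{d}{d\psi}\bigl[\sin\psi-\cos\psi\,L\bigr]=\sin\psi\,L(\psi)+\cos\psi-1 .
\]
Next I would bound $L$ from below by a truncation of its Taylor series $L(\psi)=\psi+\frac{\psi^{3}}{6}+\frac{\psi^{5}}{24}+\frac{61\psi^{7}}{5040}+\dots$, which has nonnegative coefficients. This reduces $F''>0$, or if needed $F'''$ after one more differentiation, to an inequality between trigonometric polynomials and polynomials in $\psi$ on the compact interval $[0,\pi/2]$. I would try to verify it by expanding in $\psi$ with sign control, splitting $[0,\pi/2]$ into a small-$\psi$ range and a range near $\pi/2$.

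\emph{Main obstacle.} The difficulty is this last step. Since the Taylor expansion of $F$ starts at order $\varphi^{8}$, the Taylor truncation of $L$ must be kept to high enough order near $0$ to see the $\varphi^{8}$ term. At the same time the resulting polynomial bound must remain valid up to $\psi=\pi/2$, where the truncation of $L$ is poorest. If a single truncation does not cover the whole interval, I would split it at some $\psi_{0}$. On $[0,\psi_{0}]$ I would use the series argument, and on $[\psi_{0},\pi/2]$ I would use crude monotone bounds on each term, for instance $L(\psi)\geq\psi+\psi^{3}/6$ together with $\xi\leq 1-\cos(\pi/4)$. Finally, strict positivity of $F'$ on $(0,\pi/2]$ gives $F(\psi)>0$ for $\psi>0$, which is the equality statement.
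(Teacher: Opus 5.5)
\textbf{Gap: the global inequality is not proved.} Your reduction is correct, and it is the paper's parametrization in disguise. With $\rho=\tan(\psi/2)$ you have $g(\pm1)=(1\pm\rho)/\sqrt{1+\rho^{2}}$, $L(\psi)=2\operatorname{artanh}\rho$, and
\[
\sin\psi-\cos\psi\,L(\psi)=\frac{2}{1+\rho^{2}}\bigl(\rho-(1-\rho^{2})\operatorname{artanh}\rho\bigr),
\]
which is exactly the quantity in the paper's formula for $\delta'(\rho)$. The problem is that the global inequality, which is the whole content of the theorem, is never established. You only describe a verification strategy, and the optimality of $\frac15$ is likewise only asserted.

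This is not a formality. $F''$ vanishes to order six at $\psi=0$, while its individual terms are of order $\psi^{2}$. So any polynomial or trigonometric lower bound must reproduce every cancellation through order five exactly. At $\psi=\frac12$, using $L\geq\psi+\psi^{3}/6+\psi^{5}/24$, the resulting lower bound for $F''$ is about $4\cdot10^{-4}$, against terms of size about $0.13$. Your suggested crude bound near the endpoint actually fails. With $L\geq\psi+\psi^{3}/6$, the resulting lower bound for $F''$ equals about $1.217-1.367<0$ at $\psi=\pi/2$, even using the exact value of the derivative of the $\xi$-terms. Also, $F'$ itself is only about $0.02$ at $\psi=1.2$, so monotone bounds over long subintervals will not close the argument either.

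\textbf{The missing idea: work in $\rho$.} In that variable everything becomes exact.

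First,
$\rho-(1-\rho^{2})\operatorname{artanh}\rho=\int_{0}^{\rho}\frac{\rho^{2}-\sigma^{2}}{1-\sigma^{2}}\,d\sigma=\sum_{k\geq1}\frac{2\rho^{2k+1}}{(2k-1)(2k+1)}\geq\frac23\rho^{3}+\frac{2}{15}\rho^{5}$.
This series has nonnegative coefficients, so you get a lower bound for $F'$ directly. In $\psi$, the logarithm enters $F'$ with the negative factor $-\cos\psi$, which is what forced you to differentiate again.

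Second, set $y=\sec(\psi/2)=\sqrt{1+\rho^{2}}$ and $D=\left\Vert g-1\right\Vert_{2}^{2}=2\xi=2(1-1/y)$. The identity $\rho^{2}-2(y-1)=(y-1)^{2}$ shows that the $\frac23\rho^{3}$ term cancels the derivative of $\frac23D^{2}$ up to a multiple of $\rho(y-1)^{2}$. The identity $\rho^{4}=(y-1)^{2}(y+1)^{2}$ turns the $\frac{2}{15}\rho^{5}$ term into a multiple of the same factor. That factor also appears in $(D^{3})'=24\rho(y-1)^{2}/y^{5}$. Together these give
$\frac{d}{d\rho}\bigl(\delta-\frac23D^{2}\bigr)\geq\frac{y(5+(y+1)^{2})}{45}\,\frac{d}{d\rho}D^{3}\geq\frac15\,\frac{d}{d\rho}D^{3}$,
strictly for $\rho>0$, with no numerics. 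The neglected part of the series is $O(\rho^{7})$ and the ratio tends to $\frac15$ as $\rho\to0$, so the same identity also proves that both constants are sharp.
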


Our proof of Theorem \ref{Ttwopoint} is an integral argument. It is based on an exact formula for the derivative of the deficit. The sixth-order term comes from a positive integral. This sixth-order term gives the strictness, the nonattainment and the quantitative control \eqref{minimizingintro} on $C_{4}$. We identify $C_{4}$ with the product $\{-1,1\}^{2}$. Then the energy is the sum of two two-point energies, and the entropy satisfies a chain rule. We split the deficit of an arbitrary function on $C_{4}$ along its slices, and we keep all the remainder terms. This gives the exact decomposition
\begin{equation*}
\Delta _{4}(u)-\frac{1}{3}d_{4}(u)^{4}=\text{(sum of five explicit
nonnegative terms)},
\end{equation*}
see Proposition \ref{Pexact4}. Two of these terms are two-point remainders, and Theorem \ref{Ttwopoint} shows that they are nonnegative. This decomposition explains the value $\frac{1}{3}=\frac{1}{2}\cdot \frac{2}{3}$. Indeed, the two-point constant $\frac{2}{3}$ is split equally between the two coordinate directions. The product functions $g(s)g(t)$ show that the constant is optimal. The same slice argument, combined with induction on the dimension, gives the sharp constant $\frac{2}{3d}$ in the quartic stability estimate for Gross's inequality on the discrete cube $\{-1,1\}^{d}$, see Proposition \ref{Rcube}. For even $d$, the cube is the product of $\frac{d}{2}$ copies of $C_{4}$, and the product estimate of Xie and Zhang \cite{XZ26} gives the constant $\frac{1}{6d}$. So the constant $\frac{2}{3d}$ gives the sharp form of their product estimate on the cube.

Now we explain the two remaining tools, Theorems \ref{Tmajorantintro} and \ref{Tmomentintro}. Together with Theorem \ref{Tcubic}, they give Theorem \ref{Tlog} and the lower bounds in Theorem \ref{Cseparation} for the cycles with at least five vertices. The only exception is the word-length energy on $C_{5}$, see Remark \ref{Rfive}. For these models, there is no product structure. We use the exact identity
\begin{equation*}
\Delta _{n}(u)=2\mathcal{F}_{n}(u)+\left\langle H(u)\right\rangle ,\qquad
H(w)=\frac{2}{3}\left( w-1\right) ^{2}\left( w+2\right) +\left(
w^{2}-1\right) -w^{2}\log w^{2}\geq 0,
\end{equation*}
where $H$ is the remainder of the cubic majorant of \cite{FI26}. Then we estimate the two terms separately. For the scalar term, we prove the following result. It holds on an arbitrary probability space.

\begin{theorem}[Integrated scalar remainder]
\label{Tmajorantintro}Let $u\geq 0$ on a probability space with $\left\langle u^{2}\right\rangle =1$. Then
\begin{equation*}
\left\langle H(u)\right\rangle \geq \frac{1}{6}\left\Vert u-1\right\Vert _{2}^{4}+\frac{1}{45}
\left\Vert u-1\right\Vert _{2}^{6}.
\end{equation*}
The constant $\frac{1}{6}$ is the best constant that holds on all probability spaces.
\end{theorem}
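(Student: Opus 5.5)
The plan is to reduce the statement to a one-variable inequality by a Jensen/tangent-line argument that also uses the constraint $\langle u^{2}\rangle=1$. Write $t=u-1\in[-1,\infty)$ and $s=d^{2}=\langle t^{2}\rangle$. Since $\langle u^{2}\rangle=1$, we have $\langle t^{2}+2t\rangle=0$, so $\langle t\rangle=-s/2$ and $s\in[0,2]$.

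A Taylor expansion at $w=1$ gives
\begin{equation*}
H(1+t)=\frac{t^{4}}{6}-\frac{t^{5}}{15}+O(t^{6}).
\end{equation*}
So the naive pointwise bound $H(w)\geq \frac{1}{6}(w-1)^{4}+\frac{1}{45}(w-1)^{6}$, followed by Jensen's inequality $\langle t^{4}\rangle\geq s^{2}$, $\langle t^{6}\rangle\geq s^{3}$, fails for moderate $t>0$. The fifth-order term has to be absorbed using the constraint $\langle t^{2}+2t\rangle=0$.

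Set $\varphi(s)=\frac{1}{6}s^{2}+\frac{1}{45}s^{3}$, which is convex on $[0,\infty)$. For each fixed $s_{0}\in(0,2]$ I will look for a multiplier $m=m(s_{0})$ such that, for all $t\geq-1$,
\begin{equation}
H(1+t)\geq \varphi(s_{0})+\varphi'(s_{0})\,(t^{2}-s_{0})+m\,(t^{2}+2t). \label{tangentplan}
\end{equation}
Averaging \eqref{tangentplan} with $s_{0}=\langle t^{2}\rangle$ kills both the tangent term and the multiplier term, and gives $\langle H(u)\rangle\geq\varphi(d^{2})$. This is exactly the statement.

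The choice of $m$ is dictated by the requirement that the right-hand side of \eqref{tangentplan} touch $H(1+t)$ from below. To choose it, I will first identify the candidate extremal distributions. By a standard extreme-point argument (two moment constraints, namely $\langle t^{2}\rangle=s_{0}$ and $\langle t^{2}+2t\rangle=0$, plus normalization), the minimum of $\langle H(1+t)\rangle$ is attained on measures supported on at most three points. Heuristically these are two points $t_{-}<0<t_{+}$ with $t_{\pm}\approx\pm\sqrt{s_{0}}$. The multiplier $m$ is then the Lagrange multiplier of that problem, and I expect $m=O(s_{0}^{3/2})$, which is of the size needed to cancel the $-t^{5}/15$ term near $t\approx\pm\sqrt{s_{0}}$.

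Concretely, I would first try $m=c\,s_{0}^{3/2}$ with an explicit constant $c$. I would then verify \eqref{tangentplan} separately on three ranges:
\begin{itemize}
\item $t\in[-1,0]$, where $H$ dominates its quartic Taylor part because the odd terms help;
\item $t\in[0,T]$, via a polynomial lower bound for $-\log(1+t)$;
\item $t\geq T$, where $H(1+t)\sim\frac{2}{3}t^{3}$ beats the quadratic right-hand side.
\end{itemize}
If this explicit verification becomes unwieldy, the fallback is to prove the inequality directly on the finitely supported extremal measures. That is a two- or three-variable calculus problem, handled by the same range splitting.

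Sharpness of $\frac{1}{6}$ uses the two-point space with $u=1+\varepsilon$ and $u=1-\eta$, each with probability $\frac{1}{2}$, where $\eta=\varepsilon+O(\varepsilon^{2})$ is fixed by the constraint $\langle u^{2}\rangle=1$. Then
\begin{equation*}
\langle H(u)\rangle=\frac{\varepsilon^{4}}{6}+O(\varepsilon^{5}),\qquad d^{4}=\varepsilon^{4}+O(\varepsilon^{5}),
\end{equation*}
so the ratio $\langle H(u)\rangle/d^{4}$ tends to $\frac{1}{6}$.

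The main obstacle is the global verification of \eqref{tangentplan}, uniformly in $s_{0}\in(0,2]$. The sixth-order constant $\frac{1}{45}$ leaves little room, and the large-$s_{0}$ regime (close to $s_{0}=2$, i.e. mean close to $0$) must be handled without Taylor expansions.
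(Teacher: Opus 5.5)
\paragraph{Gap: the tangent-line inequality is neither built nor checked.} Your framework is legitimate, but the proposal stops where the proof has to begin. You never produce the multiplier $m(s_0)$, and you never verify the pointwise inequality
\[
H(1+t)\ge \varphi(s_0)+\varphi'(s_0)(t^{2}-s_0)+m\,(t^{2}+2t),\qquad t\ge -1 .
\]
This inequality \emph{is} the theorem. By convex duality for the single constraint $\langle t^{2}+2t\rangle=0$, such an $m$ exists for every $s_0\in(0,2]$ precisely because the theorem is true. So the reduction by itself gives nothing.

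The concrete ansatz you propose also fails. Let $G(t)$ be the left side minus the right side. At $t=\pm\sqrt{s_0}$ the tangent term vanishes, and $H(1+t)=\frac{t^{4}}{6}-\frac{t^{5}}{15}+O(t^{6})$ gives
\[
G(\pm\sqrt{s_0})=\mp\Big(\frac{s_0^{5/2}}{15}+2m\sqrt{s_0}\Big)-m s_0+O(s_0^{3}).
\]
Requiring both values to be nonnegative forces first $|m|=O(s_0^{2})$ and then $m=-\frac{s_0^{2}}{30}+O(s_0^{5/2})$. So the multiplier is negative and of order $s_0^{2}$, not $s_0^{3/2}$: the term $2mt$ must balance $\frac{t^{5}}{15}$ at $t\sim\sqrt{s_0}$. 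With $m=c\,s_0^{3/2}$, the term $\mp 2cs_0^{2}$ makes $G(\sqrt{s_0})<0$ if $c\ge 0$, and $G(-\sqrt{s_0})<0$ if $c<0$, for all small $s_0$.

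The three-range verification in $t$ (uniform in $s_0$ up to $2$) and the extreme-point fallback are only announced. As written, only the sharpness of $\frac16$ is actually proved; your two-point example is fine and equivalent to the paper's $g_\varepsilon$.

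\paragraph{The missing idea, which is the paper's proof.} What makes the certificate explicit is the integral form \eqref{Hintegral},
\[
H(1+t)=\frac{2}{3}t^{4}\int_{0}^{1}\frac{(1-\tau)^{3}}{(1+\tau t)^{2}}\,d\tau ,
\]
together with the fact that $\langle(1+\tau t)^{2}\rangle=1-\tau(1-\tau)s$ depends only on $s$. In your language, fix $\tau$ and $c$. Expand $\big(\frac{t^{2}}{1+\tau t}-c(1+\tau t)\big)^{2}\ge 0$ and use $(1+\tau t)^{2}=1-\tau(1-\tau)t^{2}+\tau(t^{2}+2t)$. This gives
\[
\frac{t^{4}}{(1+\tau t)^{2}}\ge 2ct^{2}-c^{2}\big(1-\tau(1-\tau)t^{2}\big)-c^{2}\tau\,(t^{2}+2t),\qquad t\ge -1 .
\]
Now take $c=s_0/(1-\tau(1-\tau)s_0)$ and integrate against $\frac{2}{3}(1-\tau)^{3}d\tau$. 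The result is a certificate of exactly your form, valid for all $t\ge -1$ and all $s_0\in(0,2]$ at once. The difference is that it is tangent to $\Phi(s)=s^{2}\Theta(s)$ from \eqref{majorant} rather than to $\varphi$, and its multiplier is
\[
m(s_0)=-\frac{2}{3}s_0^{2}\int_{0}^{1}\frac{\tau(1-\tau)^{3}}{\big(1-\tau(1-\tau)s_0\big)^{2}}\,d\tau=-\frac{s_0^{2}}{30}+O(s_0^{3}).
\]
Averaging this at $s_0=d^{2}$ is precisely the paper's Cauchy--Schwarz-with-remainder step for $q_\tau$ and $\omega_\tau$, followed by Tonelli. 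No splitting in $t$ or in $s_0$ is needed. The bound $\Phi(s)\ge\frac{1}{6}s^{2}+\frac{1}{45}s^{3}$ then follows from $\frac{1}{1-\beta}=1+\beta+\frac{\beta^{2}}{1-\beta}$ and the Beta integral $\int_{0}^{1}\tau(1-\tau)^{4}\,d\tau=\frac{1}{30}$.
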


No pointwise inequality of the form $H(w)\geq c\left( w-1\right) ^{4}$ with $c>0$ can hold for all $w\geq 0$. Indeed, $H(w)\sim \frac{2}{3}w^{3}$ grows only cubically as $w\rightarrow \infty $. Xie and Zhang \cite[Lemma 12]{XZ26} proved the pointwise bound $H(w)\geq \frac{\left( w-1\right) ^{4}}{6\max \{1,w^{2}\}}$. They combined this bound with the Cauchy--Schwarz inequality and $\left\langle \max \{1,u^{2}\}\right\rangle \leq 1+\left\langle u^{2}\right\rangle =2$, and they obtained $\left\langle H(u)\right\rangle \geq \frac{1}{12}\left\Vert u-1\right\Vert _{2}^{4}$. This is where the constant $\frac{1}{12}$ in Theorem \ref{TB} comes from. The idea of Theorem \ref{Tmajorantintro} is to integrate the exact Taylor remainder of $H$ along the segment from $1$ to $u$. Then we use the normalization $\left\langle u^{2}\right\rangle =1$ through the identity $\left\langle \left( 1-\tau +\tau u\right) ^{2}\right\rangle =1-\tau \left( 1-\tau \right) \left\Vert u-1\right\Vert _{2}^{2}$. This doubles the constant. Together with $2\mathcal{F}_{n}(u)\geq \frac{1}{6}d_{n}(u)^{4}$ from Theorem \ref{Tcubic}, we obtain \eqref{log4}. So the constant $\frac{1}{3}$ is the sum of two equal contributions.

For the word-length energy, we cannot apply the argument of Theorem \ref{Tcubic}. The reason is that the eigenvalues $\psi _{n}(k)$ grow only linearly. So we cannot control the supremum norm of $z$ uniformly in $n$. We refer to \cite[Section 1.1]{XZ26} for this point. Following Xie and Zhang, we estimate the cubic mean directly through the Fourier resonances. The following theorem improves their moment estimate $\left\vert \left\langle f^{3}\right\rangle \right\vert \leq 3\left\Vert f\right\Vert _{2}^{2}\big( \mathcal{E}_{\mathrm{w},n}(f)-\left\Vert f\right\Vert _{2}^{2}\big) ^{1/2}$.

\begin{theorem}[An improved third-moment estimate]
\label{Tmomentintro}Let $n\geq 6$ be finite and let $f:C_{n}\rightarrow \mathbb{R}$ with $\langle f\rangle =0$. Then
\begin{equation*}
\left\vert \left\langle f^{3}\right\rangle \right\vert ^{2}\leq \frac{383}{64
}\left\Vert f\right\Vert _{2}^{4}\left( \mathcal{E}_{\mathrm{w},n}(f)-\left\Vert
f\right\Vert _{2}^{2}\right) .
\end{equation*}
\end{theorem}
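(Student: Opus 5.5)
We expand $f$ in Fourier series and write $\langle f^{3}\rangle$ as a sum over resonant triples. Since $f$ is real with $\langle f\rangle =0$, we have $\widehat f(0)=0$ and $\widehat f(-k)=\overline{\widehat f(k)}$, so
\begin{equation*}
\langle f^{3}\rangle =\sum_{k+l+m\equiv 0\ (\mathrm{mod}\ n)}\widehat f(k)\widehat f(l)\widehat f(m),
\end{equation*}
where all indices are nonzero. Set $a_{k}=|\widehat f(k)|$. Then $\|f\|_{2}^{2}=\sum_{k}a_{k}^{2}$ and
\begin{equation*}
\mathcal{E}_{\mathrm{w},n}(f)-\|f\|_{2}^{2}=\sum_{k}(\psi_{n}(k)-1)a_{k}^{2}=:R .
\end{equation*}
The key structural observation is that a resonant triple $k+l+m\equiv 0$ cannot consist only of frequencies $\pm 1$ when $n\ge 4$ (the sums are $\pm1,\pm3$). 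Hence every term contains at least one index $m$ with $\psi_{n}(m)\ge 2$, that is, with weight $\psi_n(m)-1\geq 1$. This is why the right-hand side involves $R$ and not $\mathcal{E}_{\mathrm{w},n}(f)$. We also need $n\ge 6$ so that $\pm 2$ and $\pm 3$ are distinct from $\mp 4$, $\mp3$, etc. This avoids extra wrap-around coincidences that would change the counting.

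The plan is to split the triples according to how many indices lie in $\{\pm1\}$. We write $f=v+z$, with $v$ the first-harmonic part and $z$ the rest, and $\|f\|_2^2=\|v\|_2^2+\|z\|_2^2$. Then
\begin{equation*}
\langle f^{3}\rangle = 3\langle v^{2}z\rangle +3\langle vz^{2}\rangle +\langle z^{3}\rangle ,
\end{equation*}
since $\langle v^{3}\rangle =0$. Each piece is bounded separately.

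\emph{The term $3\langle v^2 z\rangle$.} Only $\widehat z(\pm2)$ contributes, so $|\langle v^2z\rangle|=2|\Re(\widehat f(1)^2\overline{\widehat f(2)})|\le 2a_1^2a_2$. This is controlled by $\|v\|_2^2\,(\|z\|_2^2)^{1/2}$ with a sharp constant, using $a_2^2\le R$.

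\emph{The term $3\langle vz^2\rangle$.} Here $\langle vz^{2}\rangle =2\Re\sum_{k}\widehat f(1)\widehat z(k)\overline{\widehat z(k+1)}$. Among any two consecutive frequencies, at least one has $\psi_n-1\ge \psi_n/2$-type weight. A weighted Cauchy--Schwarz, with weights $\psi_n(k)-1$ placed on the larger-word-length factor, gives a bound $\le C a_1\|z\|_2 R^{1/2}$.

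\emph{The term $\langle z^3\rangle$.} We bound it by the Xie--Zhang type estimate $|\langle z^3\rangle|\le C\|z\|_2^2R_z^{1/2}$. The general principle is that in each resonant triple the largest $|k|$ carries weight at least comparable to the sum of the other two, and Cauchy--Schwarz over that index applies.

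Finally, we combine the three bounds as
\begin{equation*}
|\langle f^3\rangle|\le R^{1/2}\bigl(\alpha\, a_1^2+\beta\, a_1 \|z\|_2+\gamma \|z\|_2^2\bigr).
\end{equation*}
Writing $\|v\|_2^2=2a_1^2=s$ and $\|z\|_2^2=t$ with $s+t=\|f\|_2^2$, we maximize the quadratic form in $(\sqrt s,\sqrt t)$ under the constraint $s+t=\|f\|_2^2$. Its largest eigenvalue squared is meant to equal $383/64$. It is the constants $\alpha,\beta,\gamma$, each optimized separately rather than the crude factor $3$, that make $383/64<9$.

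The main obstacle is getting sharp enough constants in the mixed term $\langle vz^2\rangle$ and in $\langle z^3\rangle$. A naive Cauchy--Schwarz loses factors of $2$, because pairs $(k,k+1)$ are counted twice. My first attempt would be to assign each resonant pair or triple to its index of maximal $\psi_n$ and use $\psi_n(\max)-1\ge \tfrac12(\psi_n(k)+\psi_n(l)) -\tfrac12$. I would check the wrap-around cases near $n/2$ separately, using $n\ge 6$. I would then tune the weight split in the final quadratic optimization until the constant $383/64$ appears.
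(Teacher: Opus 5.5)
There is a genuine gap, and it lies in the structure of the argument, not only in the constants you leave unspecified. You bound the three pieces separately and then add the bounds. So you need constants $A,B,C$ with
\[
3|\langle v^{2}z\rangle|\le A\|v\|_{2}^{2}\sqrt{R},\qquad 3|\langle vz^{2}\rangle|\le B\|v\|_{2}\|z\|_{2}\sqrt{R},\qquad |\langle z^{3}\rangle|\le C\|z\|_{2}^{2}\sqrt{R},
\]
and your final constant is $K=\max_{s+t=1}(As+B\sqrt{st}+Ct)$, where $s=\|v\|_2^2$ and $t=\|z\|_2^2$. Take $n\ge 13$ and write $c_k=\widehat f(k)$; each test function below is chosen with the relevant product of the $c_k$ real and positive.
\begin{itemize}
\item Frequencies $\pm1,\pm2$ give $3\langle v^2z\rangle=6\Re(c_1^2\overline{c_2})$ and $R=2|c_2|^2$. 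This forces $A\ge 3/\sqrt2$.
\item Frequencies $\pm1,\pm2,\pm3$ give $3\langle vz^2\rangle=12\Re(c_1c_2\overline{c_3})$ and $R=2|c_2|^2+4|c_3|^2$. The choice $|c_3|^2=|c_2|^2/\sqrt2$ forces $B\ge 6-3\sqrt2$.
\item Frequencies $\pm2,\pm4$ give $\langle z^3\rangle=6\Re(c_2^2\overline{c_4})$ and $R=2|c_2|^2+6|c_4|^2$. The choice $|c_4|^2=|c_2|^2/3$ forces $C\ge 3\sqrt3/8$.
\end{itemize}
Evaluating at $s=4t$ gives
\[
K\ge \frac{4A+2B+C}{5}\ge \frac{12}{5}+\frac{3\sqrt3}{40}>2.52,\qquad K^2>6.35>\frac{383}{64}.
\]
So no tuning of the three separate estimates can give the theorem, or even the threshold $6$. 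The loss comes from the triangle inequality across the pieces. Each piece is charged against all of $R$, although the three pieces are extremized by different configurations.

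The paper instead applies a single weighted Cauchy--Schwarz inequality to all resonant triples at once. Normalize the amplitudes so that $\|f\|_2^2=\sum a_k^2$ and $R=\sum_{k\ge2}(k-1)a_k^2$. After taking absolute values, each resonant product is attributed to exactly one factor $a_k$ with $k\ge2$. For example, $a_1^2a_2$ goes to $a_2$, $a_1a_{k-1}a_k$ goes to $a_k$, and on odd cycles $n=2m+1$ the wrap-around term $a_1a_m^2$ goes to $a_m$. This gives $|\langle f^3\rangle|\le\sum_{k\ge2}a_kX_k$, hence $|\langle f^3\rangle|^2\le R\sum_{k\ge2}X_k^2/(k-1)$, so the contributions combine in quadrature.

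The quartic form is then bounded by counting: each ordered pair $(i,j)$ enters at most one of the sums. This gives $\tfrac{47}{8}s^2$ for odd $n\ge7$. For $n=2m$, after recovering the pairs with $i+j=m$, it gives $(5+\tfrac{4}{m-1})s^2$. This factor is at least $6$ for $n=6,8,10$, and these cycles need separate arguments: explicit sums of squares for $n=6,8$, and a three-case analysis for $n=10$. The worst case there, $\tfrac{6125}{1024}$, is where $\tfrac{383}{64}=\tfrac{6128}{1024}$ comes from. Your plan to tune weights until $\tfrac{383}{64}$ appears has no mechanism for any of this.

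Your reason for needing $n\ge6$ is also off, since on $C_6$ one has $2\equiv-4$ and $3\equiv-3$. The real obstruction on $C_5$ is the resonance $1+2+2\equiv0$: its term $a_1a_2^2$ can only be charged to frequency $2$, which has weight $1$.
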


The main point is that the squared coefficient $\frac{383}{64}$ is strictly smaller than six. We will see that, in our method, the value six is exactly the threshold for the constant $\frac{1}{3}$. We will also see that the strict margin $\frac{1}{64}$ gives the lower bound \eqref{sepw} for $n\geq 6$, which does not depend on $n$. For this, we combine the margin with the positivity bound on the first harmonic. The improvement comes from a more careful assignment of the resonant terms before we apply the weighted Cauchy--Schwarz inequality. The six-, eight- and ten-cycles need a separate treatment.

\begin{remark}
\label{Rfive}The word-length energy on the five-cycle needs a separate argument. The reason is that the estimate of Theorem \ref{Tmomentintro} fails on $C_{5}$ because of the resonance $1+2+2=5$, see Example \ref{Exfive}. On $C_{5}$, we use a Taylor expansion of $w^{2}\log w^{2}$ at a point which depends on the mean of $u$. This gives the bound $\Delta _{\mathrm{w},5}(u)\geq \frac{503}{1500}d_{5}(u)^{4}$, see Proposition \ref{Pfive} in Appendix A.
\end{remark}

The paper is organized as follows. In Section 2, we collect the Fourier decomposition, the bound for the first harmonic in Lemma \ref{Lpositivity}, the exact formulas for the deficits, and the spectral and interaction estimates from \cite{FI26}. In Section 3, we prove Theorem \ref{Tcubic}. In Section 4, we prove Theorem \ref{Tmajorantintro}, the estimate \eqref{log4}, the optimality of the power four and the lower bound \eqref{sepnn}. In Section 5, we prove Theorem \ref{Ttwopoint}, the four-cycle part of Theorem \ref{Tlog} including \eqref{sharp13} and \eqref{minimizingintro}, and the sharp estimate on the discrete cube (Proposition \ref{Rcube}). In Section 6, we prove Theorem \ref{Tmomentintro}, the word-length part of Theorem \ref{Tlog}, the lower bound \eqref{sepw} and the upper bounds \eqref{uppnn} and \eqref{uppw}. In Section \ref{Sproofs}, we put all these results together and prove Theorems \ref{Tlog} and \ref{Cseparation}. In Appendix A, we treat the word-length energy on the five-cycle.
\section{Preliminaries}

In this section, we fix the notation. We also collect the facts about the Fourier decomposition that we use throughout the paper. Most of them are contained in \cite{FI26}. We recall them with complete proofs in our normalization, so that the paper is self-contained.

\subsection{Fourier analysis on the cycle}

For finite $n$, the characters $\chi _{k}(j)=e^{2\pi ijk/n}$, $0\leq k<n$, satisfy
\begin{equation*}
\left\langle \chi _{k}\overline{\chi _{l}}\right\rangle =\frac{1}{n}
\sum_{j=0}^{n-1}e^{2\pi ij(k-l)/n}=\left\{
\begin{array}{ll}
1, & k\equiv l\ (\mathrm{mod}\ n), \\
0, & \text{otherwise,}
\end{array}
\right.
\end{equation*}
because, for $k\not\equiv l$, the sum is a geometric sum with ratio $\omega =e^{2\pi i(k-l)/n}\neq 1$ and $\omega ^{n}=1$. So $\sum_{j=0}^{n-1}\omega ^{j}=\frac{\omega ^{n}-1}{\omega -1}=0$. Therefore, $\{\chi _{k}\}_{0\leq k<n}$ is an orthonormal basis of $L^{2}(C_{n})$. Every $f$ can be written as $f=\sum_{k=0}^{n-1}\widehat{f}(k)\chi _{k}$ with $\widehat{f}(k)=\left\langle f\overline{\chi _{k}}\right\rangle $. Parseval's identity is $\left\langle fg\right\rangle =\sum_{k}\widehat{f}(k)\overline{\widehat{g}(k)}$ for real $f,g$. In particular, $\left\Vert f\right\Vert _{2}^{2}=\sum_{k}\left\vert \widehat{f}(k)\right\vert ^{2}$. We will use the following consequence many times: \emph{the mean of a product of characters $\chi _{k_{1}}\chi _{k_{2}}\cdots \chi _{k_{m}}=\chi _{k_{1}+\cdots +k_{m}}$ is one if $k_{1}+\cdots +k_{m}\equiv 0$ modulo $n$, and it is zero otherwise.}

The characters diagonalize the energy \eqref{energy}. To see this, we use the graph Laplacian $\mathcal{L}f_{j}=2f_{j}-f_{j-1}-f_{j+1}$. We use a summation by parts. This is only a change of the summation index, and it is allowed because the indices are taken modulo $n$. We get
\begin{multline*}
\sum_{j=0}^{n-1}\left( f_{j+1}-f_{j}\right) ^{2}=\sum_{j}f_{j+1}^{2}+
\sum_{j}f_{j}^{2}-2\sum_{j}f_{j}f_{j+1} \\
=2\sum_{j}f_{j}^{2}-\sum_{j}f_{j}\left(
f_{j+1}+f_{j-1}\right) =\sum_{j}f_{j}\,\mathcal{L}f_{j}
\end{multline*}
and $\mathcal{L}\chi _{k}(j)=\left( 2-e^{2\pi ik/n}-e^{-2\pi ik/n}\right) \chi _{k}(j)=2\left( 1-\cos \frac{2\pi k}{n}\right) \chi _{k}(j)$. Therefore, by Parseval's identity,
\begin{equation}
\begin{split}
\mathcal{E}_{n}(f)& =\frac{1}{\mu _{n}}\left\langle f\,\mathcal{L}f\right\rangle =\sum_{k=0}^{n-1}\ell _{n}(k)\left\vert \widehat{f}(k)\right\vert ^{2}, \\
\ell _{n}(k)& =\frac{2\left( 1-\cos (2\pi k/n)\right) }{\mu _{n}}=\frac{1-\cos (2\pi k/n)}{1-\cos (2\pi /n)}.
\end{split}
\label{eigen}
\end{equation}
Note that $\ell _{n}(0)=0$ and $\ell _{n}(\pm 1)=1$. This is the reason for the normalization \eqref{energy}. With this normalization, the spectral gap is one. Moreover, $\ell _{n}(k)=\ell _{n}(n-k)$. We also have $\ell _{n}(k)>1$ for $2\leq k\leq n-2$, because $k\mapsto 1-\cos (2\pi k/n)$ is increasing on $0\leq k\leq n/2$. On the circle $\mathbb{T}$, we use $\widehat{f}(k)=\int_{0}^{1}f(x)e^{-2\pi ikx}dx$, $k\in \mathbb{Z}$. Since $\widehat{f^{\prime }}(k)=2\pi ik\widehat{f}(k)$, Parseval's identity gives $\mathcal{E}_{\infty }(f)=\sum_{k\in \mathbb{Z}}k^{2}\left\vert \widehat{f}(k)\right\vert ^{2}$. So $\ell _{\infty }(k)=k^{2}$, and again the spectral gap is one. Since our functions are real, we have $\widehat{f}(-k)=\overline{\widehat{f}(k)}$. In the finite case, the indices are taken modulo $n$.

Let $V_{1,n}$ be the real eigenspace of the eigenvalue one. On a finite cycle, it is the real span of $\cos (2\pi j/n)$ and $\sin (2\pi j/n)$. In the case of the circle, it is the real span of $\cos (2\pi x)$ and $\sin (2\pi x)$ on $\mathbb{T}$. It is two-dimensional. For $u\in \mathcal{A}_{n}$, we write
\begin{equation}
u=a+v+z,\qquad a=\langle u\rangle ,\qquad v=P_{1}u\in V_{1,n},\qquad z\perp
1,V_{1,n},  \label{decomp}
\end{equation}
where $P_{1}$ is the orthogonal projection onto $V_{1,n}$. In terms of Fourier coefficients,
\begin{equation*}
a=\widehat{u}(0),\qquad v=\widehat{u}(1)\chi _{1}+\widehat{u}(-1)\chi
_{-1}=2\Re \left( \widehat{u}(1)\chi _{1}\right) ,\qquad z=\sum_{2\leq
k\leq n-2}\widehat{u}(k)\chi _{k}
\end{equation*}
where the last sum is over $\left\vert k\right\vert \geq 2$ when $n=\infty $. We set
\begin{equation*}
r=\left\Vert v\right\Vert _{2},\qquad t=\left\Vert z\right\Vert _{2},\qquad
s=r^{2}+t^{2},\qquad Q_{n}(z)=\mathcal{E}_{n}(z)-\left\Vert z\right\Vert
_{2}^{2}.
\end{equation*}
By Parseval's identity, $r^{2}=\left\vert \widehat{u}(1)\right\vert ^{2}+\left\vert \widehat{u}(-1)\right\vert ^{2}=2\left\vert \widehat{u}(1)\right\vert ^{2}$, $t^{2}=\sum_{2\leq k\leq n-2}\left\vert \widehat{u}(k)\right\vert ^{2}$, and
\begin{equation*}
Q_{n}(z)=\sum_{2\leq k\leq n-2}\left( \ell _{n}(k)-1\right) \left\vert
\widehat{u}(k)\right\vert ^{2}\geq 0,
\end{equation*}
which we call the \emph{excess energy} of the high frequencies. It measures how much energy $z$ has above the Poincar\'{e} level $\left\Vert z\right\Vert _{2}^{2}$. The functions $a$, $v$ and $z$ are mutually orthogonal, and $\left\Vert u\right\Vert _{2}=1$. So we get
\begin{equation}
s=1-a^{2},\qquad \mathcal{E}_{n}(u)=r^{2}+t^{2}+Q_{n}(z)=s+Q_{n}(z),\qquad
d_{n}(u)^{2}=\left\langle u^{2}\right\rangle -2\langle u\rangle +1=2(1-a).
\label{basic}
\end{equation}
Here we used $\ell _{n}(\pm 1)=1$ for the energy of $v$. By the Cauchy--Schwarz inequality, $a=\left\langle u\cdot 1\right\rangle \leq \left\Vert u\right\Vert _{2}\left\Vert 1\right\Vert _{2}=1$. Moreover, $a>0$ since $u\geq 0$ and $u\neq 0$. So $0<a\leq 1$. By the equality case in the Cauchy--Schwarz inequality, $a=1$ if and only if $u=1$. We will use the formula $d_{n}(u)^{2}=2(1-a)$ many times. It says that \emph{the distance to the constant function is measured by the mean}. We also have
\begin{equation*}
d_{n}(u)^{4}=4\left( 1-a\right) ^{2},\qquad d_{n}(u)^{6}=8\left( 1-a\right)
^{3}.
\end{equation*}

The next lemma is where the nonnegativity of $u$ enters the cubic estimate. The lemma is elementary, but it plays a very important role in this paper.

\begin{lemma}
\label{Lpositivity}Let $u\in \mathcal{A}_{n}$, $n\in \mathcal{I}$. Then
\begin{equation}
r^{2}=2\left\vert \widehat{u}(1)\right\vert ^{2}\leq 2a^{2}.  \label{positivity}
\end{equation}
Moreover, if $n$ is finite, then $a\geq n^{-1/2}$. In particular, $a\geq \frac{1}{2}$ on $C_{4}$ and $a\geq \frac{1}{\sqrt{5}}$ on $C_{5}$.
\end{lemma}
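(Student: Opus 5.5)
The plan is to get both parts directly from the nonnegativity of $u$ and the triangle inequality.

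For the positivity bound \eqref{positivity}, I will use $\widehat{u}(1)=\langle u\,\overline{\chi_{1}}\rangle$ and $|\chi_{1}|=1$ pointwise. Since $u\geq 0$, this gives
\[
\left\vert \widehat{u}(1)\right\vert \leq \langle |u|\,|\chi_{1}|\rangle =\langle u\rangle =a .
\]
The same computation works on $\mathbb{T}$ with the integral in place of the average. Squaring and multiplying by two gives $r^{2}=2|\widehat{u}(1)|^{2}\leq 2a^{2}$. The identity $r^{2}=2|\widehat{u}(1)|^{2}$ was already recorded after \eqref{decomp}, since $\widehat{u}(-1)=\overline{\widehat{u}(1)}$ for real $u$.

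For the lower bound on $a$ when $n$ is finite, I will compare $\langle u^{2}\rangle$ with $\langle u\rangle^{2}$ in the reverse direction, using the fact that the measure is discrete with atoms of mass $1/n$. Because $u\geq 0$, expanding the square gives
\[
\Big(\sum_{j}u_{j}\Big)^{2}=\sum_{j}u_{j}^{2}+\sum_{j\neq k}u_{j}u_{k}\geq \sum_{j}u_{j}^{2}.
\]
Hence
\[
n^{2}a^{2}=\Big(\sum_{j}u_{j}\Big)^{2}\geq \sum_{j}u_{j}^{2}=n\langle u^{2}\rangle =n,
\]
and so $a\geq n^{-1/2}$. Setting $n=4$ and $n=5$ gives $a\geq \frac12$ and $a\geq \frac{1}{\sqrt5}$. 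Equality holds for a function concentrated at a single vertex, although the statement does not require this.

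There is no real obstacle here. The only care needed is to note that nonnegativity is used twice: once to replace $|u|$ by $u$, so that the first Fourier coefficient is bounded by the mean, and once to drop the cross terms in the square of the sum. The bound on $a$ has no analogue on the circle, and none is claimed there.
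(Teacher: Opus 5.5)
Your proposal is correct and follows the paper's proof: the triangle inequality with $|\chi_{1}|=1$ and $u\geq 0$ bounds $|\widehat{u}(1)|$ by $a$ (including the circle case), and dropping the nonnegative cross terms in $\big(\sum_{j}u_{j}\big)^{2}$ gives $n^{2}a^{2}\geq n$. Your remark that equality in $a\geq n^{-1/2}$ is attained by $u=\sqrt{n}\,\delta_{0}$ is also correct, though it is not needed.
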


\begin{proof}
Since $u_{j}\geq 0$ for all $j$ and $\left\vert e^{-2\pi ij/n}\right\vert =1$, the triangle inequality gives
\begin{equation*}
\left\vert \widehat{u}(1)\right\vert =\left\vert \frac{1}{n}
\sum_{j=0}^{n-1}u_{j}e^{-2\pi ij/n}\right\vert \leq \frac{1}{n}
\sum_{j=0}^{n-1}\left\vert u_{j}\right\vert \left\vert e^{-2\pi
ij/n}\right\vert =\frac{1}{n}\sum_{j=0}^{n-1}u_{j}=a.
\end{equation*}
Squaring and multiplying by two, we obtain \eqref{positivity}. On $\mathbb{T}$, the same computation holds with $\left\vert \int_{0}^{1}u(x)e^{-2\pi ix}dx\right\vert \leq \int_{0}^{1}u(x)dx=a$. Now let $n$ be finite. Since all $u_{j}\geq 0$, all the cross terms $2u_{i}u_{j}$ in the expansion of $\left( \sum_{j}u_{j}\right) ^{2}$ are nonnegative. So $\left( \sum_{j}u_{j}\right) ^{2}\geq \sum_{j}u_{j}^{2}=n\left\langle u^{2}\right\rangle =n$. Since $\sum_{j}u_{j}=na$, this gives $n^{2}a^{2}\geq n$, that is, $a\geq n^{-1/2}$. This completes the proof.
\end{proof}

\subsection{Spectral estimates for the high frequencies}

For finite $n$, the two estimates of the next lemma are proved in \cite[Lemma 3]{FI26}. We give the proofs in our normalization, and we add the corresponding result for the circle.

\begin{lemma}
\label{Lspectral}Let $n\geq 4$ be finite and let $z$ be real with $z\perp 1,V_{1,n}$. Then
\begin{equation}
Q_{n}(z)\geq q_{n}\left\Vert z\right\Vert _{2}^{2},\qquad \left\Vert
z\right\Vert _{\infty }^{2}\leq \beta _{n}Q_{n}(z),  \label{spectral}
\end{equation}
where $q_{n}$ is given by \eqref{qn} and
\begin{equation*}
\beta _{n}=\sum_{2\leq k\leq n-2}
\frac{1}{\ell _{n}(k)-1}=\frac{3}{2}-\frac{1}{2}\tan ^{2}\frac{\pi }{n}.
\end{equation*}
In particular,
\begin{equation}
q_{4}=1,\qquad q_{5}=\frac{1+\sqrt{5}}{2}>\frac{3}{2},\qquad q_{n}\geq
2\quad (n\geq 6),\qquad \beta _{n}\leq \frac{3}{2}.  \label{qvalues}
\end{equation}
On the circle, for every real $z\in H^{1}(\mathbb{T})$ with $z\perp 1,V_{1,\infty }$, we have
\begin{equation}
Q_{\infty }(z)\geq 3\left\Vert z\right\Vert _{2}^{2},\qquad \left\Vert
z\right\Vert _{\infty }^{2}\leq \frac{3}{2}Q_{\infty }(z).  \label{spectralcircle}
\end{equation}
\end{lemma}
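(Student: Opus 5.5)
The plan is to work entirely on the Fourier side, using the diagonalization \eqref{eigen}. For $z\perp 1,V_{1,n}$ we have $Q_{n}(z)=\sum_{2\leq k\leq n-2}(\ell _{n}(k)-1)|\widehat{z}(k)|^{2}$.

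\textbf{First inequality.} Since $k\mapsto 1-\cos (2\pi k/n)$ increases on $0\leq k\leq n/2$ and $\ell _{n}(k)=\ell _{n}(n-k)$, the minimum of $\ell _{n}(k)-1$ over $2\leq k\leq n-2$ is attained at $k=2$. Write $\theta =2\pi /n$. Using $1-\cos 2\theta =2\sin ^{2}\theta =2(1-\cos \theta )(1+\cos \theta )$, we get
\[
\ell _{n}(2)=2(1+\cos \theta )=4\cos ^{2}\tfrac{\pi }{n},
\]
so $\ell _{n}(2)-1=q_{n}$. This gives $Q_{n}(z)\geq q_{n}\Vert z\Vert _{2}^{2}$. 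The listed values follow by direct evaluation:
\begin{itemize}
\item $q_{4}=4\cdot \tfrac12-1=1$;
\item $q_{5}=4\cos ^{2}36^{\circ }-1=\tfrac{1+\sqrt5}{2}$;
\item $q_{n}\geq 4\cdot \tfrac34-1=2$ for $n\geq 6$, since $\cos ^{2}(\pi /n)\geq \cos ^{2}(\pi /6)=\tfrac34$.
\end{itemize}

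\textbf{Second inequality.} For every $j$, the triangle inequality and the Cauchy--Schwarz inequality with weights $\ell _{n}(k)-1$ give
\[
|z_{j}|\leq \sum_{k}|\widehat{z}(k)|\leq \Big(\sum_{k}\frac{1}{\ell _{n}(k)-1}\Big)^{1/2}Q_{n}(z)^{1/2}.
\]
Hence $\Vert z\Vert _{\infty }^{2}\leq \beta _{n}Q_{n}(z)$ with $\beta _{n}$ defined as that sum. The real work is the closed form. Since $\ell _{n}(k)-1=(\cos \theta -\cos k\theta )/(1-\cos \theta )$, we need
\[
\beta _{n}=(1-\cos \theta )\sum_{k\neq 0,\pm 1}\frac{1}{\cos \theta -\cos k\theta }.
\]

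\textbf{Computing the sum (main obstacle).} I would start from the factorization $T_{n}(x)-1=2^{n-1}\prod_{k=0}^{n-1}(x-\cos k\theta )$. Logarithmic differentiation gives
\[
S(x):=\sum_{k=0}^{n-1}\frac{1}{x-\cos k\theta }=\frac{nU_{n-1}(x)}{T_{n}(x)-1}.
\]
With $x=\cos \varphi $, this becomes $S=-\dfrac{n\cot (n\varphi /2)}{\sin \varphi }$. The wanted quantity is
\[
\lim_{x\to \cos \theta }\Big(S(x)-\frac{2}{x-\cos \theta }\Big)-\frac{1}{\cos \theta -1}.
\]
Here the factor $2$ accounts for the double root at $k=\pm 1$, and the last term removes $k=0$. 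I would compute this limit by a Laurent expansion in $\varphi -\theta $, using $\cot (n\varphi /2)=\cot (\pi +n(\varphi -\theta )/2)$. Both the $1/(x-\cos \theta )$ term and the constant term must be tracked carefully. Multiplying by $1-\cos \theta $ and simplifying should yield $\tfrac32-\tfrac12\tan ^{2}(\pi /n)$; I expect this bookkeeping to be the only delicate step. As a check, $n=4$ gives $\beta _{4}=1=1/(\ell _{4}(2)-1)$. The bound $\beta _{n}\leq \tfrac32$ is then immediate.

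\textbf{Circle.} Here $\ell _{\infty }(k)=k^{2}$, and $k^{2}-1\geq 3$ for $|k|\geq 2$, which gives $Q_{\infty }(z)\geq 3\Vert z\Vert _{2}^{2}$. For $z\in H^{1}$, the same Cauchy--Schwarz argument shows that the Fourier series of $z$ is absolutely convergent, so $z$ has a continuous representative. It also gives $\Vert z\Vert _{\infty }^{2}\leq \big(\sum_{|k|\geq 2}\tfrac{1}{k^{2}-1}\big)Q_{\infty }(z)$. The sum telescopes: $\sum_{|k|\geq 2}\frac{1}{k^{2}-1}=2\cdot \tfrac12\sum_{k\geq 2}\big(\tfrac{1}{k-1}-\tfrac{1}{k+1}\big)=\tfrac32$.
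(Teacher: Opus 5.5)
Your treatment of $Q_{n}(z)\geq q_{n}\Vert z\Vert_{2}^{2}$, the values of $q_{n}$, the weighted Cauchy--Schwarz bound for $\Vert z\Vert_{\infty}$, and the circle case is the same as in the paper. The only genuinely different step is the closed form of $\beta_{n}$.

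The paper writes $\ell_{n}(k)-1=\sin\frac{(k-1)\pi}{n}\sin\frac{(k+1)\pi}{n}/\sin^{2}\frac{\pi}{n}$. It then uses $\cot x-\cot y=\sin(y-x)/(\sin x\sin y)$ to get
\[
\frac{1}{\ell_{n}(k)-1}=\frac{1}{2}\tan\frac{\pi}{n}\Bigl(\cot\frac{(k-1)\pi}{n}-\cot\frac{(k+1)\pi}{n}\Bigr),
\]
so the sum over $2\leq k\leq n-2$ telescopes to $1+\tan\frac{\pi}{n}\cot\frac{2\pi}{n}$.

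You instead compute the full resolvent trace $\sum_{k}(x-\cos k\theta)^{-1}=-n\cot(n\varphi/2)/\sin\varphi$ via $T_{n}-1$, and then strip off the poles. Your formulas are right, and the step you left as ``should yield'' closes quickly. With $\varphi=\theta+h$ you have $\cot(n\varphi/2)=\cot(nh/2)=\frac{2}{nh}-\frac{nh}{6}+O(h^{3})$, which has no constant term. Hence
\[
S=-\frac{2}{h\sin\theta}+\frac{2\cot\theta}{\sin\theta}+O(h),\qquad \frac{2}{x-\cos\theta}=-\frac{2}{h\sin\theta}+\frac{\cot\theta}{\sin\theta}+O(h).
\]
The limit is therefore $\cos\theta/\sin^{2}\theta$. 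Adding $1/(1-\cos\theta)$ and multiplying by $1-\cos\theta$ gives $\beta_{n}=1+\frac{\cos\theta}{1+\cos\theta}$. This is exactly the paper's intermediate expression $1+\tan\frac{\pi}{n}\cot\frac{2\pi}{n}$. It equals $\frac{3}{2}-\frac{1}{2}\tan^{2}\frac{\pi}{n}$, since $1+\cos\theta=2\cos^{2}\frac{\pi}{n}$.

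The two routes trade off as follows:
\begin{itemize}
\item The paper's telescoping is finite and purely algebraic, with no limit to take, but it depends on spotting the cotangent-difference identity.
\item Your route is more systematic: it evaluates $\sum_{k}(x-\cos k\theta)^{-1}$ for every $x$, so shifted sums such as $\sum_{k}(\ell_{n}(k)-\lambda)^{-1}$ follow from the same formula. The cost is a Laurent expansion at a double root.
\end{itemize}
In a write-up you should include that expansion explicitly rather than leave it as expected; as it stands, the closed form is only sketched.
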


\begin{proof}
\emph{The lower bound.} We have $\ell _{n}(n-k)=\ell _{n}(k)$, and $\ell _{n}(k)$ is increasing in $k$ for $0\leq k\leq n/2$. So the smallest value of $\ell _{n}(k)-1$ over $2\leq k\leq n-2$ is attained at $k=2$ and at $k=n-2$. Therefore,
\begin{equation*}
Q_{n}(z)=\sum_{2\leq k\leq n-2}\left( \ell _{n}(k)-1\right) \left\vert
\widehat{z}(k)\right\vert ^{2}\geq \left( \ell _{n}(2)-1\right) \sum_{2\leq
k\leq n-2}\left\vert \widehat{z}(k)\right\vert ^{2}=\left( \ell
_{n}(2)-1\right) \left\Vert z\right\Vert _{2}^{2}.
\end{equation*}
To compute $\ell _{n}(2)$, we use $1-\cos 2\theta =2\sin ^{2}\theta $ with $\theta =\frac{2\pi }{n}$ in the numerator and $\theta =\frac{\pi }{n}$ in the denominator. Then we use $\sin 2\theta =2\sin \theta \cos \theta $. We obtain
\begin{equation*}
\ell _{n}(2)=\frac{1-\cos \frac{4\pi }{n}}{1-\cos \frac{2\pi }{n}}=\frac{2\sin
^{2}\frac{2\pi }{n}}{2\sin ^{2}\frac{\pi }{n}}=\frac{4\sin ^{2}\frac{\pi }{n}
\cos ^{2}\frac{\pi }{n}}{\sin ^{2}\frac{\pi }{n}}=4\cos ^{2}\frac{\pi }{n}.
\end{equation*}
So $q_{n}=4\cos ^{2}\frac{\pi }{n}-1$. For $n=4$, we have $\cos ^{2}\frac{\pi }{4}=\frac{1}{2}$ and $q_{4}=1$. For $n=5$, we have $\cos \frac{\pi }{5}=\frac{1+\sqrt{5}}{4}$. Therefore, $\cos ^{2}\frac{\pi }{5}=\frac{6+2\sqrt{5}}{16}=\frac{3+\sqrt{5}}{8}$ and $q_{5}=\frac{3+\sqrt{5}}{2}-1=\frac{1+\sqrt{5}}{2}\approx 1.618>\frac{3}{2}$. For $n=6$, we have $\cos ^{2}\frac{\pi }{6}=\frac{3}{4}$ and $q_{6}=2$. Since $\cos ^{2}\frac{\pi }{n}$ increases with $n$, we get $q_{n}\geq 2$ for all $n\geq 6$.

\emph{The supremum bound.} Let $j$ be arbitrary. We use the Fourier expansion of $z$ and the triangle inequality. We also use the Cauchy--Schwarz inequality with the weights $\ell _{n}(k)-1>0$. We have
\begin{align*}
\left\vert z(j)\right\vert & \leq \sum_{2\leq k\leq n-2}\left\vert \widehat{z}
(k)\right\vert =\sum_{2\leq k\leq n-2}\frac{1}{\sqrt{\ell _{n}(k)-1}}\cdot
\sqrt{\ell _{n}(k)-1}\left\vert \widehat{z}(k)\right\vert \\
& \leq \Big( \sum_{2\leq k\leq n-2}\frac{1}{\ell _{n}(k)-1}\Big) ^{1/2}\Big(
\sum_{2\leq k\leq n-2}\left( \ell _{n}(k)-1\right) \left\vert \widehat{z}
(k)\right\vert ^{2}\Big) ^{1/2}=\beta _{n}^{1/2}Q_{n}(z)^{1/2}.
\end{align*}
Squaring, we get $\left\Vert z\right\Vert _{\infty }^{2}\leq \beta _{n}Q_{n}(z)$.

\emph{The closed form of $\beta _{n}$.} We put $\theta =\frac{\pi }{n}$. We use $1-\cos 2\theta =2\sin ^{2}\theta $ and $\cos 2\theta -\cos 2k\theta =2\sin \left( (k-1)\theta \right) \sin \left( (k+1)\theta \right) $. Then \eqref{eigen} gives
\begin{equation*}
\ell _{n}(k)-1=\frac{\cos 2\theta -\cos 2k\theta }{1-\cos 2\theta }=\frac{\sin
\left( (k-1)\theta \right) \sin \left( (k+1)\theta \right) }{\sin ^{2}\theta }.
\end{equation*}
For $2\leq k\leq n-2$, both angles $(k-1)\theta $ and $(k+1)\theta $ lie in $(0,\pi )$, so both sines are positive. Moreover, $\cot x-\cot y=\frac{\sin (y-x)}{\sin x\sin y}$. We take $x=(k-1)\theta $ and $y=(k+1)\theta $, so that $y-x=2\theta $. Since $\frac{\sin ^{2}\theta }{\sin 2\theta }=\frac{\tan \theta }{2}$, we get
\begin{equation*}
\frac{1}{\ell _{n}(k)-1}=\frac{\sin ^{2}\theta }{\sin \left( (k-1)\theta
\right) \sin \left( (k+1)\theta \right) }=\frac{\tan \theta }{2}\Big( \cot
\left( (k-1)\theta \right) -\cot \left( (k+1)\theta \right) \Big) .
\end{equation*}
The sum over $2\leq k\leq n-2$ is telescoping. Only the terms with $\cot \theta $, $\cot 2\theta $, $\cot \left( (n-2)\theta \right) $ and $\cot \left( (n-1)\theta \right) $ remain. Since $\cot \left( (n-j)\theta \right) =\cot (\pi -j\theta )=-\cot (j\theta )$, we obtain
\begin{align*}
\beta _{n}& =\frac{\tan \theta }{2}\Big( \cot \theta +\cot 2\theta -\cot \left(
(n-2)\theta \right) -\cot \left( (n-1)\theta \right) \Big) \\
& =\tan \theta \left( \cot \theta +\cot 2\theta \right) =1+\tan \theta \cot 2\theta .
\end{align*}
Finally, $\cot 2\theta =\frac{1-\tan ^{2}\theta }{2\tan \theta }$. Therefore, $\beta _{n}=1+\frac{1-\tan ^{2}\theta }{2}=\frac{3}{2}-\frac{1}{2}\tan ^{2}\frac{\pi }{n}$. Since $\tan ^{2}\geq 0$, we always have $\beta _{n}\leq \frac{3}{2}$. For example, $\beta _{4}=1$, since the only term is $k=2$ with $\ell _{4}(2)-1=1$. This is the computation of \cite[Lemma 3]{FI26}, written in our normalization. Indeed, the quadratic form used in \cite{FI26} is $D(z)/\lambda _{n}-2\left\Vert z\right\Vert _{2}^{2}$ with $D(z)=\frac{1}{n}\sum_{j}\left( z_{j+1}-z_{j}\right) ^{2}=2\mathcal{D}_{n}(z)$. Since $\mathcal{E}_{n}=D/\mu _{n}=D/(2\lambda _{n})$, this form equals $2Q_{n}(z)$.

\emph{The circle.} We have $\ell _{\infty }(k)-1=k^{2}-1\geq 3$ for $\left\vert k\right\vert \geq 2$. So we get $Q_{\infty }(z)\geq 3\left\Vert z\right\Vert _{2}^{2}$. The same Cauchy--Schwarz argument shows that $\sum_{\left\vert k\right\vert \geq 2}\left\vert \widehat{z}(k)\right\vert $ is finite. So the Fourier series of $z$ converges absolutely and uniformly to a continuous representative of $z$, and we get $\left\Vert z\right\Vert _{\infty }^{2}\leq \left( \sum_{\left\vert k\right\vert \geq 2}\frac{1}{k^{2}-1}\right) Q_{\infty }(z)$. We use the partial fraction decomposition $\frac{1}{k^{2}-1}=\frac{1}{2}\left( \frac{1}{k-1}-\frac{1}{k+1}\right) $. Then the sum is telescoping:
\begin{equation*}
\sum_{k\geq 2}\frac{1}{k^{2}-1}=\frac{1}{2}\sum_{k\geq 2}\left( \frac{1}{k-1}-
\frac{1}{k+1}\right) =\frac{1}{2}\left( 1+\frac{1}{2}\right) =\frac{3}{4},
\end{equation*}
so the sum over $\left\vert k\right\vert \geq 2$, with both signs, equals $\frac{3}{2}$. This completes the proof.
\end{proof}

In the same way, every function in $H^{1}(\mathbb{T})$ has summable Fourier coefficients and a continuous representative, since the frequencies $0$ and $\pm 1$ only add three terms. Therefore, all the pointwise statements below make sense in $H^{1}(\mathbb{T})$. No further regularity is needed.

\subsection{Exact expressions of the deficits}

Let $y=v+z=u-a$. Then $\langle y\rangle =0$ and $\left\langle y^{2}\right\rangle =r^{2}+t^{2}=s$. We expand $\left( u-1\right) ^{2}\left( u+2\right) =\left( u^{2}-2u+1\right) \left( u+2\right) =u^{3}-3u+2$, and then $u^{3}=\left( a+y\right) ^{3}=a^{3}+3a^{2}y+3ay^{2}+y^{3}$. Taking means, we get
\begin{equation*}
\left\langle \left( u-1\right) ^{2}\left( u+2\right) \right\rangle
=\left\langle u^{3}\right\rangle -3a+2=a^{3}+3as+\left\langle
y^{3}\right\rangle -3a+2.
\end{equation*}
Now we insert $s=1-a^{2}$ and obtain $a^{3}+3a\left( 1-a^{2}\right) -3a+2=a^{3}+3a-3a^{3}-3a+2=2-2a^{3}$. Therefore,
\begin{equation*}
\left\langle \left( u-1\right) ^{2}\left( u+2\right) \right\rangle =2\left(
1-a^{3}\right) +\left\langle y^{3}\right\rangle .
\end{equation*}
Subtracting one third of this identity from $\mathcal{E}_{n}(u)=s+Q_{n}(z)$, we get
\begin{equation*}
\mathcal{F}_{n}(u)=Q_{n}(z)+\left[ \left( 1-a^{2}\right) -\frac{2}{3}\left(
1-a^{3}\right) \right] -\frac{1}{3}\left\langle y^{3}\right\rangle ,
\end{equation*}
and the bracket equals $\frac{3-3a^{2}-2+2a^{3}}{3}=\frac{1-3a^{2}+2a^{3}}{3}=\frac{\left( 1-a\right) ^{2}\left( 1+2a\right) }{3}$. The last step holds because $\left( 1-a\right) ^{2}\left( 1+2a\right) =\left( 1-2a+a^{2}\right) \left( 1+2a\right) =1-3a^{2}+2a^{3}$. Therefore, we have proved the exact formulas
\begin{align}
\mathcal{F}_{n}(u)& =Q_{n}(z)+\frac{\left( 1-a\right) ^{2}\left( 1+2a\right)
}{3}-\frac{1}{3}\left\langle \left( v+z\right) ^{3}\right\rangle
\label{Fexact} \\
& =Q_{n}(z)+\frac{s^{2}}{4}+\mathcal{R}(a)-\frac{1}{3}\left\langle \left(
v+z\right) ^{3}\right\rangle ,\qquad \mathcal{R}(a)=\frac{\left( 1-a\right)
^{3}\left( 1+3a\right) }{12},  \label{Fexact2}
\end{align}
where the second formula comes from the identity
\begin{equation}
\frac{\left( 1-a\right) ^{2}\left( 1+2a\right) }{3}-\frac{\left(
1-a^{2}\right) ^{2}}{4}=\frac{\left( 1-a\right) ^{3}\left( 1+3a\right) }{12}.
\label{Ridentity}
\end{equation}
To verify \eqref{Ridentity}, we expand both sides. The left side is $\frac{4\left( 1-3a^{2}+2a^{3}\right) -3\left( 1-2a^{2}+a^{4}\right) }{12}=\frac{1-6a^{2}+8a^{3}-3a^{4}}{12}$. The right side is $\frac{\left( 1-3a+3a^{2}-a^{3}\right) \left( 1+3a\right) }{12}=\frac{1-6a^{2}+8a^{3}-3a^{4}}{12}$. We note that $\mathcal{R}(a)=\frac{1+3a}{96}d_{n}(u)^{6}$ is of sixth order in the distance. Moreover, $\frac{s^{2}}{4}=\frac{\left( 1-a\right) ^{2}\left( 1+a\right) ^{2}}{4}\approx d_{n}(u)^{4}/4$ near the constant.

For the logarithmic deficit, we set
\begin{equation}
H(w)=\frac{2}{3}\left( w-1\right) ^{2}\left( w+2\right) +\left(
w^{2}-1\right) -w^{2}\log w^{2},\qquad w\geq 0,  \label{Hdef}
\end{equation}
with $H(0)=\frac{4}{3}-1=\frac{1}{3}$. Here we recall the convention $0\log 0=0$. Since $\left\langle u^{2}\right\rangle =1$, the average of the term $u^{2}-1$ vanishes. So we have
\begin{equation*}
\left\langle H(u)\right\rangle =\frac{2}{3}\left\langle \left( u-1\right)
^{2}\left( u+2\right) \right\rangle -\left\langle u^{2}\log
u^{2}\right\rangle =\frac{2}{3}\left\langle \left( u-1\right) ^{2}\left(
u+2\right) \right\rangle -\Ent(u^{2}).
\end{equation*}
On the other hand, $2\mathcal{F}_{n}(u)=2\mathcal{E}_{n}(u)-\frac{2}{3}\left\langle \left( u-1\right) ^{2}\left( u+2\right) \right\rangle $. We add the two identities. Then the cubic terms cancel, and we obtain
\begin{equation}
\Delta _{n}(u)=2\mathcal{F}_{n}(u)+\left\langle H(u)\right\rangle .
\label{splitting}
\end{equation}
The function $H$ is nonnegative. This is the cubic majorant of \cite[Lemma 1]{FI26}. We will prove it in Section 4, see formula \eqref{Hintegral}. Formula \eqref{splitting} shows that, to get a lower bound for the logarithmic deficit, it is enough to have lower bounds for the cubic deficit $\mathcal{F}_{n}(u)$ and for the scalar quantity $\left\langle H(u)\right\rangle $. The first one is an \emph{energy} quantity, and it depends on the graph. The second one is a \emph{scalar} quantity, which depends only on the distribution of the values of $u$.

\subsection{Cubic interaction estimates}

We now prove the estimates for the cubic mean $\left\langle \left( v+z\right) ^{3}\right\rangle $. For finite $n$, they come from the proof of \cite[Theorem 2]{FI26}.

\begin{lemma}
\label{Linteraction}Let $n\in \mathcal{J}$, $v\in V_{1,n}$ and let $z$ be real with $z\perp 1,V_{1,n}$. We also assume $z\in H^{1}(\mathbb{T})$ when $n=\infty $. Set $r=\left\Vert v\right\Vert _{2}$ and $t=\left\Vert z\right\Vert _{2}$. Then
\begin{align}
\frac{1}{3}\left\vert \left\langle \left( v+z\right) ^{3}\right\rangle
\right\vert & \leq \frac{r^{2}t}{\sqrt{2}}+\sqrt{2}rt^{2}+\frac{1}{\sqrt{6}}
\sqrt{Q_{n}(z)}\,t^{2},\qquad 6\leq n\leq \infty ,  \label{interaction} \\
\frac{1}{3}\left\vert \left\langle \left( v+z\right) ^{3}\right\rangle
\right\vert & \leq \frac{r^{2}t+rt^{2}}{\sqrt{2}},\qquad n=5.
\label{interaction5}
\end{align}
\end{lemma}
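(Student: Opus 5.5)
The plan is to expand the cube and estimate each resulting term separately with Fourier resonances. Write $v=c\chi_1+\overline{c}\chi_{-1}$ with $c=\widehat{u}(1)$, so that $r^2=2|c|^2$, and expand
\[
\left\langle (v+z)^3\right\rangle=\left\langle v^3\right\rangle+3\left\langle v^2z\right\rangle+3\left\langle vz^2\right\rangle+\left\langle z^3\right\rangle .
\]
\textbf{Step 1: the pure first-harmonic term vanishes.} The product of three characters $\chi_{\pm1}$ has frequency in $\{\pm1,\pm3\}$, and no such number is $\equiv 0$ modulo $n$ when $n\geq 4$ (or $n=\infty$). So $\langle v^3\rangle=0$, and it remains to show
\[
\frac13\left|\left\langle (v+z)^3\right\rangle\right|\leq \left|\left\langle v^2z\right\rangle\right|+\left|\left\langle vz^2\right\rangle\right|+\frac13\left|\left\langle z^3\right\rangle\right| .
\]

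\textbf{Step 2: the term $\langle v^2z\rangle$.} We have $v^2=2|c|^2+c^2\chi_2+\overline{c}^2\chi_{-2}$. Since $z\perp 1$, only the $\chi_{\pm2}$ part of $v^2$ pairs with $z$; for $n\geq5$ the frequencies $\pm2$ are distinct from $0,\pm1$. The $L^2$ norm of this part is $\sqrt2|c|^2=r^2/\sqrt2$. By Cauchy--Schwarz, $|\langle v^2z\rangle|\leq r^2t/\sqrt2$ for every $n\in\mathcal{J}$.

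\textbf{Step 3: the terms involving $z^2$ when $6\leq n\leq\infty$.} Since $|v|=2|\Re(c\chi_1)|\leq 2|c|=\sqrt2\,r$, we get $|\langle vz^2\rangle|\leq\sqrt2\,r\,t^2$. For the last term we use $|\langle z^3\rangle|\leq\|z\|_\infty t^2$. Lemma \ref{Lspectral} gives $\|z\|_\infty^2\leq\beta_nQ_n(z)\leq\frac32Q_n(z)$, and \eqref{spectralcircle} gives the same bound on the circle. Hence
\[
\frac13\left|\left\langle z^3\right\rangle\right|\leq\frac13\sqrt{\tfrac32}\,\sqrt{Q_n(z)}\,t^2=\frac{1}{\sqrt6}\sqrt{Q_n(z)}\,t^2 .
\]
On the circle, the pointwise manipulations are justified by the continuity of $H^1$ functions noted after Lemma \ref{Lspectral}. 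This proves \eqref{interaction}.

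\textbf{Step 4: the five-cycle.} This is the only place requiring care, since the crude sup bound for $\langle vz^2\rangle$ is too weak there; I expect it to be the main obstacle. On $C_5$ we have $z=d\chi_2+\overline{d}\chi_{-2}$ with $t^2=2|d|^2$. Then
\[
z^2=2|d|^2+d^2\chi_4+\overline{d}^2\chi_{-4},\qquad \chi_{\pm4}=\chi_{\mp1}.
\]
Pairing with $v$ gives $\langle vz^2\rangle=2\Re\!\left(c\,d^2\right)$. Therefore
\[
\left|\left\langle vz^2\right\rangle\right|\leq 2|c||d|^2=2\cdot\frac{r}{\sqrt2}\cdot\frac{t^2}{2}=\frac{rt^2}{\sqrt2}.
\]
Finally, $\langle z^3\rangle=0$ on $C_5$: three frequencies $\pm2$ sum to $\pm2$ or $\pm6$, and none of these is $\equiv0$ modulo $5$. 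Combining this with Step 2 gives \eqref{interaction5}.

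The only bookkeeping to watch is that each resonance claim uses the correct residues modulo $n$. In particular, in Step 2 the requirement $n\geq5$ ensures that $\chi_{\pm2}$ does not collapse onto $\chi_{\mp2}$ or onto the constants.
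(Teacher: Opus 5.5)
Your proof is correct and follows the paper's argument. It uses the same expansion of the cube with $\langle v^{3}\rangle=0$, Cauchy--Schwarz against the frequency-$\pm2$ part of $v^{2}$ (norm $r^{2}/\sqrt{2}$, using $n\geq5$), and the sup bounds $\|v\|_{\infty}\leq\sqrt{2}\,r$ and $\|z\|_{\infty}^{2}\leq\frac{3}{2}Q_{n}(z)$. On $C_{5}$, your resonance computations $\langle vz^{2}\rangle=2\Re(cd^{2})$ and $\langle z^{3}\rangle=0$ together amount to the paper's identity \eqref{fiveexact}. One small slip: $c$ should be $\widehat{v}(1)$ rather than $\widehat{u}(1)$, since $v$ is an arbitrary element of $V_{1,n}$ here.
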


\begin{proof}
\emph{Step 1: the first harmonic.} Every $v\in V_{1,n}$ of norm $r$ can be written as $v(j)=\sqrt{2}r\cos \left( 2\pi j/n+\theta \right) $ for some $\theta \in \mathbb{R}$. Indeed, $v=\alpha \cos (2\pi j/n)+\alpha ^{\prime }\sin (2\pi j/n)=\sqrt{\alpha ^{2}+\alpha ^{\prime 2}}\cos (2\pi j/n+\theta )$ for a suitable $\theta $. Moreover, $\left\langle \cos ^{2}(2\pi j/n+\theta )\right\rangle =\frac{1}{2}$ by the orthogonality of the characters $\chi _{k}$. Indeed, we have $\cos ^{2}(2\pi j/n+\theta )=\frac{1}{2}+\frac{1}{2}\cos (4\pi j/n+2\theta )$ and $\cos (4\pi j/n+2\theta )=\Re \left( e^{2i\theta }\chi _{2}(j)\right) $. For $n\geq 3$, we have $2\not\equiv 0$ modulo $n$, so $\left\langle \chi _{2}\right\rangle =0$ by the orthogonality relation above. Therefore, the mean of $\cos (4\pi j/n+2\theta )$ vanishes. So $r^{2}=\frac{1}{2}\left( \alpha ^{2}+\alpha ^{\prime 2}\right) $. As a consequence, $\left\Vert v\right\Vert _{\infty }\leq \sqrt{2}r$. In addition, by $\cos ^{2}\phi =\frac{1+\cos 2\phi }{2}$, we have
\begin{equation*}
v^{2}=r^{2}+r^{2}\cos \left( 4\pi j/n+2\theta \right) ,\qquad \left\Vert
v^{2}-r^{2}\right\Vert _{2}^{2}=r^{4}\left\langle \cos ^{2}\left( 4\pi
j/n+2\theta \right) \right\rangle =\frac{r^{4}}{2}.
\end{equation*}
Here we used that the mean of $\cos \left( 8\pi j/n+4\theta \right) $ vanishes, since $n\geq 5$. This fails on $C_{4}$.
Moreover, $\left\langle v^{3}\right\rangle =0$. Indeed, the frequencies in $v^{3}$ are $\pm 1\pm 1\pm 1\in \{\pm 1,\pm 3\}$. None of them is zero modulo $n$ when $n\geq 4$. For $n=3$, we have $3\equiv 0$ and $\left\langle v^{3}\right\rangle \neq 0$ in general. This is where the argument breaks down for the three-cycle, and in fact \eqref{LSIconstant} is false for $n=3$. On the circle, we write $v(x)=\sqrt{2}\,r\cos \left( 2\pi x+\theta \right) $. The same computations hold, because $\int_{0}^{1}\cos \left( 2\pi mx+\phi \right) dx=0$ for every integer $m\neq 0$ and every real $\phi $.

\emph{Step 2: expansion of the cube.} We expand $\left( v+z\right) ^{3}=v^{3}+3v^{2}z+3vz^{2}+z^{3}$. Note that $\left\langle v^{3}\right\rangle =0$ and $\langle z\rangle =0$. So $\left\langle v^{2}z\right\rangle =\left\langle \left( v^{2}-r^{2}\right) z\right\rangle $. Therefore, we get
\begin{equation*}
\frac{1}{3}\left\langle \left( v+z\right) ^{3}\right\rangle =\left\langle
\left( v^{2}-r^{2}\right) z\right\rangle +\left\langle vz^{2}\right\rangle +
\frac{1}{3}\left\langle z^{3}\right\rangle .
\end{equation*}
We estimate the three terms separately. By the Cauchy--Schwarz inequality and Step 1,
\begin{equation*}
\left\vert \left\langle \left( v^{2}-r^{2}\right) z\right\rangle \right\vert
\leq \left\Vert v^{2}-r^{2}\right\Vert _{2}\left\Vert z\right\Vert _{2}=
\frac{r^{2}t}{\sqrt{2}}.
\end{equation*}
By the supremum bound on $v$,
\begin{equation*}
\left\vert \left\langle vz^{2}\right\rangle \right\vert \leq \left\Vert
v\right\Vert _{\infty }\left\langle z^{2}\right\rangle \leq \sqrt{2}rt^{2}.
\end{equation*}
By Lemma \ref{Lspectral},
\begin{equation*}
\frac{1}{3}\left\vert \left\langle z^{3}\right\rangle \right\vert \leq \frac{1
}{3}\left\Vert z\right\Vert _{\infty }\left\langle z^{2}\right\rangle \leq
\frac{\sqrt{\beta _{n}}}{3}\sqrt{Q_{n}(z)}\,t^{2}\leq \frac{1}{3}\sqrt{\frac{3
}{2}}\sqrt{Q_{n}(z)}\,t^{2}=\frac{1}{\sqrt{6}}\sqrt{Q_{n}(z)}\,t^{2},
\end{equation*}
where $\frac{1}{3}\sqrt{\frac{3}{2}}=\frac{\sqrt{3}}{3\sqrt{2}}=\frac{1}{\sqrt{3}\sqrt{2}}=\frac{1}{\sqrt{6}}$. We add the three bounds and obtain \eqref{interaction}. On the circle, all the products are integrable because $H^{1}(\mathbb{T})\subset C^{0}(\mathbb{T})$. Moreover, \eqref{spectralcircle} gives the same constant.

\emph{Step 3: the five-cycle.} For $n=5$, the only high frequencies are $\pm 2$, since $3\equiv -2$ and $4\equiv -1$. We write $\chi =\chi _{1}$, $v=c_{1}\chi +\overline{c_{1}}\chi ^{-1}$ and $z=c_{2}\chi ^{2}+\overline{c_{2}}\chi ^{-2}$ with $c_{1},c_{2}\in \mathbb{C}$. Then Parseval's identity gives $r^{2}=2\left\vert c_{1}\right\vert ^{2}$ and $t^{2}=2\left\vert c_{2}\right\vert ^{2}$. To compute $\left\langle \left( v+z\right) ^{3}\right\rangle $, we expand the cube of the four-term sum. We keep only the products of three characters with frequencies that add up to $0$ modulo $5$. The available frequencies are $\pm 1,\pm 2$. The multisets with sum $\equiv 0$ are
\begin{equation*}
\{1,1,-2\},\qquad \{-1,-1,2\},\qquad \{1,2,2\},\qquad \{-1,-2,-2\},
\end{equation*}
because $1+1-2=0$ and $1+2+2=5$. No other combination works. If a multiset contains both $k$ and $-k$, then its sum is the third frequency, which is not $0$ modulo $5$. The remaining multisets have the sums $\pm 3$, $\pm 4$ and $\pm 6$, for example $1+1+1=3$, $1+1+2=4$ and $2+2+2=6$, and none of them is $0$ modulo $5$. Each multiset consists of two equal indices and one different index. So it corresponds to $3$ ordered triples. Therefore,
\begin{equation*}
\left\langle \left( v+z\right) ^{3}\right\rangle =3c_{1}^{2}\overline{c_{2}}+3
\overline{c_{1}}^{2}c_{2}+3c_{1}c_{2}^{2}+3\overline{c_{1}}\,\overline{c_{2}}^{2}=6\Re \left( c_{1}^{2}
\overline{c_{2}}\right) +6\Re \left( c_{1}c_{2}^{2}\right) ,
\end{equation*}
that is,
\begin{equation}
\frac{1}{3}\left\langle \left( v+z\right) ^{3}\right\rangle =2\Re \left(
c_{1}^{2}\overline{c_{2}}\right) +2\Re \left( c_{1}c_{2}^{2}\right) .  \label{fiveexact}
\end{equation}
We take absolute values and use $\left\vert c_{1}\right\vert =\frac{r}{\sqrt{2}}$ and $\left\vert c_{2}\right\vert =\frac{t}{\sqrt{2}}$. Then we get
\begin{equation*}
\frac{1}{3}\left\vert \left\langle \left( v+z\right) ^{3}\right\rangle
\right\vert \leq 2\left\vert c_{1}\right\vert ^{2}\left\vert c_{2}\right\vert
+2\left\vert c_{1}\right\vert \left\vert c_{2}\right\vert ^{2}=2\cdot \frac{r^{2}}{2}
\cdot \frac{t}{\sqrt{2}}+2\cdot \frac{r}{\sqrt{2}}\cdot \frac{t^{2}}{2}=\frac{
r^{2}t}{\sqrt{2}}+\frac{rt^{2}}{\sqrt{2}},
\end{equation*}
which is \eqref{interaction5}. This completes the proof.
\end{proof}

\section{Stability of the cubic Sobolev inequality -- \texorpdfstring{\mbox{Proof of Theorem~\ref{Tcubic}}}{Proof of Theorem \ref{Tcubic}}}

In this section, we prove Theorem \ref{Tcubic}. Its first part also gives the lower bound for the energy term $2\mathcal{F}_{n}(u)$ in the splitting \eqref{splitting}. This lower bound is needed for \eqref{log4} and for the lower bound \eqref{sepnn} in Theorem \ref{Cseparation}. The proof follows the strategy of \cite{FI26}. More precisely, by \eqref{Fexact} and Lemma \ref{Linteraction}, $\mathcal{F}_{n}(u)$ is bounded from below by an explicit expression in the four numbers $a,r,t$ and $Q=Q_{n}(z)$. The new point is that we use the positivity bound \eqref{positivity}. We need it on the part of the admissible class where the mean is small or moderate.

\subsection{A positivity-dependent scalar inequality}

The hypotheses of the next scalar inequality come from the facts proved in Section 2. We have $r^{2}+t^{2}=1-a^{2}$ by \eqref{basic}, and $r^{2}\leq 2a^{2}$ by Lemma \ref{Lpositivity}. We have $Q\geq 2t^{2}$ when $6\leq n\leq \infty $ and $Q\geq \frac{3}{2}t^{2}$ when $n=5$, both by Lemma \ref{Lspectral}. We also have $a\geq \frac{1}{\sqrt{5}}$ when $n=5$ by Lemma \ref{Lpositivity}. In addition, we keep a positive term for large means. We will use this term in the proof of \eqref{sepnn}.

\begin{theorem}
\label{Tscalar}Let $0\leq a\leq 1$ and $r,t,Q\geq 0$ be such that
\begin{equation*}
r^{2}+t^{2}=1-a^{2}=:s,\qquad r^{2}\leq 2a^{2}.
\end{equation*}

\begin{enumerate}
\item[(i)] If $Q\geq 2t^{2}$, then
\begin{equation}
Q+\frac{\left( 1-a\right) ^{2}\left( 1+2a\right) }{3}-\frac{r^{2}t}{\sqrt{2}}
-\sqrt{2}rt^{2}-\frac{\sqrt{Q}\,t^{2}}{\sqrt{6}}\geq \frac{\left( 1-a\right)
^{2}}{3}.  \label{scalar1}
\end{equation}

\item[(ii)] If $a\geq \frac{1}{\sqrt{5}}$ and $Q\geq \frac{3}{2}t^{2}$, then
\begin{equation}
Q+\frac{\left( 1-a\right) ^{2}\left( 1+2a\right) }{3}-\frac{r^{2}t+rt^{2}}{
\sqrt{2}}\geq \frac{\left( 1-a\right) ^{2}}{3}.  \label{scalar2}
\end{equation}
\end{enumerate}
Moreover, if $a\geq \frac{3}{5}$, the right-hand sides of \eqref{scalar1} and \eqref{scalar2} can be replaced by $\frac{\left( 1-a\right) ^{2}}{3}+B(a)$, where
\begin{equation}
B(a)=\frac{\left( 1-a\right) ^{2}\left( 5-3a\right) \left( 5a-3\right) }{96}
\geq 0,\qquad \frac{3}{5}\leq a\leq 1.  \label{Bdef}
\end{equation}
\end{theorem}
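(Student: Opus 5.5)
First, eliminate $Q$. In case (i) the left side of \eqref{scalar1} depends on $Q$ only through $Q-\frac{t^{2}}{\sqrt6}\sqrt Q$. This map is increasing in $\sqrt Q$ once $\sqrt Q\geq \frac{t^{2}}{2\sqrt6}$, and that holds because $Q\geq 2t^{2}$ and $t\leq 1$. So we may set $Q=2t^{2}$, which gives $Q-\frac{t^{2}\sqrt Q}{\sqrt6}=2t^{2}-\frac{t^{3}}{\sqrt3}$. In case (ii) the left side is increasing in $Q$, so we may set $Q=\frac32 t^{2}$. Next use the identity $\frac{(1-a)^2(1+2a)}{3}-\frac{(1-a)^2}{3}=\frac{2a(1-a)^2}{3}$. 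It remains to prove
\begin{equation*}
2t^{2}-\frac{t^{3}}{\sqrt3}+\frac{2a(1-a)^{2}}{3}\geq \frac{r^{2}t}{\sqrt2}+\sqrt2\,rt^{2}\qquad\text{(case (i))},
\end{equation*}
and the analogous inequality $\frac32t^{2}+\frac{2a(1-a)^2}{3}\geq \frac{r^{2}t+rt^{2}}{\sqrt2}$ in case (ii). In both, $r^{2}+t^{2}=1-a^{2}$ and $r^{2}\leq \min(2a^{2},1-a^{2})$. Strengthened versions with $B(a)$ added are needed when $a\geq\frac35$.

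Then split into the three ranges of the mean announced in the introduction. For each range, treat the inequality as a quadratic estimate in $t$, with $r$ playing the role of a parameter.

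\emph{Large mean}, say $a\geq \frac35$. Here $r^{2}\leq s=1-a^{2}\leq \frac{16}{25}$ and $t\leq\sqrt s$ are small. Divide by $t$ (the case $t=0$ is trivial). The terms $-\frac{t^{3}}{\sqrt3}$ and $\sqrt2 rt^{2}$ are absorbed into $2t^{2}$ up to a factor like $2-\sqrt2 r-\frac{t}{\sqrt3}\geq c$, which is bounded below. The remaining cross term $\frac{r^{2}t}{\sqrt2}$ is handled by AM--GM against $c\,t^{2}$ and the constant term $\frac{2a(1-a)^2}{3}$. Since $r^{2}\leq s=(1-a)(1+a)$, this requires
\[
\frac{r^{4}}{8c}\leq \frac{2a(1-a)^{2}}{3}-B(a),
\]
that is, $\frac{(1+a)^{2}}{8c}\leq \frac{2a}{3}-\frac{(5-3a)(5a-3)}{96}$. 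I expect $B$ was designed precisely so that this one-variable polynomial inequality holds on $[\frac35,1]$ with $c$ as large as the range permits. Checking it is routine: it is a cubic or quartic in $a$.

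\emph{Small and moderate means.} Here the term $\frac{2a(1-a)^2}{3}$ is not enough, and the positivity bound $r^{2}\leq 2a^{2}$ takes over. For small $a$ (say $a\leq\frac13$), $r\leq\sqrt2 a$ is small while $t^{2}=s-r^{2}\geq 1-3a^{2}$ is close to one. The right side is then at most $a^{2}\sqrt2\,t+2a t^{2}$, and we compare it with $2t^{2}-\frac{t^{3}}{\sqrt3}\geq (2-\frac1{\sqrt3})t^{2}$. This leaves a clean quadratic inequality in $t$ with coefficients depending on $a$.

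For the moderate range $\frac13\leq a\leq\frac35$, I would use $r\leq \min(\sqrt2 a,\sqrt{1-a^{2}})$ and bound the right side by $\frac{r^{2}t}{\sqrt2}\leq \varepsilon t^{2}+\frac{r^{4}}{8\varepsilon}$ and $\sqrt2 rt^{2}\leq\sqrt2 r_{\max}t^{2}$. Then check that $2-\frac{t}{\sqrt3}-\sqrt2 r_{\max}-\varepsilon\geq0$ and that $\frac{r^{4}}{8\varepsilon}\leq \frac{2a(1-a)^{2}}{3}$, possibly after subdividing the range once more.

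\emph{Case (ii)} is identical in structure with the coefficients $\frac32$, $\frac1{\sqrt2}$, $\frac1{\sqrt2}$. The hypothesis $a\geq\frac1{\sqrt5}$ removes the small-mean range, where $\frac32$ would otherwise be insufficient.

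The main obstacle is the moderate range, roughly $a\in[0.4,0.6]$. There, neither smallness of $r$ (which comes from positivity) nor smallness of $s$ is strong, and the coefficient $\sqrt2 r$ in front of $t^{2}$ can approach $2$. If the crude AM--GM split fails there, I would first substitute $r^{2}=s-t^{2}$ exactly. The inequality then becomes a polynomial inequality in $t$ on the interval $t^{2}\in[\max(0,1-3a^{2}),1-a^{2}]$, and I would verify it by monotonicity or endpoint analysis in $t$ for each fixed $a$, splitting $a$ into a few subintervals if necessary.
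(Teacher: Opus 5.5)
Your elimination of $Q$, the reduction to an inequality with constant term $\frac{2a(1-a)^2}{3}$, your small-mean case, and part (ii) (which has no $t^3$ term) all go through as you describe. The gap is in part (i) at and just above $a=\frac35$. This is the large-mean step that must produce $B(a)$, and you call it routine.

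Your target condition $\frac{(1+a)^2}{8c}\le\frac{2a}{3}-\frac{(5-3a)(5a-3)}{96}$ is not a genuine polynomial condition in $a$. Since $64a-(5-3a)(5a-3)=15(1+a)^2$, the right side equals $\frac{5(1+a)^2}{32}$, so the condition is simply $c\ge\frac45$ for every $a$. Now look at what $c$ you can actually guarantee. After absorbing $-\frac{t^3}{\sqrt3}$ and $\sqrt2\,rt^2$ into $2t^2$, the best uniform coefficient is
\[
\min_{r^2+t^2=s}\Bigl(2-\sqrt2\,r-\tfrac{t}{\sqrt3}\Bigr)=2-\sqrt{7s/3},
\]
by Cauchy--Schwarz. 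The minimum is attained at $r^2=\frac67 s$, which respects $r^2\le 2a^2$ in this range. At $a=\frac35$ this gives $2-\frac45\sqrt{7/3}\approx 0.778<\frac45$. So your sufficient condition is false for $\frac35\le a\lesssim 0.619$, already at $a=\frac35$ where $B=0$. The same computation shows your moderate-range AM--GM also fails just below $\frac35$.

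The culprit is the bound $r^4\le s^2$. It forgets that $r$ shrinks when $t$ grows. As a result, the bad cubic $-\frac{t^3}{\sqrt3}$ has to be paid for out of the $t^2$ coefficient, and there is not enough room.

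The missing idea is to substitute $r^2=s-t^2$ in the term $\frac{r^2t}{\sqrt2}$ from the start. You mention this only as an unspecified fallback for the moderate range.

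\begin{itemize}
\item The term becomes $\frac{st}{\sqrt2}-\frac{t^3}{\sqrt2}$.
\item The cubic terms then combine to $\bigl(\frac1{\sqrt2}-\frac1{\sqrt3}\bigr)t^3\ge 0$, which can be dropped.
\item You are left with $At^2-\frac{s}{\sqrt2}t+\frac{2a(1-a)^2}{3}$, where $A=2-\sqrt2\,r$. Only a $t$-independent lower bound for $A$ is needed before completing the square.
\item For $a\ge\frac35$: $A\ge 2-\sqrt{2s}>\frac45$, and completing the square leaves exactly $B(a)$.
\item For $\frac13\le a\le\frac35$: positivity gives $A\ge 2(1-a)$. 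After factoring out $1-a$, the discriminant condition becomes $(7a-1)(3-5a)\ge 0$.
\end{itemize}

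With this substitution your three-range plan closes. Without it, the large-mean case is not routine: the condition you reduce it to fails near $a=\frac35$.
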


\begin{proof}
If $a=1$, then $s=0$. So $r=t=0$, and both inequalities become $Q\geq 0$. Also $B(1)=0$. Therefore, we may assume $a<1$. Note that $t^{2}\leq s\leq 1$, so $t\leq 1$, and $s=\left( 1-a\right) \left( 1+a\right) $.

\textbf{Proof of (i).}

\emph{Step 1: minimization over $Q$.} The left-hand side of \eqref{scalar1} depends on $Q$ only through the function $\phi (Q)=Q-\frac{t^{2}}{\sqrt{6}}\sqrt{Q}$. If $t=0$, then $\phi (Q)=Q$ is increasing. If $t>0$, then for $Q\geq 2t^{2}$ we have $\sqrt{Q}\geq \sqrt{2}t$ and
\begin{equation*}
\phi ^{\prime }(Q)=1-\frac{t^{2}}{2\sqrt{6}\sqrt{Q}}\geq 1-\frac{t^{2}}{2
\sqrt{6}\sqrt{2}t}=1-\frac{t}{2\sqrt{12}}\geq 1-\frac{1}{2\sqrt{12}}>0,
\end{equation*}
since $t\leq 1$. So $\phi $ is increasing on $\left[ 2t^{2},\infty \right) $, and its minimum is at $Q=2t^{2}$. We obtain
\begin{equation*}
Q-\frac{\sqrt{Q}\,t^{2}}{\sqrt{6}}\geq \phi (2t^{2})=2t^{2}-\frac{\sqrt{2}
t\cdot t^{2}}{\sqrt{6}}=2t^{2}-\frac{t^{3}}{\sqrt{3}}.
\end{equation*}
In other words, the worst case is when the excess energy of the high frequencies is as small as Lemma \ref{Lspectral} allows.

\emph{Step 2: collecting the terms.} We subtract the right-hand side of \eqref{scalar1}. First, we have
\begin{equation*}
\frac{\left( 1-a\right) ^{2}\left( 1+2a\right) }{3}-\frac{\left( 1-a\right)
^{2}}{3}=\frac{\left( 1-a\right) ^{2}\cdot 2a}{3}=\frac{2a\left( 1-a\right)
^{2}}{3}.
\end{equation*}
Second, we write $r^{2}=s-t^{2}$ in the term $-\frac{r^{2}t}{\sqrt{2}}$. Then
\begin{equation*}
-\frac{r^{2}t}{\sqrt{2}}=-\frac{\left( s-t^{2}\right) t}{\sqrt{2}}=-\frac{st}{
\sqrt{2}}+\frac{t^{3}}{\sqrt{2}}.
\end{equation*}
By Step 1, the left-hand side of \eqref{scalar1} minus its right-hand side is at least
\begin{align}
& 2t^{2}-\frac{t^{3}}{\sqrt{3}}-\frac{st}{\sqrt{2}}+\frac{t^{3}}{\sqrt{2}}-
\sqrt{2}rt^{2}+\frac{2a\left( 1-a\right) ^{2}}{3}  \notag \\
& =\left( 2-\sqrt{2}r\right) t^{2}-\frac{s}{\sqrt{2}}t+\frac{2a\left(
1-a\right) ^{2}}{3}+\left( \frac{1}{\sqrt{2}}-\frac{1}{\sqrt{3}}\right)
t^{3}  \notag \\
& \geq At^{2}-\frac{s}{\sqrt{2}}t+\frac{2a\left( 1-a\right) ^{2}}{3},\qquad
A=2-\sqrt{2}r,  \label{quadratic}
\end{align}
where we dropped the term $\left( \frac{1}{\sqrt{2}}-\frac{1}{\sqrt{3}}\right) t^{3}\geq 0$. The expression \eqref{quadratic} looks like a quadratic polynomial in $t$ with leading coefficient $A$. However, $A$ depends on $r=\sqrt{s-t^{2}}$, and so on $t$. In each case below, we first replace $A$ by a lower bound that does not depend on $t$. The linear coefficient is negative. The constant term is nonnegative, and it vanishes like $\left( 1-a\right) ^{2}$ as $a\rightarrow 1$. We will prove that this expression is nonnegative. For this, we consider three cases, according to the size of the mean. We need these cases because the discriminant condition $\frac{s^{2}}{2}\leq \frac{8aA\left( 1-a\right) ^{2}}{3}$ for the quadratic holds for $a$ close to one, but fails for small $a$. Therefore, for small $a$ we instead use the fact that $t$ is bounded away from zero.

\emph{Case 1: $0\leq a\leq \frac{1}{3}$.} By the positivity bound $r\leq \sqrt{2}a$, we have $\sqrt{2}r\leq 2a\leq \frac{2}{3}$. So $A=2-\sqrt{2}r\geq 2-2a\geq \frac{4}{3}$. Moreover, we have
\begin{equation*}
t^{2}=s-r^{2}=1-a^{2}-r^{2}\geq 1-a^{2}-2a^{2}=1-3a^{2}\geq 1-\frac{1}{3}=
\frac{2}{3},\qquad \text{hence}\qquad t\geq \sqrt{\tfrac{2}{3}}>\frac{3}{4}.
\end{equation*}
Therefore, $At\geq \frac{4}{3}\cdot \frac{3}{4}=1$. On the other hand, $\frac{s}{\sqrt{2}}\leq \frac{1}{\sqrt{2}}<1$. As a result,
\begin{equation*}
At^{2}-\frac{s}{\sqrt{2}}t=t\left( At-\frac{s}{\sqrt{2}}\right) \geq
t\left( 1-\frac{1}{\sqrt{2}}\right) \geq 0,
\end{equation*}
and the other term $\frac{2a\left( 1-a\right) ^{2}}{3}$ in \eqref{quadratic} is nonnegative. So \eqref{quadratic} is nonnegative in Case 1. In other words, when the mean is small, nonnegativity puts most of the mass into the high frequencies. Then $t$ is large, and the quadratic term $At^{2}$ alone is larger than the interaction term.

\emph{Case 2: $\frac{1}{3}\leq a\leq \frac{3}{5}$.} Now we use the positivity bound in the form $A\geq 2-2a=2\left( 1-a\right) $. We also use $s=\left( 1-a\right) \left( 1+a\right) $. Since $t^{2}\geq 0$, the expression \eqref{quadratic} is at least
\begin{equation*}
2\left( 1-a\right) t^{2}-\frac{\left( 1-a\right) \left( 1+a\right) }{\sqrt{2}}
t+\frac{2a\left( 1-a\right) ^{2}}{3}=\left( 1-a\right) \left( 2t^{2}-\frac{1+a
}{\sqrt{2}}t+\frac{2a\left( 1-a\right) }{3}\right) .
\end{equation*}
We complete the square in the bracket and get $2t^{2}-\frac{1+a}{\sqrt{2}}t=2\left( t-\frac{1+a}{4\sqrt{2}}\right) ^{2}-2\cdot \frac{\left( 1+a\right) ^{2}}{32}=2\left( t-\frac{1+a}{4\sqrt{2}}\right) ^{2}-\frac{\left( 1+a\right) ^{2}}{16}$. So the bracket is equal to
\begin{equation*}
2\left( t-\frac{1+a}{4\sqrt{2}}\right) ^{2}+\frac{2a\left( 1-a\right) }{3}-
\frac{\left( 1+a\right) ^{2}}{16}=2\left( t-\frac{1+a}{4\sqrt{2}}\right)
^{2}+\frac{32a\left( 1-a\right) -3\left( 1+a\right) ^{2}}{48}.
\end{equation*}
By expanding, we have $32a\left( 1-a\right) -3\left( 1+a\right) ^{2}=32a-32a^{2}-3-6a-3a^{2}=-35a^{2}+26a-3$. The roots of $-35a^{2}+26a-3$ are $a=\frac{26\pm \sqrt{26^{2}-4\cdot 35\cdot 3}}{70}=\frac{26\pm 16}{70}$, that is, $a=\frac{1}{7}$ and $a=\frac{3}{5}$. Therefore,
\begin{equation*}
-35a^{2}+26a-3=\left( 7a-1\right) \left( 3-5a\right) ,
\end{equation*}
as one can check by expanding the right-hand side. This expansion gives $21a-35a^{2}-3+5a$. The product is nonnegative exactly for $\frac{1}{7}\leq a\leq \frac{3}{5}$, and this range contains the interval $\left[ \frac{1}{3},\frac{3}{5}\right] $. Therefore, the bracket is nonnegative, and \eqref{quadratic} is nonnegative in Case 2. Note that $\left( 7a-1\right) \left( 3-5a\right) $ vanishes at $a=\frac{3}{5}$. For this reason, we must use a different lower bound for $A$ in the next case.

\emph{Case 3: $\frac{3}{5}\leq a<1$.} Here $s=1-a^{2}\leq 1-\frac{9}{25}=\frac{16}{25}$. We do not use the positivity bound here. We only use the trivial bound $r^{2}\leq s$, and we obtain
\begin{equation*}
\sqrt{2}r\leq \sqrt{2s}\leq \sqrt{\frac{32}{25}}=\frac{4\sqrt{2}}{5}<\frac{6}{
5},\qquad \text{hence}\qquad A=2-\sqrt{2}r>\frac{4}{5}.
\end{equation*}
By completing the square with the leading coefficient $\frac{4}{5}$, we get
\begin{equation*}
\frac{4}{5}t^{2}-\frac{s}{\sqrt{2}}t=\frac{4}{5}\left( t-\frac{5s}{8\sqrt{2}}
\right) ^{2}-\frac{4}{5}\cdot \frac{25s^{2}}{128}=\frac{4}{5}\left( t-\frac{5s
}{8\sqrt{2}}\right) ^{2}-\frac{5s^{2}}{32}.
\end{equation*}
Therefore, by using $A\geq \frac{4}{5}$ and $s^{2}=\left( 1-a\right) ^{2}\left( 1+a\right) ^{2}$, we have
\begin{align*}
At^{2}-\frac{s}{\sqrt{2}}t+\frac{2a\left( 1-a\right) ^{2}}{3}& \geq \frac{4}{5}
\left( t-\frac{5s}{8\sqrt{2}}\right) ^{2}+\frac{2a\left( 1-a\right) ^{2}}{3}-
\frac{5\left( 1-a\right) ^{2}\left( 1+a\right) ^{2}}{32} \\
& =\frac{4}{5}\left( t-\frac{5s}{8\sqrt{2}}\right) ^{2}+\left( 1-a\right)
^{2}\,\frac{64a-15\left( 1+a\right) ^{2}}{96}.
\end{align*}
By expanding, we have $64a-15\left( 1+a\right) ^{2}=-15a^{2}+34a-15$. Its roots are $a=\frac{34\pm \sqrt{34^{2}-900}}{30}=\frac{34\pm 16}{30}$, that is, $a=\frac{3}{5}$ and $a=\frac{5}{3}$. Therefore,
\begin{equation*}
-15a^{2}+34a-15=\left( 5-3a\right) \left( 5a-3\right) ,
\end{equation*}
and this is nonnegative for $\frac{3}{5}\leq a\leq \frac{5}{3}$. In particular, it is nonnegative on $\left[ \frac{3}{5},1\right] $. So the last expression is $\geq \frac{\left( 1-a\right) ^{2}\left( 5-3a\right) \left( 5a-3\right) }{96}=B(a)\geq 0$. This proves \eqref{scalar1} in Case 3. It also gives the improvement by $B(a)$ when $a\geq \frac{3}{5}$. Indeed, we simply keep the term $B(a)$ instead of dropping it.

\textbf{Proof of (ii).}

\emph{Step 1: reduction.} We subtract $\frac{\left( 1-a\right) ^{2}}{3}$ from both sides. Then we use $Q\geq \frac{3}{2}t^{2}$ and $r^{2}=s-t^{2}$, and we obtain
\begin{align*}
& Q+\frac{\left( 1-a\right) ^{2}\left( 1+2a\right) }{3}-\frac{\left(
s-t^{2}\right) t+rt^{2}}{\sqrt{2}}-\frac{\left( 1-a\right) ^{2}}{3} \\
& \geq \frac{3}{2}t^{2}-\frac{st}{\sqrt{2}}+\frac{t^{3}}{\sqrt{2}}-\frac{
rt^{2}}{\sqrt{2}}+\frac{2a\left( 1-a\right) ^{2}}{3}\geq A_{5}t^{2}-\frac{s}{
\sqrt{2}}t+\frac{2a\left( 1-a\right) ^{2}}{3},\qquad A_{5}=\frac{3}{2}-\frac{r
}{\sqrt{2}},
\end{align*}
where we dropped the nonnegative term $\frac{t^{3}}{\sqrt{2}}$. It is enough to show that the last expression is nonnegative. Since $a\geq \frac{1}{\sqrt{5}}\approx 0.447>\frac{1}{3}$, only the ranges of Cases 2 and 3 in the proof of (i) occur.

\emph{Case 2: $\frac{1}{3}\leq a\leq \frac{3}{5}$.} By the positivity bound, $\frac{r}{\sqrt{2}}\leq a$. So $A_{5}\geq \frac{3}{2}-a=:A_{0}>0$. By completing the square with the coefficient $A_{0}$, we get
\begin{equation*}
A_{0}t^{2}-\frac{s}{\sqrt{2}}t=A_{0}\left( t-\frac{s}{2\sqrt{2}A_{0}}\right)
^{2}-\frac{s^{2}}{8A_{0}},
\end{equation*}
and therefore, with $s^{2}=\left( 1-a\right) ^{2}\left( 1+a\right) ^{2}$, we have
\begin{equation*}
A_{5}t^{2}-\frac{s}{\sqrt{2}}t+\frac{2a\left( 1-a\right) ^{2}}{3}\geq
A_{0}\left( t-\frac{s}{2\sqrt{2}A_{0}}\right) ^{2}+\left( 1-a\right)
^{2}\left( \frac{2a}{3}-\frac{\left( 1+a\right) ^{2}}{8A_{0}}\right) .
\end{equation*}
We multiply the last bracket by the positive number $24A_{0}=36-24a$, and we get
\begin{align*}
24A_{0}\left( \frac{2a}{3}-\frac{\left( 1+a\right) ^{2}}{8A_{0}}\right)
& =16a\left( \frac{3}{2}-a\right) -3\left( 1+a\right) ^{2} \\
& =24a-16a^{2}-3-6a-3a^{2}=-19a^{2}+18a-3.
\end{align*}
At $a=\frac{1}{3}$ this equals $-\frac{19}{9}+6-3=\frac{8}{9}$, and at $a=\frac{3}{5}$ it equals $-\frac{171}{25}+\frac{54}{5}-3=\frac{-171+270-75}{25}=\frac{24}{25}$. If a concave quadratic is positive at the two endpoints of an interval, then it is positive on the whole interval. Therefore, the bracket is positive on $\left[ \frac{1}{3},\frac{3}{5}\right] $, and the expression is nonnegative.

\emph{Case 3: $\frac{3}{5}\leq a<1$.} Now $\frac{r}{\sqrt{2}}\leq \sqrt{\frac{s}{2}}\leq \sqrt{\frac{8}{25}}=\frac{2\sqrt{2}}{5}\approx 0.566<\frac{3}{5}$, so $A_{5}\geq \frac{3}{2}-\frac{3}{5}=\frac{9}{10}>\frac{4}{5}$. The computation of Case 3 in the proof of (i) used only $A\geq \frac{4}{5}$ and $\frac{3}{5}\leq a<1$. Therefore, it works without change with $A_{5}$ in place of $A$, and it gives the same term $B(a)$. This proves \eqref{scalar2} and its improvement. This completes the proof.
\end{proof}

\subsection{Proof of \eqref{cubic4}}

\begin{proposition}
\label{Pcubic}Let $n\in \mathcal{J}$ and $u\in \mathcal{A}_{n}$ with mean $a=\langle u\rangle $. Then \eqref{cubic4} holds, and, if $a\geq \frac{3}{5}$,
\begin{equation}
\mathcal{F}_{n}(u)\geq \frac{1}{12}d_{n}(u)^{4}+B(a),  \label{cubicB}
\end{equation}
with $B(a)$ given by \eqref{Bdef}. In particular, $\mathcal{F}_{n}(u)=0$ if and only if $u=1$.
\end{proposition}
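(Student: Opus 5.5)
The plan is to combine the exact formula \eqref{Fexact} with Lemma \ref{Linteraction} and Theorem \ref{Tscalar}. Write $u=a+v+z$ as in \eqref{decomp}, with $r=\|v\|_2$, $t=\|z\|_2$ and $Q=Q_n(z)$.

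\emph{Step 1: reduction to the scalar inequality.} By \eqref{Fexact},
\[
\mathcal{F}_n(u)\ge Q+\tfrac{(1-a)^2(1+2a)}{3}-\tfrac13\left|\langle (v+z)^3\rangle\right| .
\]
For $6\le n\le\infty$ we insert \eqref{interaction}; for $n=5$ we insert \eqref{interaction5}. This gives exactly the left-hand sides of \eqref{scalar1} and \eqref{scalar2}.

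\emph{Step 2: checking the hypotheses of Theorem \ref{Tscalar}.} From \eqref{basic} we have $r^2+t^2=1-a^2$ and $0<a\le 1$. From Lemma \ref{Lpositivity} we have $r^2\le 2a^2$, and also $a\ge 1/\sqrt5$ when $n=5$. From Lemma \ref{Lspectral}, via \eqref{qvalues} and \eqref{spectralcircle}, we get $Q\ge 2t^2$ for $n\ge 6$ (including the circle, where even $Q\ge 3t^2$), and $Q\ge q_5t^2\ge \frac32 t^2$ for $n=5$.

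\emph{Step 3: conclusion of the bound.} Theorem \ref{Tscalar} then gives $\mathcal{F}_n(u)\ge \frac{(1-a)^2}{3}$, with the additional term $B(a)$ when $a\ge\frac35$. Since $d_n(u)^4=4(1-a)^2$, we have $\frac{(1-a)^2}{3}=\frac1{12}d_n(u)^4$. This proves \eqref{cubic4} and \eqref{cubicB}.

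\emph{Step 4: the equality case.} If $\mathcal{F}_n(u)=0$, then $(1-a)^2=0$, so $a=1$. By the equality case of Cauchy--Schwarz noted after \eqref{basic}, this forces $u=1$. Conversely, $u=1$ gives $\mathcal{F}_n(u)=0$ directly.

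There is no real obstacle here, since the hard work sits in Theorem \ref{Tscalar}. The only point needing care is the circle case $n=\infty$. There one must note that $z\in H^1(\mathbb{T})$, so that Lemma \ref{Linteraction} and the spectral bounds \eqref{spectralcircle} apply. Since $Q_\infty(z)\ge 3t^2\ge 2t^2$, hypothesis (i) of Theorem \ref{Tscalar} holds.
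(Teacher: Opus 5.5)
Your proposal is correct and follows the paper's proof essentially step for step. You combine \eqref{Fexact} with Lemma \ref{Linteraction}, take the hypotheses from Lemmas \ref{Lpositivity} and \ref{Lspectral} (noting $z\in H^{1}(\mathbb{T})$ on the circle), and apply Theorem \ref{Tscalar} (i) for $6\le n\le\infty$ and (ii) for $n=5$, keeping $B(a)$ when $a\ge\frac35$ and reading the zero-deficit statement off the lower bound $\frac{(1-a)^2}{3}$.
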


\begin{proof}
We use the decomposition \eqref{decomp} and the notation $r,t,s,Q=Q_{n}(z)$. When $n=\infty $, both $a$ and $v$ are smooth. So $z=u-a-v\in H^{1}(\mathbb{T})$, and we can apply Lemma \ref{Linteraction}. If $a=1$, then $u=1$ and there is nothing to prove. So we assume $a<1$. By \eqref{basic} and Lemma \ref{Lpositivity}, we have $r^{2}+t^{2}=1-a^{2}$ and $r^{2}\leq 2a^{2}$. These are the hypotheses of Theorem \ref{Tscalar}.

\emph{The case $6\leq n\leq \infty $.} By Lemma \ref{Lspectral}, $Q\geq q_{n}t^{2}\geq 2t^{2}$ for finite $n\geq 6$, and $Q\geq 3t^{2}\geq 2t^{2}$ on the circle. We start from the exact formula \eqref{Fexact}. We replace $-\frac{1}{3}\left\langle \left( v+z\right) ^{3}\right\rangle $ by the lower bound $-\frac{1}{3}\left\vert \left\langle \left( v+z\right) ^{3}\right\rangle \right\vert $ and use \eqref{interaction}. Then we apply Theorem \ref{Tscalar} (i). We obtain
\begin{align*}
\mathcal{F}_{n}(u)& =Q+\frac{\left( 1-a\right) ^{2}\left( 1+2a\right) }{3}-
\frac{1}{3}\left\langle \left( v+z\right) ^{3}\right\rangle \\
& \geq Q+\frac{\left( 1-a\right) ^{2}\left( 1+2a\right) }{3}-\frac{r^{2}t}{
\sqrt{2}}-\sqrt{2}rt^{2}-\frac{\sqrt{Q}\,t^{2}}{\sqrt{6}} \\
& \geq \frac{\left( 1-a\right) ^{2}}{3}=\frac{1}{3}\left( \frac{d_{n}(u)^{2}}{
2}\right) ^{2}=\frac{1}{12}d_{n}(u)^{4},
\end{align*}
where in the last step we used $d_{n}(u)^{2}=2\left( 1-a\right) $ from \eqref{basic}.

\emph{The case $n=5$.} In this case, the only high frequencies are $\pm 2$. So $Q=Q_{5}(z)=\left( \ell _{5}(2)-1\right) t^{2}=q_{5}t^{2}\geq \frac{3}{2}t^{2}$ by \eqref{qvalues}. Moreover, $a\geq \frac{1}{\sqrt{5}}$ by Lemma \ref{Lpositivity}. By \eqref{Fexact}, \eqref{interaction5} and Theorem \ref{Tscalar} (ii), we have
\begin{equation*}
\mathcal{F}_{5}(u)\geq Q+\frac{\left( 1-a\right) ^{2}\left( 1+2a\right) }{3}-
\frac{r^{2}t+rt^{2}}{\sqrt{2}}\geq \frac{\left( 1-a\right) ^{2}}{3}=\frac{1}{
12}d_{5}(u)^{4}.
\end{equation*}

\emph{Large means.} If $a\geq \frac{3}{5}$, we use the same two chains of inequalities with the improved right-hand side $\frac{\left( 1-a\right) ^{2}}{3}+B(a)$ of Theorem \ref{Tscalar}. This gives \eqref{cubicB}.

\emph{Zero deficit.} If $u\neq 1$, then $a<1$ and the right-hand side of \eqref{cubic4} is positive. So $\mathcal{F}_{n}(u)>0$. Conversely, $\mathcal{F}_{n}(1)=0$ follows directly from the definition.
\end{proof}

\subsection{The four-cycle: an exact formula}

On $C_{4}$ we use the orthonormal basis $1,e,f,\chi _{2}$ of $L^{2}(C_{4})$, where
\begin{equation}
e=\left( \sqrt{2},0,-\sqrt{2},0\right) ,\qquad f=\left( 0,\sqrt{2},0,-\sqrt{2}
\right) ,\qquad \chi _{2}=\left( 1,-1,1,-1\right) .  \label{basis4}
\end{equation}
These are $e(j)=\sqrt{2}\cos \frac{\pi j}{2}$, $f(j)=\sqrt{2}\sin \frac{\pi j}{2}$ and $\chi _{2}(j)=\cos (\pi j)=(-1)^{j}$ (the character of frequency $2$). They are orthogonal, and they satisfy $\left\langle e^{2}\right\rangle =\left\langle f^{2}\right\rangle =\left\langle \chi _{2}^{2}\right\rangle =1$. Moreover, by \eqref{eigen}, the eigenvalue is $\ell _{4}(1)=\ell _{4}(3)=1$ for $e$ and $f$. For $\chi _{2}$, it is $\ell _{4}(2)=\frac{1-\cos \pi }{1-\cos \frac{\pi }{2}}=2$. Therefore, we can write every $u\in \mathcal{A}_{4}$ in a unique way as $u=a+xe+yf+c\chi _{2}$ with $v=xe+yf$, $z=c\chi _{2}$, and we have
\begin{equation*}
r^{2}=x^{2}+y^{2},\qquad t^{2}=c^{2},\qquad Q_{4}(c\chi _{2})=\left( 2-1\right)
c^{2}=c^{2},\qquad a^{2}+r^{2}+c^{2}=1,\qquad a\geq \frac{1}{2},
\end{equation*}
where the last inequality comes from Lemma \ref{Lpositivity}.

\begin{proposition}
\label{PSOS}For every $u=a+xe+yf+c\chi _{2}\in \mathcal{A}_{4}$ we have the exact identity
\begin{equation}
\mathcal{F}_{4}(u)=\left( c-\frac{x^{2}-y^{2}}{2}\right) ^{2}+x^{2}y^{2}+
\frac{1}{2}\left( x^{2}+y^{2}\right) c^{2}+\frac{1}{4}c^{4}+\mathcal{R}(a).
\label{SOS}
\end{equation}
\end{proposition}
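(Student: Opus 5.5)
The plan is to start from the exact formula \eqref{Fexact2}, which holds on every $C_{n}$ with $n\geq 4$. It only used $\left\Vert u\right\Vert _{2}=1$, the orthogonal decomposition \eqref{decomp} and \eqref{basic}, so it applies on $C_{4}$:
\begin{equation*}
\mathcal{F}_{4}(u)=Q_{4}(z)+\frac{s^{2}}{4}+\mathcal{R}(a)-\frac{1}{3}\left\langle \left( v+z\right) ^{3}\right\rangle .
\end{equation*}
With $u=a+xe+yf+c\chi _{2}$, the computation preceding the proposition gives $Q_{4}(z)=c^{2}$ and $s=r^{2}+t^{2}=x^{2}+y^{2}+c^{2}$. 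So the whole proof reduces to computing the single cubic mean $\left\langle \left( v+z\right) ^{3}\right\rangle $ exactly on $C_{4}$, followed by an algebraic comparison. We cannot use Lemma \ref{Linteraction} here, since it is only an inequality and its Step 1 fails on $C_{4}$.

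For the cubic mean, I would evaluate $v+z$ at the four vertices using \eqref{basis4}. The values are $c+\sqrt{2}x$, $-c+\sqrt{2}y$, $c-\sqrt{2}x$ and $-c-\sqrt{2}y$ for $j=0,1,2,3$. Summing cubes in the pairs $j=0,2$ and $j=1,3$, the odd powers of $\sqrt{2}x$ and of $\sqrt{2}y$ cancel, and so do the terms $\pm c^{3}$. Only the mixed terms survive:
\begin{equation*}
\left\langle \left( v+z\right) ^{3}\right\rangle =\frac{1}{4}\left( 12cx^{2}-12cy^{2}\right) =3c\left( x^{2}-y^{2}\right) .
\end{equation*}
Equivalently, in Fourier terms, the only resonance with frequencies in $\{\pm 1,2\}$ summing to $0$ modulo $4$ is $1+1+2\equiv 0$. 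This is the $C_{4}$-specific resonance that makes the four-cycle exceptional. Hence
\begin{equation*}
\mathcal{F}_{4}(u)=c^{2}-c\left( x^{2}-y^{2}\right) +\frac{\left( x^{2}+y^{2}+c^{2}\right) ^{2}}{4}+\mathcal{R}(a).
\end{equation*}

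Finally, I would expand the right-hand side of \eqref{SOS} and match terms. First, $\left( c-\frac{x^{2}-y^{2}}{2}\right) ^{2}=c^{2}-c\left( x^{2}-y^{2}\right) +\frac{\left( x^{2}-y^{2}\right) ^{2}}{4}$. Adding $x^{2}y^{2}$ turns the last term into $\frac{\left( x^{2}+y^{2}\right) ^{2}}{4}$. Adding $\frac{1}{2}\left( x^{2}+y^{2}\right) c^{2}+\frac{1}{4}c^{4}$ then completes $\frac{1}{4}\left( x^{2}+y^{2}+c^{2}\right) ^{2}=\frac{s^{2}}{4}$. The term $\mathcal{R}(a)$ appears on both sides, so the identity follows.

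The only delicate point is the sign and orientation bookkeeping in the cubic mean, namely which of $x^{2}$ and $y^{2}$ carries the plus sign. It must match the grouping $c-\frac{x^{2}-y^{2}}{2}$ in \eqref{SOS}, and this is determined by $\chi _{2}(0)=1$, $\chi _{2}(1)=-1$. Everything else is routine expansion.
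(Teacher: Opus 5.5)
Your proposal is correct and is essentially the paper's proof: both start from \eqref{Fexact2} with $Q_{4}(z)=c^{2}$ and $s=x^{2}+y^{2}+c^{2}$, compute $\frac{1}{3}\left\langle (v+c\chi _{2})^{3}\right\rangle =c(x^{2}-y^{2})$ exactly, and then complete the square using $(x^{2}-y^{2})^{2}+4x^{2}y^{2}=(x^{2}+y^{2})^{2}$. The only difference is cosmetic. You get the cubic mean by evaluating $v+z$ at the four vertices, while the paper uses the product rules $e^{2}=1+\chi _{2}$, $f^{2}=1-\chi _{2}$, $ef=0$. Also, in your Fourier aside, the conjugate resonance $-1-1+2\equiv 0$ should be listed alongside $1+1+2\equiv 0$.
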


\begin{proof}
\emph{Step 1: pointwise products.} We multiply the vectors \eqref{basis4} entry by entry, and we get
\begin{equation*}
e^{2}=\left( 2,0,2,0\right) =1+\chi _{2},\quad f^{2}=\left( 0,2,0,2\right)
=1-\chi _{2},\quad ef=0,\quad \chi _{2}^{2}=\left( 1,1,1,1\right) =1.
\end{equation*}
Therefore,
\begin{equation*}
v^{2}=x^{2}e^{2}+2xyef+y^{2}f^{2}=x^{2}\left( 1+\chi _{2}\right) +y^{2}\left( 1-\chi _{2}\right) =r^{2}+\left( x^{2}-y^{2}\right) \chi _{2}.
\end{equation*}

\emph{Step 2: the cubic mean.} We expand $\left( v+c\chi _{2}\right) ^{3}=v^{3}+3cv^{2}\chi _{2}+3c^{2}v\chi _{2}^{2}+c^{3}\chi _{2}^{3}$ and take the mean of each term. First, $\left\langle v^{3}\right\rangle =0$, since the frequencies of $v^{3}$ are $\pm 1$ and $\pm 3$, and none of them is $\equiv 0$ modulo $4$. Next, $\left\langle v^{2}\chi _{2}\right\rangle =r^{2}\left\langle \chi _{2}\right\rangle +\left( x^{2}-y^{2}\right) \left\langle \chi _{2}^{2}\right\rangle =x^{2}-y^{2}$. In addition, $\left\langle v\chi _{2}^{2}\right\rangle =\left\langle v\right\rangle =0$ and $\left\langle \chi _{2}^{3}\right\rangle =\left\langle \chi _{2}\right\rangle =0$. Therefore,
\begin{equation*}
\frac{1}{3}\left\langle \left( v+c\chi _{2}\right) ^{3}\right\rangle =c\left(
x^{2}-y^{2}\right) .
\end{equation*}

\emph{Step 3: substitution.} We insert this into \eqref{Fexact2} with $Q_{4}(z)=c^{2}$ and $s=r^{2}+c^{2}$. We expand $\frac{\left( r^{2}+c^{2}\right) ^{2}}{4}=\frac{r^{4}}{4}+\frac{r^{2}c^{2}}{2}+\frac{c^{4}}{4}$, and we add and subtract $\frac{1}{4}\left( x^{2}-y^{2}\right) ^{2}$. Then
\begin{align*}
\mathcal{F}_{4}(u)-\mathcal{R}(a)& =c^{2}+\frac{\left( r^{2}+c^{2}\right) ^{2}}{
4}-c\left( x^{2}-y^{2}\right) \\
& =\left( c-\frac{x^{2}-y^{2}}{2}\right) ^{2}+\frac{r^{4}-\left(
x^{2}-y^{2}\right) ^{2}}{4}+\frac{r^{2}c^{2}}{2}+\frac{c^{4}}{4}.
\end{align*}
Since $r^{2}=x^{2}+y^{2}$, we have
\begin{equation*}
r^{4}-\left( x^{2}-y^{2}\right) ^{2}=\left( r^{2}-x^{2}+y^{2}\right) \left(
r^{2}+x^{2}-y^{2}\right) =\left( 2y^{2}\right) \left( 2x^{2}\right)
=4x^{2}y^{2}.
\end{equation*}
By collecting the terms, we obtain \eqref{SOS}. This completes the proof.
\end{proof}

\subsection{Proof of \eqref{cubic6} and of the zero-deficit statement}\label{SScubic6}

All the terms in \eqref{SOS} are nonnegative. We drop the first four terms. We use $a\geq \frac{1}{2}$ and $\left( 1-a\right) ^{3}=\left( \frac{d_{4}(u)^{2}}{2}\right) ^{3}=\frac{1}{8}d_{4}(u)^{6}$, and we get
\begin{equation*}
\mathcal{F}_{4}(u)\geq \mathcal{R}(a)=\frac{\left( 1-a\right) ^{3}\left(
1+3a\right) }{12}=\frac{1+3a}{96}d_{4}(u)^{6}\geq \frac{1+\frac{3}{2}}{96}
d_{4}(u)^{6}=\frac{5}{192}d_{4}(u)^{6},
\end{equation*}
which is \eqref{cubic6}. In every model, the right-hand side of the corresponding cubic bound is positive when $u\neq 1$. Moreover, $\mathcal{F}_{n}(1)=0$. Therefore, $u=1$ is the only zero of the cubic deficit in $\mathcal{A}_{n}$.

\subsection{Optimality of the powers}\label{SSpowers}

First, we study the cubic deficit along the simplest nonconstant functions. We use the same functions again in Section \ref{SSpower} for the logarithmic deficit. Let $e_{n}(j)=\sqrt{2}\cos \left( 2\pi j/n\right) =\frac{1}{\sqrt{2}}\left( \chi _{1}(j)+\chi _{-1}(j)\right) $ for finite $n$ and $e_{\infty }(x)=\sqrt{2}\cos \left( 2\pi x\right) $. Then
\begin{equation}
\left\langle e_{n}\right\rangle =0,\qquad \left\langle e_{n}^{2}\right\rangle
=1,\qquad \left\langle e_{n}^{3}\right\rangle =0,\qquad \left\Vert
e_{n}\right\Vert _{\infty }\leq \sqrt{2},\qquad \mathcal{E}_{n}(e_{n})=1.
\label{enprops}
\end{equation}
Indeed, $\widehat{e_{n}}(\pm 1)=\frac{1}{\sqrt{2}}$. So $\left\langle e_{n}^{2}\right\rangle =\left\vert \widehat{e_{n}}(1)\right\vert ^{2}+\left\vert \widehat{e_{n}}(-1)\right\vert ^{2}=\frac{1}{2}+\frac{1}{2}=1$ by Parseval, and $\mathcal{E}_{n}(e_{n})=\ell _{n}(1)\left\langle e_{n}^{2}\right\rangle =1$. Moreover, $\left\langle e_{n}^{3}\right\rangle =0$, because the frequencies of $e_{n}^{3}$ are $\pm 1,\pm 3$, and none of them is zero modulo $n\geq 4$. On the circle, these frequencies are not zero either. For $0<\varepsilon <\frac{1}{4}$, we define
\begin{equation}
u_{\varepsilon }=\frac{1+\varepsilon e_{n}}{\sqrt{1+\varepsilon ^{2}}}.
\label{testfamily}
\end{equation}
These functions are positive, since $1+\varepsilon e_{n}\geq 1-\sqrt{2}\varepsilon >0$. They have $L^{2}$-norm one, since $\left\langle \left( 1+\varepsilon e_{n}\right) ^{2}\right\rangle =1+2\varepsilon \left\langle e_{n}\right\rangle +\varepsilon ^{2}\left\langle e_{n}^{2}\right\rangle =1+\varepsilon ^{2}$. They are also nonconstant, and on the circle they are smooth, so $u_{\varepsilon }\in \mathcal{A}_{\infty }$. Their mean is $a_{\varepsilon }=\left( 1+\varepsilon ^{2}\right) ^{-1/2}$, their first harmonic is $\varepsilon a_{\varepsilon }e_{n}$, and their high part $z$ vanishes. Therefore, by \eqref{basic} and \eqref{Fexact} with $Q=0$ and $\left\langle v^{3}\right\rangle =0$, we have
\begin{equation*}
d_{n}(u_{\varepsilon })^{2}=2\left( 1-a_{\varepsilon }\right) ,\qquad
\mathcal{E}_{n}(u_{\varepsilon })=\varepsilon ^{2}a_{\varepsilon }^{2}=
\frac{\varepsilon ^{2}}{1+\varepsilon ^{2}},\qquad \mathcal{F}
_{n}(u_{\varepsilon })=\frac{\left( 1-a_{\varepsilon }\right)
^{2}\left( 1+2a_{\varepsilon }\right) }{3}.
\end{equation*}
By the binomial series $\left( 1+x\right) ^{-1/2}=1-\frac{1}{2}x+\frac{3}{8}x^{2}+O(x^{3})$ with $x=\varepsilon ^{2}$, we get
\begin{equation*}
a_{\varepsilon }=1-\frac{1}{2}\varepsilon ^{2}+\frac{3}{8}\varepsilon
^{4}+O(\varepsilon ^{6}),\qquad 1-a_{\varepsilon }=\frac{1}{2}
\varepsilon ^{2}+O(\varepsilon ^{4}),
\end{equation*}
and therefore
\begin{equation}
d_{n}(u_{\varepsilon })^{2}=\varepsilon ^{2}+O(\varepsilon ^{4}),\qquad
\mathcal{F}_{n}(u_{\varepsilon })=\frac{\left( \frac{1}{2}\varepsilon
^{2}\right) ^{2}\cdot 3}{3}+O(\varepsilon ^{6})=\frac{1}{4}\varepsilon
^{4}+O(\varepsilon ^{6}).  \label{testorders}
\end{equation}
As a consequence, for $0<p<4$, we have
\begin{equation*}
\frac{\mathcal{F}_{n}(u_{\varepsilon })}{d_{n}(u_{\varepsilon })^{p}}=\frac{
\frac{1}{4}\varepsilon ^{4}+O(\varepsilon ^{6})}{\varepsilon
^{p}(1+O(\varepsilon ^{2}))}=\frac{1}{4}\varepsilon ^{4-p}\left(
1+O(\varepsilon ^{2})\right) \rightarrow 0\qquad (\varepsilon \rightarrow 0),
\end{equation*}
which proves \eqref{cubicpower} for $n\in \mathcal{J}$.

On $C_{4}$, the family \eqref{testfamily} only gives $\mathcal{F}_{4}(u_{\varepsilon })\sim \frac{1}{4}\varepsilon ^{4}$. This is not small enough to show that the power six is optimal. We use Proposition \ref{PSOS} as a guide, and we make the first square vanish, that is, $y=0$, $c=\frac{x^{2}}{2}$. More precisely, we set
\begin{equation*}
U_{\varepsilon }=b_{\varepsilon }+\varepsilon e+\frac{\varepsilon ^{2}}{2}\chi _{2},\qquad
b_{\varepsilon }=\sqrt{1-\varepsilon ^{2}-\frac{\varepsilon ^{4}}{4}},\qquad
0<\varepsilon <\frac{1}{4}.
\end{equation*}
By orthogonality, $\left\Vert U_{\varepsilon }\right\Vert _{2}^{2}=b_{\varepsilon }^{2}+\varepsilon ^{2}+\frac{\varepsilon ^{4}}{4}=1$. Since $\varepsilon <\frac{1}{4}$, we have $b_{\varepsilon }^{2}\geq 1-\frac{1}{16}-\frac{1}{1024}>\frac{9}{16}$, and so $b_{\varepsilon }>\frac{3}{4}$. Moreover, every entry of $U_{\varepsilon }$ is at least
\begin{equation*}
b_{\varepsilon }-\left\Vert \varepsilon e\right\Vert _{\infty }-\left\Vert
\frac{\varepsilon ^{2}}{2}\chi _{2}\right\Vert _{\infty }=b_{\varepsilon }-\sqrt{2}
\varepsilon -\frac{\varepsilon ^{2}}{2}>\frac{3}{4}-\frac{\sqrt{2}}{4}-\frac{1
}{32}>\frac{3}{4}-\frac{3}{8}-\frac{1}{32}=\frac{11}{32}>0.
\end{equation*}
So $U_{\varepsilon }\in \mathcal{A}_{4}\setminus \{1\}$, and $d_{4}(U_{\varepsilon })^{2}=2\left( 1-b_{\varepsilon }\right) =\varepsilon ^{2}+O(\varepsilon ^{4})$. Indeed, $\sqrt{1-\eta }=1-\frac{\eta }{2}+O(\eta ^{2})$ with $\eta =\varepsilon ^{2}+\frac{\varepsilon ^{4}}{4}$ gives $1-b_{\varepsilon }=\frac{\varepsilon ^{2}}{2}+O(\varepsilon ^{4})$. In \eqref{SOS}, we have $x=\varepsilon $, $y=0$, $c=\frac{\varepsilon ^{2}}{2}$. So $c-\frac{x^{2}-y^{2}}{2}=0$ and $x^{2}y^{2}=0$, and we get
\begin{equation*}
\mathcal{F}_{4}(U_{\varepsilon })=\frac{1}{2}\varepsilon ^{2}\cdot \frac{
\varepsilon ^{4}}{4}+\frac{1}{4}\cdot \frac{\varepsilon ^{8}}{16}+\mathcal{R}
(b_{\varepsilon })=\frac{\varepsilon ^{6}}{8}+\frac{\varepsilon ^{8}}{64}+
\mathcal{R}(b_{\varepsilon }).
\end{equation*}
Moreover, $\mathcal{R}(b_{\varepsilon })=\frac{\left( 1-b_{\varepsilon }\right) ^{3}\left( 1+3b_{\varepsilon }\right) }{12}=\frac{\left( \frac{\varepsilon ^{2}}{2}\right) ^{3}\cdot 4}{12}+O(\varepsilon ^{8})=\frac{\varepsilon ^{6}}{24}+O(\varepsilon ^{8})$. Therefore,
\begin{equation*}
\mathcal{F}_{4}(U_{\varepsilon })=\frac{\varepsilon ^{6}}{8}+\frac{
\varepsilon ^{6}}{24}+O(\varepsilon ^{8})=\frac{3+1}{24}\varepsilon
^{6}+O(\varepsilon ^{8})=\frac{\varepsilon ^{6}}{6}+O(\varepsilon ^{8}),
\end{equation*}
and $\mathcal{F}_{4}(U_{\varepsilon })/d_{4}(U_{\varepsilon })^{p}\sim \frac{1}{6}\varepsilon ^{6-p}\rightarrow 0$ for $0<p<6$. This proves \eqref{cubicpower} for $n=4$. The quotients in \eqref{cubicpower} are nonnegative by \eqref{cubic4}--\eqref{cubic6}. Therefore, their infima are exactly zero.

\begin{proof}[Proof of Theorem \ref{Tcubic}]
The estimate \eqref{cubic4} is proved in Proposition \ref{Pcubic}. The estimate \eqref{cubic6} is proved in Section \ref{SScubic6}, by the exact formula of Proposition \ref{PSOS}. By Proposition \ref{Pcubic} and Section \ref{SScubic6}, the cubic deficit vanishes only for $u=1$. Finally, \eqref{cubicpower} is proved in Section \ref{SSpowers}. This completes the proof of Theorem \ref{Tcubic}.
\end{proof}

\section{The scalar remainder and the nearest-neighbor logarithmic stability -- \texorpdfstring{\mbox{Proof of Theorem~\ref{Tmajorantintro}}}{Proof of Theorem \ref{Tmajorantintro}}}

\subsection{An integral formula for $H$}

Let $\varphi (w)=w^{2}\log w^{2}=2w^{2}\log w$ for $w>0$. Its derivatives are
\begin{gather*}
\varphi ^{\prime }(w)=2w\log w^{2}+2w,\qquad \varphi ^{\prime \prime
}(w)=2\log w^{2}+6,\qquad \varphi ^{\prime \prime \prime }(w)=\frac{4}{w}, \\
\varphi ^{(4)}(w)=-\frac{4}{w^{2}},\qquad \varphi ^{(5)}(w)=\frac{8}{
w^{3}}.
\end{gather*}
At $w=1$, we have $\varphi (1)=0$, $\varphi ^{\prime }(1)=2$, $\varphi ^{\prime \prime }(1)=6$, $\varphi ^{\prime \prime \prime }(1)=4$. Therefore, the cubic Taylor polynomial of $\varphi $ at $w=1$ is
\begin{equation*}
P_{3}(w)=2\left( w-1\right) +3\left( w-1\right) ^{2}+\frac{2}{3}\left(
w-1\right) ^{3}.
\end{equation*}
We check that this is the cubic polynomial in \eqref{Hdef}. Indeed, $w^{2}-1=\left( w-1\right) \left( w+1\right) =2\left( w-1\right) +\left( w-1\right) ^{2}$, and $\frac{2}{3}\left( w-1\right) ^{2}\left( w+2\right) =\frac{2}{3}\left( w-1\right) ^{2}\left( 3+\left( w-1\right) \right) =2\left( w-1\right) ^{2}+\frac{2}{3}\left( w-1\right) ^{3}$. The sum of these two expressions is $P_{3}(w)$. Therefore, $H=P_{3}-\varphi $ is minus the cubic Taylor remainder of $\varphi $. By Taylor's formula with integral remainder, we have, for $w>0$,
\begin{equation*}
\varphi (w)-P_{3}(w)=\int_{1}^{w}\frac{\left( w-\sigma \right) ^{3}}{3!}
\varphi ^{(4)}(\sigma )d\sigma =-\frac{2}{3}\int_{1}^{w}\frac{\left( w-\sigma
\right) ^{3}}{\sigma ^{2}}d\sigma .
\end{equation*}
We substitute $\sigma =1+\tau \left( w-1\right) $, $0\leq \tau \leq 1$. Then $d\sigma =\left( w-1\right) d\tau $ and $w-\sigma =\left( 1-\tau \right) \left( w-1\right) $, and we obtain
\begin{equation}
\begin{split}
H(w)& =\frac{2}{3}\int_{0}^{1}\frac{\left( 1-\tau \right) ^{3}\left( w-1\right) ^{3}}{\left( 1+\tau \left( w-1\right) \right) ^{2}}\left( w-1\right) d\tau \\
& =\frac{2}{3}\left( w-1\right) ^{4}\int_{0}^{1}\frac{\left( 1-\tau \right) ^{3}}{\left( 1+\tau \left( w-1\right) \right) ^{2}}d\tau ,\qquad w\geq 0.
\end{split}
\label{Hintegral}
\end{equation}
Since $1+\tau \left( w-1\right) =\left( 1-\tau \right) +\tau w>0$ for $w\geq 0$ and $0\leq \tau <1$, the integrand is positive and $H\geq 0$. This is \cite[Lemma 1]{FI26}. At $w=0$, the integrand is $\left( 1-\tau \right) ^{3}/\left( 1-\tau \right) ^{2}=1-\tau $ and the integral equals $\frac{1}{2}$. So \eqref{Hintegral} gives $H(0)=\frac{2}{3}\cdot \frac{1}{2}=\frac{1}{3}$. This agrees with \eqref{Hdef}. Near $w=1$, the integral is close to $\int_{0}^{1}\left( 1-\tau \right) ^{3}d\tau =\frac{1}{4}$, and so $H(w)\approx \frac{1}{6}\left( w-1\right) ^{4}$. That is, the remainder is of fourth order, as expected for a cubic Taylor approximation.

For large $w$, the remainder is much smaller than $\left( w-1\right) ^{4}$. More precisely, for $w\rightarrow \infty $, we have $H(w)\sim \frac{2}{3}w^{3}$, so the remainder grows only cubically. Therefore, $H(w)/\left( w-1\right) ^{4}\rightarrow 0$. As a result, \emph{no} pointwise inequality $H(w)\geq c\left( w-1\right) ^{4}$ with $c>0$ holds on all of $\left[ 0,\infty \right) $. A pointwise bound that does hold is the bound of Xie and Zhang \cite[Lemma 12]{XZ26},
\begin{equation*}
H(w)\geq \frac{\left( w-1\right) ^{4}}{6\max \{1,w^{2}\}},\qquad w\geq 0,
\end{equation*}
which follows directly from \eqref{Hintegral}. Indeed, for $w\leq 1$ the denominator satisfies $1+\tau \left( w-1\right) \leq 1$, and for $w\geq 1$ it satisfies $1+\tau \left( w-1\right) \leq w$. Therefore, in both cases the integral in \eqref{Hintegral} is at least $\frac{1}{4\max \{1,w^{2}\}}$. As in \cite[Lemma 12]{XZ26}, we take the average and apply the Cauchy--Schwarz inequality in the form $\left\langle \frac{\left( u-1\right) ^{4}}{\max \{1,u^{2}\}}\right\rangle \left\langle \max \{1,u^{2}\}\right\rangle \geq \left\langle \left( u-1\right) ^{2}\right\rangle ^{2}$. Together with $\left\langle \max \{1,u^{2}\}\right\rangle \leq 1+\left\langle u^{2}\right\rangle =2$, we obtain $\left\langle H(u)\right\rangle \geq \frac{1}{12}\left\Vert u-1\right\Vert _{2}^{4}$. This is the origin of the constant $\frac{1}{12}$ in Theorem \ref{TB}. The factor $\frac{1}{2}$ is lost when we pass from the pointwise bound to the average. Now we prove Theorem \ref{Tmajorantintro}. The proof shows that if we average the exact remainder in \eqref{Hintegral} and use the normalization $\left\langle u^{2}\right\rangle =1$, then we obtain the constant $\frac{1}{6}$, which is twice as large, together with a sixth-order term.

\begin{proof}[Proof of Theorem \ref{Tmajorantintro}]
Let $(X,\mu )$ be the probability space in Theorem \ref{Tmajorantintro}, and let $d=\left\Vert u-1\right\Vert _{2}$. We prove the more precise estimate
\begin{equation}
\left\langle H(u)\right\rangle \geq d^{4}\Theta (d^{2})\geq \frac{1
}{6}d^{4}+\frac{1}{45}d^{6},\qquad \Theta (x)=\frac{2}{3}
\int_{0}^{1}\frac{\left( 1-\tau \right) ^{3}}{1-\tau \left( 1-\tau \right) x}
d\tau .  \label{majorant}
\end{equation}
Here the left-hand side is allowed to be $+\infty $. If $d=0$, then $u=1$ a.e.\ and both sides of \eqref{majorant} vanish. Let $d>0$ and $a=\langle u\rangle $. As in Section 2, we have $0<a<1$ and $d^{2}=\left\langle u^{2}\right\rangle -2a+1=2\left( 1-a\right) <2$.

\emph{Step 1: the auxiliary functions.} For $0<\tau <1$, we define
\begin{equation*}
\omega _{\tau }=\left( 1-\tau \right) +\tau u,\qquad q_{\tau }=\frac{\left(
u-1\right) ^{2}}{\omega _{\tau }},\qquad m_{\tau }=\left\langle \omega _{\tau
}^{2}\right\rangle .
\end{equation*}
Since $\omega _{\tau }\geq \min \{\tau ,1-\tau \}\left( 1+u\right) >0$, the function $q_{\tau }$ is well defined. Moreover, $0\leq q_{\tau }\leq \frac{\left( 1+u\right) ^{2}}{\min \{\tau ,1-\tau \}\left( 1+u\right) }=\frac{1+u}{\min \{\tau ,1-\tau \}}$, and so $q_{\tau }\in L^{2}$. We expand the square and use $\left\langle u\right\rangle =a$, $\left\langle u^{2}\right\rangle =1$. Then we get
\begin{multline*}
m_{\tau }=\left( 1-\tau \right) ^{2}+2\tau \left( 1-\tau \right) a+\tau
^{2}=\left[ \left( 1-\tau \right) ^{2}+\tau ^{2}\right] +2\tau \left( 1-\tau
\right) a \\
=1-2\tau \left( 1-\tau \right) +2\tau \left( 1-\tau \right) a,
\end{multline*}
because $\left( 1-\tau \right) ^{2}+\tau ^{2}=1-2\tau \left( 1-\tau \right) $. Therefore,
\begin{equation}
m_{\tau }=1-2\tau \left( 1-\tau \right) \left( 1-a\right) =1-\tau \left(
1-\tau \right) d^{2}\geq 1-\frac{1}{4}\cdot 2=\frac{1}{2},  \label{mtau}
\end{equation}
where we used $\tau \left( 1-\tau \right) \leq \frac{1}{4}$ and $d^{2}<2$. In addition, since $q_{\tau }\omega _{\tau }=\left( u-1\right) ^{2}$, we have
\begin{equation*}
\left\langle q_{\tau }\omega _{\tau }\right\rangle =\left\langle \left( u-1\right)
^{2}\right\rangle =d^{2}.
\end{equation*}

\emph{Step 2: Cauchy--Schwarz with its remainder.} Let $c_{\tau }=\frac{\left\langle q_{\tau }\omega _{\tau }\right\rangle }{\left\langle \omega _{\tau }^{2}\right\rangle }=\frac{d^{2}}{m_{\tau }}$. Then $c_{\tau }\omega _{\tau }$ is the orthogonal projection of $q_{\tau }$ onto the line spanned by $\omega _{\tau }$ in $L^{2}$. By expanding the square, we have
\begin{equation*}
\left\Vert q_{\tau }-c_{\tau }\omega _{\tau }\right\Vert _{2}^{2}=\left\langle
q_{\tau }^{2}\right\rangle -2c_{\tau }\left\langle q_{\tau }\omega _{\tau
}\right\rangle +c_{\tau }^{2}m_{\tau }=\left\langle q_{\tau
}^{2}\right\rangle -2\frac{d^{4}}{m_{\tau }}+\frac{d^{4}}{
m_{\tau }}=\left\langle q_{\tau }^{2}\right\rangle -\frac{d^{4}}{
m_{\tau }}.
\end{equation*}
Therefore, we have the exact identity
\begin{equation*}
\left\langle q_{\tau }^{2}\right\rangle =\frac{d^{4}}{m_{\tau }}
+\left\Vert q_{\tau }-c_{\tau }\omega _{\tau }\right\Vert
_{2}^{2}\geq \frac{d^{4}}{m_{\tau }},
\end{equation*}
which is exactly the Cauchy--Schwarz inequality $d^{4}=\left\langle q_{\tau }\omega _{\tau }\right\rangle ^{2}\leq \left\langle q_{\tau }^{2}\right\rangle \left\langle \omega _{\tau }^{2}\right\rangle $ written with its remainder.

\emph{Step 3: integration.} We apply \eqref{Hintegral} at every point of $X$, with $w$ equal to the value of $u$ at this point. Since $1+\tau \left( u-1\right) =\omega _{\tau }$, we get
\begin{equation*}
H(u)=\frac{2}{3}\int_{0}^{1}\left( 1-\tau \right) ^{3}\frac{\left(
u-1\right) ^{4}}{\omega _{\tau }^{2}}d\tau =\frac{2}{3}\int_{0}^{1}\left( 1-\tau
\right) ^{3}q_{\tau }^{2}d\tau .
\end{equation*}
We take the average over $X$ and exchange the order of integration. This is allowed by Tonelli's theorem, since all quantities are nonnegative. We obtain
\begin{multline*}
\left\langle H(u)\right\rangle =\frac{2}{3}\int_{0}^{1}\left( 1-\tau \right)
^{3}\left\langle q_{\tau }^{2}\right\rangle d\tau \geq \frac{2}{3}
\int_{0}^{1}\left( 1-\tau \right) ^{3}\frac{d^{4}}{m_{\tau }}d\tau \\
=\frac{2}{3}d^{4}\int_{0}^{1}\frac{\left( 1-\tau \right) ^{3}}{1-\tau
\left( 1-\tau \right) d^{2}}d\tau =d^{4}\Theta (d^{2}).
\end{multline*}
This proves the first inequality in \eqref{majorant}. It also holds when $\left\langle H(u)\right\rangle =+\infty $, since all the integrands are nonnegative.

\emph{Step 4: expansion of $\Theta $.} We use the algebraic identity $\frac{1}{1-\beta }=1+\beta +\frac{\beta ^{2}}{1-\beta }$, which can be checked by multiplying out. We take $\beta =\tau \left( 1-\tau \right) x\in \left[ 0,\frac{1}{2}\right] $ for $0\leq x\leq 2$, and we get
\begin{equation*}
\Theta (x)=\frac{2}{3}\int_{0}^{1}\left( 1-\tau \right) ^{3}d\tau +\frac{2}{3}
x\int_{0}^{1}\tau \left( 1-\tau \right) ^{4}d\tau +\frac{2}{3}
x^{2}\int_{0}^{1}\frac{\tau ^{2}\left( 1-\tau \right) ^{5}}{1-\tau \left(
1-\tau \right) x}d\tau .
\end{equation*}
The first integral is $\frac{1}{4}$. The second one is a Beta integral, and it is equal to $\frac{1!\,4!}{6!}=\frac{24}{720}=\frac{1}{30}$. Therefore,
\begin{equation*}
\Theta (x)=\frac{1}{6}+\frac{x}{45}+\frac{2x^{2}}{3}\int_{0}^{1}\frac{\tau
^{2}\left( 1-\tau \right) ^{5}}{1-\tau \left( 1-\tau \right) x}d\tau \geq
\frac{1}{6}+\frac{x}{45},\qquad 0\leq x\leq 2,
\end{equation*}
with strict inequality for $x>0$, since the last integrand is positive. Taking $x=d^{2}$, we obtain $d^{4}\Theta (d^{2})\geq \frac{1}{6}d^{4}+\frac{1}{45}d^{6}$. This is the second inequality in \eqref{majorant}.

\emph{Step 5: optimality of $\frac{1}{6}$.} We consider the two-point space $\{-1,1\}$ with the uniform measure and the function $g_{\varepsilon }(\sigma )=\left( 1+\varepsilon \sigma \right) /\sqrt{1+\varepsilon ^{2}}$, $0<\varepsilon <1$. It is positive and satisfies $\left\langle g_{\varepsilon }^{2}\right\rangle =1$. Then $g_{\varepsilon }-1=\varepsilon \sigma -\frac{1}{2}\varepsilon ^{2}+O(\varepsilon ^{3})$, and so $\left( g_{\varepsilon }-1\right) ^{4}=\varepsilon ^{4}+O(\varepsilon ^{5})$. Moreover, $H(1+\eta )=\frac{1}{6}\eta ^{4}+O(\eta ^{5})$ by the fourth-order Taylor expansion (see Step 1 in Section \ref{SSpower}). So we get $\left\langle H(g_{\varepsilon })\right\rangle =\frac{1}{6}\varepsilon ^{4}+O(\varepsilon ^{5})$. On the other hand, $\left\Vert g_{\varepsilon }-1\right\Vert _{2}^{2}=2\left( 1-\left( 1+\varepsilon ^{2}\right) ^{-1/2}\right) =\varepsilon ^{2}+O(\varepsilon ^{4})$, and so $\left\Vert g_{\varepsilon }-1\right\Vert _{2}^{4}=\varepsilon ^{4}+O(\varepsilon ^{6})$. Therefore, the quotient tends to $\frac{1}{6}$. As a consequence, no inequality $\left\langle H(u)\right\rangle \geq c\left\Vert u-1\right\Vert _{2}^{4}$ with $c>\frac{1}{6}$ can hold on all probability spaces. This completes the proof of Theorem \ref{Tmajorantintro}.
\end{proof}

\subsection{Proof of \eqref{log4}}\label{SSlog4}

Let $n\in \mathcal{J}$ and $u\in \mathcal{A}_{n}$. All terms in \eqref{splitting} are finite. On a finite cycle this is obvious, and on the circle $u$ is continuous by $H^{1}(\mathbb{T})\subset C^{0}(\mathbb{T})$. By \eqref{splitting}, Proposition \ref{Pcubic} and Theorem \ref{Tmajorantintro}, we have
\begin{equation*}
\Delta _{n}(u)=2\mathcal{F}_{n}(u)+\left\langle H(u)\right\rangle \geq 2\cdot
\frac{1}{12}d_{n}(u)^{4}+\frac{1}{6}d_{n}(u)^{4}+\frac{1}{45}d_{n}(u)^{6}=
\frac{1}{3}d_{n}(u)^{4}+\frac{1}{45}d_{n}(u)^{6},
\end{equation*}
which is \eqref{log4}.

\subsection{Optimality of the power four}\label{SSpower}

We show that for every fixed $n\in \mathcal{I}$ and $0<p<4$,
\begin{equation}
\inf_{u\in \mathcal{A}_{n}\setminus \{1\}}\frac{\Delta _{n}(u)}{d_{n}(u)^{p}}
=0,\qquad \text{and}\qquad \inf_{u\in \mathcal{A}_{n}\setminus \{1\}}\frac{
\Delta _{\mathrm{w},n}(u)}{d_{n}(u)^{p}}=0\quad (n<\infty ).  \label{logpower}
\end{equation}

\emph{Step 1: a Taylor bound for $H$.} We have $H(1)=H^{\prime }(1)=H^{\prime \prime }(1)=H^{\prime \prime \prime }(1)=0$, since $H=P_{3}-\varphi $ and $P_{3}$ is the cubic Taylor polynomial. Moreover, $H^{(4)}(w)=-\varphi ^{(4)}(w)=\frac{4}{w^{2}}$, so $H^{(4)}(1)=4$, and $H^{(5)}(w)=-\varphi ^{(5)}(w)=-\frac{8}{w^{3}}$. For $\left\vert \eta \right\vert \leq \frac{1}{2}$, Taylor's formula with Lagrange remainder of order five gives
\begin{align*}
\left\vert H(1+\eta )-\frac{H^{(4)}(1)}{4!}\eta ^{4}\right\vert & =\left\vert H(1+\eta )-\frac{1}{6}\eta ^{4}\right\vert \leq \frac{\max_{\left[ 1/2,3/2\right] }\left\vert H^{(5)}\right\vert }{5!}\left\vert \eta \right\vert ^{5} \\
& =\frac{8\cdot 8}{120}\left\vert \eta \right\vert ^{5}=\frac{8}{15}\left\vert \eta \right\vert ^{5},
\end{align*}
since $\left\vert H^{(5)}(w)\right\vert =\frac{8}{w^{3}}\leq 64$ on $\left[ \frac{1}{2},\frac{3}{2}\right] $.

\emph{Step 2: the family.} We consider $u_{\varepsilon }$ from \eqref{testfamily}. We have $\left\Vert e_{n}\right\Vert _{\infty }\leq \sqrt{2}$ and $a_{\varepsilon }\leq 1$. So the range of $u_{\varepsilon }$ is contained in $\left[ a_{\varepsilon }\left( 1-\sqrt{2}\varepsilon \right) ,1+\sqrt{2}\varepsilon \right] \subset \left[ \frac{1}{2},\frac{3}{2}\right] $ for $\varepsilon <\frac{1}{4}$. Here we use $a_{\varepsilon }\geq 1-\frac{\varepsilon ^{2}}{2}\geq \frac{31}{32}$, which we now prove. Indeed, $1-a_{\varepsilon }=1-\left( 1+\varepsilon ^{2}\right) ^{-1/2}\leq \frac{\varepsilon ^{2}}{2}$. This holds because $\left( 1+x\right) ^{-1/2}\geq 1-\frac{x}{2}$ for $x\geq 0$, by convexity. In addition,
\begin{equation*}
u_{\varepsilon }-1-\varepsilon e_{n}=\left( a_{\varepsilon }-1\right)
+\varepsilon \left( a_{\varepsilon }-1\right) e_{n},\qquad \left\Vert
u_{\varepsilon }-1-\varepsilon e_{n}\right\Vert _{\infty }\leq \frac{
\varepsilon ^{2}}{2}\left( 1+\sqrt{2}\varepsilon \right) \leq \varepsilon
^{2}.
\end{equation*}
Therefore, $\left( u_{\varepsilon }-1\right) ^{4}=\varepsilon ^{4}e_{n}^{4}+O(\varepsilon ^{5})$ uniformly in the point and in the model. By Step 1, we get
\begin{equation*}
\left\langle H(u_{\varepsilon })\right\rangle =\frac{\varepsilon ^{4}}{6}
\left\langle e_{n}^{4}\right\rangle +O(\varepsilon ^{5}),\qquad \left\langle
e_{n}^{4}\right\rangle \leq \left\Vert e_{n}\right\Vert _{\infty
}^{2}\left\Vert e_{n}\right\Vert _{2}^{2}\leq 2.
\end{equation*}
Combining this with the splitting \eqref{splitting} and \eqref{testorders}, we obtain
\begin{equation*}
\Delta _{n}(u_{\varepsilon })=2\mathcal{F}_{n}(u_{\varepsilon })+\left\langle
H(u_{\varepsilon })\right\rangle =\left( \frac{1}{2}+\frac{1}{6}\left\langle
e_{n}^{4}\right\rangle \right) \varepsilon ^{4}+O(\varepsilon
^{5})=O(\varepsilon ^{4}),\qquad d_{n}(u_{\varepsilon })\sim \varepsilon .
\end{equation*}
For finite $n$, the first harmonic also has word-length eigenvalue $\psi _{n}(1)=1$. Therefore, $\mathcal{E}_{\mathrm{w},n}(u_{\varepsilon })=\mathcal{E}_{n}(u_{\varepsilon })$ and $\Delta _{\mathrm{w},n}(u_{\varepsilon })=\Delta _{n}(u_{\varepsilon })=O(\varepsilon ^{4})$.

\emph{Step 3: conclusion.} For $0<p<4$, both quotients $\Delta _{n}(u_{\varepsilon })/d_{n}(u_{\varepsilon })^{p}$ and $\Delta _{\mathrm{w},n}(u_{\varepsilon })/d_{n}(u_{\varepsilon })^{p}$ are $O(\varepsilon ^{4-p})\rightarrow 0$. These quotients are nonnegative, since $\Delta _{n}\geq 0$ by Theorem \ref{TA} and $\Delta _{\mathrm{w},n}\geq 0$ by Theorem \ref{TB}. Therefore, \eqref{logpower} follows.

\subsection{Proof of the lower bound \eqref{sepnn}}\label{SSsepnn}

Let $n\in \mathcal{J}$, $u\in \mathcal{A}_{n}\setminus \{1\}$, $a=\langle u\rangle <1$ and $d=d_{n}(u)>0$. We consider two ranges of the mean separately.

\emph{Small and moderate means: $a\leq \frac{3}{5}$.} In this case, $d^{2}=2\left( 1-a\right) \geq \frac{4}{5}$. So the sixth-order term in \eqref{log4} is at least a fixed fraction of the fourth-order term, and we have
\begin{equation*}
\frac{\Delta _{n}(u)}{d^{4}}\geq \frac{1}{3}+\frac{d^{2}}{45}\geq \frac{1}{3}+
\frac{4/5}{45}=\frac{1}{3}+\frac{4}{225}=\frac{79}{225}.
\end{equation*}

\emph{Large means: $a\geq \frac{3}{5}$.} In this case, $d$ may be small and the $d^{6}$ term is not useful. Instead, we use the retained term $B(a)$. By \eqref{cubicB} and Theorem \ref{Tmajorantintro} in the splitting \eqref{splitting}, we have
\begin{equation*}
\Delta _{n}(u)\geq 2\left( \frac{1}{12}d^{4}+B(a)\right) +\frac{1}{6}d^{4}+
\frac{1}{45}d^{6}=\frac{1}{3}d^{4}+\frac{1}{45}d^{6}+2B(a).
\end{equation*}
Since $d^{4}=4\left( 1-a\right) ^{2}$, we have $\frac{2B(a)}{d^{4}}=\frac{2\left( 1-a\right) ^{2}\left( 5-3a\right) \left( 5a-3\right) }{96\cdot 4\left( 1-a\right) ^{2}}=\frac{\left( 5-3a\right) \left( 5a-3\right) }{192}$ and $\frac{d^{6}}{45d^{4}}=\frac{d^{2}}{45}=\frac{2\left( 1-a\right) }{45}$. Therefore,
\begin{equation*}
\frac{\Delta _{n}(u)}{d^{4}}\geq \frac{1}{3}+\Psi (a),\qquad \Psi (a)=\frac{2\left(
1-a\right) }{45}+\frac{\left( 5-3a\right) \left( 5a-3\right) }{192}.
\end{equation*}
The function $\Psi $ is a quadratic polynomial, and the coefficient of $a^{2}$ is $-\frac{15}{192}<0$. So it is concave. A concave function on an interval is at least the minimum of its values at the endpoints. We compute $\Psi \left( \frac{3}{5}\right) =\frac{2\cdot \frac{2}{5}}{45}+0=\frac{4}{225}$ and $\Psi (1)=0+\frac{2\cdot 2}{192}=\frac{1}{48}$, and we note that $\frac{1}{48}\approx 0.0208>\frac{4}{225}\approx 0.0178$. Therefore, $\Psi (a)\geq \frac{4}{225}$ on $\left[ \frac{3}{5},1\right] $.

The two ranges of $a$ cover the whole admissible class. This proves \eqref{sepnn}.

\section{Gross's two-point inequality and the four-cycle -- \texorpdfstring{\mbox{Proof of Theorem~\ref{Ttwopoint}}}{Proof of Theorem \ref{Ttwopoint}}}

\subsection{The two-point inequality}

\begin{proof}[Proof of Theorem \ref{Ttwopoint}]
\emph{Step 1: parametrization.} Let $g\geq 0$ on $\{-1,1\}$ with $\left\Vert g\right\Vert _{2}^{2}=\frac{1}{2}\left( g(1)^{2}+g(-1)^{2}\right) =1$. Exchanging the two points changes neither $\delta _{2}(g)$ nor $\left\Vert g-1\right\Vert _{2}$. So we may assume $g(1)\geq g(-1)$. Since $g\not\equiv 0$, we have $g(1)+g(-1)>0$, and we set
\begin{equation*}
\rho =\frac{g(1)-g(-1)}{g(1)+g(-1)}\in \left[ 0,1\right] ,\qquad c_{0}=\frac{
g(1)+g(-1)}{2}>0.
\end{equation*}
Then $g(\pm 1)=c_{0}\left( 1\pm \rho \right) $, and the normalization gives $c_{0}^{2}\frac{\left( 1+\rho \right) ^{2}+\left( 1-\rho \right) ^{2}}{2}=c_{0}^{2}\left( 1+\rho ^{2}\right) =1$. Therefore,
\begin{equation}
g(\pm 1)=\frac{1\pm \rho }{\sqrt{1+\rho ^{2}}},\qquad 0\leq \rho \leq 1.
\label{rhorep}
\end{equation}
The value $\rho =0$ corresponds to $g\equiv 1$, and $\rho =1$ corresponds to $g=(\sqrt{2},0)$, which vanishes at one point. We write $y=\sqrt{1+\rho ^{2}}\in \left[ 1,\sqrt{2}\right] $.

\emph{Step 2: the three quantities as functions of $\rho $.} The mean is $\langle g\rangle =\frac{1}{2}\cdot \frac{\left( 1+\rho \right) +\left( 1-\rho \right) }{y}=\frac{1}{y}$. So
\begin{equation}
D(\rho ):=\left\Vert g-1\right\Vert _{2}^{2}=\left\langle g^{2}\right\rangle
-2\langle g\rangle +1=2\left( 1-\frac{1}{y}\right) ,\qquad D^{\prime }(\rho )=
\frac{2y^{\prime }}{y^{2}}=\frac{2\rho }{y^{3}},  \label{Drho}
\end{equation}
where we used $y^{\prime }=\frac{\rho }{y}$. Twice the energy is $2\mathcal{E}_{2}(g)=\frac{1}{2}\left( g(1)-g(-1)\right) ^{2}=\frac{1}{2}\left( \frac{2\rho }{y}\right) ^{2}=\frac{2\rho ^{2}}{1+\rho ^{2}}$. For the entropy, since $\left\langle g^{2}\right\rangle =1$, we have $\Ent(g^{2})=\left\langle g^{2}\log g^{2}\right\rangle $. Moreover, $\log g(\pm 1)^{2}=2\log \left( 1\pm \rho \right) -\log \left( 1+\rho ^{2}\right) $. Therefore,
\begin{align*}
\Ent(g^{2})& =\frac{1}{2}\sum_{\pm }\frac{\left( 1\pm \rho \right) ^{2}}{
1+\rho ^{2}}\left( 2\log \left( 1\pm \rho \right) -\log \left( 1+\rho
^{2}\right) \right) \\
& =\frac{1}{1+\rho ^{2}}\sum_{\pm }\left( 1\pm \rho \right) ^{2}\log \left(
1\pm \rho \right) -\log \left( 1+\rho ^{2}\right) \cdot \frac{1}{2}\sum_{\pm }
\frac{\left( 1\pm \rho \right) ^{2}}{1+\rho ^{2}} \\
& =\frac{L(\rho )}{1+\rho ^{2}}-\log \left( 1+\rho ^{2}\right) ,
\end{align*}
where $L(\rho )=\left( 1+\rho \right) ^{2}\log \left( 1+\rho \right) +\left( 1-\rho \right) ^{2}\log \left( 1-\rho \right) $. At $\rho =1$, we define this expression by continuity, because $\left( 1-\rho \right) ^{2}\log \left( 1-\rho \right) \rightarrow 0$ there. We also used $\frac{1}{2}\sum_{\pm }\left( 1\pm \rho \right) ^{2}=1+\rho ^{2}$. Therefore,
\begin{equation}
\delta (\rho ):=\delta _{2}(g)=2\mathcal{E}_{2}(g)-\Ent(g^{2})=\frac{2\rho
^{2}-L(\rho )}{1+\rho ^{2}}+\log \left( 1+\rho ^{2}\right) .  \label{deltarho}
\end{equation}
As a check, at $\rho =1$ we have $L(1)=4\log 2$ and $\delta (1)=\frac{2-4\log 2}{2}+\log 2=1-\log 2>0$.

\emph{Step 3: the derivative of $\delta $.} We have
\begin{align*}
L^{\prime }(\rho )& =2\left( 1+\rho \right) \log \left( 1+\rho \right)
+\left( 1+\rho \right) -2\left( 1-\rho \right) \log \left( 1-\rho \right)
-\left( 1-\rho \right) \\
& =2\left( 1+\rho \right) \log \left( 1+\rho \right) -2\left( 1-\rho
\right) \log \left( 1-\rho \right) +2\rho .
\end{align*}
We write $\delta =\frac{N}{y^{2}}+\log y^{2}$ with $N=2\rho ^{2}-L$ and $y^{2}=1+\rho ^{2}$, so that $\left( y^{2}\right) ^{\prime }=2\rho $. By the quotient rule, we get
\begin{equation*}
\delta ^{\prime }=\frac{N^{\prime }y^{2}-2\rho N}{y^{4}}+\frac{2\rho }{y^{2}}
=\frac{N^{\prime }y^{2}-2\rho N+2\rho y^{2}}{y^{4}}.
\end{equation*}
We now compute the numerator. Let us write $\ell _{+}=\log \left( 1+\rho \right) $ and $\ell _{-}=\log \left( 1-\rho \right) $. We have $N^{\prime }=4\rho -L^{\prime }=2\rho -2\left( 1+\rho \right) \ell _{+}+2\left( 1-\rho \right) \ell _{-}$ and $N=2\rho ^{2}-\left( 1+\rho \right) ^{2}\ell _{+}-\left( 1-\rho \right) ^{2}\ell _{-}$. We collect the coefficients of $\ell _{+}$, $\ell _{-}$ and the remaining terms as follows:
\begin{align*}
\text{coefficient of }\ell _{+}:& \quad -2\left( 1+\rho \right) \left(
1+\rho ^{2}\right) +2\rho \left( 1+\rho \right) ^{2}=2\left( 1+\rho \right)
\left( \rho -1\right) =-2\left( 1-\rho ^{2}\right) , \\
\text{coefficient of }\ell _{-}:& \quad 2\left( 1-\rho \right) \left(
1+\rho ^{2}\right) +2\rho \left( 1-\rho \right) ^{2}=2\left( 1-\rho \right)
\left( 1+\rho \right) =2\left( 1-\rho ^{2}\right) , \\
\text{remaining terms}:& \quad 2\rho \left( 1+\rho ^{2}\right) -2\rho \cdot
2\rho ^{2}+2\rho \left( 1+\rho ^{2}\right) =4\rho .
\end{align*}
So the numerator equals $4\rho -2\left( 1-\rho ^{2}\right) \left( \ell _{+}-\ell _{-}\right) $. We recall that
\begin{equation*}
\operatorname{artanh}\rho =\frac{1}{2}\log \frac{1+\rho }{1-\rho }=\frac{1}{2}\left(
\ell _{+}-\ell _{-}\right) =\int_{0}^{\rho }\frac{d\sigma }{1-\sigma ^{2}}.
\end{equation*}
The integral formula follows from $\frac{d}{d\rho }\frac{1}{2}\log \frac{1+\rho }{1-\rho }=\frac{1}{2}\left( \frac{1}{1+\rho }+\frac{1}{1-\rho }\right) =\frac{1}{1-\rho ^{2}}$ and $\operatorname{artanh}0=0$. Then we obtain, for $0<\rho <1$,
\begin{equation}
\delta ^{\prime }(\rho )=\frac{4}{y^{4}}\left( \rho -\left( 1-\rho
^{2}\right) \operatorname{artanh}\rho \right) .  \label{deltaprime}
\end{equation}

\emph{Step 4: an integral decomposition.} Since $\frac{\rho ^{2}-\sigma ^{2}}{1-\sigma ^{2}}=\frac{\left( \rho ^{2}-1\right) +\left( 1-\sigma ^{2}\right) }{1-\sigma ^{2}}=1-\frac{1-\rho ^{2}}{1-\sigma ^{2}}$, we have
\begin{equation*}
\int_{0}^{\rho }\frac{\rho ^{2}-\sigma ^{2}}{1-\sigma ^{2}}d\sigma =\rho
-\left( 1-\rho ^{2}\right) \operatorname{artanh}\rho .
\end{equation*}
On the other hand, $\frac{\rho ^{2}-\sigma ^{2}}{1-\sigma ^{2}}=\left( \rho ^{2}-\sigma ^{2}\right) \left( 1+\frac{\sigma ^{2}}{1-\sigma ^{2}}\right) $ and $\int_{0}^{\rho }\left( \rho ^{2}-\sigma ^{2}\right) d\sigma =\rho ^{3}-\frac{\rho ^{3}}{3}=\frac{2}{3}\rho ^{3}$. Therefore,
\begin{equation}
\rho -\left( 1-\rho ^{2}\right) \operatorname{artanh}\rho =\frac{2}{3}\rho ^{3}+J(\rho
),\qquad J(\rho )=\int_{0}^{\rho }\frac{\sigma ^{2}\left( \rho ^{2}-\sigma
^{2}\right) }{1-\sigma ^{2}}d\sigma .  \label{Jdecomp}
\end{equation}
For $0\leq \sigma \leq \rho <1$, the integrand of $J$ is nonnegative. Moreover, since $\frac{1}{1-\sigma ^{2}}\geq 1$, we get
\begin{equation}
J(\rho )\geq \int_{0}^{\rho }\sigma ^{2}\left( \rho ^{2}-\sigma ^{2}\right)
d\sigma =\frac{\rho ^{5}}{3}-\frac{\rho ^{5}}{5}=\frac{2}{15}\rho ^{5}.
\label{Jbound}
\end{equation}

\emph{Step 5: comparison of derivatives.} Let $Z(\rho )=\delta (\rho )-\frac{2}{3}D(\rho )^{2}$. By \eqref{Drho}, we have $\left( D^{2}\right) ^{\prime }=2DD^{\prime }=2\cdot 2\left( 1-\frac{1}{y}\right) \cdot \frac{2\rho }{y^{3}}=\frac{8\rho \left( y-1\right) }{y^{4}}$. By \eqref{deltaprime} and \eqref{Jdecomp}, we obtain
\begin{equation*}
Z^{\prime }(\rho )=\frac{4}{y^{4}}\left( \frac{2}{3}\rho ^{3}+J(\rho )\right)
-\frac{2}{3}\cdot \frac{8\rho \left( y-1\right) }{y^{4}}=\frac{8\rho }{3y^{4}}
\left( \rho ^{2}-2\left( y-1\right) \right) +\frac{4}{y^{4}}J(\rho ).
\end{equation*}
Since $\rho ^{2}=y^{2}-1$, we have $\rho ^{2}-2\left( y-1\right) =y^{2}-2y+1=\left( y-1\right) ^{2}$. Using \eqref{Jbound}, we get
\begin{equation*}
Z^{\prime }(\rho )\geq \frac{8\rho \left( y-1\right) ^{2}}{3y^{4}}+\frac{
8\rho ^{5}}{15y^{4}}.
\end{equation*}
Now $\rho ^{4}=\left( y^{2}-1\right) ^{2}=\left( y-1\right) ^{2}\left( y+1\right) ^{2}$. Therefore, $\frac{8\rho ^{5}}{15y^{4}}=\frac{8\rho \left( y-1\right) ^{2}\left( y+1\right) ^{2}}{15y^{4}}$, and
\begin{equation*}
Z^{\prime }(\rho )\geq \frac{8\rho \left( y-1\right) ^{2}}{y^{4}}\left( \frac{1
}{3}+\frac{\left( y+1\right) ^{2}}{15}\right) =\frac{8\rho \left( y-1\right)
^{2}}{15y^{4}}\left( 5+\left( y+1\right) ^{2}\right) .
\end{equation*}
On the other hand, $\left( D^{3}\right) ^{\prime }=3D^{2}D^{\prime }=3\cdot 4\left( 1-\frac{1}{y}\right) ^{2}\cdot \frac{2\rho }{y^{3}}=\frac{24\rho \left( y-1\right) ^{2}}{y^{5}}$. Dividing, we obtain
\begin{equation*}
\frac{Z^{\prime }(\rho )}{\left( D^{3}\right) ^{\prime }(\rho )}\geq \frac{
8/(15y^{4})}{24/y^{5}}\left( 5+\left( y+1\right) ^{2}\right) =\frac{y\left(
5+\left( y+1\right) ^{2}\right) }{45}\geq \frac{1\cdot \left( 5+4\right) }{45}
=\frac{1}{5},
\end{equation*}
since $y\geq 1$. The inequality is strict for $\rho >0$, where $y>1$. So $Z^{\prime }(\rho )\geq \frac{1}{5}\left( D^{3}\right) ^{\prime }(\rho )$ on $(0,1)$, with strict inequality.

\emph{Step 6: integration and equality case.} We integrate from $0$ to $\rho $ and use $Z(0)=D(0)=0$. Then
\begin{equation*}
\delta (\rho )-\frac{2}{3}D(\rho )^{2}-\frac{1}{5}D(\rho )^{3}=\int_{0}^{\rho
}\left( Z^{\prime }-\frac{1}{5}\left( D^{3}\right) ^{\prime }\right) >0,\qquad
0<\rho <1,
\end{equation*}
which is \eqref{twopoint} with strict inequality for $0<\rho <1$. Both sides are continuous on $\left[ 0,1\right] $, and the function $Z-\frac{1}{5}D^{3}$ is strictly increasing on $(0,1)$. Therefore, its value at $\rho =1$ is also positive. Equality holds only at $\rho =0$, that is, for $g=1$.

\emph{Step 7: sharpness of the constants.} We use \eqref{Jdecomp} and \eqref{Jbound}. Since $\frac{1}{1-\sigma ^{2}}\leq \frac{1}{1-\rho ^{2}}$ on $\left[ 0,\rho \right] $, we have $\frac{2}{15}\rho ^{5}\leq J(\rho )\leq \frac{2\rho ^{5}}{15\left( 1-\rho ^{2}\right) }$. Therefore, $J(\rho )=O(\rho ^{5})$ as $\rho \rightarrow 0$. Then \eqref{deltaprime} gives $\delta ^{\prime }(\rho )=\frac{4}{y^{4}}\left( \frac{2}{3}\rho ^{3}+O(\rho ^{5})\right) =\frac{8}{3}\rho ^{3}+O(\rho ^{5})$, and so $\delta (\rho )=\frac{2}{3}\rho ^{4}+O(\rho ^{6})$. In addition, $D(\rho )=2\left( 1-\left( 1+\rho ^{2}\right) ^{-1/2}\right) =\rho ^{2}+O(\rho ^{4})$. Therefore, $\delta /D^{2}\rightarrow \frac{2}{3}$, and the constant $\frac{2}{3}$ cannot be increased. We now consider the constant $\frac{1}{5}$. The coefficient $\frac{y\left( 5+\left( y+1\right) ^{2}\right) }{45}$ tends to $\frac{1}{5}$ as $\rho \rightarrow 0$. Moreover, the difference $J(\rho )-\frac{2}{15}\rho ^{5}=\int_{0}^{\rho }\frac{\sigma ^{4}\left( \rho ^{2}-\sigma ^{2}\right) }{1-\sigma ^{2}}d\sigma =O(\rho ^{7})$. The only inequality that we used in Step 5 is \eqref{Jbound}. So we have the exact identity
\begin{equation*}
Z^{\prime }(\rho )=\frac{8\rho \left( y-1\right) ^{2}}{15y^{4}}\left( 5+\left( y+1\right) ^{2}\right) +\frac{4}{y^{4}}\Big( J(\rho )-\frac{2}{15}\rho ^{5}\Big) .
\end{equation*}
Moreover, $\left( D^{3}\right) ^{\prime }(\rho )=\frac{24\rho \left( y-1\right) ^{2}}{y^{5}}=6\rho ^{5}+O(\rho ^{7})$, since $y-1=\frac{\rho ^{2}}{2}+O(\rho ^{4})$. We divide and get $\frac{Z^{\prime }(\rho )}{\left( D^{3}\right) ^{\prime }(\rho )}=\frac{y\left( 5+\left( y+1\right) ^{2}\right) }{45}+O(\rho ^{2})=\frac{1}{5}+O(\rho ^{2})$. By integration, we get $\delta -\frac{2}{3}D^{2}=\frac{1}{5}D^{3}+O(\rho ^{8})$. So $\frac{1}{5}$ cannot be increased either.

\emph{Step 8: the homogeneous form.} Let $g\geq 0$ be arbitrary and nonzero. Let $M=\left\Vert g\right\Vert _{2}$ and $\widetilde{g}=g/M$, so that $\left\Vert \widetilde{g}\right\Vert _{2}=1$. The energy and the entropy are homogeneous of degree two, that is, $\mathcal{E}_{2}(Mg)=M^{2}\mathcal{E}_{2}(g)$ and $\Ent(M^{2}g^{2})=M^{2}\Ent(g^{2})$. The second identity holds because $\left\langle M^{2}g^{2}\log \left( M^{2}g^{2}\right) \right\rangle -\left\langle M^{2}g^{2}\right\rangle \log \left\langle M^{2}g^{2}\right\rangle =M^{2}\Ent(g^{2})+M^{2}\left\langle g^{2}\right\rangle \log M^{2}-M^{2}\left\langle g^{2}\right\rangle \log M^{2}$. Therefore, $\delta _{2}(g)=M^{2}\delta _{2}(\widetilde{g})$. Moreover,
\begin{equation*}
\left\Vert \widetilde{g}-1\right\Vert _{2}^{2}=2\left( 1-\left\langle
\widetilde{g}\right\rangle \right) =2\left( 1-\frac{\langle g\rangle }{M}
\right) =\frac{2\xi }{M},\qquad \xi =M-\langle g\rangle \geq 0,
\end{equation*}
where $\xi \geq 0$ by the Cauchy--Schwarz inequality $\langle g\rangle \leq \left\Vert g\right\Vert _{2}$. We apply \eqref{twopoint} to $\widetilde{g}$ and multiply by $M^{2}$. Then
\begin{equation*}
\delta _{2}(g)\geq M^{2}\left( \frac{2}{3}\cdot \frac{4\xi ^{2}}{M^{2}}+\frac{
1}{5}\cdot \frac{8\xi ^{3}}{M^{3}}\right) =\frac{8}{3}\xi ^{2}+\frac{8}{5}
\frac{\xi ^{3}}{M},
\end{equation*}
which is \eqref{twopointhom}. Equality holds if and only if $\widetilde{g}=1$, that is, if and only if $g$ is a positive constant. This completes the proof of Theorem \ref{Ttwopoint}.
\end{proof}

\begin{remark}
The inequality $\delta \geq \frac{2}{3}D^{2}$ can also be studied by power series. We expand both sides in powers of $x_{0}=\frac{1}{2}\left( g(1)^{2}-g(-1)^{2}\right) =\frac{2\rho }{1+\rho ^{2}}$ and compare the coefficients. The two series agree up to order $x_{0}^{4}$, and $\delta -\frac{2}{3}D^{2}=\frac{x_{0}^{6}}{320}+O(x_{0}^{8})$. Since $\frac{1}{5}D^{3}=\frac{x_{0}^{6}}{320}+O(x_{0}^{8})$, the sixth-order term in \eqref{twopoint} is exactly the first nonzero coefficient of this expansion. The advantage of the integral argument above is that it gives the sixth-order term globally.
\end{remark}

\subsection{The four-cycle as a product of two two-point spaces}

We identify the vertices $0,1,2,3$ of $C_{4}$ with the points
\begin{equation*}
0\mapsto (1,1),\qquad 1\mapsto (-1,1),\qquad 2\mapsto (-1,-1),\qquad
3\mapsto (1,-1)
\end{equation*}
of $\{-1,1\}^{2}$. Two consecutive vertices differ in exactly one coordinate. Indeed, $0\rightarrow 1$ changes the first coordinate, $1\rightarrow 2$ changes the second, $2\rightarrow 3$ changes the first, and $3\rightarrow 0$ changes the second. Therefore, the four edges of the cycle are the four edges of the square, with two edges in each coordinate direction. In the rest of this section, we write the two coordinates as $s,t\in \{-1,1\}$. A subscript indicates the coordinate over which the mean is taken. In the same way, a subscript on $\mathcal{E}_{2}$, $\Ent$ or $\left\Vert \cdot \right\Vert _{2}$ means that this quantity is computed in this coordinate. For example, $\left\langle F\right\rangle _{t}=\frac{1}{2}\left( F(1)+F(-1)\right) $ for a function $F$ of $t$.

Since $\mu _{4}=2\left( 1-\cos \frac{\pi }{2}\right) =2$, the energy \eqref{energy} is $\mathcal{E}_{4}(u)=\frac{1}{4\cdot 2}\sum_{\text{edges}}\left( \text{difference}\right) ^{2}=\frac{1}{8}\sum_{\text{edges}}\left( \text{difference}\right) ^{2}$. The two edges in the $s$-direction join $(1,t)$ and $(-1,t)$ for $t=\pm 1$, and we have
\begin{equation*}
\frac{1}{8}\sum_{t=\pm 1}\left( u(1,t)-u(-1,t)\right) ^{2}=\frac{1}{2}
\sum_{t=\pm 1}\frac{1}{4}\left( u(1,t)-u(-1,t)\right) ^{2}=\left\langle
\mathcal{E}_{2,s}\left( u(\cdot ,t)\right) \right\rangle _{t}.
\end{equation*}
The same holds for the $t$-direction. Therefore,
\begin{equation}
\mathcal{E}_{4}(u)=\left\langle \mathcal{E}_{2,s}\left( u(\cdot ,t)\right)
\right\rangle _{t}+\left\langle \mathcal{E}_{2,t}\left( u(s,\cdot )\right)
\right\rangle _{s}.  \label{E4product}
\end{equation}
This expresses $\mathcal{E}_{4}$ as the sum of two two-point energies, exactly as for a product space. We note that $C_{4}$ is the only cycle that is a Cartesian product of smaller graphs. Indeed, let $n\geq 3$ and let $C_{n}$ be the Cartesian product of two graphs, each with at least two vertices. Since $C_{n}$ is connected, both factors are connected. So every vertex has degree at least one in each factor. In the product, the degree of a vertex is the sum of its degrees in the two factors, and this sum is two. So every vertex has degree one in each factor. A connected graph in which every vertex has degree one has two vertices and one edge. Therefore, both factors are this graph, and their product is $C_{4}$. We recall that the word-length energy coincides with $\mathcal{E}_{4}$ on $C_{4}$, since both have eigenvalues $(0,1,2,1)$ for $k=0,1,2,3$. Indeed, $\ell _{4}=(0,1,2,1)$ by \eqref{eigen} and $\psi _{4}(k)=\min (k,4-k)=(0,1,2,1)$.

For $u\in \mathcal{A}_{4}$, we set
\begin{equation*}
g_{t}(s)=u(s,t),\qquad h(t)=\left\Vert g_{t}\right\Vert _{2,s}=\sqrt{\tfrac{1
}{2}\left( u(1,t)^{2}+u(-1,t)^{2}\right) },
\end{equation*}
\begin{equation*}
m(t)=\left\langle g_{t}\right\rangle _{s},\qquad a=\langle u\rangle ,\qquad
b=\left\langle h\right\rangle _{t}.
\end{equation*}
So $g_{t}$ is the ``slice'' of $u$ at height $t$, and it is a two-point function of $s$. The function $h$ of $t$ collects the norms of the slices. Then $\left\langle h^{2}\right\rangle _{t}=\left\langle u^{2}\right\rangle =1$, so $\left\Vert h\right\Vert _{2,t}=1$. Moreover, $0\leq m(t)\leq h(t)$ by the Cauchy--Schwarz inequality on each slice. Taking means in $t$, we get $a=\left\langle m\right\rangle _{t}\leq \left\langle h\right\rangle _{t}=b\leq \left\Vert h\right\Vert _{2,t}=1$. Therefore, $0<a\leq b\leq 1$.

\begin{lemma}
\label{Ldecomp4}For every $u\in \mathcal{A}_{4}$, we have the exact decomposition
\begin{equation}
\Delta _{4}(u)=\left\langle \delta _{2}(g_{t})\right\rangle _{t}+\delta
_{2}(h)+2\mathcal{R}_{\mathrm{slice}},\qquad \mathcal{R}_{\mathrm{slice}}=
\frac{1}{2}\left( h(1)h(-1)-\left\langle g_{1}g_{-1}\right\rangle
_{s}\right) \geq 0.  \label{Delta4decomp}
\end{equation}
\end{lemma}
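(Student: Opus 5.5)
The plan is to split both the energy and the entropy of $u$ along the slices in the $s$-direction and compare term by term. By \eqref{E4product}, $2\mathcal{E}_{4}(u)=\langle 2\mathcal{E}_{2,s}(g_{t})\rangle _{t}+2\langle \mathcal{E}_{2,t}(u(s,\cdot ))\rangle _{s}$. For the entropy, I will use the chain rule (conditional entropy decomposition) on the product space $\{-1,1\}^{2}$ with $\langle u^{2}\rangle =1$:
\begin{equation*}
\Ent(u^{2})=\langle \Ent_{s}(g_{t}^{2})\rangle _{t}+\Ent_{t}(h^{2}).
\end{equation*}
To verify it, write $\langle u^{2}\log u^{2}\rangle =\langle \langle g_{t}^{2}\log g_{t}^{2}\rangle _{s}\rangle _{t}$. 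Then add and subtract $\langle h(t)^{2}\log h(t)^{2}\rangle _{t}$, using $\langle g_{t}^{2}\rangle _{s}=h(t)^{2}$ and $\langle h^{2}\rangle _{t}=1$.

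Subtracting the entropy identity from twice the energy identity gives
\begin{equation*}
\Delta _{4}(u)=\langle \delta _{2}(g_{t})\rangle _{t}+\delta _{2}(h)+2\Big( \langle \mathcal{E}_{2,t}(u(s,\cdot ))\rangle _{s}-\mathcal{E}_{2,t}(h)\Big) .
\end{equation*}
Here $\delta _{2}(g_{t})=2\mathcal{E}_{2,s}(g_{t})-\Ent_{s}(g_{t}^{2})$, which is the inhomogeneous two-point deficit, homogeneous of degree two as in Step 8 of the proof of Theorem \ref{Ttwopoint}. So it remains to identify the last bracket with $\mathcal{R}_{\mathrm{slice}}$.

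For this, expand the squares:
\begin{equation*}
\langle \mathcal{E}_{2,t}(u(s,\cdot ))\rangle _{s}=\tfrac{1}{4}\langle (g_{1}-g_{-1})^{2}\rangle _{s}=\tfrac{1}{4}\big( h(1)^{2}+h(-1)^{2}\big) -\tfrac{1}{2}\langle g_{1}g_{-1}\rangle _{s},
\end{equation*}
while $\mathcal{E}_{2,t}(h)=\tfrac{1}{4}(h(1)-h(-1))^{2}=\tfrac{1}{4}(h(1)^{2}+h(-1)^{2})-\tfrac{1}{2}h(1)h(-1)$. The difference is exactly $\tfrac{1}{2}\big( h(1)h(-1)-\langle g_{1}g_{-1}\rangle _{s}\big) =\mathcal{R}_{\mathrm{slice}}$.

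Its nonnegativity is the Cauchy--Schwarz inequality in $s$: $\langle g_{1}g_{-1}\rangle _{s}\leq \Vert g_{1}\Vert _{2,s}\Vert g_{-1}\Vert _{2,s}=h(1)h(-1)$.

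The only delicate step is the chain rule when some slice vanishes, i.e.\ $h(t)=0$. This is impossible for both $t$, since $\langle h^{2}\rangle =1$. With the convention $0\log 0=0$, the identity still holds termwise, because both sides contribute $0$ for that slice. So there is no real obstacle; the work is bookkeeping.
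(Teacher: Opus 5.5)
Your proposal is correct and follows essentially the same route as the paper: the entropy chain rule obtained by adding and subtracting $\langle h^{2}\log h^{2}\rangle _{t}$, the expansion of $\tfrac{1}{4}\langle (g_{1}-g_{-1})^{2}\rangle _{s}$ to isolate $\mathcal{E}_{2,t}(h)+\mathcal{R}_{\mathrm{slice}}$, Cauchy--Schwarz in $s$ for $\mathcal{R}_{\mathrm{slice}}\geq 0$, and assembly via \eqref{E4product}. Your treatment of a vanishing slice under the convention $0\log 0=0$ also matches the paper's remark.
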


\begin{proof}
\emph{The entropy.} Since $\left\langle u^{2}\right\rangle =1$, we have $\Ent(u^{2})=\left\langle u^{2}\log u^{2}\right\rangle _{s,t}=\left\langle \left\langle g_{t}^{2}\log g_{t}^{2}\right\rangle _{s}\right\rangle _{t}$. We add and subtract $\left\langle h^{2}\log h^{2}\right\rangle _{t}$ and obtain
\begin{equation}
\Ent(u^{2})=\left\langle \left\langle g_{t}^{2}\log g_{t}^{2}\right\rangle
_{s}-h(t)^{2}\log h(t)^{2}\right\rangle _{t}+\left\langle h^{2}\log
h^{2}\right\rangle _{t}=\left\langle \Ent_{s}(g_{t}^{2})\right\rangle
_{t}+\Ent_{t}(h^{2}).  \label{chainrule}
\end{equation}
Here $\Ent_{s}(g_{t}^{2})=\left\langle g_{t}^{2}\log g_{t}^{2}\right\rangle _{s}-\left\langle g_{t}^{2}\right\rangle _{s}\log \left\langle g_{t}^{2}\right\rangle _{s}$ is the homogeneous entropy of the slice. Note that $\left\langle g_{t}^{2}\right\rangle _{s}=h(t)^{2}$. In addition, $\Ent_{t}(h^{2})=\left\langle h^{2}\log h^{2}\right\rangle _{t}$ because $\left\langle h^{2}\right\rangle _{t}=1$. This is the usual chain rule for the entropy. Because of the convention $0\log 0=0$, it does not require positive entries. Indeed, a zero slice contributes zero to both sides.

\emph{The energy in the $t$-direction.} We have
\begin{align*}
\left\langle \mathcal{E}_{2,t}\left( u(s,\cdot )\right) \right\rangle _{s}& =
\left\langle \frac{1}{4}\left( u(s,1)-u(s,-1)\right) ^{2}\right\rangle _{s}=
\frac{1}{4}\left\Vert g_{1}-g_{-1}\right\Vert _{2,s}^{2} \\
& =\frac{1}{4}\left(
h(1)^{2}+h(-1)^{2}-2\left\langle g_{1}g_{-1}\right\rangle _{s}\right) \\
& =\frac{1}{4}\left( h(1)-h(-1)\right) ^{2}+\frac{1}{2}\left(
h(1)h(-1)-\left\langle g_{1}g_{-1}\right\rangle _{s}\right) \\
& =\mathcal{E}_{2,t}(h)+\mathcal{R}_{\mathrm{slice}},
\end{align*}
where we added and subtracted $\frac{1}{2}h(1)h(-1)$. By the Cauchy--Schwarz inequality, $\left\langle g_{1}g_{-1}\right\rangle _{s}\leq \left\Vert g_{1}\right\Vert _{2,s}\left\Vert g_{-1}\right\Vert _{2,s}=h(1)h(-1)$. Therefore, $\mathcal{R}_{\mathrm{slice}}\geq 0$. This also holds when a slice vanishes.

\emph{Assembling.} By \eqref{E4product} and the two identities above, we have
\begin{align*}
\Delta _{4}(u)& =2\left\langle \mathcal{E}_{2,s}(g_{t})\right\rangle _{t}+2
\mathcal{E}_{2,t}(h)+2\mathcal{R}_{\mathrm{slice}}-\left\langle
\Ent_{s}(g_{t}^{2})\right\rangle _{t}-\Ent_{t}(h^{2}) \\
& =\left\langle 2\mathcal{E}_{2,s}(g_{t})-\Ent_{s}(g_{t}^{2})\right\rangle
_{t}+\left( 2\mathcal{E}_{2,t}(h)-\Ent_{t}(h^{2})\right) +2\mathcal{R}_{
\mathrm{slice}},
\end{align*}
which is \eqref{Delta4decomp}.
\end{proof}

\subsection{Proof of \eqref{log4cycle}, strictness and nonattainment}\label{SSlog4cycle}

For a nonnegative function $g$ on $\{-1,1\}$, we define the remainder of the two-point inequality beyond its sharp quartic term by
\begin{equation*}
\mathcal{R}_{2}(g)=\delta _{2}(g)-\frac{8}{3}\left( \left\Vert g\right\Vert
_{2}-\langle g\rangle \right) ^{2}.
\end{equation*}
By Theorem \ref{Ttwopoint}, we have $\mathcal{R}_{2}(g)\geq 0$ for $g\neq 0$, with equality only for constant $g$. Moreover, $\mathcal{R}_{2}(0)=0$.

\begin{proposition}
\label{Pexact4}For every $u\in \mathcal{A}_{4}$, we have
\begin{equation}
\Delta _{4}(u)-\frac{1}{3}d_{4}(u)^{4}=\left\langle \mathcal{R}
_{2}(g_{t})\right\rangle _{t}+\mathcal{R}_{2}(h)+2\mathcal{R}_{\mathrm{slice}}+
\frac{8}{3}\left\langle \left( h-m-\left( b-a\right) \right)
^{2}\right\rangle _{t}+\frac{4}{3}\left( 2b-a-1\right) ^{2}.  \label{exact4}
\end{equation}
Every term on the right-hand side is nonnegative.
\end{proposition}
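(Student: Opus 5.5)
Starting from Lemma \ref{Ldecomp4}, I would replace each two-point deficit by its quartic term plus remainder, using the definition of $\mathcal{R}_{2}$. For a slice $g_{t}$ the homogeneous quantity is $\xi_{t}=h(t)-m(t)$, so $\delta _{2}(g_{t})=\frac{8}{3}\left( h(t)-m(t)\right) ^{2}+\mathcal{R}_{2}(g_{t})$. This also holds when the slice vanishes, since then both sides are zero. For $h$ we have $\left\Vert h\right\Vert _{2,t}=1$ and $\langle h\rangle _{t}=b$, hence $\delta _{2}(h)=\frac{8}{3}(1-b)^{2}+\mathcal{R}_{2}(h)$. Inserting these into \eqref{Delta4decomp} gives
\begin{equation*}
\Delta _{4}(u)=\left\langle \mathcal{R}_{2}(g_{t})\right\rangle _{t}+\mathcal{R}_{2}(h)+2\mathcal{R}_{\mathrm{slice}}+\frac{8}{3}\left\langle (h-m)^{2}\right\rangle _{t}+\frac{8}{3}(1-b)^{2}.
\end{equation*}
Proving \eqref{exact4} is therefore reduced to an algebraic identity among the scalars $a$, $b$ and the functions $h,m$ of $t$.

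For the algebraic step, I would apply the variance decomposition to $h-m$, whose mean is $b-a$:
\begin{equation*}
\left\langle (h-m)^{2}\right\rangle _{t}=\left\langle \left( h-m-(b-a)\right) ^{2}\right\rangle _{t}+(b-a)^{2}.
\end{equation*}
By \eqref{basic}, $\frac{1}{3}d_{4}(u)^{4}=\frac{4}{3}(1-a)^{2}$. It then remains to verify the scalar identity
\begin{equation*}
\frac{8}{3}(b-a)^{2}+\frac{8}{3}(1-b)^{2}-\frac{4}{3}(1-a)^{2}=\frac{4}{3}(2b-a-1)^{2}.
\end{equation*}
Writing $x=b-a$ and $y=1-b$, so that $1-a=x+y$, the left side becomes $\frac{4}{3}\left( 2x^{2}+2y^{2}-(x+y)^{2}\right) =\frac{4}{3}(x-y)^{2}$. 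Since $x-y=2b-a-1$, the identity holds. This is the parallelogram-type step that explains the splitting $\frac{1}{3}=\frac{1}{2}\cdot \frac{2}{3}$.

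Nonnegativity of the five terms is then immediate. The terms $\mathcal{R}_{2}(g_{t})$ and $\mathcal{R}_{2}(h)$ are nonnegative by Theorem \ref{Ttwopoint} in its homogeneous form \eqref{twopointhom}, together with $\mathcal{R}_{2}(0)=0$ for vanishing slices. The term $\mathcal{R}_{\mathrm{slice}}$ is nonnegative by Lemma \ref{Ldecomp4}, and the last two terms are squares. I expect no real obstacle here. The only care needed is the bookkeeping for the homogeneous form on slices whose $L^{2}$ norm $h(t)$ is not one, and for zero slices. This is handled exactly by \eqref{twopointhom} with $\xi =h(t)-m(t)$.
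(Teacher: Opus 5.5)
Your proposal is correct and matches the paper's proof essentially step for step. You insert the definition of $\mathcal{R}_{2}$ for the slices and for $h$ into Lemma \ref{Ldecomp4}, apply the variance decomposition to $h-m$, and finish with $2X_{0}^{2}+2Y_{0}^{2}-(X_{0}+Y_{0})^{2}=(X_{0}-Y_{0})^{2}$ for $X_{0}=b-a$, $Y_{0}=1-b$. One small remark: the relation $\delta_{2}(g_{t})=\frac{8}{3}(h(t)-m(t))^{2}+\mathcal{R}_{2}(g_{t})$ is just the definition of $\mathcal{R}_{2}$, so zero slices need no separate check there; they only matter for nonnegativity, where $\mathcal{R}_{2}(0)=0$ handles them.
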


\begin{proof}
We apply the definition of $\mathcal{R}_{2}$ to $g_{t}$, for which $\left\Vert g_{t}\right\Vert _{2,s}=h(t)$ and $\left\langle g_{t}\right\rangle _{s}=m(t)$, and to $h$, for which $\left\Vert h\right\Vert _{2,t}=1$ and $\left\langle h\right\rangle _{t}=b$. Then
\begin{equation*}
\delta _{2}(g_{t})=\mathcal{R}_{2}(g_{t})+\frac{8}{3}\left( h(t)-m(t)\right)
^{2},\qquad \delta _{2}(h)=\mathcal{R}_{2}(h)+\frac{8}{3}\left( 1-b\right)
^{2}.
\end{equation*}
Since $\left\langle h-m\right\rangle _{t}=b-a$, the variance decomposition gives
\begin{equation*}
\left\langle \left( h-m\right) ^{2}\right\rangle _{t}=\left\langle \left( h-m-\left( b-a\right) \right) ^{2}\right\rangle _{t}+\left( b-a\right) ^{2}.
\end{equation*}
Inserting these identities into Lemma \ref{Ldecomp4}, we obtain
\begin{equation*}
\Delta _{4}(u)=\left\langle \mathcal{R}_{2}(g_{t})\right\rangle _{t}+\mathcal{R
}_{2}(h)+2\mathcal{R}_{\mathrm{slice}}+\frac{8}{3}\left\langle \left(
h-m-\left( b-a\right) \right) ^{2}\right\rangle _{t}+\frac{8}{3}\left(
\left( b-a\right) ^{2}+\left( 1-b\right) ^{2}\right) .
\end{equation*}
Finally, $d_{4}(u)^{4}=4\left( 1-a\right) ^{2}$. Let $X_{0}=b-a$ and $Y_{0}=1-b$, so that $X_{0}+Y_{0}=1-a$. Then the identity $2X_{0}^{2}+2Y_{0}^{2}-\left( X_{0}+Y_{0}\right) ^{2}=\left( X_{0}-Y_{0}\right) ^{2}$ gives
\begin{equation*}
\frac{8}{3}\left( \left( b-a\right) ^{2}+\left( 1-b\right) ^{2}\right) -
\frac{4}{3}\left( 1-a\right) ^{2}=\frac{4}{3}\left( 2X_{0}^{2}+2Y_{0}^{2}-\left(
X_{0}+Y_{0}\right) ^{2}\right) =\frac{4}{3}\left( X_{0}-Y_{0}\right) ^{2},
\end{equation*}
and $X_{0}-Y_{0}=2b-a-1$.
This proves \eqref{exact4}.
\end{proof}

\begin{proof}[Proof of \eqref{log4cycle}]
\emph{Step 1: the sixth-order bounds for the two-point remainders.} Let $X(t)=h(t)-m(t)\geq 0$. We apply \eqref{twopointhom} to $g_{t}$. Here $\left\Vert g_{t}\right\Vert _{2,s}=h(t)$, and $\xi =\left\Vert g_{t}\right\Vert _{2,s}-\left\langle g_{t}\right\rangle _{s}=X(t)$. Then
\begin{equation*}
\mathcal{R}_{2}(g_{t})=\delta _{2}(g_{t})-\frac{8}{3}X(t)^{2}\geq \frac{8}{5}
\frac{X(t)^{3}}{h(t)},
\end{equation*}
where the fraction is defined to be zero on a zero slice. Note that $X=h=0$ on such a slice. We also apply \eqref{twopointhom} to $h$. Here $\left\Vert h\right\Vert _{2,t}=1$ and $\xi =1-b$, and we get $\mathcal{R}_{2}(h)\geq \frac{8}{5}\left( 1-b\right) ^{3}$.

\emph{Step 2: H\"{o}lder's inequality.} We need a lower bound for $\left\langle X^{3}/h\right\rangle _{t}$ in terms of $\left\langle X\right\rangle _{t}=b-a$. On $\{h>0\}$, we write $X=\left( Xh^{-1/3}\right) \cdot h^{1/3}$. We apply H\"{o}lder's inequality with the exponents $3$ and $\frac{3}{2}$. Note that $\frac{1}{3}+\frac{2}{3}=1$. Then
\begin{equation*}
\left\langle X\right\rangle _{t}\leq \left\langle \left( Xh^{-1/3}\right)
^{3}\right\rangle _{t}^{1/3}\left\langle \left( h^{1/3}\right)
^{3/2}\right\rangle _{t}^{2/3}=\left\langle \frac{X^{3}}{h}\right\rangle
_{t}^{1/3}\left\langle \sqrt{h}\right\rangle _{t}^{2/3}.
\end{equation*}
Taking cubes, we get $\left( b-a\right) ^{3}\leq \left\langle X^{3}/h\right\rangle _{t}\left\langle \sqrt{h}\right\rangle _{t}^{2}$. By the Cauchy--Schwarz inequality, $\left\langle \sqrt{h}\right\rangle _{t}^{2}\leq \left\langle h\right\rangle _{t}=b$. Therefore,
\begin{equation*}
\left\langle \frac{X^{3}}{h}\right\rangle _{t}\geq \frac{\left( b-a\right)
^{3}}{b}.
\end{equation*}

\emph{Step 3: assembling.} We insert Steps 1 and 2 into \eqref{exact4} and drop the other three nonnegative terms. Using $b\leq 1$, we get
\begin{align}
\Delta _{4}(u)-\frac{1}{3}d_{4}(u)^{4}& \geq \frac{8}{5}\left( \frac{\left(
b-a\right) ^{3}}{b}+\left( 1-b\right) ^{3}\right) \geq \frac{8}{5}\left(
\left( b-a\right) ^{3}+\left( 1-b\right) ^{3}\right)  \notag \\
& \geq \frac{2}{5}\left( 1-a\right) ^{3}=\frac{1}{20}d_{4}(u)^{6},
\label{sextic}
\end{align}
where the last inequality is the power-mean inequality $X_{0}^{3}+Y_{0}^{3}\geq \frac{1}{4}\left( X_{0}+Y_{0}\right) ^{3}$ for $X_{0}=b-a\geq 0$, $Y_{0}=1-b\geq 0$, $X_{0}+Y_{0}=1-a$. It follows from the identity
\begin{align*}
4\left( X_{0}^{3}+Y_{0}^{3}\right) -\left( X_{0}+Y_{0}\right) ^{3}&
=3X_{0}^{3}+3Y_{0}^{3}-3X_{0}^{2}Y_{0}-3X_{0}Y_{0}^{2} \\
& =3\left( X_{0}-Y_{0}\right) \left( X_{0}^{2}-Y_{0}^{2}\right) =3\left(
X_{0}+Y_{0}\right) \left( X_{0}-Y_{0}\right) ^{2}\geq 0,
\end{align*}
and we also use $\left( 1-a\right) ^{3}=\frac{1}{8}d_{4}(u)^{6}$, so that $\frac{8}{5}\cdot \frac{1}{4}\cdot \frac{1}{8}=\frac{1}{20}$. This proves \eqref{log4cycle}.

\emph{Step 4: strictness.} If some slice $g_{t}$ is nonconstant, then its inequality \eqref{twopointhom} is strict by Theorem \ref{Ttwopoint}. So the first inequality in \eqref{sextic} is strict. Now assume that all slices are constant, but $u$ is not constant. Then $u(s,t)=h(t)$. So $h$ is nonconstant, and the inequality \eqref{twopointhom} for $h$ is strict. Therefore, \eqref{log4cycle} is strict for every nonconstant $u\in \mathcal{A}_{4}$.

\emph{Step 5: nonattainment.} Dividing \eqref{sextic} by $d_{4}(u)^{4}>0$, we get \eqref{minimizingintro}. So, if $u^{(k)}\in \mathcal{A}_{4}\setminus \{1\}$ and $\Delta _{4}(u^{(k)})/d_{4}(u^{(k)})^{4}\rightarrow \frac{1}{3}$, then $d_{4}(u^{(k)})\rightarrow 0$, that is, $u^{(k)}\rightarrow 1$ in $L^{2}(C_{4})$. Moreover, a nonconstant $u$ has $d_{4}(u)>0$, and so $\Delta _{4}(u)/d_{4}(u)^{4}>\frac{1}{3}$. Therefore, the value $\frac{1}{3}$ is not attained.
\end{proof}

\subsection{Optimality of the constant \texorpdfstring{$\frac{1}{3}$}{1/3}}\label{SSoptimal13}

Let $g_{\varepsilon }(s)=\frac{1+\varepsilon s}{\sqrt{1+\varepsilon ^{2}}}$ be the function of Step 5 of the proof of Theorem \ref{Tmajorantintro}, and let $G_{\varepsilon }(s,t)=g_{\varepsilon }(s)g_{\varepsilon }(t)$ with $0<\varepsilon <\frac{1}{4}$. These functions are positive, and $\left\langle G_{\varepsilon }^{2}\right\rangle =\left\langle g_{\varepsilon }^{2}\right\rangle _{s}\left\langle g_{\varepsilon }^{2}\right\rangle _{t}=1$. In the basis \eqref{basis4}, we have $s+t=\sqrt{2}e$ and $st=\chi _{2}$ on $C_{4}$. Therefore,
\begin{equation*}
G_{\varepsilon }=\frac{1+\varepsilon \left( s+t\right) +\varepsilon ^{2}st}{1+\varepsilon ^{2}}=\frac{1+\sqrt{2}\varepsilon e+\varepsilon ^{2}\chi _{2}}{1+\varepsilon ^{2}}.
\end{equation*}
So $G_{\varepsilon }$ is a constant plus a first harmonic of order $\varepsilon $ plus a second harmonic of order $\varepsilon ^{2}$. In the notation of Proposition \ref{PSOS}, the coefficients of $e$, $f$ and $\chi _{2}$ are $\frac{\sqrt{2}\varepsilon }{1+\varepsilon ^{2}}$, $0$ and $\frac{\varepsilon ^{2}}{1+\varepsilon ^{2}}$. So the first square in \eqref{SOS} is equal to $\Big( \frac{\varepsilon ^{2}}{1+\varepsilon ^{2}}-\frac{\varepsilon ^{2}}{\left( 1+\varepsilon ^{2}\right) ^{2}}\Big) ^{2}=\frac{\varepsilon ^{8}}{\left( 1+\varepsilon ^{2}\right) ^{4}}$, which is only of order $\varepsilon ^{8}$. For the family $U_{\varepsilon }$ in Section 3, this square vanishes.

\emph{Energy of a product.} By \eqref{E4product}, the $s$-energy is $\left\langle \mathcal{E}_{2,s}\left( g_{\varepsilon }(\cdot )g_{\varepsilon }(t)\right) \right\rangle _{t}=\left\langle g_{\varepsilon }(t)^{2}\right\rangle _{t}\mathcal{E}_{2}(g_{\varepsilon })=\mathcal{E}_{2}(g_{\varepsilon })$. This holds because $\mathcal{E}_{2}$ is homogeneous of degree two and $\left\Vert g_{\varepsilon }\right\Vert _{2}=1$. The same holds for the $t$-energy. Therefore, $\mathcal{E}_{4}(G_{\varepsilon })=2\mathcal{E}_{2}(g_{\varepsilon })$.

\emph{Entropy of a product.} We have $G_{\varepsilon }^{2}\log G_{\varepsilon }^{2}=g_{\varepsilon }(s)^{2}g_{\varepsilon }(t)^{2}\left( \log g_{\varepsilon }(s)^{2}+\log g_{\varepsilon }(t)^{2}\right) $. Taking the average and using $\left\langle g_{\varepsilon }^{2}\right\rangle =1$, we get $\Ent(G_{\varepsilon }^{2})=2\left\langle g_{\varepsilon }^{2}\log g_{\varepsilon }^{2}\right\rangle =2\Ent(g_{\varepsilon }^{2})$. Therefore,
\begin{equation*}
\Delta _{4}(G_{\varepsilon })=2\left( 2\mathcal{E}_{2}(g_{\varepsilon })-
\Ent(g_{\varepsilon }^{2})\right) =2\delta _{2}(g_{\varepsilon }).
\end{equation*}

\emph{The two-point deficit.} In the representation \eqref{rhorep}, we have exactly $\rho =\varepsilon $. So, by Step 7 of the proof of Theorem \ref{Ttwopoint}, we get $\delta _{2}(g_{\varepsilon })=\frac{2}{3}\varepsilon ^{4}+O(\varepsilon ^{6})$.

\emph{The distance.} We have $\left\langle G_{\varepsilon }\right\rangle =\left\langle g_{\varepsilon }\right\rangle ^{2}=\frac{1}{1+\varepsilon ^{2}}$. So $d_{4}(G_{\varepsilon })^{2}=2\left( 1-\frac{1}{1+\varepsilon ^{2}}\right) =\frac{2\varepsilon ^{2}}{1+\varepsilon ^{2}}$ and $d_{4}(G_{\varepsilon })^{4}=\frac{4\varepsilon ^{4}}{\left( 1+\varepsilon ^{2}\right) ^{2}}=4\varepsilon ^{4}+O(\varepsilon ^{6})$.

Therefore,
\begin{equation*}
\frac{\Delta _{4}(G_{\varepsilon })}{d_{4}(G_{\varepsilon })^{4}}=\frac{\frac{4
}{3}\varepsilon ^{4}+O(\varepsilon ^{6})}{4\varepsilon ^{4}+O(\varepsilon
^{6})}\rightarrow \frac{1}{3}\qquad (\varepsilon \rightarrow 0).
\end{equation*}
Since $G_{\varepsilon }\in \mathcal{A}_{4}$, this limit and \eqref{log4cycle} give $\mathcal{C}_{4}=\frac{1}{3}$. Since $\mathcal{E}_{\mathrm{w},4}=\mathcal{E}_{4}$, we also have $\mathcal{C}_{4}^{\mathrm{w}}=\frac{1}{3}$. Finally, \eqref{log4} gives $\mathcal{C}_{n}\geq \frac{1}{3}$ for $n\in \mathcal{J}$. Therefore, all the equalities in \eqref{sharp13} follow.

\begin{remark}
Along $G_{\varepsilon }$, the five terms on the right-hand side of \eqref{exact4} divided by $d_{4}(G_{\varepsilon })^{6}$ tend to $\frac{1}{40}$, $\frac{1}{40}$, $0$, $\frac{1}{12}$ and $0$. Indeed, let $y=\sqrt{1+\varepsilon ^{2}}$. The slices of $G_{\varepsilon }$ are $g_{t}=g_{\varepsilon }(t)g_{\varepsilon }$. So $h=g_{\varepsilon }$, $m=g_{\varepsilon }/y$, $b=\frac{1}{y}$ and $a=\frac{1}{y^{2}}$. Since $\mathcal{R}_{2}$ is homogeneous of degree two, both $\left\langle \mathcal{R}_{2}(g_{t})\right\rangle _{t}$ and $\mathcal{R}_{2}(h)$ are equal to $\mathcal{R}_{2}(g_{\varepsilon })=\delta _{2}(g_{\varepsilon })-\frac{2}{3}D(\varepsilon )^{2}=\frac{1}{5}D(\varepsilon )^{3}+O(\varepsilon ^{8})$, by Step 7 of the proof of Theorem \ref{Ttwopoint}. Moreover, $\mathcal{R}_{\mathrm{slice}}=0$, since the two slices are multiples of $g_{\varepsilon }$. The fourth term is equal to $\frac{8}{3}\left( 1-\frac{1}{y}\right) ^{2}\left( 1-\frac{1}{y^{2}}\right) $, and the fifth term is equal to $\frac{4}{3}\left( 1-\frac{1}{y}\right) ^{4}$. Finally, $D(\varepsilon )=2\left( 1-\frac{1}{y}\right) =\varepsilon ^{2}+O(\varepsilon ^{4})$ and $d_{4}(G_{\varepsilon })^{6}=8\left( 1-\frac{1}{y^{2}}\right) ^{3}=8\varepsilon ^{6}+O(\varepsilon ^{8})$. So
\begin{equation*}
\frac{\Delta _{4}(G_{\varepsilon })-\frac{1}{3}d_{4}(G_{\varepsilon })^{4}}{d_{4}(G_{\varepsilon })^{6}}\rightarrow \frac{1}{40}+\frac{1}{40}+\frac{1}{12}=\frac{2}{15}\qquad (\varepsilon \rightarrow 0).
\end{equation*} The constant $\frac{1}{20}$ in \eqref{log4cycle} comes from the two terms with $\mathcal{R}_{2}$, and the missing $\frac{1}{12}$ comes from the fourth term, which we dropped in \eqref{sextic}. Therefore, the best constant in the sixth-order term of \eqref{log4cycle} lies in $\left[ \frac{1}{20},\frac{2}{15}\right] $. We do not know its exact value.
\end{remark}

\subsection{The discrete cube}\label{SScube}

The argument of this section extends to the discrete cube $\Omega _{d}=\{-1,1\}^{d}$, $d\geq 1$, with the uniform probability measure and the energy
\begin{equation*}
\mathcal{E}^{(d)}(g)=\sum_{i=1}^{d}\left\langle \frac{1}{4}\left( g-g\circ
\sigma _{i}\right) ^{2}\right\rangle ,
\end{equation*}
where $\sigma _{i}$ changes the sign of the $i$-th coordinate. For $d=1$ this is $\mathcal{E}_{2}$, and for $d=2$ it is $\mathcal{E}_{4}$ by \eqref{E4product}. The Walsh functions $\chi _{S}(x)=\prod_{i\in S}x_{i}$ are eigenfunctions with eigenvalues $\left\vert S\right\vert $. So the spectral gap is one. Gross's inequality on $\Omega _{d}$, which follows from the two-point case by tensorization \cite{Gross75}, reads $2\mathcal{E}^{(d)}(g)\geq \Ent(g^{2})$.

\begin{proposition}
\label{Rcube}Let $d\geq 1$. For every $g\geq 0$ on $\Omega _{d}$ with $\left\Vert g\right\Vert _{2}=1$, we have
\begin{equation}
\Delta ^{(d)}(g):=2\mathcal{E}^{(d)}(g)-\Ent(g^{2})\geq \frac{2}{3d}\left\Vert
g-1\right\Vert _{2}^{4}.  \label{cube}
\end{equation}
The constant $\frac{2}{3d}$ is sharp.
\end{proposition}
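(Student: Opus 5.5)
We argue by induction on $d$, mimicking the slice decomposition of Lemma \ref{Ldecomp4} and Proposition \ref{Pexact4}. For $d=1$ the statement is the quartic part of Theorem \ref{Ttwopoint}. For $d\geq 2$ we write $x=(x',t)$ with $x'\in\Omega_{d-1}$ and $t\in\{-1,1\}$. We set $g_{t}=g(\cdot ,t)$ on $\Omega_{d-1}$, $h(t)=\Vert g_{t}\Vert_{2}$, $m(t)=\langle g_{t}\rangle$, $a=\langle g\rangle$ and $b=\langle h\rangle_{t}$. Then $\Vert h\Vert_{2,t}=1$ and $0<a\leq b\leq 1$.

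The first step is the exact splitting. The energy $\mathcal{E}^{(d)}(g)$ is the $t$-averaged $(d-1)$-dimensional energy of the slices plus the energy in the last coordinate. As in Lemma \ref{Ldecomp4}, the latter equals $\mathcal{E}_{2}(h)+\mathcal{R}_{\mathrm{slice}}$ with $\mathcal{R}_{\mathrm{slice}}=\frac{1}{2}\left(h(1)h(-1)-\langle g_{1}g_{-1}\rangle\right)\geq 0$ by Cauchy--Schwarz. The entropy chain rule \eqref{chainrule} holds verbatim. Hence
\begin{equation*}
\Delta^{(d)}(g)=\left\langle \Delta^{(d-1)}_{\hom}(g_{t})\right\rangle_{t}+\delta_{2}(h)+2\mathcal{R}_{\mathrm{slice}},
\end{equation*}
where $\Delta^{(d-1)}_{\hom}$ is the degree-two homogeneous deficit. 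By homogeneity, the induction hypothesis in homogeneous form reads $\Delta^{(d-1)}_{\hom}(g_{t})\geq \frac{2}{3(d-1)}\cdot 4\left(h(t)-m(t)\right)^{2}$. This uses $\Vert \widetilde g-1\Vert_{2}^{2}=2\xi/M$, exactly as in Step 8 of the proof of Theorem \ref{Ttwopoint}. Theorem \ref{Ttwopoint} in the form \eqref{twopointhom} gives $\delta_{2}(h)\geq \frac{8}{3}(1-b)^{2}$.

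The second step is the scalar optimization. We drop $\mathcal{R}_{\mathrm{slice}}$ and apply Jensen, $\langle (h-m)^{2}\rangle_{t}\geq (b-a)^{2}$. With $X_{0}=b-a$ and $Y_{0}=1-b$ this gives
\begin{equation*}
\Delta^{(d)}(g)\geq \frac{8}{3}\left(\frac{X_{0}^{2}}{d-1}+Y_{0}^{2}\right)\geq \frac{8}{3}\cdot\frac{(X_{0}+Y_{0})^{2}}{d},
\end{equation*}
by Cauchy--Schwarz with weights $d-1$ and $1$. Since $X_{0}+Y_{0}=1-a$ and $\Vert g-1\Vert_{2}^{4}=4(1-a)^{2}$, the right-hand side equals $\frac{2}{3d}\Vert g-1\Vert_{2}^{4}$. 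This is \eqref{cube}. The only delicate point is the bookkeeping of the homogeneous form, including zero slices, where both sides vanish.

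The third step is sharpness. We take products $G_{\varepsilon}(x)=\prod_{i=1}^{d}g_{\varepsilon}(x_{i})$, as in Section \ref{SSoptimal13}. Tensorization gives $\Delta^{(d)}(G_{\varepsilon})=d\,\delta_{2}(g_{\varepsilon})=\frac{2d}{3}\varepsilon^{4}+O(\varepsilon^{6})$ by Step 7 of the proof of Theorem \ref{Ttwopoint}. Also $\langle G_{\varepsilon}\rangle=(1+\varepsilon^{2})^{-d/2}$, so $\Vert G_{\varepsilon}-1\Vert_{2}^{2}=d\varepsilon^{2}+O(\varepsilon^{4})$. The quotient therefore tends to $\frac{2d/3}{d^{2}}=\frac{2}{3d}$. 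I expect no real obstacle here. The main care point is checking that the weighted Cauchy--Schwarz equality case is compatible with the product family, which holds since $X_{0}\approx (d-1)Y_{0}$ along it.
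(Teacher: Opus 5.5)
Your proof is correct. It uses the same ingredients as the paper (slice induction, the entropy chain rule, \eqref{twopointhom}, Jensen, a weighted Cauchy--Schwarz inequality and the product family $G_{\varepsilon}$), but the roles of the two factors are swapped.

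The paper writes $\Omega_{d}=\{-1,1\}\times\Omega_{d-1}$, so its slices $g_{t}$ are two-point functions and the outer function $h$ lives on $\Omega_{d-1}$. It applies \eqref{twopointhom} to the slices and the normalized induction hypothesis directly to $h$; no rescaling is needed, since $\Vert h\Vert_{2}=1$. The cost is that there are $d-1$ outer directions. To compare the energy of $g$ with $\mathcal{E}^{(d-1)}(h)$, the paper uses the reverse triangle inequality $\bigl\vert\Vert g_{t}\Vert_{2,s}-\Vert g_{\sigma_{i}t}\Vert_{2,s}\bigr\vert\leq\Vert g_{t}-g_{\sigma_{i}t}\Vert_{2,s}$.

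You split off a single outer coordinate. Your energy step is therefore exactly the identity of Lemma \ref{Ldecomp4}, and you get the exact formula $\Delta^{(d)}(g)=\langle\Delta^{(d-1)}_{\mathrm{hom}}(g_{t})\rangle_{t}+\delta_{2}(h)+2\mathcal{R}_{\mathrm{slice}}$. The price is that the induction hypothesis must be used in homogeneous form. You obtain it correctly from degree-two homogeneity, as in Step 8 of the proof of Theorem \ref{Ttwopoint}.

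The resulting scalar problems are mirror images, $\frac{X_{0}^{2}}{d-1}+Y_{0}^{2}$ versus $X_{0}^{2}+\frac{Y_{0}^{2}}{d-1}$, and they give the same constant. Both are instances of one product rule. If the factors satisfy homogeneous bounds $\Delta\geq\kappa_{A}(\Vert g\Vert_{2}-\langle g\rangle)^{2}$ and $\Delta\geq\kappa_{B}(\Vert g\Vert_{2}-\langle g\rangle)^{2}$, then the product satisfies the same bound with constant $(\kappa_{A}^{-1}+\kappa_{B}^{-1})^{-1}$. Hence any splitting $d=d_{1}+d_{2}$ yields $\frac{2}{3d}$.
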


\begin{proof}
We prove \eqref{cube} by induction on $d$. The case $d=1$ is Theorem \ref{Ttwopoint}. Let $d\geq 2$ and write the points of $\Omega _{d}$ as $(s,t)$ with $s\in \{-1,1\}$ and $t\in \Omega _{d-1}$. As above, let $g_{t}(s)=g(s,t)$, $h(t)=\left\Vert g_{t}\right\Vert _{2,s}$, $m(t)=\left\langle g_{t}\right\rangle _{s}$, $a=\langle g\rangle $ and $b=\left\langle h\right\rangle _{t}$, so that $h\geq 0$, $\left\Vert h\right\Vert _{2,t}=1$ and $0<a\leq b\leq 1$. The chain rule \eqref{chainrule} holds without any change. That is, we have $\Ent(g^{2})=\left\langle \Ent_{s}(g_{t}^{2})\right\rangle _{t}+\Ent_{t}(h^{2})$. The direction $i=1$ contributes $\left\langle \mathcal{E}_{2,s}(g_{t})\right\rangle _{t}$ to the energy. Each direction $i\geq 2$ acts on $t$. For such a direction, the reverse triangle inequality $\left\vert \left\Vert g_{t}\right\Vert _{2,s}-\left\Vert g_{\sigma _{i}t}\right\Vert _{2,s}\right\vert \leq \left\Vert g_{t}-g_{\sigma _{i}t}\right\Vert _{2,s}$ gives
\begin{equation*}
\left\langle \frac{1}{4}\left( g-g\circ \sigma _{i}\right) ^{2}\right\rangle
=\left\langle \frac{1}{4}\left\Vert g_{t}-g_{\sigma _{i}t}\right\Vert
_{2,s}^{2}\right\rangle _{t}\geq \left\langle \frac{1}{4}\left( h(t)-h(\sigma
_{i}t)\right) ^{2}\right\rangle _{t}.
\end{equation*}
Summing over $i\geq 2$, we obtain $\mathcal{E}^{(d)}(g)\geq \left\langle \mathcal{E}_{2,s}(g_{t})\right\rangle _{t}+\mathcal{E}^{(d-1)}(h)$. We now use \eqref{twopointhom} on the slices (when $g_{t}=0$, both $\delta _{2}(g_{t})$ and $h(t)-m(t)$ vanish), the induction hypothesis applied to $h$, the inequality $\left\langle \left( h-m\right) ^{2}\right\rangle _{t}\geq \left\langle h-m\right\rangle _{t}^{2}=\left( b-a\right) ^{2}$ and the identity $\left\Vert h-1\right\Vert _{2,t}^{2}=2\left( 1-b\right) $. Then
\begin{equation*}
\Delta ^{(d)}(g)\geq \left\langle \delta _{2}(g_{t})\right\rangle _{t}+\Delta
^{(d-1)}(h)\geq \frac{8}{3}\left( b-a\right) ^{2}+\frac{2}{3\left(
d-1\right) }\cdot 4\left( 1-b\right) ^{2}=\frac{8}{3}\left( X_{0}^{2}+\frac{Y_{0}^{2}
}{d-1}\right) ,
\end{equation*}
with $X_{0}=b-a\geq 0$, $Y_{0}=1-b\geq 0$ and $X_{0}+Y_{0}=1-a$. The Cauchy--Schwarz inequality $\left( X_{0}+Y_{0}\right) ^{2}\leq d\left( X_{0}^{2}+\frac{Y_{0}^{2}}{d-1}\right) $ gives $\Delta ^{(d)}(g)\geq \frac{8}{3d}\left( 1-a\right) ^{2}=\frac{2}{3d}\left\Vert g-1\right\Vert _{2}^{4}$, because $\left\Vert g-1\right\Vert _{2}^{2}=2\left( 1-a\right) $. This proves \eqref{cube}. For the sharpness, we consider the product $G_{\varepsilon }(x)=\prod_{i=1}^{d}g_{\varepsilon }(x_{i})$, with $g_{\varepsilon }$ as in Section \ref{SSoptimal13}. Exactly as in the case $d=2$, the energy and the entropy are additive on products of normalized factors. Therefore, $\Delta ^{(d)}(G_{\varepsilon })=d\,\delta _{2}(g_{\varepsilon })=\frac{2}{3}d\varepsilon ^{4}+O(\varepsilon ^{6})$. In addition, we have $\left\Vert G_{\varepsilon }-1\right\Vert _{2}^{2}=2\left( 1-\left( 1+\varepsilon ^{2}\right) ^{-d/2}\right) =d\varepsilon ^{2}+O(\varepsilon ^{4})$. Therefore, the quotient $\Delta ^{(d)}(G_{\varepsilon })/\left\Vert G_{\varepsilon }-1\right\Vert _{2}^{4}$ tends to $\frac{2}{3d}$. For $d=2$, we recover the constant $\frac{1}{3}$ of \eqref{log4cycle}.
\end{proof}

Finally, we compare \eqref{cube} with the product estimate of Xie and Zhang. Let $d$ be even. Then $\Omega _{d}$ is the product of $\frac{d}{2}$ copies of $C_{4}$, and $\mathcal{E}^{(d)}$ is the sum of the energies $\mathcal{E}_{4}$ of the factors by \eqref{E4product}. Moreover, $\mathcal{E}_{4}$ is also the word-length energy on $C_{4}$. So \cite[(1.8)]{XZ26} gives \eqref{cube} with the constant $\frac{1}{12}\cdot \frac{2}{d}=\frac{1}{6d}$. The sharp constant $\frac{2}{3d}$ is four times larger.
\section{The word-length energy -- \texorpdfstring{\mbox{Proof of Theorem~\ref{Tmomentintro}}}{Proof of Theorem \ref{Tmomentintro}}}

In this section, $n$ is finite, except in Section \ref{SSupper}, where we also treat the circle. For a real function $f$ on $C_{n}$ with $\langle f\rangle =0$, we set
\begin{equation*}
s=\left\Vert f\right\Vert _{2}^{2}=\sum_{k\neq 0}\left\vert \widehat{f}
(k)\right\vert ^{2},\qquad T=\mathcal{E}_{\mathrm{w},n}(f)-s=\sum_{k\neq
0}\left( \psi _{n}(k)-1\right) \left\vert \widehat{f}(k)\right\vert ^{2}\geq
0.
\end{equation*}
So $T$ is a weighted sum. In this sum, the frequencies $\pm 1$ have weight zero, and the frequencies $\pm k$ have weight $\psi _{n}(k)-1=k-1$ for $2\leq k\leq n/2$. For the word-length energy, the quantity $T$ plays the same role as $Q_{n}(z)$ for the nearest-neighbor energy. That is, it is the excess of the energy over the Poincar\'{e} level.

\subsection{A third-moment estimate}

Xie and Zhang \cite[Propositions 10--11]{XZ26} proved that $\left\vert \left\langle f^{3}\right\rangle \right\vert \leq 3s\sqrt{T}$ for all real mean-zero $f$ on $C_{n}$ with $n\geq 4$. In Theorem \ref{Tmomentintro}, the squared coefficient $9$ is replaced by $\frac{383}{64}$, which is strictly less than six, the critical value for our method. This theorem is the key result for the word-length case. With the notation above, Theorem \ref{Tmomentintro} says that
\begin{equation}
\left\vert \left\langle f^{3}\right\rangle \right\vert ^{2}\leq \frac{383}{64}s^{2}T  \label{moment}
\end{equation}
for every finite $n\geq 6$ and every real function $f$ on $C_{n}$ with $\langle f\rangle =0$.

Before the proof, we explain the general scheme. We write $f=\sum_{k\neq 0}\widehat{f}(k)\chi _{k}$. Then $f^{3}=\sum_{k_{1},k_{2},k_{3}}\widehat{f}(k_{1})\widehat{f}(k_{2})\widehat{f}(k_{3})\chi _{k_{1}+k_{2}+k_{3}}$. By the orthogonality of characters, we have
\begin{equation*}
\left\langle f^{3}\right\rangle =\sum_{k_{1}+k_{2}+k_{3}\equiv 0\ (\mathrm{mod}\ n)}\widehat{f}(k_{1})\widehat{f}(k_{2})\widehat{f}(k_{3}),
\end{equation*}
and this is a sum over the \emph{resonant} ordered triples. We take absolute values, and we bound $\left\vert \left\langle f^{3}\right\rangle \right\vert $ by a sum of the form $\sum_{k\geq 2}a_{k}X_{k}$. Here $a_{k}$ is the amplitude of the frequency $k$, and $X_{k}$ is a quadratic form in the amplitudes. Then the Cauchy--Schwarz inequality with the weights $k-1$ gives
\begin{equation*}
\Big( \sum_{k\geq 2}a_{k}X_{k}\Big) ^{2}=\Big( \sum_{k\geq 2}\sqrt{k-1}
\,a_{k}\cdot \frac{X_{k}}{\sqrt{k-1}}\Big) ^{2}\leq \Big( \sum_{k\geq
2}\left( k-1\right) a_{k}^{2}\Big) \Big( \sum_{k\geq 2}\frac{X_{k}^{2}}{k-1}
\Big) =TW,
\end{equation*}
and so it remains to show that $W\leq \frac{383}{64}s^{2}$. Here $W$ is a quartic form in the amplitudes. For $n=10$, in one case we argue directly with the bound for $\left\vert \left\langle f^{3}\right\rangle \right\vert $. Our improvement over \cite{XZ26} comes from a more careful assignment of the resonant terms to the factors $a_{k}$ before this step. Note that the frequency $k=1$ has weight zero in $T$. Therefore, no term may be attributed to $a_{1}$.

\begin{proof}[Proof of Theorem \ref{Tmomentintro}]
If $T=0$, then only the frequencies $\pm 1$ are present, and $\left\langle f^{3}\right\rangle =0$. Indeed, the frequencies of $f^{3}$ are $\pm 1,\pm 3$, and none of them is $\equiv 0$ modulo $n\geq 6$. So we assume $T>0$. Then $s>0$. We first treat the odd cycles, then the even cycles with $n\geq 12$, and finally the three cycles $n=6,8,10$.

\textbf{Odd cycles $n=2m+1\geq 7$.} The nonzero frequencies are $\pm 1,\ldots ,\pm m$. We write
\begin{equation*}
f=\sum_{j=1}^{m}\left( c_{j}\chi _{j}+\overline{c_{j}}\chi _{-j}\right)
,\qquad a_{j}=\sqrt{2}\left\vert c_{j}\right\vert ,
\end{equation*}
so that, by Parseval, $s=\sum_{j=1}^{m}2\left\vert c_{j}\right\vert ^{2}=\sum_{j=1}^{m}a_{j}^{2}$ and $T=\sum_{j=2}^{m}\left( j-1\right) a_{j}^{2}$.

\emph{Step 1: the resonant triples.} Consider an ordered triple of signed frequencies from $\{\pm 1,\ldots ,\pm m\}$ with sum $\equiv 0$ modulo $n$. We have three cases, according to the signs:
\begin{itemize}
\item Two positive and one negative, $(i,j,-k)$. Then $i+j-k\in \left[ 2-m,2m-1\right] $, and this interval lies strictly between $-n$ and $n$. So $i+j-k\equiv 0$ forces $i+j=k$. The coefficient is $c_{i}c_{j}\overline{c_{k}}$, and there are three positions for the negative index.
\item One positive and two negative. This is the complex conjugate of the previous case.
\item All positive, $(i,j,k)$. Then $3\leq i+j+k\leq 3m<2n$, so $i+j+k\equiv 0$ forces $i+j+k=n$. The coefficient is $c_{i}c_{j}c_{k}$. The all-negative case is the complex conjugate of this case.
\end{itemize}
Therefore, we have the exact identity
\begin{equation*}
\left\langle f^{3}\right\rangle =6\Re \sum_{\substack{ 1\leq i,j,k\leq m  \\
i+j=k}}c_{i}c_{j}\overline{c_{k}}+2\Re \sum_{\substack{ 1\leq i,j,k\leq m  \\
i+j+k=n}}c_{i}c_{j}c_{k},
\end{equation*}
where the sums are over ordered indices.

\emph{Step 2: absolute values.} For ordered pairs of indices in $\{1,\ldots ,m\}$, we define
\begin{equation*}
L_{k}=\sum_{i+j=k}a_{i}a_{j},\qquad R_{k}=\sum_{i+j=n-k}a_{i}a_{j},\qquad
1\leq k\leq m.
\end{equation*}
Then $L_{1}=0$, since $i+j=1$ has no solution. Also, $R_{1}=a_{m}^{2}$, since the only solution of $i+j=2m$ is $(m,m)$. For $2\leq k\leq m$, $L_{k}$ has the $k-1$ terms given by the pairs $(1,k-1),\ldots ,(k-1,1)$. Moreover, $R_{k}$ has the $k$ terms given by the pairs $(i,j)$ with $i+j=2m+1-k$ and $1\leq i,j\leq m$, that is, $i\in \left[ m+1-k,m\right] $. Since $\left\vert c_{i}c_{j}c_{k}\right\vert =\frac{a_{i}a_{j}a_{k}}{2\sqrt{2}}$, we take absolute values in Step 1 and we get
\begin{equation}
\left\vert \left\langle f^{3}\right\rangle \right\vert \leq \frac{6}{2\sqrt{2}
}\sum_{k}a_{k}\sum_{i+j=k}a_{i}a_{j}+\frac{2}{2\sqrt{2}}\sum_{k}a_{k}
\sum_{i+j=n-k}a_{i}a_{j}=\frac{1}{\sqrt{2}}\sum_{k=1}^{m}a_{k}\left(
3L_{k}+R_{k}\right) .  \label{oddbound}
\end{equation}

\emph{Step 3: removing the $k=1$ term.} The term with $k=1$ is $a_{1}R_{1}=a_{1}a_{m}^{2}=a_{m}\left( a_{1}a_{m}\right) $. It is a multiple of $a_{m}$, so we attribute it to the frequency $m$. More precisely, we set $\widetilde{R}_{k}=R_{k}$ for $k<m$ and $\widetilde{R}_{m}=R_{m}+a_{1}a_{m}$. Then
\begin{equation*}
\left\vert \left\langle f^{3}\right\rangle \right\vert \leq \frac{1}{\sqrt{2}}
\sum_{k=2}^{m}a_{k}\left( 3L_{k}+\widetilde{R}_{k}\right) ,
\end{equation*}
and every summand has the form $a_{k}\times (\cdots )$ with $k\geq 2$. So the weight $k-1$ is positive.

\emph{Step 4: weighted Cauchy--Schwarz.} By the general scheme, we obtain
\begin{equation*}
\left\vert \left\langle f^{3}\right\rangle \right\vert ^{2}\leq TW,\qquad
W=\sum_{k=2}^{m}\frac{\left( 3L_{k}+\widetilde{R}_{k}\right) ^{2}}{2\left(
k-1\right) }.
\end{equation*}

\emph{Step 5: bounding each square.} Let $U_{k}$ and $V_{k}$ denote the sums $L_{k}$ and $R_{k}$ with $a_{i}a_{j}$ replaced by $a_{i}^{2}a_{j}^{2}$. For $k<m$, $3L_{k}+R_{k}$ is a sum of $k-1$ terms with weight $3$ and $k$ terms with weight $1$. The Cauchy--Schwarz inequality $\left( \sum_{\ell }\omega _{\ell }x_{\ell }\right) ^{2}\leq \left( \sum_{\ell }\omega _{\ell }^{2}\right) \left( \sum_{\ell }x_{\ell }^{2}\right) $ with $x_{\ell }=a_{i}a_{j}$ gives
\begin{equation*}
\frac{\left( 3L_{k}+R_{k}\right) ^{2}}{2\left( k-1\right) }\leq \frac{9\left(
k-1\right) +k}{2\left( k-1\right) }\left( U_{k}+V_{k}\right) =\left( 5+\frac{1
}{2\left( k-1\right) }\right) \left( U_{k}+V_{k}\right) \leq \frac{11}{2}
\left( U_{k}+V_{k}\right) .
\end{equation*}
For $k=m$, the two ordered terms $a_{1}a_{m}$ and $a_{m}a_{1}$ of $R_{m}$ receive an extra $a_{1}a_{m}$ in total. So each of them now has weight $\frac{3}{2}$. The other $m-2$ terms of $R_{m}$ keep weight $1$, and the $m-1$ terms of $L_{m}$ have weight $3$. Therefore,
\begin{align*}
\frac{\left( 3L_{m}+\widetilde{R}_{m}\right) ^{2}}{2\left( m-1\right) }& \leq
\frac{9\left( m-1\right) +\left( m-2\right) +2\cdot \frac{9}{4}}{2\left(
m-1\right) }\left( U_{m}+V_{m}\right) =\frac{10\left( m-1\right) +\frac{7}{2}
}{2\left( m-1\right) }\left( U_{m}+V_{m}\right) \\
& =\left( 5+\frac{7}{4\left( m-1\right) }\right) \left( U_{m}+V_{m}\right)
\leq \frac{47}{8}\left( U_{m}+V_{m}\right) ,
\end{align*}
since $m\geq 3$.

\emph{Step 6: summing.} Consider the index sums $i+j$ of the pairs in $U_{k}$, $2\leq k\leq m$. They range over $2,\ldots ,m$. The index sums of the pairs in $V_{k}$ range over $n-k\in \{m+1,\ldots ,2m-1\}$. Every ordered pair $(i,j)\in \{1,\ldots ,m\}^{2}$ has $i+j\in \{2,\ldots ,2m\}$. So each pair appears in exactly one of the $U_{k},V_{k}$, except $(m,m)$ with sum $2m$. Therefore,
\begin{equation*}
\sum_{k=2}^{m}\left( U_{k}+V_{k}\right) \leq \sum_{i,j=1}^{m}a_{i}^{2}a_{j}^{2}=
\Big( \sum_{i=1}^{m}a_{i}^{2}\Big) ^{2}=s^{2}.
\end{equation*}
As a consequence, $W\leq \frac{47}{8}s^{2}$, and $\frac{47}{8}=5.875<\frac{383}{64}=5.984375$. This proves \eqref{moment} for odd $n\geq 7$.

\textbf{Even cycles $n=2m\geq 12$.} Now the frequency $m\equiv -m$ is its own conjugate. Also, $\chi _{m}(j)=(-1)^{j}$ is real, and $c_{m}=\widehat{f}(m)$ is real. We write
\begin{equation*}
f=\sum_{j=1}^{m-1}\left( c_{j}\chi _{j}+\overline{c_{j}}\chi _{-j}\right)
+c_{m}\chi _{m},\qquad a_{j}=\sqrt{2}\left\vert c_{j}\right\vert \ (j<m),\qquad
a_{m}=\left\vert c_{m}\right\vert ,
\end{equation*}
so that again $s=\sum_{j=1}^{m}a_{j}^{2}$. Here the middle frequency contributes $c_{m}^{2}$ without the factor $2$. Moreover, $T=\sum_{j=2}^{m}\left( j-1\right) a_{j}^{2}$, since the weight of $m$ is $\psi _{n}(m)-1=m-1$.

\emph{Step 1: the resonant triples.} We treat the triples without the frequency $m$ as in the odd case. For the all-positive ones, we have $i+j+k=2m$, since $3\leq i+j+k\leq 3\left( m-1\right) <4m$. Consider a triple containing $m$ exactly once, $(m,\pm i,\pm j)$. It resonates if and only if $m\pm i\pm j\equiv 0$ modulo $2m$. For $(m,i,j)$, this means $i+j=m$, since $2\leq i+j\leq 2m-2$. For $(m,-i,-j)$, it also means $i+j=m$. For $(m,i,-j)$, it would mean $i-j\equiv m$, which is impossible since $\left\vert i-j\right\vert \leq m-2$. Two occurrences of $m$ give $2m+k\equiv k\not\equiv 0$, and three occurrences give $3m\equiv m\not\equiv 0$. There are three positions for $m$. Therefore,
\begin{equation*}
\left\langle f^{3}\right\rangle =6\Re \sum_{\substack{ 1\leq i,j,k<m  \\
i+j=k}}c_{i}c_{j}\overline{c_{k}}+2\Re \sum_{\substack{ 1\leq i,j,k<m  \\
i+j+k=2m}}c_{i}c_{j}c_{k}+3c_{m}\sum_{\substack{ 1\leq i,j<m  \\ i+j=m}}\left(
c_{i}c_{j}+\overline{c_{i}}\,\overline{c_{j}}\right) .
\end{equation*}

\emph{Step 2: absolute values.} Let $L_{k}=\sum_{i+j=k}a_{i}a_{j}$ and $R_{k}=\sum_{i+j=2m-k}a_{i}a_{j}$, with indices in $\{1,\ldots ,m-1\}$, and let $L_{m}=\sum_{i+j=m}a_{i}a_{j}$. Since $\left\vert 3c_{m}\cdot 2\Re \left( c_{i}c_{j}\right) \right\vert \leq 6\left\vert c_{m}\right\vert \frac{a_{i}a_{j}}{2}=3a_{m}a_{i}a_{j}$, we have
\begin{equation*}
\left\vert \left\langle f^{3}\right\rangle \right\vert \leq \frac{1}{\sqrt{2}}
\sum_{k=2}^{m-1}a_{k}\left( 3L_{k}+R_{k}\right) +3a_{m}L_{m}.
\end{equation*}
Here $R_{1}=0$, because $i+j=2m-1$ has no solution with $i,j\leq m-1$. So no re-attribution is needed. For $2\leq k<m$, both $L_{k}$ and $R_{k}$ have $k-1$ terms. Indeed, for $R_{k}$, the conditions $i+j=2m-k$ and $i,j\leq m-1$ mean that $i\in \left[ m+1-k,m-1\right] $. Moreover, $L_{m}$ has $m-1$ terms.

\emph{Step 3: weighted Cauchy--Schwarz.} The first vector is $\left( \sqrt{k-1}\,a_{k}\right) _{k=2}^{m}$, and its squared norm is $T$. The second vector has the entries $\frac{3L_{k}+R_{k}}{\sqrt{2\left( k-1\right) }}$ for $k<m$ and the entry $\frac{3L_{m}}{\sqrt{m-1}}$ for $k=m$. So we get $\left\vert \left\langle f^{3}\right\rangle \right\vert ^{2}\leq TW$ with
\begin{align*}
W& =\sum_{k=2}^{m-1}\frac{\left( 3L_{k}+R_{k}\right) ^{2}}{2\left(
k-1\right) }+\frac{9L_{m}^{2}}{m-1}\leq \sum_{k=2}^{m-1}\frac{9\left(
k-1\right) +\left( k-1\right) }{2\left( k-1\right) }\left( U_{k}+V_{k}\right) +
\frac{9L_{m}^{2}}{m-1} \\
& =5\sum_{k=2}^{m-1}\left( U_{k}+V_{k}\right) +\frac{9L_{m}^{2}}{m-1}.
\end{align*}

\emph{Step 4: the missing pairs.} Let $S_{0}=\sum_{j=1}^{m-1}a_{j}^{2}$ and $U_{m}=\sum_{i+j=m}a_{i}^{2}a_{j}^{2}$. The index sums of $U_{k}$ range over $2,\ldots ,m-1$. The index sums of $V_{k}$ range over $2m-k\in \{m+1,\ldots ,2m-2\}$. Every ordered pair in $\{1,\ldots ,m-1\}^{2}$ has $i+j\in \{2,\ldots ,2m-2\}$. So each pair occurs exactly once, except the pairs with $i+j=m$. Therefore,
\begin{equation*}
\sum_{k=2}^{m-1}\left( U_{k}+V_{k}\right) =S_{0}^{2}-U_{m}.
\end{equation*}
We do not throw away this missing sum $U_{m}$. Since $L_{m}$ has $m-1$ terms, the Cauchy--Schwarz inequality gives $L_{m}^{2}\leq \left( m-1\right) U_{m}$, that is, $-U_{m}\leq -\frac{L_{m}^{2}}{m-1}$. Also, $L_{m}=\sum_{j=1}^{m-1}a_{j}a_{m-j}\leq \left( \sum_{j}a_{j}^{2}\right) ^{1/2}\left( \sum_{j}a_{m-j}^{2}\right) ^{1/2}=S_{0}$. Consequently,
\begin{align}
W& \leq 5S_{0}^{2}-5U_{m}+\frac{9L_{m}^{2}}{m-1}\leq 5S_{0}^{2}-\frac{5L_{m}^{2}}{m-1}+\frac{9L_{m}^{2}}{m-1}  \notag \\
& =5S_{0}^{2}+\frac{4L_{m}^{2}}{m-1}\leq \left( 5+\frac{4}{m-1}\right) S_{0}^{2}\leq \frac{29}{5}s^{2},  \label{evenW}
\end{align}
for $m\geq 6$, and $\frac{29}{5}=5.8<\frac{383}{64}$. This proves \eqref{moment} for even $n\geq 12$. For $m\leq 5$, the factor $5+\frac{4}{m-1}$ is at least six, and this bound is not enough. This is why the cycles with $n=6,8,10$ need a separate treatment.

\textbf{The six-cycle.} The frequencies are $\pm 1,\pm 2$ and the middle frequency $3$. Let $a_{1},a_{2}\geq 0$ be the amplitudes of the frequencies $\pm 1,\pm 2$, and let $b=\left\vert \widehat{f}(3)\right\vert $ be the amplitude of the middle frequency. So $b=a_{m}$ in the notation of the even case, and we use $b$ for the middle amplitude also for $n=8$ and $n=10$. Then $s=a_{1}^{2}+a_{2}^{2}+b^{2}$ and $T=(2-1)a_{2}^{2}+(3-1)b^{2}=a_{2}^{2}+2b^{2}$. The multisets of frequencies with sum $\equiv 0$ modulo $6$ are the following. First, $\{1,1,-2\}$ and its conjugate, with three orderings each. Second, $\{2,2,2\}$ and its conjugate, with one ordering each. Third, $\{1,2,3\}$ and its conjugate $\{-1,-2,3\}$, with six orderings each. One can check that no other combination works. For example, $1+1+1=3$ and $3+3+k\equiv k$. We take absolute values with $\left\vert c_{j}\right\vert =\frac{a_{j}}{\sqrt{2}}$, and we get
\begin{equation*}
\left\vert \left\langle f^{3}\right\rangle \right\vert \leq 6\cdot \frac{
a_{1}^{2}}{2}\cdot \frac{a_{2}}{\sqrt{2}}+2\cdot \frac{a_{2}^{3}}{2\sqrt{2}}
+12\cdot b\cdot \frac{a_{1}a_{2}}{2}=\frac{a_{2}\left(
3a_{1}^{2}+a_{2}^{2}\right) }{\sqrt{2}}+6a_{1}a_{2}b=:P.
\end{equation*}
The term $6a_{1}a_{2}b$ contains the factor $a_{2}$, with weight $1$ in $T$. It also contains the factor $b$, with weight $2$ in $T$. We split it as $a_{1}a_{2}b+5a_{1}a_{2}b$, and we attribute the first part to $a_{2}$ and the second part to $b$. This gives
\begin{equation*}
P=a_{2}\left( \frac{3a_{1}^{2}+a_{2}^{2}}{\sqrt{2}}+a_{1}b\right) +\sqrt{2}
b\cdot \frac{5a_{1}a_{2}}{\sqrt{2}}.
\end{equation*}
Since $a_{2}^{2}+\left( \sqrt{2}b\right) ^{2}=T$, the Cauchy--Schwarz inequality gives $P^{2}\leq TW$ with
\begin{equation*}
W=\left( \frac{3a_{1}^{2}+a_{2}^{2}}{\sqrt{2}}+a_{1}b\right) ^{2}+\frac{25}{2}
a_{1}^{2}a_{2}^{2}.
\end{equation*}
We set $a_{1}=\sqrt{2}x$, $a_{2}=\sqrt{2}y$, $b=z$, where $x,y,z\geq 0$ are real numbers. Then $s=2x^{2}+2y^{2}+z^{2}$, $\frac{3a_{1}^{2}+a_{2}^{2}}{\sqrt{2}}+a_{1}b=\frac{6x^{2}+2y^{2}}{\sqrt{2}}+\sqrt{2}xz=\sqrt{2}\left( 3x^{2}+y^{2}+xz\right) $ and $\frac{25}{2}a_{1}^{2}a_{2}^{2}=50x^{2}y^{2}$. So we have
\begin{align}
6s^{2}-W& =6\left( 2x^{2}+2y^{2}+z^{2}\right) ^{2}-2\left(
3x^{2}+y^{2}+xz\right) ^{2}-50x^{2}y^{2}  \notag \\
& =6\left( x^{2}-xz-\frac{7}{6}y^{2}\right) ^{2}+16\left( xz-\frac{9}{16}
y^{2}\right) ^{2}+\frac{421}{48}y^{4}+24y^{2}z^{2}+6z^{4},  \label{SOS6}
\end{align}
and this is an exact polynomial identity. It can be checked by expanding both sides. Both sides are equal to
\begin{equation*}
6x^{4}-12x^{3}z-14x^{2}y^{2}+22x^{2}z^{2}-4xy^{2}z+22y^{4}+24y^{2}z^{2}+6z^{4}.
\end{equation*} In particular, $W\leq 6s^{2}$. To get a coefficient strictly below six, we put $A=x^{2}-xz-\frac{7}{6}y^{2}$. Since $\frac{421}{48}>8$, \eqref{SOS6} gives $6s^{2}-W\geq 6A^{2}+8y^{4}+6z^{4}$. From $x^{2}=A+xz+\frac{7}{6}y^{2}$ and $xz\leq \frac{x^{2}+z^{2}}{2}$, we get $x^{2}\leq \left\vert A\right\vert +\frac{x^{2}+z^{2}}{2}+\frac{7}{6}y^{2}$, that is, $x^{2}\leq 2\left\vert A\right\vert +z^{2}+\frac{7}{3}y^{2}$. Therefore,
\begin{equation*}
s=2x^{2}+2y^{2}+z^{2}\leq 4\left\vert A\right\vert +\frac{20}{3}y^{2}+3z^{2}.
\end{equation*}
The weighted Cauchy--Schwarz inequality $\left( \sum \alpha _{i}\beta _{i}\right) ^{2}\leq \left( \sum \frac{\alpha _{i}^{2}}{\lambda _{i}}\right) \left( \sum \lambda _{i}\beta _{i}^{2}\right) $ with $\left( \alpha _{i}\right) =\left( 4,\frac{20}{3},3\right) $, $\left( \beta _{i}\right) =\left( \left\vert A\right\vert ,y^{2},z^{2}\right) $ and $\left( \lambda _{i}\right) =\left( 6,8,6\right) $ gives
\begin{equation*}
s^{2}\leq \left( \frac{16}{6}+\frac{\left( 20/3\right) ^{2}}{8}+\frac{9}{6}
\right) \left( 6A^{2}+8y^{4}+6z^{4}\right) =\frac{175}{18}\left(
6A^{2}+8y^{4}+6z^{4}\right) ,
\end{equation*}
since $\frac{16}{6}+\frac{50}{9}+\frac{3}{2}=\frac{48+100+27}{18}=\frac{175}{18}$.
Therefore, $6s^{2}-W\geq \frac{18}{175}s^{2}\geq \frac{1}{10}s^{2}$ and $W\leq \frac{59}{10}s^{2}<\frac{383}{64}s^{2}$.

\textbf{The eight-cycle.} The frequencies are $\pm 1,\pm 2,\pm 3$ and the middle frequency $4$. Let $a_{1},a_{2},a_{3}\geq 0$ be the amplitudes of the frequencies $\pm 1,\pm 2,\pm 3$, and let $b=\left\vert \widehat{f}(4)\right\vert $ be the amplitude of the middle frequency. Then $s=a_{1}^{2}+a_{2}^{2}+a_{3}^{2}+b^{2}$ and $T=a_{2}^{2}+2a_{3}^{2}+3b^{2}$. Up to conjugation, the multisets with sum $\equiv 0$ modulo $8$ are $\{1,1,-2\}$, $\{1,2,-3\}$, $\{2,3,3\}$ (note that $2+3+3=8$), $\{1,3,4\}$ and $\{2,2,4\}$. We take absolute values. A multiset with a repeated index has three orderings, and the other multisets have six orderings. The conjugate gives a factor $2$. Therefore,
\begin{equation*}
\left\vert \left\langle f^{3}\right\rangle \right\vert \leq \frac{
3a_{1}^{2}a_{2}+6a_{1}a_{2}a_{3}+3a_{2}a_{3}^{2}}{\sqrt{2}}
+6a_{1}a_{3}b+3a_{2}^{2}b=:P.
\end{equation*}
For example, $\{1,2,-3\}$ gives $6\cdot 2\cdot \frac{a_{1}a_{2}a_{3}}{2\sqrt{2}}=\frac{6a_{1}a_{2}a_{3}}{\sqrt{2}}$, and $\{1,3,4\}$ gives $6\cdot 2\cdot b\frac{a_{1}a_{3}}{2}=6a_{1}a_{3}b$. We attribute the terms to the factors $a_{2},a_{3},b$ as follows. We assign one copy of $a_{1}a_{3}b$ to $a_{3}$ and five copies to $b$:
\begin{gather*}
P=a_{2}A_{2}+a_{3}A_{3}+bB, \\
A_{2}=\frac{3a_{1}^{2}+a_{3}^{2}}{\sqrt{2}
},\qquad A_{3}=\frac{6a_{1}a_{2}+2a_{2}a_{3}}{\sqrt{2}}+a_{1}b,\qquad
B=5a_{1}a_{3}+3a_{2}^{2}.
\end{gather*}
To check this, note that the $a_{2}a_{3}^{2}$ terms give $\frac{1+2}{\sqrt{2}}=\frac{3}{\sqrt{2}}$ and the $a_{1}a_{3}b$ terms give $1+5=6$. We write $P=\left( a_{2},\sqrt{2}a_{3},\sqrt{3}b\right) \cdot \left( A_{2},\frac{A_{3}}{\sqrt{2}},\frac{B}{\sqrt{3}}\right) $. Here the first vector has squared norm $T$. Then we get $P^{2}\leq TW$ with $W=A_{2}^{2}+\frac{1}{2}A_{3}^{2}+\frac{1}{3}B^{2}$. We set $a_{1}=x$, $a_{2}=y$, $a_{3}=z$, $b=\sqrt{2}w$, where $x,y,z,w\geq 0$ are real numbers. Then $s=x^{2}+y^{2}+z^{2}+2w^{2}$, $A_{2}^{2}=\frac{\left( 3x^{2}+z^{2}\right) ^{2}}{2}$, $A_{3}=\sqrt{2}\left( 3xy+yz+xw\right) $ so that $\frac{A_{3}^{2}}{2}=\left( 3xy+yz+xw\right) ^{2}$, and $\frac{B^{2}}{3}=\frac{\left( 5xz+3y^{2}\right) ^{2}}{3}$. The exact identity
\begin{align}
6s^{2}-W& =6s^{2}-\frac{\left( 3x^{2}+z^{2}\right) ^{2}}{2}-\left(
3xy+yz+xw\right) ^{2}-\frac{\left( 5xz+3y^{2}\right) ^{2}}{3}  \notag \\
& =\frac{3}{2}\left( x^{2}-y^{2}\right) ^{2}+\frac{3}{2}\left(
y^{2}-z^{2}\right) ^{2}+4z^{4}+\frac{2}{3}x^{2}z^{2}+24w^{2}\left(
w^{2}+y^{2}+z^{2}\right)  \notag \\
& \quad +23\left( wx-\frac{3}{23}xy-\frac{1}{23}yz\right) ^{2}+\frac{129}{23}
\left( xy-\frac{187}{129}yz\right) ^{2}+\frac{280}{129}y^{2}z^{2}
\label{SOS8}
\end{align}
can again be verified by expansion. Both sides are equal to
\begin{multline*}
\frac{3}{2}x^{4}+3y^{4}+\frac{11}{2}z^{4}+24w^{4}+3x^{2}y^{2}+\frac{2}{3}x^{2}z^{2}+11y^{2}z^{2} \\
+23w^{2}x^{2}+24w^{2}y^{2}+24w^{2}z^{2}-6wx^{2}y-2wxyz-16xy^{2}z.
\end{multline*}
For instance, the last three terms expand to $23w^{2}x^{2}+6x^{2}y^{2}+14y^{2}z^{2}-6wx^{2}y-2wxyz-16xy^{2}z$. We put $Y_{1}=x^{2}-y^{2}$ and $Y_{2}=y^{2}-z^{2}$. Then $6s^{2}-W\geq \frac{3}{2}Y_{1}^{2}+\frac{3}{2}Y_{2}^{2}+4z^{4}+24w^{4}$, and $s=Y_{1}+2Y_{2}+3z^{2}+2w^{2}$. Indeed, $Y_{1}+2Y_{2}=x^{2}+y^{2}-2z^{2}$. The weighted Cauchy--Schwarz inequality with $\left( \alpha _{i}\right) =\left( 1,2,3,2\right) $, $\left( \beta _{i}\right) =\left( Y_{1},Y_{2},z^{2},w^{2}\right) $ and $\left( \lambda _{i}\right) =\left( \frac{3}{2},\frac{3}{2},4,24\right) $ gives
\begin{equation*}
s^{2}\leq \left( \frac{2}{3}+\frac{8}{3}+\frac{9}{4}+\frac{4}{24}\right)
\left( \frac{3}{2}Y_{1}^{2}+\frac{3}{2}Y_{2}^{2}+4z^{4}+24w^{4}\right) \leq
\frac{23}{4}\left( 6s^{2}-W\right) ,
\end{equation*}
since $\frac{2}{3}+\frac{8}{3}+\frac{9}{4}+\frac{1}{6}=\frac{8+32+27+2}{12}=\frac{23}{4}$.
Therefore, $W\leq \left( 6-\frac{4}{23}\right) s^{2}=\frac{134}{23}s^{2}\approx 5.826\,s^{2}<\frac{383}{64}s^{2}$.

\textbf{The ten-cycle.} The frequencies are $\pm 1,\ldots ,\pm 4$ and the middle frequency $5$. Let $a_{1},\ldots ,a_{4}\geq 0$ be the amplitudes of the frequencies $\pm 1,\ldots ,\pm 4$, and let $b=\left\vert \widehat{f}(5)\right\vert $ be the amplitude of the middle frequency. We set
\begin{equation*}
S_{0}=\sum_{j=1}^{4}a_{j}^{2},\qquad
T_{0}=a_{2}^{2}+2a_{3}^{2}+3a_{4}^{2},\qquad s=S_{0}+b^{2},\qquad
T=T_{0}+4b^{2},
\end{equation*}
\begin{equation*}
L=2\left( a_{1}a_{4}+a_{2}a_{3}\right) ,\qquad U=2\left(
a_{1}^{2}a_{4}^{2}+a_{2}^{2}a_{3}^{2}\right) .
\end{equation*}
If $S_{0}=0$, then the only frequency is the middle one, and $\left\langle f^{3}\right\rangle =\widehat{f}(5)^{3}\left\langle \chi _{5}^{3}\right\rangle =0$. So we assume $S_{0}>0$. The even-cycle decomposition with $m=5$ gives $\left\vert \left\langle f^{3}\right\rangle \right\vert \leq P=P_{0}+3bL$. Here $P_{0}=\frac{1}{\sqrt{2}}\sum_{k=2}^{4}a_{k}\left( 3L_{k}+R_{k}\right) $ and $L=L_{5}$. Explicitly, we have
\begin{equation*}
L_{2}=a_{1}^{2},\quad L_{3}=2a_{1}a_{2},\quad L_{4}=2a_{1}a_{3}+a_{2}^{2},\quad
R_{2}=a_{4}^{2},\quad R_{3}=2a_{3}a_{4},\quad R_{4}=2a_{2}a_{4}+a_{3}^{2}.
\end{equation*}
Here $R_{k}$ is given by $i+j=10-k$ with $i,j\leq 4$. For example, for $R_{4}$, the condition $i+j=6$ gives $(2,4),(4,2),(3,3)$. The pairs with $i+j=5$ are $(1,4),(4,1),(2,3),(3,2)$. They are exactly the missing ones, and the sum of the squares of the corresponding products $a_{i}a_{j}$ is $U$. The weighted Cauchy--Schwarz inequality over $k=2,3,4$, as in Step 3 of the even case, and the identity $\sum_{k=2}^{4}\left( U_{k}+V_{k}\right) =S_{0}^{2}-U$ from Step 4 with $m=5$ give $P_{0}^{2}\leq 5T_{0}\left( S_{0}^{2}-U\right) $. Next, we apply the Cauchy--Schwarz inequality to $P=\sqrt{T_{0}}\cdot \frac{P_{0}}{\sqrt{T_{0}}}+2b\cdot \frac{3L}{2}$. Here the vector $\left( \sqrt{T_{0}},2b\right) $ has squared norm $T$. If $T_{0}=0$, then $P_{0}=0$ and the estimate is trivial. Otherwise, we obtain
\begin{equation}
P^{2}\leq T\left( \frac{P_{0}^{2}}{T_{0}}+\frac{9L^{2}}{4}\right) \leq
TW,\qquad W=5\left( S_{0}^{2}-U\right) +\frac{9}{4}L^{2}\leq 5S_{0}^{2}+L^{2},
\label{W10}
\end{equation}
where the last inequality follows from $L^{2}=4\left( a_{1}a_{4}+a_{2}a_{3}\right) ^{2}\leq 8\left( a_{1}^{2}a_{4}^{2}+a_{2}^{2}a_{3}^{2}\right) =4U$. This follows from $\left( X+Y\right) ^{2}\leq 2X^{2}+2Y^{2}$. So $-5U\leq -\frac{5}{4}L^{2}$ and $-\frac{5}{4}L^{2}+\frac{9}{4}L^{2}=L^{2}$. We also have
\begin{equation}
L\leq S_{0},\qquad L^{2}\leq 4\left( a_{1}^{2}+a_{2}^{2}\right) \left(
a_{3}^{2}+a_{4}^{2}\right) \leq 2S_{0}T_{0}.  \label{L10}
\end{equation}
The first inequality follows from $2a_{1}a_{4}\leq a_{1}^{2}+a_{4}^{2}$ and $2a_{2}a_{3}\leq a_{2}^{2}+a_{3}^{2}$. The second one follows from the Cauchy--Schwarz inequality $\left( a_{1}a_{4}+a_{2}a_{3}\right) ^{2}\leq \left( a_{1}^{2}+a_{2}^{2}\right) \left( a_{4}^{2}+a_{3}^{2}\right) $, together with $a_{1}^{2}+a_{2}^{2}\leq S_{0}$ and $2\left( a_{3}^{2}+a_{4}^{2}\right) \leq 2a_{3}^{2}+3a_{4}^{2}\leq T_{0}$. We distinguish three cases.

\emph{Case 1: $T_{0}\leq \frac{2}{5}S_{0}$.} Then \eqref{W10}--\eqref{L10} give $W\leq 5S_{0}^{2}+2S_{0}T_{0}\leq 5S_{0}^{2}+\frac{4}{5}S_{0}^{2}=\frac{29}{5}S_{0}^{2}\leq \frac{29}{5}s^{2}$. Intuitively, if $T_{0}$ is small compared with $S_{0}$, then the frequencies $2,3,4$ are small, and the resonances $1+4=5$ and $2+3=5$ are weak.

\emph{Case 2: $T_{0}>\frac{2}{5}S_{0}$ and $b^{2}\geq \frac{S_{0}}{512}$.} Then $s=S_{0}+b^{2}\geq \frac{513}{512}S_{0}$, and \eqref{W10}--\eqref{L10} give
\begin{equation*}
W\leq 5S_{0}^{2}+L^{2}\leq 6S_{0}^{2}\leq 6\left( \frac{512}{513}\right)
^{2}s^{2}<\frac{383}{64}s^{2}.
\end{equation*}
The last inequality is equivalent to $383\cdot 513^{2}>384\cdot 512^{2}$. We write $513^{2}=512^{2}+2\cdot 512+1$. Then it becomes $383\left( 2\cdot 512+1\right) >512^{2}$, that is, $383\cdot 1025=392575>262144$, which is true. In this case, the middle frequency carries a non-negligible part of $s$. So the factor $\left( S_{0}/s\right) ^{2}<1$ makes up for the constant $6$.

\emph{Case 3: $T_{0}>\frac{2}{5}S_{0}$ and $b^{2}<\frac{S_{0}}{512}$.} Then $\frac{b^{2}}{T_{0}}<\frac{S_{0}/512}{2S_{0}/5}=\frac{5}{1024}$, so $\frac{b}{\sqrt{T_{0}}}<\frac{\sqrt{5}}{32}$. By $P_{0}\leq \sqrt{5T_{0}\left( S_{0}^{2}-U\right) }\leq \sqrt{5T_{0}}S_{0}$ and $L\leq S_{0}$, we have
\begin{equation*}
P\leq S_{0}\sqrt{T_{0}}\left( \sqrt{5}+\frac{3b}{\sqrt{T_{0}}}\right) \leq
S_{0}\sqrt{T_{0}}\left( \sqrt{5}+\frac{3\sqrt{5}}{32}\right) =\frac{35\sqrt{5}
}{32}S_{0}\sqrt{T_{0}}.
\end{equation*}
Therefore, $P^{2}\leq \frac{35^{2}\cdot 5}{1024}S_{0}^{2}T_{0}=\frac{6125}{1024}S_{0}^{2}T_{0}\leq \frac{6125}{1024}s^{2}T<\frac{383}{64}s^{2}T$, because $\frac{383}{64}=\frac{6128}{1024}$. The margin here is only $\frac{3}{1024}$. In our proof, this case determines the constant $\frac{383}{64}$.

The three cases cover all possibilities, and \eqref{moment} is proved for $n=10$. This completes the proof of Theorem \ref{Tmomentintro}.
\end{proof}

\begin{example}
\label{Exfive}The estimate \eqref{moment} is not true for the five-cycle. Even the weaker estimate with the constant $6$ in place of $\frac{383}{64}$ fails on $C_{5}$. Indeed, let $f(j)=\sqrt{2}\cos \left( 2\pi j/5\right) +\frac{1}{\sqrt{2}}\cos \left( 4\pi j/5\right) $. In the notation of \eqref{fiveexact}, we have $f=v+z$ with $v=\frac{1}{\sqrt{2}}\left( \chi +\chi ^{-1}\right) $ and $z=\frac{1}{2\sqrt{2}}\left( \chi ^{2}+\chi ^{-2}\right) $. So $c_{1}=\frac{1}{\sqrt{2}}$ and $c_{2}=\frac{1}{2\sqrt{2}}$, and both are real. Therefore,
\begin{equation*}
\left\Vert f\right\Vert _{2}^{2}=2c_{1}^{2}+2c_{2}^{2}=1+\frac{1}{4}=\frac{5}{4}
,\qquad \mathcal{E}_{\mathrm{w},5}(f)=1\cdot 1+2\cdot \frac{1}{4}=\frac{3}{2}
,\qquad T=\frac{3}{2}-\frac{5}{4}=\frac{1}{4},
\end{equation*}
and, by \eqref{fiveexact}, $\left\langle f^{3}\right\rangle =6\left( c_{1}^{2}c_{2}+c_{1}c_{2}^{2}\right) =6\left( \frac{1}{2}\cdot \frac{1}{2\sqrt{2}}+\frac{1}{\sqrt{2}}\cdot \frac{1}{8}\right) =\frac{6}{2\sqrt{2}}\left( \frac{1}{2}+\frac{1}{4}\right) =\frac{9}{4\sqrt{2}}$. Therefore,
\begin{equation*}
\frac{\left\vert \left\langle f^{3}\right\rangle \right\vert ^{2}}{s^{2}T}=
\frac{81/32}{\left( 25/16\right) \cdot \left( 1/4\right) }=\frac{81}{32}\cdot
\frac{64}{25}=\frac{162}{25}=6.48>6.
\end{equation*}
The reason is the resonance $1+2+2=5$. It produces the product $a_{1}a_{2}^{2}$. Like $a_{1}^{2}a_{2}$, this product can only be assigned to the frequency $2$. But the weight of this frequency in $T$ is only $1$. For this reason, we treat the five-cycle separately in Appendix A, see Remark \ref{Rfive}.
\end{example}

\subsection{From the third-moment estimate to logarithmic stability}

The next theorem is the abstract counterpart of Theorem \ref{Tscalar}. Here, the hypothesis $T\geq 1-3a^{2}$ comes from the nonnegativity of $u$. This hypothesis makes the third-moment estimate effective on the whole admissible class.

\begin{theorem}
\label{Tmeanlog}Let $(X,\mu )$ be a probability space. Let $u\geq 0$ with $\left\Vert u\right\Vert _{2}=1$ and $u\in L^{3}(X,\mu )$, and let $\mathcal{E}$ be a nonnegative quadratic form vanishing on the constants, with $\mathcal{E}(u)<\infty $. Set $a=\langle u\rangle $, $s=1-a^{2}$, $T=\mathcal{E}(u)-s$ and $K=\sqrt{383/64}$. Assume that
\begin{equation}
T\geq \max \{0,1-3a^{2}\},\qquad \left\vert \left\langle \left( u-a\right)
^{3}\right\rangle \right\vert \leq Ks\sqrt{T}.  \label{hyp}
\end{equation}
Then
\begin{equation}
2\mathcal{E}(u)-\Ent(u^{2})\geq \frac{385}{1152}\left\Vert u-1\right\Vert
_{2}^{4}.  \label{meanlog}
\end{equation}
\end{theorem}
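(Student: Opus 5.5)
The plan is to repeat the splitting used for the nearest-neighbor energy, with the cubic interaction now controlled only through the hypothesis \eqref{hyp}. I set $y=u-a$, so that $\langle y\rangle =0$ and $\langle y^{2}\rangle =s$. The algebra of Section 2 uses only $\langle u^{2}\rangle =1$, so it carries over verbatim and gives
\begin{equation*}
\langle (u-1)^{2}(u+2)\rangle =2(1-a^{3})+\langle y^{3}\rangle ,
\end{equation*}
which is finite because $u\in L^{3}$. Then $\mathcal{E}(u)=s+T$ together with the identity behind \eqref{Fexact} yields
\begin{equation*}
2\mathcal{E}(u)-\Ent(u^{2})=2T+\frac{2}{3}(1-a)^{2}(1+2a)-\frac{2}{3}\langle y^{3}\rangle +\langle H(u)\rangle ,
\end{equation*}
where $H$ is the function \eqref{Hdef}. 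For the scalar term I invoke Theorem \ref{Tmajorantintro} in the form \eqref{majorant}, which is valid on any probability space:
\begin{equation*}
\langle H(u)\rangle \geq \frac{1}{6}d^{4}+\frac{1}{45}d^{6}=\frac{2}{3}(1-a)^{2}+\frac{8}{45}(1-a)^{3},
\end{equation*}
where $d^{2}=\Vert u-1\Vert _{2}^{2}=2(1-a)$. For the cubic term I use $|\langle y^{3}\rangle |\leq Ks\sqrt{T}$ from \eqref{hyp}.

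Writing $\tau =\sqrt{T}$, the energy part is bounded below by the scalar quadratic
\begin{equation*}
2\tau ^{2}-\frac{2}{3}Ks\tau +\frac{2}{3}(1-a)^{2}(1+2a),
\end{equation*}
to be minimized over $\tau \geq \tau _{0}=\sqrt{\max \{0,1-3a^{2}\}}$. The unconstrained minimizer is $\tau ^{\ast }=Ks/6$, with value $\frac{2}{3}(1-a)^{2}(1+2a)-\frac{K^{2}s^{2}}{18}$. Since $s=(1-a)(1+a)$ and $d^{4}=4(1-a)^{2}$, the target \eqref{meanlog} reads $\frac{385}{288}(1-a)^{2}$. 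After dividing by $(1-a)^{2}$, the claim reduces to the scalar inequality
\begin{equation*}
\frac{2}{3}(1+2a)-\frac{383}{1152}(1+a)^{2}+\frac{2}{3}+\frac{8}{45}(1-a)\geq \frac{385}{288}
\end{equation*}
whenever $\tau ^{\ast }\geq \tau _{0}$, and to the analogous inequality with the quadratic evaluated at $\tau =\tau _{0}$ when $\tau ^{\ast }<\tau _{0}$.

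At $a=1$ this is an exact equality. Indeed, $2-\frac{383}{288}=\frac{193}{288}$ and $\frac{193}{288}+\frac{192}{288}=\frac{385}{288}$. This is precisely why $K^{2}=\frac{383}{64}$ produces the constant $\frac{385}{1152}$: the margin $6-K^{2}=\frac{1}{64}$ turns into $\frac{1}{3}+\frac{1}{1152}$. For $a<1$ the left-hand side is a concave quadratic in $a$. So on the range where the unconstrained minimum is admissible, it suffices to check the two endpoints, and the $\frac{8}{45}(1-a)$ term helps as $a$ decreases. The main obstacle is small $a$. There the bound with $\tau ^{\ast }$ becomes too weak, since $-\frac{K^{2}s^{2}}{18}$ does not shrink while $\frac{2}{3}(1+2a)$ does.

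This is where the positivity hypothesis $T\geq 1-3a^{2}$ enters. It is the abstract form of $t^{2}\geq 1-3a^{2}$ from Case 1 of Theorem \ref{Tscalar}. When $\tau ^{\ast }<\tau _{0}$, the quadratic is increasing on $[\tau _{0},\infty )$, so I evaluate it at $\tau _{0}$. This gives
\begin{equation*}
2(1-3a^{2})-\frac{2}{3}Ks\sqrt{1-3a^{2}}+\frac{2}{3}(1-a)^{2}(1+2a),
\end{equation*}
which must be compared with $\frac{385}{288}(1-a)^{2}-\frac{2}{3}(1-a)^{2}-\frac{8}{45}(1-a)^{3}$. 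I will split $a\in (0,1]$ into a few explicit ranges, in the spirit of Cases 1--3 of Theorem \ref{Tscalar}: a small-$a$ range where $\tau _{0}$ dominates, a middle range handled by direct polynomial bounds, and the large-$a$ range where the endpoint computation above applies. In each range I will verify the resulting one-variable inequality by completing squares or checking monotonicity at the endpoints. The delicate part is the transition zone near $a\approx 1/\sqrt{3}$, where $\tau _{0}$ becomes small. I expect the $\frac{8}{45}d^{6}$ term of \eqref{majorant} to supply the slack there, and if it does not I would use the exact integral $\Theta (d^{2})$ instead of its two-term truncation.
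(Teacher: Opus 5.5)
Your plan is correct and is essentially the paper's argument. The paper completes the square in $\sqrt{T}$, which is your quadratic in $\tau$. For $a\geq\frac{1}{2}$ it drops the square and checks the resulting concave function of $a$ at the two endpoints, exactly as you propose. For $a\leq\frac{1}{2}$ it bounds the square from below using $\sqrt{T}\geq\sqrt{1-3a^{2}}$, which amounts to your evaluation at $\tau_{0}$.

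The transition you worry about near $a\approx 1/\sqrt{3}$ is harmless. The unconstrained bound already suffices on all of $[\frac{1}{2},1]$. For $a\leq\frac{1}{2}$ one has $\tau_{0}\geq\frac{1}{2}>\frac{K}{6}\geq\tau^{\ast}$. So the two-term bound $\frac{1}{6}d^{4}+\frac{1}{45}d^{6}$ is enough and the exact $\Theta$ is never needed.
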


\begin{proof}
\emph{Step 0: finiteness and trivial case.} For $w\geq 0$, we have $\left\vert w^{2}\log w^{2}\right\vert \leq e^{-1}+2w^{3}$. Indeed, for $w\leq 1$ this holds because $x\log x\geq -e^{-1}$ on $\left[ 0,1\right] $. For $w\geq 1$ it holds because $\log w^{2}=2\log w\leq 2w$. So all entropies are finite when $u\in L^{3}$. If $a=1$, then $u=1$ and both sides of \eqref{meanlog} vanish. So let $0<a<1$, $d=\left\Vert u-1\right\Vert _{2}=\sqrt{2\left( 1-a\right) }>0$ and $\eta =1-a\in (0,1)$.

\emph{Step 1: the cubic deficit for a general form.} We repeat the computation that led to \eqref{Fexact2}, with $Q_{n}(z)$ replaced by $T=\mathcal{E}(u)-s$. Both quantities are ``energy minus Poincar\'{e} level''. Moreover, the computation of $\left\langle \left( u-1\right) ^{2}\left( u+2\right) \right\rangle $ only used $\langle u\rangle =a$ and $\left\langle u^{2}\right\rangle =1$. Therefore, we have
\begin{equation*}
\mathcal{F}_{\mathcal{E}}(u):=\mathcal{E}(u)-\frac{1}{3}\left\langle \left(
u-1\right) ^{2}\left( u+2\right) \right\rangle =T+\frac{s^{2}}{4}+\mathcal{R}
(a)-\frac{1}{3}\left\langle \left( u-a\right) ^{3}\right\rangle .
\end{equation*}

\emph{Step 2: completing the square in $\sqrt{T}$.} By \eqref{hyp}, we have $-\frac{1}{3}\left\langle \left( u-a\right) ^{3}\right\rangle \geq -\frac{1}{3}Ks\sqrt{T}$. Since $T-\frac{Ks}{3}\sqrt{T}=\left( \sqrt{T}-\frac{Ks}{6}\right) ^{2}-\frac{K^{2}s^{2}}{36}$, we obtain
\begin{equation*}
\mathcal{F}_{\mathcal{E}}(u)\geq \left( \sqrt{T}-\frac{Ks}{6}\right)
^{2}+\left( \frac{1}{4}-\frac{K^{2}}{36}\right) s^{2}+\mathcal{R}(a).
\end{equation*}
With $K^{2}=6-\frac{1}{64}$, we have $\frac{K^{2}}{36}=\frac{1}{6}-\frac{1}{2304}$, and so $\frac{1}{4}-\frac{K^{2}}{36}=\frac{1}{12}+\frac{1}{2304}$.

\emph{Step 3: adding the scalar remainder.} The splitting identity \eqref{splitting} holds for any energy, because it only used $\left\langle u^{2}\right\rangle =1$. So $2\mathcal{E}(u)-\Ent(u^{2})=2\mathcal{F}_{\mathcal{E}}(u)+\left\langle H(u)\right\rangle $. By Theorem \ref{Tmajorantintro}, $\left\langle H(u)\right\rangle \geq \frac{d^{4}}{6}+\frac{d^{6}}{45}$. Dividing by $d^{4}=4\eta ^{2}$, we get
\begin{equation*}
\frac{2\mathcal{E}(u)-\Ent(u^{2})}{d^{4}}\geq \frac{2}{d^{4}}\left( \sqrt{T}-
\frac{Ks}{6}\right) ^{2}+\frac{2}{d^{4}}\left[ \left( \frac{1}{12}+\frac{1}{
2304}\right) s^{2}+\mathcal{R}(a)\right] +\frac{1}{6}+\frac{d^{2}}{45}.
\end{equation*}

\emph{Step 4: the mean terms.} Recall that $s=\left( 1-a\right) \left( 1+a\right) $, $\mathcal{R}(a)=\frac{\left( 1-a\right) ^{3}\left( 1+3a\right) }{12}$ and $d^{4}=4\left( 1-a\right) ^{2}$. Then
\begin{align*}
& \frac{2}{d^{4}}\left[ \left( \frac{1}{12}+\frac{1}{2304}\right) s^{2}+
\mathcal{R}(a)\right] \\
& \qquad =\frac{1}{2\left( 1-a\right) ^{2}}\left[ \left( \frac{1}{12}+\frac{1
}{2304}\right) \left( 1-a\right) ^{2}\left( 1+a\right) ^{2}+\frac{\left(
1-a\right) ^{3}\left( 1+3a\right) }{12}\right] \\
& \qquad =\frac{\left( 1+a\right) ^{2}}{24}+\frac{\left( 1+a\right) ^{2}}{4608}+
\frac{\left( 1-a\right) \left( 1+3a\right) }{24} \\
& \qquad =\frac{1+2a-a^{2}}{12}+\frac{\left( 1+a\right) ^{2}}{4608},
\end{align*}
since $\left( 1+a\right) ^{2}+\left( 1-a\right) \left( 1+3a\right) =1+2a+a^{2}+1+2a-3a^{2}=2+4a-2a^{2}$. Next, $\frac{1+2a-a^{2}}{12}+\frac{1}{6}-\frac{1}{3}=\frac{1+2a-a^{2}-2}{12}=-\frac{1-2a+a^{2}}{12}=-\frac{\left( 1-a\right) ^{2}}{12}=-\frac{\eta ^{2}}{12}$, and $\frac{d^{2}}{45}=\frac{2\eta }{45}$. Therefore,
\begin{equation}
\frac{2\mathcal{E}(u)-\Ent(u^{2})}{d^{4}}-\frac{1}{3}\geq \frac{2}{d^{4}}
\left( \sqrt{T}-\frac{Ks}{6}\right) ^{2}+\vartheta (\eta )+\frac{\left( 1+a\right)
^{2}}{4608},\qquad \vartheta (\eta )=\frac{2\eta }{45}-\frac{\eta ^{2}}{12}.
\label{meangap}
\end{equation}
Note that when $a\rightarrow 1$, that is, $\eta \rightarrow 0$, the sum of the last two terms on the right-hand side tends to $\frac{4}{4608}=\frac{1}{1152}$. This is where the final constant comes from, and it is exactly the contribution of $K^{2}<6$.

\emph{Step 5: the right-hand side of \eqref{meangap} is at least $\frac{1}{1152}$.} The function $\vartheta $ is a concave parabola. So on any interval, it is at least the minimum of its values at the endpoints. We use $K\leq \sqrt{6}$, $s\leq 1$ and the hypothesis $T\geq 1-3a^{2}$. We distinguish three ranges of the mean.

If $a\leq \frac{1}{3}$, then $\eta \in \left[ \frac{2}{3},1\right) $ and $T\geq 1-3a^{2}\geq 1-\frac{1}{3}=\frac{2}{3}$. So $\sqrt{T}\geq \sqrt{\frac{2}{3}}=\frac{2}{\sqrt{6}}$. Also, $\frac{Ks}{6}\leq \frac{\sqrt{6}}{6}=\frac{1}{\sqrt{6}}$. So $\sqrt{T}-\frac{Ks}{6}\geq \frac{1}{\sqrt{6}}$, and the square is at least $\frac{1}{6}$. Since $d^{4}\leq 4$, the first term is at least $\frac{2}{4}\cdot \frac{1}{6}=\frac{1}{12}$. On $\left[ 0,1\right] $, we have $\vartheta (\eta )\geq \min \{\vartheta (0),\vartheta (1)\}=\min \{0,\frac{2}{45}-\frac{1}{12}\}=-\frac{7}{180}$. We drop the last term, and we see that the right-hand side is at least $\frac{1}{12}-\frac{7}{180}=\frac{15-7}{180}=\frac{2}{45}>\frac{1}{1152}$.

If $\frac{1}{3}\leq a\leq \frac{1}{2}$, then $\eta \in \left[ \frac{1}{2},\frac{2}{3}\right] $ and $T\geq 1-3\cdot \frac{1}{4}=\frac{1}{4}$. So $\sqrt{T}\geq \frac{1}{2}$. Moreover, $s\leq 1-\frac{1}{9}=\frac{8}{9}$, so $\frac{Ks}{6}\leq \frac{\sqrt{6}}{6}\cdot \frac{8}{9}=\frac{8}{9\sqrt{6}}\approx 0.363<\frac{3}{8}$. So the square is at least $\left( \frac{1}{2}-\frac{3}{8}\right) ^{2}=\frac{1}{64}$. Since $d^{4}=4\eta ^{2}\leq 4\cdot \frac{4}{9}=\frac{16}{9}$, the first term is at least $\frac{2\cdot 9}{16}\cdot \frac{1}{64}=\frac{9}{512}$. On $\left[ \frac{1}{2},\frac{2}{3}\right] $, we have $\vartheta (\eta )\geq \min \{\vartheta (\frac{1}{2}),\vartheta (\frac{2}{3})\}$ with $\vartheta (\frac{1}{2})=\frac{1}{45}-\frac{1}{48}=\frac{1}{720}$ and $\vartheta (\frac{2}{3})=\frac{4}{135}-\frac{1}{27}=-\frac{1}{135}$. So $\vartheta \geq -\frac{1}{135}$. The right-hand side is at least $\frac{9}{512}-\frac{1}{135}=\frac{9\cdot 135-512}{69120}=\frac{703}{69120}>\frac{1}{1152}=\frac{60}{69120}$.

If $\frac{1}{2}\leq a<1$, then $\eta \leq \frac{1}{2}$. In this range, $T$ may be very small. So we drop the square and consider
\begin{equation*}
\Phi(a)=\eta \left( \frac{2}{45}-\frac{\eta }{12}\right) +\frac{\left(
1+a\right) ^{2}}{4608},\qquad \eta =1-a.
\end{equation*}
The coefficient of $a^{2}$ in this function is $-\frac{1}{12}+\frac{1}{4608}=-\frac{383}{4608}<0$. So $\Phi$ is concave on $\left[ \frac{1}{2},1\right] $ and $\Phi(a)\geq \min \{\Phi(\frac{1}{2}),\Phi(1)\}$. We compute $\Phi(1)=\frac{4}{4608}=\frac{1}{1152}$ and
\begin{multline*}
\Phi \left( \tfrac{1}{2}\right) =\frac{1}{2}\left( \frac{2}{45}-\frac{1}{24}
\right) +\frac{9/4}{4608}=\frac{1}{2}\cdot \frac{16-15}{360}+\frac{9}{18432} \\
=\frac{1}{720}+\frac{1}{2048}=\frac{128+45}{92160}=\frac{173}{92160}>\frac{1}{
1152}.
\end{multline*}
Therefore, $\Phi(a)\geq \frac{1}{1152}$ on $\left[ \frac{1}{2},1\right] $.

In all cases, the right-hand side of \eqref{meangap} is at least $\frac{1}{1152}$, and \eqref{meanlog} follows from the identity $\frac{1}{3}+\frac{1}{1152}=\frac{385}{1152}$.
\end{proof}

\subsection{Proof of \eqref{logword}, \eqref{wordtonn} and \eqref{sepw}}\label{SSword}

\emph{Verification of the hypotheses on $C_{n}$, $n\geq 6$.} Let $u\in \mathcal{A}_{n}$ with the decomposition $u=a+v+z$ of \eqref{decomp}. We take $\mathcal{E}=\mathcal{E}_{\mathrm{w},n}$ in Theorem \ref{Tmeanlog}. It is a nonnegative quadratic form vanishing on the constants, since $\psi _{n}(0)=0$. Then $T=\mathcal{E}_{\mathrm{w},n}(u)-s=\sum_{k\neq 0}\left( \psi _{n}(k)-1\right) \left\vert \widehat{u}(k)\right\vert ^{2}$. The terms with $k=\pm 1$ vanish. For $2\leq k\leq n-2$, the weight $\psi _{n}(k)-1$ is at least $1$. Therefore,
\begin{equation*}
T\geq \sum_{2\leq k\leq n-2}\left\vert \widehat{u}(k)\right\vert
^{2}=\left\Vert z\right\Vert _{2}^{2}=1-a^{2}-\left\Vert v\right\Vert
_{2}^{2}\geq 1-a^{2}-2a^{2}=1-3a^{2},
\end{equation*}
by Lemma \ref{Lpositivity}. So $T\geq \max \{0,1-3a^{2}\}$. Next, $f=u-a$ is real with $\langle f\rangle =0$ and $\left\Vert f\right\Vert _{2}^{2}=s$. Moreover, $\mathcal{E}_{\mathrm{w},n}(f)=\mathcal{E}_{\mathrm{w},n}(u)$, since the constants are in the kernel. So $\mathcal{E}_{\mathrm{w},n}(f)-\left\Vert f\right\Vert _{2}^{2}=T$, and Theorem \ref{Tmomentintro} gives $\left\vert \left\langle \left( u-a\right) ^{3}\right\rangle \right\vert \leq Ks\sqrt{T}$ with $K^{2}=\frac{383}{64}$. Integrability is automatic on a finite cycle. So we can apply Theorem \ref{Tmeanlog}, and we get
\begin{equation}
\Delta _{\mathrm{w},n}(u)\geq \frac{385}{1152}d_{n}(u)^{4},\qquad n\geq 6.
\label{wordgap}
\end{equation}
This proves \eqref{sepw} for $n\geq 6$. In particular, it proves \eqref{logword} for $n\geq 6$, with strict inequality for $u\neq 1$. For $n=4$, the two energies coincide on $C_{4}$, and \eqref{logword} follows from \eqref{log4cycle}, with strict inequality for $u\neq 1$. For $n=5$, \eqref{logword} and \eqref{sepw} follow from Proposition \ref{Pfive} in Appendix A, since $\frac{503}{1500}>\frac{385}{1152}$.

\emph{Proof of \eqref{wordtonn}.} We first show that $\ell _{n}(k)\geq \psi _{n}(k)$ for $0\leq k<n$. This is \cite[(1.4)]{XZ26}. By the symmetries $\ell _{n}(n-k)=\ell _{n}(k)$ and $\psi _{n}(n-k)=\psi _{n}(k)$, we may assume $0\leq k\leq n/2$, so that $\psi _{n}(k)=k$. With $\theta =\frac{\pi }{n}$ and $1-\cos 2x=2\sin ^{2}x$, we have
\begin{equation*}
\ell _{n}(k)=\frac{\sin ^{2}(k\theta )}{\sin ^{2}\theta },\qquad 0\leq
k\theta \leq \frac{\pi }{2}.
\end{equation*}
First, let $k\geq 3$. The function $\sin $ is concave on $\left[ 0,\frac{\pi }{2}\right] $, with $\sin 0=0$ and $\sin \frac{\pi }{2}=1$. So it lies above the chord. That is, $\sin x\geq \frac{2}{\pi }x$ on $\left[ 0,\frac{\pi }{2}\right] $. Also, $\sin \theta \leq \theta $. Therefore, $\ell _{n}(k)\geq \frac{\left( 2k\theta /\pi \right) ^{2}}{\theta ^{2}}=\frac{4k^{2}}{\pi ^{2}}$, and this is $\geq k$ as soon as $k\geq \frac{\pi ^{2}}{4}\approx 2.47$. For $k=2$, we have $\ell _{n}(2)=4\cos ^{2}\frac{\pi }{n}\geq 4\cos ^{2}\frac{\pi }{4}=2=\psi _{n}(2)$ for $n\geq 4$. For $k=0,1$, both sides agree. So $\ell _{n}(k)\geq \psi _{n}(k)$ for all $k$. By Parseval, we have
\begin{equation*}
\mathcal{E}_{n}(u)-\mathcal{E}_{\mathrm{w},n}(u)=\sum_{k=0}^{n-1}\left( \ell
_{n}(k)-\psi _{n}(k)\right) \left\vert \widehat{u}(k)\right\vert ^{2}\geq 0.
\end{equation*}
Since both deficits have the same entropy, we have $\Delta _{n}(u)=\Delta _{\mathrm{w},n}(u)+2\left( \mathcal{E}_{n}(u)-\mathcal{E}_{\mathrm{w},n}(u)\right) $. Therefore, \eqref{wordtonn} follows from \eqref{logword}.

\subsection{Proof of the upper bounds \eqref{uppnn} and \eqref{uppw}}\label{SSupper}

We add a second harmonic to the first-harmonic family \eqref{testfamily}. Let $n\in \mathcal{J}$, and let $e_{n}$ be as in Section 3. For finite $n$, let $h_{n}(j)=\sqrt{2}\cos \left( 4\pi j/n\right) $. On the circle, let $h_{\infty }(x)=\sqrt{2}\cos \left( 4\pi x\right) $.

\emph{Step 1: properties of $e_{n}$ and $h_{n}$.} Since $2\cos ^{2}\phi =1+\cos 2\phi $, we have
\begin{equation}
e_{n}^{2}=1+\frac{h_{n}}{\sqrt{2}},\qquad h_{n}^{2}=1+\cos \left( 8\pi j/n\right) .
\label{ehsquare}
\end{equation}
The mean of $\cos \left( 8\pi j/n\right) $ is zero, since $4\not\equiv 0$ modulo $n$ for $n\geq 5$. On the circle, we replace $j/n$ by $x$, and the mean of $\cos \left( 8\pi x\right) $ is also zero. So $\left\langle h_{n}\right\rangle =0$ and $\left\langle h_{n}^{2}\right\rangle =1$. Moreover, $\left\langle e_{n}h_{n}\right\rangle =0$, since the frequencies $\pm 1\pm 2$ are not zero modulo $n$. By \eqref{ehsquare}, we get
\begin{equation*}
\left\langle e_{n}^{2}h_{n}\right\rangle =\left\langle h_{n}\right\rangle +\frac{\left\langle h_{n}^{2}\right\rangle }{\sqrt{2}}=\frac{1}{\sqrt{2}},\qquad \left\langle e_{n}^{4}\right\rangle =\left\langle \Big( 1+\frac{h_{n}}{\sqrt{2}}\Big) ^{2}\right\rangle =1+\frac{1}{2}=\frac{3}{2}.
\end{equation*}
We also have $\left\langle e_{n}^{3}\right\rangle =0$ by \eqref{enprops}, and $\left\vert e_{n}\right\vert ,\left\vert h_{n}\right\vert \leq \sqrt{2}$. The function $h_{n}$ has only the frequencies $\pm 2$. By \eqref{eigen} and the formula $\ell _{n}(2)=4\cos ^{2}\frac{\pi }{n}$ from the proof of Lemma \ref{Lspectral}, we get $\mathcal{E}_{n}(h_{n})=\ell _{n}(2)=1+q_{n}$. This also holds on the circle, since $\ell _{\infty }(2)=4$. For finite $n$, we have $\mathcal{E}_{\mathrm{w},n}(h_{n})=\psi _{n}(2)=2$.

Let $\mathcal{E}$ be either $\mathcal{E}_{n}$ or, for finite $n$, $\mathcal{E}_{\mathrm{w},n}$. Let $p=q_{n}$ in the first case and $p=1$ in the second case. Then $\mathcal{E}(e_{n})=1$ and $\mathcal{E}(h_{n})=1+p$. Moreover, $p\geq 1$ by \eqref{qn} and \eqref{qvalues}.

\emph{Step 2: the family.} For $0<\varepsilon <\frac{1}{8}$, we set
\begin{equation*}
\delta =\frac{\varepsilon ^{2}}{2\sqrt{2}p},\qquad \widetilde{u}_{\varepsilon }=a\left( 1+\varepsilon e_{n}+\delta h_{n}\right) ,\qquad a=\left( 1+\varepsilon ^{2}+\delta ^{2}\right) ^{-1/2}.
\end{equation*}
Since $1$, $e_{n}$ and $h_{n}$ are orthonormal, we have $\left\Vert \widetilde{u}_{\varepsilon }\right\Vert _{2}=1$, and the mean of $\widetilde{u}_{\varepsilon }$ is $a$. Since $p\geq 1$, we have $\delta \leq \frac{\varepsilon ^{2}}{2\sqrt{2}}$, and so $1+\varepsilon e_{n}+\delta h_{n}\geq 1-\sqrt{2}\varepsilon -\frac{\varepsilon ^{2}}{2}>0$. On the circle, $\widetilde{u}_{\varepsilon }$ is smooth. So $\widetilde{u}_{\varepsilon }\in \mathcal{A}_{n}\setminus \{1\}$. In the decomposition \eqref{decomp}, we have $v=a\varepsilon e_{n}$ and $z=a\delta h_{n}$. So $r=a\varepsilon $, $t=a\delta $ and $s=a^{2}\left( \varepsilon ^{2}+\delta ^{2}\right) =1-a^{2}$. Since $\left( 1+x\right) ^{-1/2}\geq 1-\frac{x}{2}$ for $x\geq 0$, we have $0<1-a\leq \frac{\varepsilon ^{2}+\delta ^{2}}{2}\leq \varepsilon ^{2}$. Moreover, $1-a=\frac{\varepsilon ^{2}}{2}+O(\varepsilon ^{4})$. Below, the constants in $O(\cdot )$ depend only on $p$.

\emph{Step 3: the cubic deficit.} Since $\mathcal{E}$ is diagonal in the Fourier basis, we have $\mathcal{E}(\widetilde{u}_{\varepsilon })-s=a^{2}\delta ^{2}\left( \mathcal{E}(h_{n})-1\right) =pa^{2}\delta ^{2}$. So the computation in Step 1 of the proof of Theorem \ref{Tmeanlog}, which is the computation that led to \eqref{Fexact2}, gives
\begin{equation*}
\mathcal{F}_{\mathcal{E}}(\widetilde{u}_{\varepsilon })=\mathcal{E}(\widetilde{u}_{\varepsilon })-\frac{1}{3}\left\langle \left( \widetilde{u}_{\varepsilon }-1\right) ^{2}\left( \widetilde{u}_{\varepsilon }+2\right) \right\rangle =pa^{2}\delta ^{2}+\frac{s^{2}}{4}+\mathcal{R}(a)-\frac{1}{3}\left\langle \left( v+z\right) ^{3}\right\rangle .
\end{equation*}
We expand the cube and use Step 1. Then
\begin{equation*}
\left\langle \left( v+z\right) ^{3}\right\rangle =a^{3}\left( \varepsilon ^{3}\left\langle e_{n}^{3}\right\rangle +3\varepsilon ^{2}\delta \left\langle e_{n}^{2}h_{n}\right\rangle +3\varepsilon \delta ^{2}\left\langle e_{n}h_{n}^{2}\right\rangle +\delta ^{3}\left\langle h_{n}^{3}\right\rangle \right) =\frac{3\varepsilon ^{2}\delta }{\sqrt{2}}+O(\varepsilon ^{5}).
\end{equation*}
Here we used $a^{3}=1+O(\varepsilon ^{2})$, $\delta =O(\varepsilon ^{2})$ and $\left\vert e_{n}\right\vert ,\left\vert h_{n}\right\vert \leq \sqrt{2}$. Moreover, $pa^{2}\delta ^{2}=p\delta ^{2}+O(\varepsilon ^{6})$, $\frac{s^{2}}{4}=\frac{\varepsilon ^{4}}{4}+O(\varepsilon ^{6})$ and $\mathcal{R}(a)=O\left( (1-a)^{3}\right) =O(\varepsilon ^{6})$. Since $p\delta ^{2}=\frac{\varepsilon ^{4}}{8p}$ and $\frac{\varepsilon ^{2}\delta }{\sqrt{2}}=\frac{\varepsilon ^{4}}{4p}$, we obtain
\begin{equation*}
\mathcal{F}_{\mathcal{E}}(\widetilde{u}_{\varepsilon })=\frac{\varepsilon ^{4}}{4}+p\delta ^{2}-\frac{\varepsilon ^{2}\delta }{\sqrt{2}}+O(\varepsilon ^{5})=\Big( \frac{1}{4}-\frac{1}{8p}\Big) \varepsilon ^{4}+O(\varepsilon ^{5}).
\end{equation*}
Note that our choice of $\delta $ minimizes $p\delta ^{2}-\frac{\varepsilon ^{2}\delta }{\sqrt{2}}$. So the second harmonic lowers the cubic deficit by $\frac{\varepsilon ^{4}}{8p}$.

\emph{Step 4: the scalar term.} We have $\widetilde{u}_{\varepsilon }-1-\varepsilon e_{n}=\left( a-1\right) \left( 1+\varepsilon e_{n}\right) +a\delta h_{n}$. Since $\left\vert a-1\right\vert \leq \varepsilon ^{2}$, $\left\vert 1+\varepsilon e_{n}\right\vert \leq 2$ and $a\delta \left\vert h_{n}\right\vert \leq \sqrt{2}\delta \leq \frac{\varepsilon ^{2}}{2}$, we get $\left\vert \widetilde{u}_{\varepsilon }-1-\varepsilon e_{n}\right\vert \leq 3\varepsilon ^{2}$. So $\left\vert \widetilde{u}_{\varepsilon }-1\right\vert \leq \sqrt{2}\varepsilon +3\varepsilon ^{2}<\frac{1}{2}$, and $\left( \widetilde{u}_{\varepsilon }-1\right) ^{4}=\varepsilon ^{4}e_{n}^{4}+O(\varepsilon ^{5})$ at every point. By Step 1 in Section \ref{SSpower}, we get
\begin{equation*}
\left\langle H(\widetilde{u}_{\varepsilon })\right\rangle =\frac{1}{6}\left\langle \left( \widetilde{u}_{\varepsilon }-1\right) ^{4}\right\rangle +O(\varepsilon ^{5})=\frac{\varepsilon ^{4}}{6}\left\langle e_{n}^{4}\right\rangle +O(\varepsilon ^{5})=\frac{\varepsilon ^{4}}{4}+O(\varepsilon ^{5}).
\end{equation*}

\emph{Step 5: conclusion.} The splitting \eqref{splitting} holds for $\mathcal{E}$, see Step 3 of the proof of Theorem \ref{Tmeanlog}. By Steps 3 and 4, we get
\begin{equation*}
2\mathcal{E}(\widetilde{u}_{\varepsilon })-\Ent(\widetilde{u}_{\varepsilon }^{2})=2\mathcal{F}_{\mathcal{E}}(\widetilde{u}_{\varepsilon })+\left\langle H(\widetilde{u}_{\varepsilon })\right\rangle =\Big( \frac{3}{4}-\frac{1}{4p}\Big) \varepsilon ^{4}+O(\varepsilon ^{5}).
\end{equation*}
On the other hand, $d_{n}(\widetilde{u}_{\varepsilon })^{2}=2\left( 1-a\right) =\varepsilon ^{2}+O(\varepsilon ^{4})$, and so $d_{n}(\widetilde{u}_{\varepsilon })^{4}=\varepsilon ^{4}+O(\varepsilon ^{6})$. Since $\widetilde{u}_{\varepsilon }\in \mathcal{A}_{n}\setminus \{1\}$, the definition \eqref{Cdef} gives
\begin{equation*}
\mathcal{C}\leq \frac{2\mathcal{E}(\widetilde{u}_{\varepsilon })-\Ent(\widetilde{u}_{\varepsilon }^{2})}{d_{n}(\widetilde{u}_{\varepsilon })^{4}}=\frac{3}{4}-\frac{1}{4p}+O(\varepsilon ),
\end{equation*}
where $\mathcal{C}=\mathcal{C}_{n}$ or $\mathcal{C}=\mathcal{C}_{n}^{\mathrm{w}}$. We let $\varepsilon \rightarrow 0$. For $p=q_{n}$, this gives \eqref{uppnn}. For $p=1$, it gives \eqref{uppw}.

For $\delta =0$, the functions $\widetilde{u}_{\varepsilon }$ are the functions \eqref{testfamily}, and the same computation gives only the value $\frac{3}{4}$. On $C_{4}$, the computation is different. Indeed, $4\equiv 0$ modulo $4$. So $h_{4}=\sqrt{2}\chi _{2}$, and we get $\left\langle h_{4}^{2}\right\rangle =2$, $\left\langle e_{4}^{2}h_{4}\right\rangle =\sqrt{2}$ and $\left\langle e_{4}^{4}\right\rangle =2$ instead of the values $1$, $\frac{1}{\sqrt{2}}$ and $\frac{3}{2}$ of Step 1. In this case, the second harmonic can cancel the whole fourth-order term of the cubic deficit, see Proposition \ref{PSOS}, and we get the value $\frac{1}{3}$ of Section \ref{SSoptimal13}.

\section{Proofs of Theorems \ref{Tlog} and \ref{Cseparation}}\label{Sproofs}

In this section, we put together the results of Sections 3--6 and of Appendix A.

\begin{proof}[Proof of Theorem \ref{Tlog}]
\emph{The estimate \eqref{log4}.} Let $n\in \mathcal{J}$ and $u\in \mathcal{A}_{n}$. By the splitting \eqref{splitting}, Proposition \ref{Pcubic} and Theorem \ref{Tmajorantintro}, we have $\Delta _{n}(u)\geq \frac{1}{3}d_{n}(u)^{4}+\frac{1}{45}d_{n}(u)^{6}$, see Section \ref{SSlog4}.

\emph{The estimate \eqref{log4cycle} and its strictness.} They are proved in Section \ref{SSlog4cycle}. The proof uses the exact decomposition of Proposition \ref{Pexact4} and the two-point estimate of Theorem \ref{Ttwopoint}.

\emph{The estimate \eqref{logword}.} For $n\geq 6$, it follows from \eqref{wordgap}, since $\frac{385}{1152}>\frac{1}{3}$. For $n=4$, it is \eqref{log4cycle}, since $\mathcal{E}_{\mathrm{w},4}=\mathcal{E}_{4}$. For $n=5$, it follows from Proposition \ref{Pfive}, since $\frac{503}{1500}>\frac{1}{3}$. See Section \ref{SSword} and Appendix A.

\emph{The sharpness of $\frac{1}{3}$.} By \eqref{log4} and \eqref{log4cycle}, we have $\mathcal{C}_{n}\geq \frac{1}{3}$ for every $n\in \mathcal{I}$. In Section \ref{SSoptimal13}, we show that $\Delta _{4}(G_{\varepsilon })/d_{4}(G_{\varepsilon })^{4}\rightarrow \frac{1}{3}$ as $\varepsilon \rightarrow 0$. So $\mathcal{C}_{4}=\frac{1}{3}$, and \eqref{sharp13} follows.

\emph{Minimizing sequences on $C_{4}$.} The bound \eqref{minimizingintro} is proved in Step 5 of the proof of \eqref{log4cycle} in Section \ref{SSlog4cycle}. It shows that every minimizing sequence for $\mathcal{C}_{4}$ converges to $1$ in $L^{2}(C_{4})$, and that the infimum in the definition of $\mathcal{C}_{4}$ is not attained.

\emph{The optimality of the power four.} This is \eqref{logpower}, which we prove in Section \ref{SSpower}.

\emph{The estimate \eqref{wordtonn}.} It is proved in Section \ref{SSword}. It follows from \eqref{logword} and the inequality $\ell _{n}(k)\geq \psi _{n}(k)$.

This completes the proof of Theorem \ref{Tlog}.
\end{proof}

\begin{proof}[Proof of Theorem \ref{Cseparation}]
\emph{The four-cycle.} We have $\mathcal{C}_{4}=\frac{1}{3}$ by \eqref{sharp13}. Since $\mathcal{E}_{\mathrm{w},4}=\mathcal{E}_{4}$, we also have $\mathcal{C}_{4}^{\mathrm{w}}=\frac{1}{3}$, see Section \ref{SSoptimal13}.

\emph{The lower bounds.} The lower bound \eqref{sepnn} is proved in Section \ref{SSsepnn}. For $n\geq 6$, the lower bound \eqref{sepw} is \eqref{wordgap}. For $n=5$, Proposition \ref{Pfive} gives $\mathcal{C}_{5}^{\mathrm{w}}\geq \frac{503}{1500}$. So \eqref{sepw} also holds for $n=5$, since $\frac{503}{1500}>\frac{385}{1152}$.

\emph{The upper bounds.} The upper bounds \eqref{uppnn} and \eqref{uppw} are proved in Section \ref{SSupper}.

This completes the proof of Theorem \ref{Cseparation}.
\end{proof}

\appendix
\section{The word-length energy on the five-cycle}

In this appendix, we prove the following result. It completes the proof of \eqref{logword} and of \eqref{sepw} in the case $n=5$.

\begin{proposition}
\label{Pfive}For every $u\in \mathcal{A}_{5}$, we have
\begin{equation}
\Delta _{\mathrm{w},5}(u)\geq \frac{503}{1500}d_{5}(u)^{4}=\left( \frac{1}{3}+
\frac{1}{500}\right) d_{5}(u)^{4}.  \label{fivegap}
\end{equation}
The inequality is strict for every $u\neq 1$.
\end{proposition}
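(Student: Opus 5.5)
The plan is to reuse the splitting \eqref{splitting} with $\mathcal{E}=\mathcal{E}_{\mathrm{w},5}$. The obstruction on $C_{5}$ is that the high part consists only of the frequencies $\pm 2$, whose word-length weight is $\psi _{5}(2)=2$. So the excess energy is $T=t^{2}$ rather than $q_{5}t^{2}$, and Theorem \ref{Tscalar}(ii), which needs $Q\geq \frac{3}{2}t^{2}$, is not available. On the other hand, the cubic mean is known exactly by \eqref{fiveexact}, and \eqref{interaction5} gives $\frac{1}{3}\left\vert \left\langle \left( v+z\right) ^{3}\right\rangle \right\vert \leq \frac{r^{2}t+rt^{2}}{\sqrt{2}}$. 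Step 1 of the proof of Theorem \ref{Tmeanlog} then yields
\begin{equation*}
\Delta _{\mathrm{w},5}(u)\geq 2\Big( t^{2}+\frac{s^{2}}{4}+\mathcal{R}(a)-\frac{r^{2}t+rt^{2}}{\sqrt{2}}\Big) +\left\langle H(u)\right\rangle ,
\end{equation*}
with $s=1-a^{2}=r^{2}+t^{2}$, $r^{2}\leq 2a^{2}$ by Lemma \ref{Lpositivity}, and $a\geq \frac{1}{\sqrt{5}}$. For $\left\langle H(u)\right\rangle $ I first use Theorem \ref{Tmajorantintro} in its sharper form \eqref{majorant}, namely $d^{4}\Theta (d^{2})$ with $d^{2}=2(1-a)$.

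\emph{Large means.} Near $a=1$ the margin is comfortable. I minimize $t^{2}-\frac{r^{2}t}{\sqrt{2}}$ over $t$ and get the value $-\frac{r^{4}}{8}\geq -\frac{s^{2}}{8}$. The term $\frac{rt^{2}}{\sqrt{2}}$ is of higher order, because the optimal $t$ is $O(s)$. Hence $2\mathcal{F}\geq \frac{s^{2}}{4}+\dots \approx \frac{d^{4}}{4}$, and together with $\frac{1}{6}d^{4}$ from $H$ this gives about $\frac{5}{12}d^{4}$, well above $\frac{503}{1500}d^{4}$. To make this rigorous for $a\in [a_{0},1]$, I substitute $r^{2}=s-t^{2}$ and bound $r\leq \min \{\sqrt{2}a,\sqrt{s}\}$ in the coefficient of $t^{2}$. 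This produces a quadratic in $t$ with a positive leading coefficient, exactly as in Case 3 of Theorem \ref{Tscalar}. Completing the square leaves an explicit polynomial in $a$, whose positivity on $[a_{0},1]$ I check by concavity at the endpoints. Here I keep $\mathcal{R}(a)$ and the $\frac{x}{45}$ term of $\Theta $.

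\emph{Moderate means, $\frac{1}{\sqrt{5}}\leq a\leq a_{0}$.} This is the main obstacle. Now $t$ can be of order one, and the resonance term $\frac{rt^{2}}{\sqrt{2}}$ (the $1+2+2=5$ interaction) competes with $t^{2}$ at full strength, as in Example \ref{Exfive}. My first attempt is the same completing-the-square scheme with the bound $r\leq \sqrt{2}a$, retaining all of $\Theta (d^{2})$ (which exceeds $\frac{1}{6}+\frac{d^{2}}{45}$ noticeably when $d^{2}$ is near $2(1-5^{-1/2})$), and verifying the resulting one-variable inequality on a few subintervals of $a$. If the constant $\frac{1}{6}$ from $H$ is too weak there, I follow Remark \ref{Rfive}. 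I replace the cubic majorant at $w=1$ by the cubic Taylor polynomial of $\varphi (w)=w^{2}\log w^{2}$ at a point $\kappa $ chosen depending on $a$ (for instance $\kappa =a$, the natural center of the values of $u$). The fourth-order remainder is then $\frac{2}{3}(w-\kappa )^{4}\int_{0}^{1}(1-\tau )^{3}(\kappa +\tau (w-\kappa ))^{-2}d\tau $. I average it by the Cauchy--Schwarz argument of Theorem \ref{Tmajorantintro}, now centered at $\kappa $, using $\left\langle (u-\kappa )^{2}\right\rangle =1-2a\kappa +\kappa ^{2}$. The cubic Taylor polynomial at $\kappa $ changes the cubic functional. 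Its quadratic part is controlled by $\mathcal{E}_{\mathrm{w},5}$ through the Fourier weights $1,2$, and its cubic part is again $\left\langle (v+z)^{3}\right\rangle $ with a coefficient $\frac{2}{3\kappa }$ in place of $\frac{2}{3}$. Choosing $\kappa $ to balance these contributions should restore positivity. The result is an explicit inequality in $(a,t)$, which I verify on the finitely many $a$-intervals, with the smallest margin giving the constant $\frac{1}{500}$.

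\emph{Strictness.} Strictness for $u\neq 1$ follows because, for $x>0$, the remainder integral in $\Theta $ is strictly positive, so every step leaves a strictly positive surplus.
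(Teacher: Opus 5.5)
There is a genuine gap. The range of moderate means, which you rightly call the main obstacle, is not actually handled, and both concrete routes you offer for it fail there.

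\emph{The first attempt cannot succeed.} For $a\ge 1/\sqrt3$ the positivity bound $r^{2}\le 2a^{2}$ is inactive. The bound \eqref{interaction5} is attained by real positive $c_{1},c_{2}$. So your scheme yields at best
$\frac{s^{2}}{2}+2\mathcal{R}(a)+\min_{t}\bigl[2t^{2}-\sqrt2\,(r^{2}t+rt^{2})\bigr]+d^{4}\Theta(d^{2})$, with $r^{2}=s-t^{2}$.

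At $a=0.6$ (so $s=d^{4}=0.64$) the minimum is near $t\approx0.28$. The four terms are about $0.205$, $0.030$, $-0.149$ and $0.120$, giving $0.206$. But $\frac13 d^{4}\approx0.213$ and $\frac{503}{1500}d^{4}\approx0.215$. The quotient stays near $0.322$ for $a$ roughly between $0.54$ and $0.87$. Your large-mean completion of the square, with $r\le\sqrt s$ in the $t^{2}$ coefficient, only reaches $\frac{503}{1500}$ for $a\gtrsim0.94$. So the shifted expansion is not a fallback: it must carry almost the whole interval.

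\emph{The suggested center $\kappa=a$ also fails there.} With $\gamma=a$ the mean terms become $-s-\log(1-s)\approx0.382$. But the interaction coefficient becomes $\sqrt2/a\approx2.36$, whose minimum over $t$ is about $-0.395$. The averaged remainder is only about $0.17$--$0.18$, so the total is about $0.16$. The center has to stay near $1$. The paper takes $\gamma=\frac{2+a}{3}$, which at $a=0.6$ gives about $0.227$.

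\emph{What is missing} is the entire quantitative content of the proof:
\begin{enumerate}
\item a center that works on all of $[1/\sqrt5,1)$; the paper uses $\gamma=\frac{2+a}{3}$ on the whole range, with no split into large and moderate means;
\item an elimination of $r$ and $t$; the paper uses $r\le\sqrt s-\frac{t^{2}}{2\sqrt s}$, the tangent line of $t^{3}+\frac{t^{4}}{2\sqrt s}$ at $t_{0}=\frac s2$, and a completed square;
\item a polynomial lower bound for $-\log\gamma^{2}$;
\item a certificate that the resulting function of $a$ exceeds $\frac1{500}d^{4}$; in the paper this is positivity of a degree-twenty polynomial in $\nu=\sqrt{(1-a)/(1+a)}$, checked via its Bernstein coefficients on two subintervals.
\end{enumerate}
Your sentence ``Choosing $\kappa$ to balance these contributions should restore positivity'' is exactly what has to be proved. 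The constant $\frac1{500}$ is a target you must verify, not something the smallest margin produces.

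Finally, in the shifted scheme strictness no longer comes from $\Theta$. It comes from the strict positivity of the final one-variable bound.
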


As explained in Example \ref{Exfive}, the third-moment estimate of Theorem \ref{Tmomentintro} does not hold on $C_{5}$, even with the constant $6$. Therefore, the moment hypothesis in \eqref{hyp} is not available on $C_{5}$. Indeed, the quotient in Example \ref{Exfive} does not change when $f$ is multiplied by a positive constant. So, for small $\varepsilon >0$, the function $u=\left( 1+\varepsilon f\right) /\left\Vert 1+\varepsilon f\right\Vert _{2}\in \mathcal{A}_{5}$ does not satisfy this hypothesis, since $u-\langle u\rangle $ is a positive multiple of $f$. Instead, we change the center of the Taylor expansion of $w^{2}\log w^{2}$. That is, we expand at a point that depends on the mean of $u$. After this, we bound the deficit from below by an explicit function of the mean and of the two Fourier amplitudes. Then we eliminate the two amplitudes. In this way, we reduce \eqref{fivegap} to the positivity of an explicit polynomial of one variable on an interval.

\begin{proof}[Proof of Proposition \ref{Pfive}]
\emph{Step 1: a shifted entropy estimate.} For $\gamma >0$ and $w\geq 0$, we set
\begin{equation*}
T_{\gamma }(w)=\frac{2w^{3}}{3\gamma }+\left( \log \gamma ^{2}+1\right)
w^{2}-2\gamma w+\frac{\gamma ^{2}}{3}.
\end{equation*}
A direct computation from \eqref{Hdef} gives $T_{\gamma }(w)-w^{2}\log w^{2}=\gamma ^{2}H(w/\gamma )$. Indeed, we have $\left( w-\gamma \right) ^{2}\left( w+2\gamma \right) =w^{3}-3\gamma ^{2}w+2\gamma ^{3}$, and so
\begin{equation*}
\gamma ^{2}H\left( \frac{w}{\gamma }\right) =\frac{2w^{3}}{3\gamma }-2\gamma
w+\frac{4\gamma ^{2}}{3}+w^{2}-\gamma ^{2}+w^{2}\log \gamma ^{2}-w^{2}\log
w^{2}=T_{\gamma }(w)-w^{2}\log w^{2}.
\end{equation*}
Therefore, by \eqref{Hintegral},
\begin{equation*}
T_{\gamma }(w)-w^{2}\log w^{2}=\frac{2}{3}\left( w-\gamma \right)
^{4}\int_{0}^{1}\frac{\left( 1-\tau \right) ^{3}}{\left( \gamma +\tau \left(
w-\gamma \right) \right) ^{2}}d\tau ,\qquad w\geq 0.
\end{equation*}
The function $4\left( 1-\tau \right) ^{3}$ is a probability density on $\left[ 0,1\right] $. Its mean is $\frac{1}{5}$. Since $x\mapsto x^{-2}$ is convex on $(0,\infty )$, Jensen's inequality for the function $\tau \mapsto \gamma +\tau \left( w-\gamma \right) $, which is positive for $0\leq \tau <1$, gives
\begin{equation*}
\int_{0}^{1}\frac{\left( 1-\tau \right) ^{3}}{\left( \gamma +\tau \left(
w-\gamma \right) \right) ^{2}}d\tau \geq \frac{1}{4}\left( \gamma +\frac{
w-\gamma }{5}\right) ^{-2}=\frac{25}{4\left( w+4\gamma \right) ^{2}}.
\end{equation*}
Now let $u\geq 0$ be a function on a finite probability space with $\left\langle u^{2}\right\rangle =1$. Let $a=\langle u\rangle $ and $M_{3}=\left\langle u^{3}\right\rangle $. By the Cauchy--Schwarz inequality,
\begin{equation*}
\left\langle \frac{\left( u-\gamma \right) ^{4}}{\left( u+4\gamma \right)
^{2}}\right\rangle \geq \frac{\left\langle \left( u-\gamma \right)
^{2}\right\rangle ^{2}}{\left\langle \left( u+4\gamma \right)
^{2}\right\rangle }=\frac{\left( 1-2a\gamma +\gamma ^{2}\right) ^{2}}{
1+8a\gamma +16\gamma ^{2}}.
\end{equation*}
Since $\left\langle u^{2}\right\rangle =1$, we have $\Ent(u^{2})=\left\langle u^{2}\log u^{2}\right\rangle $. We average the identity for $T_{\gamma }(u)-u^{2}\log u^{2}$ and use the two lower bounds above. Then we obtain
\begin{equation}
\Ent(u^{2})\leq \frac{2M_{3}}{3\gamma }+\log \gamma ^{2}+1-2a\gamma +\frac{
\gamma ^{2}}{3}-\frac{25\left( 1-2a\gamma +\gamma ^{2}\right) ^{2}}{6\left(
1+8a\gamma +16\gamma ^{2}\right) }.  \label{shiftedent}
\end{equation}
For $\gamma =1$, this is a slightly better form of the estimate $\left\langle H(u)\right\rangle \geq 0$. The main point is that we can choose $\gamma $ depending on $a$.

\emph{Step 2: reduction to the mean and two amplitudes.} If $u=1$, both sides of \eqref{fivegap} vanish. So let $u\in \mathcal{A}_{5}$ with $u\neq 1$. We use the decomposition $u=a+v+z$ of \eqref{decomp}. We also use the notation $r=\left\Vert v\right\Vert _{2}$, $t=\left\Vert z\right\Vert _{2}$ and $s=r^{2}+t^{2}=1-a^{2}$. Then $a\geq \frac{1}{\sqrt{5}}$ by Lemma \ref{Lpositivity}. Moreover, $a<1$, since $a\leq \left\Vert u\right\Vert _{2}=1$ with equality only for $u=1$. So $s>0$. We choose
\begin{equation*}
\gamma =\frac{2+a}{3}.
\end{equation*}
On $C_{5}$, the only high frequencies are $\pm 2$. The word-length eigenvalue of these frequencies is $\psi _{5}(2)=2$. Therefore, $\mathcal{E}_{\mathrm{w},5}(u)=r^{2}+2t^{2}=s+t^{2}$. Moreover, as in Section 2, we have $\left\langle u^{3}\right\rangle =a^{3}+3as+\left\langle \left( v+z\right) ^{3}\right\rangle $. By \eqref{interaction5}, we get
\begin{equation*}
M_{3}\leq a^{3}+3as+\frac{3}{\sqrt{2}}\left( r^{2}t+rt^{2}\right) .
\end{equation*}
The coefficient of $M_{3}$ in \eqref{shiftedent} is positive. Therefore, we can insert this upper bound into \eqref{shiftedent}. Then we subtract the result from $2\mathcal{E}_{\mathrm{w},5}(u)=2\left( s+t^{2}\right) $, and we obtain
\begin{equation}
\Delta _{\mathrm{w},5}(u)\geq C(a,\gamma )+2t^{2}-\frac{\sqrt{2}}{\gamma }
\left( r^{2}t+rt^{2}\right) ,  \label{fivefirst}
\end{equation}
where
\begin{equation}
C(a,\gamma )=2s-\frac{2\left( a^{3}+3as\right) }{3\gamma }-\log \gamma
^{2}-1+2a\gamma -\frac{\gamma ^{2}}{3}+\frac{25\left( 1-2a\gamma +\gamma
^{2}\right) ^{2}}{6\left( 1+8a\gamma +16\gamma ^{2}\right) }.  \label{Canchor}
\end{equation}

\emph{Step 3: elimination of $r$ and $t$.} We put $c=\frac{99}{70\gamma }$. Since $99^{2}=9801>9800=2\cdot 70^{2}$, we have $c>\frac{\sqrt{2}}{\gamma }$. Since $r^{2}t+rt^{2}\geq 0$, we can replace $\frac{\sqrt{2}}{\gamma }$ by $c$ in \eqref{fivefirst}. By the concavity of the square root, for $0\leq t^{2}\leq s$ we have
\begin{equation*}
r=\sqrt{s-t^{2}}\leq \sqrt{s}-\frac{t^{2}}{2\sqrt{s}}.
\end{equation*}
Using also $r^{2}=s-t^{2}$, we get
\begin{equation*}
2t^{2}-c\left( r^{2}t+rt^{2}\right) \geq \kappa t^{2}-cst+c\,p(t),\qquad
\kappa =2-c\sqrt{s},\qquad p(t)=t^{3}+\frac{t^{4}}{2\sqrt{s}}.
\end{equation*}
The coefficient $\kappa $ is positive. Indeed, $a\geq \frac{1}{\sqrt{5}}>\frac{2}{5}$ implies $\gamma >\frac{4}{5}$ and $c<\frac{99}{56}$. Since $\sqrt{s}\leq 1$, we get $\kappa >2-\frac{99}{56}=\frac{13}{56}>0$. The function $p$ is convex on $\left[ 0,\infty \right) $, since $p^{\prime \prime }(t)=6t+\frac{6t^{2}}{\sqrt{s}}\geq 0$. Therefore, $p$ lies above its tangent line at $t_{0}=\frac{s}{2}$. That is,
\begin{equation*}
p(t)\geq \left( 3t_{0}^{2}+\frac{2t_{0}^{3}}{\sqrt{s}}\right) t-2t_{0}^{3}-
\frac{3t_{0}^{4}}{2\sqrt{s}},\qquad t\geq 0.
\end{equation*}
We set
\begin{equation*}
\Lambda =s-3t_{0}^{2}-\frac{2t_{0}^{3}}{\sqrt{s}},\qquad \Gamma
=2t_{0}^{3}+\frac{3t_{0}^{4}}{2\sqrt{s}}.
\end{equation*}
Since $c>0$, these bounds and \eqref{fivefirst}, with $\frac{\sqrt{2}}{\gamma }$ replaced by $c$, give
\begin{equation*}
\Delta _{\mathrm{w},5}(u)\geq C(a,\gamma )+\kappa t^{2}-c\Lambda t-c\Gamma .
\end{equation*}
The right-hand side is a quadratic polynomial in $t$. We complete the square and get
\begin{align*}
C(a,\gamma )+\kappa t^{2}-c\Lambda t-c\Gamma & =C(a,\gamma )+\kappa \left( t-
\frac{c\Lambda }{2\kappa }\right) ^{2}-c\Gamma -\frac{c^{2}\Lambda ^{2}}{
4\kappa } \\
& \geq C(a,\gamma )-c\Gamma -\frac{c^{2}\Lambda ^{2}}{4\kappa }.
\end{align*}
We only dropped a nonnegative square. The right-hand side depends only on $a$. So we have eliminated $r$ and $t$.

\emph{Step 4: removal of the logarithm.} We set $\zeta =1-\gamma =\frac{1-a}{3}$. Then $0<\zeta <1$, and
\begin{equation}
-\log \gamma ^{2}=-2\log \left( 1-\zeta \right) \geq 2\zeta +\zeta ^{2}+\frac{2
}{3}\zeta ^{3}+\frac{1}{2}\zeta ^{4}.  \label{fivelog}
\end{equation}
Indeed, the difference of the two sides is equal to $2\int_{0}^{\zeta }\frac{x^{4}}{1-x}dx\geq 0$. Let $\underline{C}(a,\gamma )$ denote the right-hand side of \eqref{Canchor} with $-\log \gamma ^{2}$ replaced by the polynomial on the right-hand side of \eqref{fivelog}. We define
\begin{equation}
\mathcal{G}(a)=\underline{C}(a,\gamma )-c\Gamma -\frac{c^{2}\Lambda ^{2}}{
4\kappa }-\frac{4}{3}\left( 1-a\right) ^{2}.  \label{Gfive}
\end{equation}
By Steps 2--4, we have proved
\begin{equation*}
\Delta _{\mathrm{w},5}(u)-\frac{4}{3}\left( 1-a\right) ^{2}\geq \mathcal{G}
(a).
\end{equation*}
Since $\frac{4}{3}\left( 1-a\right) ^{2}=\frac{1}{3}d_{5}(u)^{4}$, it remains to show that $\mathcal{G}(a)$ is larger than $\frac{1}{500}d_{5}(u)^{4}$.

\emph{Step 5: a rational parametrization.} We use the parameter
\begin{equation*}
\nu =\sqrt{\frac{1-a}{1+a}},\qquad a=\frac{1-\nu ^{2}}{1+\nu ^{2}},\qquad
\sqrt{s}=\frac{2\nu }{1+\nu ^{2}},\qquad \gamma =\frac{3+\nu ^{2}}{3\left(
1+\nu ^{2}\right) }.
\end{equation*}
Since $\frac{1}{\sqrt{5}}\leq a<1$, the allowed range is $0<\nu \leq \frac{\sqrt{5}-1}{2}<\frac{5}{8}$. In Steps 5 and 6, we write
\begin{equation*}
h=1+\nu ^{2},\qquad k=3+\nu ^{2},\qquad g=70\nu ^{2}-297\nu +210,\qquad
m=\nu ^{4}+66\nu ^{2}+225.
\end{equation*}
A direct computation gives
\begin{equation*}
t_{0}=\frac{2\nu ^{2}}{h^{2}},\qquad c=\frac{297h}{70k},\qquad \zeta =\frac{
2\nu ^{2}}{3h},\qquad \kappa =\frac{g}{35k}.
\end{equation*}
Note that $g>0$ on $\left[ 0,\frac{5}{8}\right] $. Indeed, its derivative $140\nu -297$ is negative there, and its value at $\nu =\frac{5}{8}$ is $\frac{1655}{32}>0$. Moreover, the two tangent coefficients are
\begin{align*}
\Lambda & =\frac{4\nu ^{2}}{h^{2}}-\frac{12\nu ^{4}}{h^{4}}-\frac{8\nu ^{5}}{
h^{5}}=\frac{4\nu ^{2}\left( 1-\nu ^{3}\right) ^{2}}{h^{5}}, \\
\Gamma & =\frac{16\nu ^{6}}{h^{6}}+\frac{12\nu ^{7}}{h^{7}}=\frac{4\nu
^{6}\left( 4h+3\nu \right) }{h^{7}}.
\end{align*}
For the rational term in \eqref{Canchor}, we have
\begin{equation*}
1-2a\gamma +\gamma ^{2}=\frac{4\nu ^{2}\left( 4\nu ^{2}+9\right) }{9h^{2}}
,\qquad 1+8a\gamma +16\gamma ^{2}=\frac{m}{9h^{2}}.
\end{equation*}
We insert these expressions into the definition of $\underline{C}$. The last term of $\underline{C}(a,\gamma )$, which is the rational term of \eqref{Canchor}, is equal to $\frac{200\nu ^{4}\left( 4\nu ^{2}+9\right) ^{2}}{27h^{2}m}$. The other terms of $\underline{C}(a,\gamma )$, together with $-\frac{4}{3}\left( 1-a\right) ^{2}$, add up to $\frac{8\nu ^{4}\left( 81+135\nu ^{2}+26\nu ^{4}-30\nu ^{6}\right) }{81h^{4}k}$. Over the common denominator $81h^{4}km$, we obtain
\begin{equation*}
\underline{C}(a,\gamma )-\frac{4}{3}\left( 1-a\right) ^{2}=\frac{16\nu
^{4}N(\nu )}{81h^{4}km},
\end{equation*}
where
\begin{equation*}
N(\nu )=585\nu ^{10}+4723\nu ^{8}+18288\nu ^{6}+43308\nu ^{4}+47223\nu
^{2}+18225.
\end{equation*}
As a result, \eqref{Gfive} becomes
\begin{equation*}
\frac{\mathcal{G}(a)}{\nu ^{4}}=\frac{16N(\nu )}{81h^{4}km}-\frac{594\nu
^{2}\left( 4h+3\nu \right) }{35kh^{6}}-\frac{88209\left( 1-\nu ^{3}\right)
^{4}}{35kgh^{8}}.
\end{equation*}
We multiply by the positive common denominator $2835h^{8}kgm$. This gives
\begin{equation}
\mathcal{G}\left( \frac{1-\nu ^{2}}{1+\nu ^{2}}\right) =\frac{\nu ^{4}P(\nu )
}{2835\left( 1+\nu ^{2}\right) ^{8}\left( \nu ^{2}+3\right) \left( 70\nu
^{2}-297\nu +210\right) \left( \nu ^{4}+66\nu ^{2}+225\right) },
\label{Grational}
\end{equation}
where
\begin{equation}
P(\nu )=560h^{4}gN(\nu )-48114\nu ^{2}\left( 4h+3\nu \right)
h^{2}gm-7144929\left( 1-\nu ^{3}\right) ^{4}m.  \label{Pfactored}
\end{equation}
Expanding \eqref{Pfactored}, we get $P(\nu )=\sum_{i=0}^{20}p_{i}\nu ^{i}$ with the integer coefficients $p_{i}$ listed in Table \ref{tabfive}.

\begin{table}[!t]
\centering
\small
\caption{The coefficients $p_{i}$ of the polynomial $P$ in \eqref{Pfactored}, the coefficients $\rho _{i}$ of the polynomial $R_{5}$ in \eqref{R5}, and the integers $L_{i}^{(0)}$, $L_{i}^{(1)}$ in \eqref{Bernsteinlower}.}
\label{tabfive}
\begin{tabular}{rrrrr}
\hline
$i$ & $p_i$ & $\rho_i$ & $L_i^{(0)}$ & $L_i^{(1)}$\\
\hline
0 & 535650975 & 261395707500 & 261395 & 2481\\
1 & -3031182000 & -1506497454000 & 237856 & 665\\
2 & 5275773486 & 2593135474200 & 215650 & 291\\
3 & -7507708560 & -3693594381840 & 194678 & 1435\\
4 & 21534350661 & 10631692721700 & 174861 & 4182\\
5 & -30479481252 & -15068216140560 & 156141 & 8620\\
6 & 44974079874 & 22253624634600 & 138473 & 14847\\
7 & -57182451876 & -28318284838800 & 121826 & 22966\\
8 & 69071641368 & 34284541706400 & 106182 & 33088\\
9 & -65515848948 & -32493524572080 & 91533 & 45328\\
10 & 56822280098 & 28236516749800 & 77882 & 59807\\
11 & -53042568876 & -26362450501200 & 65240 & 76648\\
12 & 28475383109 & 14159600685700 & 53626 & 95972\\
13 & -24396356952 & -12140680321440 & 43068 & 117897\\
14 & 10067916366 & 5012290618200 & 33599 & 142526\\
15 & -6767560800 & -3372302324160 & 25262 & 169942\\
16 & 2418511871 & 1205836245100 & 18104 & 200192\\
17 & -1174718160 & -586348686000 & 12182 & 233269\\
18 & 345665600 & 172585134400 & 7557 & 269086\\
19 & -97297200 & -48635128080 & 4298 & 307443\\
20 & 22932000 & 11462824800 & 2481 & 347980\\
\hline
\end{tabular}
\end{table}

\emph{Step 6: the quantitative bound.} Since $d_{5}(u)^{2}=2\left( 1-a\right) =\frac{4\nu ^{2}}{1+\nu ^{2}}$, we have $d_{5}(u)^{4}=\frac{16\nu ^{4}}{h^{2}}$. We divide the inequality at the end of Step 4 by $d_{5}(u)^{4}$ and use \eqref{Grational}. Then we get
\begin{equation}
\frac{\Delta _{\mathrm{w},5}(u)}{d_{5}(u)^{4}}-\frac{1}{3}\geq \frac{P(\nu )}{
D_{5}(\nu )},  \label{fivequot}
\end{equation}
where
\begin{equation*}
D_{5}(\nu )=16\cdot 2835\left( 1+\nu ^{2}\right) ^{6}\left( \nu
^{2}+3\right) \left( 70\nu ^{2}-297\nu +210\right) \left( \nu ^{4}+66\nu
^{2}+225\right) .
\end{equation*}
Every factor of $D_{5}$ is positive on $\left[ 0,\frac{5}{8}\right] $. So it is enough to prove that
\begin{equation}
R_{5}(\nu ):=500P(\nu )-D_{5}(\nu )>0,\qquad 0\leq \nu \leq \frac{5}{8}.
\label{R5}
\end{equation}
The polynomial $R_{5}$ has integer coefficients and degree twenty. We write $R_{5}(\nu )=\sum_{j=0}^{20}\rho _{j}\nu ^{j}$. Its coefficients are obtained from the numbers $p_{i}$ and the explicit product $D_{5}$, and they are listed in Table \ref{tabfive}.

\emph{Step 7: positivity of $R_{5}$.} We divide $\left[ 0,\frac{5}{8}\right] $ into the two intervals $\left[ 0,\frac{5}{16}\right] $ and $\left[ \frac{5}{16},\frac{5}{8}\right] $. For $\ell \in \{0,\frac{5}{16}\}$, we write $\nu =\ell +\frac{5}{16}x$ with $0\leq x\leq 1$, and we define
\begin{equation}
b_{i}(\ell )=\sum_{k=0}^{i}\frac{\binom{i}{k}}{\binom{20}{k}}\left( \frac{5}{16
}\right) ^{k}\sum_{j=k}^{20}\rho _{j}\binom{j}{k}\ell ^{j-k},\qquad 0\leq
i\leq 20,  \label{Bernsteincoef}
\end{equation}
with the convention $0^{0}=1$. We expand $R_{5}(\ell +\frac{5}{16}x)$ in powers of $x$, and we use the identity
\begin{equation*}
x^{k}=\sum_{i=k}^{20}\frac{\binom{i}{k}}{\binom{20}{k}}\binom{20}{i}
x^{i}\left( 1-x\right) ^{20-i},\qquad 0\leq k\leq 20.
\end{equation*}
This identity follows from the binomial expansion of $x^{k}\left( x+\left( 1-x\right) \right) ^{20-k}$. We obtain the exact identity
\begin{equation}
R_{5}\left( \ell +\tfrac{5}{16}x\right) =\sum_{i=0}^{20}b_{i}(\ell )\binom{20
}{i}x^{i}\left( 1-x\right) ^{20-i},  \label{Bernsteinid}
\end{equation}
that is, the numbers $b_{i}(\ell )$ are the coefficients of $R_{5}$ in the Bernstein basis of degree twenty on the corresponding interval. The numbers $b_{i}(\ell )$ are rational. We can compute them exactly from \eqref{Bernsteincoef}. More precisely, we set $D_{\ast }=20!\,16^{20}$. For $q\in \{0,1\}$ and $0\leq i\leq 20$, we let
\begin{equation*}
N_{i}^{(q)}=\sum_{j=0}^{20}\rho _{j}5^{j}16^{20-j}\sum_{k=0}^{\min \{i,j\}}
\binom{i}{k}\binom{j}{k}k!\left( 20-k\right) !\,q^{j-k},
\end{equation*}
with the convention $0^{0}=1$. We substitute $\ell =\frac{5q}{16}$ in \eqref{Bernsteincoef} and use $\frac{20!}{\binom{20}{k}}=k!\left( 20-k\right) !$. Then we get $b_{i}\left( \frac{5q}{16}\right) =\frac{N_{i}^{(q)}}{D_{\ast }}$. Table \ref{tabfive} lists positive integers $L_{i}^{(0)}$ and $L_{i}^{(1)}$ such that
\begin{equation}
b_{i}(0)\geq 10^{6}L_{i}^{(0)},\qquad b_{i}\left( \tfrac{5}{16}\right) \geq
10^{6}L_{i}^{(1)},\qquad 0\leq i\leq 20.  \label{Bernsteinlower}
\end{equation}
Each of these forty-two inequalities is the comparison $N_{i}^{(q)}\geq 10^{6}L_{i}^{(q)}D_{\ast }$ between two integers. For example,
\begin{equation*}
b_{0}(0)=261395707500>10^{6}\cdot 261395,
\end{equation*}
and the smallest entry on the second interval is
\begin{equation*}
b_{2}\left( \tfrac{5}{16}\right) =\frac{52280692536923224348362021447375}{
179449926349046518120448}>10^{6}\cdot 291.
\end{equation*}

The Bernstein polynomials $\binom{20}{i}x^{i}\left( 1-x\right) ^{20-i}$ are nonnegative on $\left[ 0,1\right] $. Their sum is equal to one. The smallest entries in the two columns of $L_{i}^{(q)}$ in Table \ref{tabfive} are $2481$ and $291$. Therefore, \eqref{Bernsteinid} and \eqref{Bernsteinlower} give
\begin{equation*}
R_{5}(\nu )\geq 2481\cdot 10^{6}\quad \text{for }0\leq \nu \leq \frac{5}{16},\qquad R_{5}(\nu )\geq 291\cdot 10^{6}\quad \text{for }\frac{5}{16}\leq
\nu \leq \frac{5}{8}.
\end{equation*}
In particular, \eqref{R5} holds on the whole interval $\left[ 0,\frac{5}{8}\right] $. Since $D_{5}>0$ there, we get
\begin{equation*}
\frac{P(\nu )}{D_{5}(\nu )}=\frac{1}{500}+\frac{R_{5}(\nu )}{500D_{5}(\nu )}>
\frac{1}{500}.
\end{equation*}
Together with \eqref{fivequot}, this proves \eqref{fivegap}, with strict inequality for $u\neq 1$. This completes the proof.
\end{proof}

\bigskip

\noindent\textbf{Acknowledgements.} The first author was partly supported by National Natural Science Foundation of China (No. 12622108). The second author was partly supported by a NSERC discovery grant. The third author was partly supported by a grant from the Simons foundation. The authors acknowledge the use of AI tools  for valuable assistance  during the exploratory stage of this project. All mathematical arguments and proofs presented in the final manuscript were independently checked, verified, and written by the authors.

\end{document}